\DeclareFontFamily{U}{mathx}{}
\DeclareFontShape{U}{mathx}{m}{n}{<-> mathx10}{}
\DeclareSymbolFont{mathx}{U}{mathx}{m}{n}
\DeclareMathAccent{\widehat}{0}{mathx}{"70}
\DeclareMathAccent{\widecheck}{0}{mathx}{"71}
\newtheorem{theorem}{Theorem}[section]  
\newtheorem{lemma}{Lemma}[section]
\newtheorem{prop}{Proposition}[section]
\newtheorem{corollary}{Corollary}[section]
\newtheorem{remark}{Remark}[section]
\newcommand{\bremark}{\begin{remark} \em}
	\newcommand{\eremark}{\end{remark} }
\allowdisplaybreaks \numberwithin{equation}{section}
\numberwithin{equation}{section}
\theoremstyle{definition}
\theoremstyle{remark}
\newcommand{\ep}{\varepsilon}
\newcommand{\R}{\mathbb{R}}
\begin{document}

\title[KP-I Lump Solution to Travelling wave of Shadow Water wave]
{From KP-I Lump Solution to Travelling wave of 3D Gravity Capillary Water wave problem}

\author[C. Gui]{Changfeng Gui}
\address{\noindent  Changfeng Gui, Department of Mathematics, Faculty of Science and Technology, University of Macau, Taipa, Macao SAR, China.}
\email{changfenggui@um.edu.mo}

\author[S. Lai]{Shanfa Lai}
\address{\noindent   Shanfa Lai, Department of Mathematics, Faculty of Science and Technology, University of Macau, Taipa, Macao SAR, China.}
\email{laishanfa@amss.ac.cn}

\author[Y. Liu]{Yong Liu}
\address{\noindent  Yong Liu, School of Mathematics and Statistics, Beijing Technology and Business University, Beijing, China.}
\email{yliumath@btbu.edu.cn}

\author[J. Wei]{Juncheng Wei}
\address{\noindent  Juncheng Wei, Department of Mathematics, The Chinese University of Hong Kong, Shatin, Hong Kong.
}  \email{wei@math.cuhk.edu.hk}

\author[W. Yang]{Wen Yang}
\address{\noindent  Wen Yang, Department of Mathematics, Faculty of Science and Technology, University of Macau, Taipa, Macao SAR, China.
}  \email{wenyang@um.edu.mo}


\date{\today \\[2ex]
	\vspace*{2em}
} 

\begin{abstract}  
	In this paper, we study the three-dimensional gravity-capillary water wave problem involving an irrotational, perfect fluid with gravity and surface tension. We focus on steady waves propagating uniformly in one direction. Assuming constant wave speed and water depth, we analyze the fluid's velocity potential and boundary conditions. Using the Kadomtsev-Petviashvili (KP)-I equation as a simplified model, we show that, within a specific parameter range, the problem admits a fully localized solitary-wave solution resemble  lump type solutions of the KP-I equation.
\end{abstract}
\maketitle
\tableofcontents

\addtocontents{toc}{\protect\setcounter{tocdepth}{2}}	
\section{Introduction}

This article explores the classical three-dimensional gravity-capillary water-wave problem, which models the irrotational flow of an inviscid fluid with constant density, subject to gravity and surface tension forces. In the 1750s, L. Euler introduced the first comprehensive mathematical model for fluid dynamics, a framework that remains pivotal in coastal engineering, naval architecture, and environmental science. For an incompressible fluid, the velocity field at a point \(\mathbf{x} = (x_1, x_2, x_3) \in \mathbb{R}^3\) and time \(t\) is denoted by \(\mathbf{u}(\mathbf{x}, t) = (u_1(\mathbf{x}, t), u_2(\mathbf{x}, t), u_3(\mathbf{x}, t))\). The Euler equations, which govern the motion of an inviscid fluid, are expressed as:
\begin{equation}
	\label{eq:euler}
	\frac{\partial \mathbf{u}}{\partial t} + (\mathbf{u} \cdot \nabla) \mathbf{u} + \nabla P = \mathbf{F},
\end{equation}
where \(P(\mathbf{x}, t)\) is the pressure, and \(\mathbf{F}\) represents external forces, such as gravity, acting on the fluid. The fluid domain is typically bounded below by a rigid, horizontal bottom at a fixed depth and above by a free surface, described by the function $\eta(x_1, x_2, t)$. The problem is formulated as follows:
\begin{equation}
	\label{eq:water-wave}
	\begin{cases}
		\frac{\partial \mathbf{u}}{\partial t} + (\mathbf{u} \cdot \nabla) \mathbf{u} + \nabla P = \mathbf{F} & \text{in } \Omega,\\
		\nabla \cdot \mathbf{u} = 0 & \text{in } \Omega,
	\end{cases}
\end{equation}
where the domain is defined as:
\[
\Omega := \left\{ (x_1, x_2, x_3) \in \mathbb{R}^3 : -d < x_3 < \eta(x_1, x_2, t) \right\}.
\]

Assuming the flow is irrotational, the convective term in the Euler equations simplifies to \((\mathbf{u} \cdot \nabla) \mathbf{u} = \frac{1}{2} \nabla |\mathbf{u}|^2\). For gravity-capillary waves, the external force is given by \(\mathbf{F} = g \mathbf{e}_3\), where \(g\) represents the gravitational acceleration. Consequently, the momentum equation in \eqref{eq:water-wave} can be rewritten as:
\begin{equation*}
	\frac{\partial \mathbf{u}}{\partial t} + \nabla \left( \frac{1}{2} |\mathbf{u}|^2 + P + g x_3 \right) = 0.
\end{equation*}
On the free surface \(x_3 = \eta({\bf x}', t)\) with ${\bf x}'=(x_1,x_2)$, the kinematic boundary condition is:
\begin{equation}
	\label{eq:kinematic}
	\partial_t \eta = \sqrt{1 + (\partial_{x_1} \eta)^2 + (\partial_{x_2} \eta)^2} \, \mathbf{u} \cdot \mathbf{n},
\end{equation}
which reflects that the free surface evolves with the motion of the fluid. At the bottom boundary, the impermeability condition is:
\begin{equation}
	\label{eq:impermeability}
	\mathbf{u} \cdot \mathbf{n} = 0,
\end{equation}
where \(\mathbf{n}\) is the unit outward normal to the boundary of the fluid domain in \eqref{eq:kinematic} and \eqref{eq:impermeability}. The irrotationality condition implies that the velocity field is given by \(\mathbf{u} = \nabla \Phi\), where \(\Phi(\mathbf{x}, t)\) is the velocity potential. In terms of \(\Phi\), the system \eqref{eq:water-wave} can be expressed as:
\begin{equation}
	\label{eq:water-wave-phi}
	\begin{cases}
		\nabla \left( \partial_t \Phi + \frac{1}{2} |\nabla \Phi|^2 + P + g x_3 \right) = 0 & \text{in } \Omega, \\
		\Delta \Phi = 0 & \text{in } \Omega,
	\end{cases}
\end{equation}
where the second equation arises from the incompressibility condition. The first equation in \eqref{eq:water-wave-phi} implies:
\begin{equation}
	\label{eq:bernoulli}
	\partial_t \Phi + \frac{1}{2} |\nabla \Phi|^2 + P + g x_3 = C(t),
\end{equation}
where \(C(t)\) is a constant depending only on time. Evaluating \eqref{eq:bernoulli} at the free surface \(x_3 = \eta(x_1, x_2, t)\), we obtain the dynamic boundary condition, known as Bernoulli’s principle:
\begin{equation}
	\label{eq:dynamic}
	\partial_t \Phi + \frac{1}{2} |\nabla \Phi|^2 + P + g \eta = 0.
\end{equation}
When the pressure at the free surface is determined by surface tension, it is given by:
\[
P = \sigma \operatorname{div} \left( \frac{\nabla_{\perp} \eta}{\sqrt{1 + |\nabla_{\perp} \eta|^2}} \right),
\]
where \(\nabla_{\perp} = (\partial_{x_1}, \partial_{x_2})\) and $\sigma$ is the coefficient of the surface tension. Combining these, the boundary value problem for the gravity-capillary water-wave system is:
\begin{equation}
	\label{eq:water-wave-full}
	\begin{cases}
		\Delta \Phi = 0 & \text{in } \Omega, \\
		\partial_n \Phi = 0 & \text{on } x_3 = -d, \\
		\partial_t \eta = \partial_{x_3} \Phi - \nabla_{\perp} \eta \cdot \nabla_{\perp} \Phi & \text{on } x_3 = \eta(x_1, x_2, t), \\
		\partial_t \Phi = -\frac{1}{2} |\nabla \Phi|^2 - g \eta + \sigma \operatorname{div} \left( \frac{\nabla_{\perp} \eta}{\sqrt{1 + |\nabla_{\perp} \eta|^2}} \right) & \text{on } x_3 = \eta(x_1, x_2, t).
	\end{cases}
\end{equation}

An extensive body of research addresses the water wave problem, investigating topics such as well-posedness in the presence of vorticity, solitary wave and front-type solutions, the geometry and properties of the interface, wave regularity, stagnation points and critical layers of the wave, stability and instability, and other related areas. Given the extensive scope of the literature, a complete survey of all contributions is infeasible; we therefore point to a selection of works and refer readers to \cite{amick1982,Amick,bona1984,bri1995,lect,Buffoni,Buffoni2018,Buffoni2022,con-2011,con-2004,con-2010,con-2016,Craig4,Craig1,Craig3,Craig-2009,Cra2000,Craig2,Crag93,dar2003,davila2024,dias2003,Dj,galley2011,Groves2016,Groves,iooss2011,Ka,Ki,Lannes,Lannes2017,mach1993,mach1994,miles1957,mzz2012,mzz2012-1,mzz2013,Sachs,Sat,sun2002,w2009,wa2009,wheeler1,wheeler2,wheeler3,wheeler4,Whi,wu1,wu2,zz2007,zz2008} and the references therein for further details.  

The primary focus of this paper is on traveling wave solutions. For an in-depth review of recent developments in traveling water waves, we direct readers to \cite{H}. Our investigation concentrates on the 3D water wave problem, where the three-dimensional setting introduces a variety of geometric configurations, with the free surface displaying distinct behaviors in different horizontal directions. Two classes of traveling waves have attracted significant attention: doubly periodic waves, which are periodic in two independent horizontal directions, and fully localized waves, which decay in all horizontal directions. The doubly periodic case, often referred to as short-crested waves, was first rigorously analyzed by Reeder-Shinbrot \cite{Reed} using the Crandall-Rabinowitz bifurcation theorem in a fundamental domain of the ``symmetric diamond" type, where the lattice directions have equal length. Earlier formal computations for such waves were provided by Fuchs \cite{f1952} and Sretenskiĭ \cite{s1953}. Subsequently, Craig-Nicholls \cite{Cra2000} extended the analysis to arbitrary fundamental domains using a variational approach. Specifically, doubly periodic traveling waves are critical points of the energy functional
\begin{equation}
\label{1.energy-1}
E(\eta,\Phi)=\int_{\Omega_0}\left(\int_{-d}^{\eta}\frac12|\nabla\Phi|^2dx_3+\frac12g\eta^2+\sigma(\sqrt{1+|\nabla_\perp \eta|^2}-1)\right)dx_1dx_2,
\end{equation}
subject to the constraints
\begin{equation}
\label{1.constraint}
I_j(\eta,\Phi)=\int_{\Omega_0}\eta_{x_j}\Phi(x_1,x_2,\eta(x_1,x_2)) dx_1dx_2={\rm const},~j=1,2,
\end{equation}
where $\Omega_0$ denotes the fundamental domain of the lattice, $\eta$ is the free surface elevation, $\Phi$ is the velocity potential, (g) is the gravitational acceleration, and $\sigma$ is the surface tension coefficient. Since the velocity potential $\Phi$ is harmonic, the problem can be reformulated using the Dirichlet-to-Neumann operator (DNO) to reduce the dimensionality. Define the surface velocity potential as
$$\xi(x_1,x_2,t)=\Phi(x_1,x_2,\eta(x_1,x_2,t),t).$$ 
The DNO, denoted  $G(\eta)$, is defined by 
$$G(\eta)\xi=\partial_{x_3}\Phi-\nabla_{\perp}\eta\cdot\nabla_{\perp}\Phi,$$
where $\nabla_\perp = (\partial_{x_1}, \partial_{x_2})$ and the normal derivative is evaluated at the free surface. Craig and Groves \cite{Craig1} employed the DNO to derive long-wave models, with further developments in \cite{Bona, Craig2, Craig3}. For sufficiently small $C^1$-norm of $\eta$, the DNO is analytic and admits the expansion
\begin{equation}
\label{1.dno}
G(\eta)=G_0+G_1(\eta)+G_2(\eta)+R_3(\eta),
\end{equation}
where the leading terms are
\begin{equation}
\label{1.dno-1}
\begin{aligned}
&G_0=|D|\tanh(|D|),\\
&G_1(\eta)=-G_0\eta G_0-D\eta\cdot D,\\
&G_2(\eta)=-\frac12|D|^2\eta^2G_0-\frac12G_0\eta^2|D|^2+G_0\eta G_0\eta G_0,
\end{aligned}
\end{equation}
with $D = -i \nabla_\perp$ and $|D| = \sqrt{-\Delta_\perp}$. Using the DNO, the kinematic and dynamic boundary conditions of the water wave problem \eqref{eq:water-wave-full} reduce to
\begin{equation}
\label{eq:water-wave-bd}
\begin{cases}
\eta_t=G(\eta)\xi,\\
\xi_t=\frac{1}{2(1+|\nabla_{\perp}\eta|^2)}\left[(G(\eta)\xi+\nabla_{\perp}\eta\cdot\nabla_{\perp}\xi)^2-|\nabla_{\perp}\xi|^2|\nabla_\perp\eta|^2-|\nabla_\perp\xi|^2\right]\\
\quad\quad-g\eta+\sigma\operatorname{div} \left( \frac{\nabla_{\perp} \eta}{\sqrt{1 + |\nabla_{\perp} \eta|^2}} \right).
\end{cases}
\end{equation}
The energy functional \eqref{1.energy-1} can then be rewritten as
\begin{equation}
\label{1.energy-2}
\int_{\Omega_0}\left(\frac12\xi G(\eta)\xi+\frac12g\eta^2+\sigma(\sqrt{1+|\nabla_\perp \eta|^2}-1)\right)dx_1dx_2.
\end{equation}
The existence of doubly periodic waves has been demonstrated using spatial dynamics techniques. Groves and Mielke \cite{gm2001} confirmed their existence in the symmetric diamond configuration, while Groves and Haragus \cite{gh2003} and Nilsson \cite{n2019} generalized these findings to arbitrary fundamental domains. Advances in the study of doubly periodic water waves with vorticity have also emerged. Lokharu, Seth, and Wahlén \cite{lsw2020} provided foundational evidence for small-amplitude waves in this setting. Later, Groves et al. \cite{gnpw2024} investigated three-dimensional waves on Beltrami flows through a generalized Dirichlet-Neumann operator. Most recently, Seth, Varholm, and Wahlén \cite{svw2024} furthered these findings by confirming the existence of symmetric gravity-capillary waves with small vorticity, achieving a key breakthrough in the field.

For fully localized 3D solitary waves, where the free surface $\eta(x_1, x_2) \to 0$ as $|(x_1, x_2)| \to \infty$, spatial dynamics methods, successful in 2D, are less effective in 3D due to the presence of two unbounded variables. Instead, two major approaches are employed: variational methods and the implicit function theorem. As the periodic case, the Fully localised solitary waves are critical points of the energy functional
\begin{equation*}
\begin{aligned}
E(\eta,\xi)=~&\int_{\R^2}\left(\frac12\xi G(\eta)\xi+\frac12g\eta^2+\sigma(\sqrt{1+|\nabla\eta|^2}-1)\right)d{\bf x}'\\
=~&\int_{\Omega}\frac12|\nabla\phi|^2d{\bf x}+\int_{\R^2}\left(\frac12g\eta^2+\sigma(\sqrt{1+|\nabla\eta|^2}-1)\right)d{\bf x}',
\end{aligned}
\end{equation*}
subject to the momentum constraint in the $x_1$-direction,
\begin{equation*}
I_1(\eta,\xi):=\int_{\R^2}\eta_x\xi d{\bf x}'={\rm const}.
\end{equation*}
Groves-Sun constructed solutions using a mountain-pass approach for the augmented energy $E_c(\eta, \xi) = E(\eta, \xi) - c I_1(\eta, \xi),$ under the conditions $\frac{\sigma}{g d^2} > \frac{1}{3}$ and wave speed $c$ close to $\sqrt{g d}$. Buffoni-Groves-Sun-Wahl\'en \cite{Buffoni} employed a variational method minimizing $E(\eta, \xi)$ subject to a small momentum constraint, with further results for weak surface tension in \cite{Buffoni2018}.

The implicit function theorem offers an alternative approach for 3D solitary waves. The water wave problem is closely related to model equations such as the Davey-Stewartson system and the Kadomtsev-Petviashvili (KP) equation. For the gravity-capillary problem on infinite depth, the Davey-Stewartson system reduces to a 2D elliptic nonlinear Schr\"odinger equation, enabling Buffoni, Groves, and Wahlén \cite{Buffoni2022} to construct traveling wave solutions via elegant perturbation arguments. The connection between the KP equation and the water wave problem is explored in Ablowitz-Segur \cite{Abl2}, with additional details available in \cite[Section 6]{Craig1} and \cite{mzz2013}. To clarify this relationship, we consider the linearized boundary conditions
\begin{equation*}
\begin{cases}
\eta_t=\Phi_{x_3},\\
\Phi_t=-g\eta+\sigma\Delta\eta.
\end{cases}
\end{equation*}
at the free surface. Solving these with the Laplace equation and bottom boundary condition yields the dispersion relation
\[\omega^{2}=\left(  g\kappa+\sigma\kappa^{3}\right)  \tanh\left(  \kappa
d\right).\]
where $\kappa = \sqrt{k^2 + l^2}$ is the wave number. For shallow water ($\kappa h \ll 1$) and weak transverse variation ($\frac{l}{k} \ll 1$), the dispersion relation simplifies to
\[\omega^2 \approx g d \kappa^2 \left( 1 + \frac{\sigma}{g} \kappa^2 \right) \left( 1 - \frac{1}{3} (\kappa d)^2 \right).\]
Assuming $l^2 = O(k^4)$, the dispersion relation yields the linear part of the KP equation:
\[\frac{1}{c_0} \eta_{tx} + \eta_{xx} + \frac{1}{2} \eta_{yy} + \frac{d^2}{6} \left( 1 - \frac{3 \sigma}{g d^2} \right) \eta_{xxxx} = 0,\]
where $c_0 = \sqrt{g d}$. For $\frac{\sigma}{g d^2} > \frac{1}{3}$, this corresponds to the KP-I equation, supporting small-amplitude solitary waves. In fact, there is an explicit family of solutions with algebraic decay, first found by Manakov et al. \cite{Ma2}. The lump solutions have been shown to be nondegenerate and orbital stability by Liu-Wei \cite{Liu1}. While for $\frac{\sigma}{g d^2} < \frac{1}{3}$, it is KP-II, where such solutions are not expected. Further discussions are found in Lannes \cite{Lannes} and numerical studies by Părău, Vanden-Broeck and Cooker \cite{parau}.  Notably, in the context of the 2D water wave problem, the Korteweg-de Vries (KdV) equation, rather than the KP-I equation, is typically employed as an approximating model. For traveling wave solutions, Groves \cite{Groves2} utilized solutions to the stationary KdV equation to establish the existence of solutions for the 2D water wave problem when the surface tension parameter satisfies $\frac{\sigma}{g d^2} > \frac{1}{3}$. In this paper, we aim to provide a definitive resolution for the 3D water wave problem.

To streamline our analysis, we take the following normalization:
\begin{equation}
\label{1.parameter}
\mbox{Gravitational acceleration constant}~\ g = 1\mbox{ and the water depth}~\ d = 1.
\end{equation}
Under these assumptions, the corresponding  KP-I equation  takes the elliptic form
\begin{equation}
\label{1.KP}
\left(\sigma-\frac13\right)\partial_x^4q-\partial_x^2q-\partial_y^2q-\frac32\partial_x((\partial_xq)^2)=0,\quad \sigma>\frac13.
\end{equation}
In \cite{Liu1}, Liu-Wei proved that the lump solution of the equation \eqref{1.KP} is non-degenerated. Based on this finding, we aim to establish

\begin{theorem}\label{Thm1.1}
	Let $q(x, y)$ be a non-degenerate lump solution to the  KP-I equation \eqref{1.KP}. For any sufficiently small $\varepsilon>0$, there exists a solution pair $(\eta_{\ep}, \xi_{\ep})$ to the water wave problem \eqref{eq:water-wave-full} with traveling speed $c=\frac{1}{\sqrt{1+\ep^2}}$, satisfying the asymptotic expansions:
	\begin{align*}
		&\eta_{\ep}(x_1, x_2,t)=\ep^2 Q(\ep(x_1-ct),\ep^2x_2)+\mathcal{O}(\ep^3),\\
		&\xi_{\varepsilon}(x_1,x_2, t)= \varepsilon q(\ep(x_1-ct),\ep^2x_2)+\mathcal{O}(\varepsilon^2),
	\end{align*}
	where $Q(x,y) := \partial_x q(x,y)$ denotes the spatial derivative of the lump profile.
\end{theorem}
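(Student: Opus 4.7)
The strategy is a singular perturbation construction: I use the rescaled KP-I lump as the leading-order profile, build corrections to arbitrary order using the non-degeneracy theorem of Liu-Wei \cite{Liu1}, and finally close the argument by Banach contraction. Passing first to the moving frame $X_1 = x_1 - c t$ with $c = c_\varepsilon := (1+\varepsilon^2)^{-1/2}$, the evolution system \eqref{eq:water-wave-bd} becomes the stationary system
\begin{equation*}
-c_\varepsilon \partial_{X_1} \eta = G(\eta)\xi, \qquad -c_\varepsilon \partial_{X_1} \xi = -\eta + \sigma \operatorname{div}\!\Big(\tfrac{\nabla_\perp \eta}{\sqrt{1+|\nabla_\perp \eta|^2}}\Big) + \mathcal{Q}(\eta,\xi),
\end{equation*}
where $\mathcal{Q}$ collects the quadratic Bernoulli terms from \eqref{eq:water-wave-bd}. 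I then apply the anisotropic KP-I scaling $x = \varepsilon X_1$, $y = \varepsilon^2 x_2$ together with the amplitude scaling $\eta = \varepsilon^2\widetilde\eta$, $\xi = \varepsilon\widetilde\xi$. Since every Fourier mode in the ansatz then has size $O(\varepsilon)$, the symbol $|D|\tanh|D|$ of $G_0$ admits the convergent expansion $-\Delta_\perp - \tfrac{1}{3}\Delta_\perp^2 + \cdots$ in this regime, and combined with \eqref{1.dno} every term of the system becomes a formal power series in $\varepsilon^2$.

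Matching the lowest nontrivial powers yields $\widetilde\eta_0 = \partial_x \widetilde\xi_0$ and precisely the KP-I equation \eqref{1.KP} for $\widetilde\xi_0$, so choosing $\widetilde\xi_0 = q$ produces $\widetilde\eta_0 = Q = \partial_x q$ as required. Writing
\begin{equation*}
\widetilde\xi = q + \sum_{j=1}^{N} \varepsilon^j \xi_j, \qquad \widetilde\eta = Q + \sum_{j=1}^{N} \varepsilon^j \eta_j,
\end{equation*}
each step of the hierarchy reduces, after eliminating $\eta_j$ in favor of $\xi_j$, to a scalar equation $L\xi_j = F_j$ where
\begin{equation*}
L\phi := \big(\sigma-\tfrac{1}{3}\big)\partial_x^4 \phi - \partial_x^2 \phi - \partial_y^2 \phi - 3\partial_x(Q \partial_x \phi)
\end{equation*}
is the linearization of \eqref{1.KP} at $q$. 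By \cite{Liu1}, working in algebraically weighted Sobolev spaces adapted to the polynomial decay of the lump, $L$ is Fredholm of index zero with $\ker L = \operatorname{span}\{\partial_x q, \partial_y q\}$; translation invariance of the water wave problem lets me enforce the orthogonality conditions $F_j \perp \ker L$ by an appropriate centering, so the hierarchy is solvable to any order.

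For the closure I set $\widetilde\xi = \widetilde\xi_{\mathrm{app}} + \varepsilon^{N+1}\Psi$, $\widetilde\eta = \widetilde\eta_{\mathrm{app}} + \varepsilon^{N+1}\Theta$ for $N$ large, and the remainder satisfies
\begin{equation*}
\mathcal{L}_\varepsilon (\Theta, \Psi) = \mathcal{R}_\varepsilon + \mathcal{N}_\varepsilon(\Theta, \Psi),
\end{equation*}
where $\mathcal{L}_\varepsilon$ is the linearization of the full rescaled water wave operator at the approximation (a nonlocal perturbation of $L$), $\|\mathcal{R}_\varepsilon\| = O(\varepsilon)$ after the final truncation, and $\mathcal{N}_\varepsilon$ gathers superlinear remainders. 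A perturbation argument from the Fredholm theory for $L$ furnishes a right-inverse of $\mathcal{L}_\varepsilon$ modulo its approximate two-dimensional kernel, with operator norm bounded uniformly in $\varepsilon$; the kernel directions are absorbed by an $\varepsilon$-dependent adjustment of the centering of the lump, and the Banach fixed-point theorem in a weighted ball of radius $O(1)$ produces a unique solution $(\Theta,\Psi)$. Unwinding the scalings yields the asymptotic expansions in the statement.

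The main obstacle is establishing the uniform-in-$\varepsilon$ invertibility of $\mathcal{L}_\varepsilon$ in a functional framework simultaneously compatible with the nonlocal finite-depth operator $G(\eta)$ and with the slow polynomial decay of the lump. Exponentially weighted spaces, the standard tool for exponentially localized profiles like the KdV soliton exploited in \cite{Groves2}, are unavailable here, so one must work with anisotropic algebraically weighted Sobolev spaces tailored to the scaling $(x,y) = (\varepsilon X_1, \varepsilon^2 x_2)$, and control the nonlocal remainder $R_3(\eta)$ in \eqref{1.dno} together with the tails of the Taylor expansion of $|D|\tanh|D|$ uniformly in these norms.
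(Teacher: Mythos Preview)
Your overall strategy---KP-I scaling, lump as leading profile, linear theory based on Liu--Wei non-degeneracy, and contraction for the remainder---matches the paper's, but the technical route differs at several points and one of your choices is flagged by the paper as problematic.

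First, the paper does not build a formal series to order $N$; it writes $f=q+\phi$ with a single correction and sets up a fixed-point directly. More importantly, it eliminates the translation kernel $\{\partial_x q,\partial_y q\}$ by restricting from the outset to the symmetry class odd in $x$, even in $y$ (the lump itself has this parity), rather than via a Lyapunov--Schmidt reduction and recentering as you propose. Your route is legitimate but heavier; the symmetry shortcut is what the paper actually uses.

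Second, your plan to control the nonlocal piece by the Taylor expansion of $|D|\tanh|D|$ is exactly what the paper warns against: they note that a Taylor expansion only preserves ellipticity for a restricted range of the surface-tension parameter $\sigma$, and instead use the continued-fraction bounds \eqref{1.tanh}--\eqref{1.tanh-ine} to show the symbol of $\mathcal{L}_2$ is nonnegative and grows like $|\mathbf{k}|^5$ at infinity. This is what drives the uniform a-priori estimate in Proposition~\ref{Prop4-1}.

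Third, the function spaces are not the algebraically weighted Sobolev spaces you suggest. The paper works in unweighted anisotropic $L^2$/$L^p$ Sobolev spaces built on $\nabla_\varepsilon=(\partial_x,\varepsilon\partial_y)$ and obtains the $L^p$ theory from Hausdorff--Young and Hardy--Littlewood--Paley, explicitly remarking that Green-function decay estimates (the natural companion to weighted spaces) are not directly available here. Your final paragraph correctly identifies this as the main obstacle, but the resolution is Fourier-analytic rather than weight-based.

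Finally, you do not mention the Morse-index-one obstruction: beyond the two-dimensional kernel, $L$ has a single negative eigenvalue, so coercivity fails even after symmetry/centering removes the kernel. The paper handles this by decomposing $\phi=a\phi_1+\phi_2$ along the negative eigenfunction $\phi_1$ and its $(\cdot,\cdot)$-orthogonal complement (Steps~2--4 of Proposition~\ref{Prop4-1}), then playing the two pieces against each other to recover a uniform bound. Any complete proof must confront this; a generic ``perturbation from the Fredholm theory of $L$'' does not automatically give uniform-in-$\varepsilon$ estimates in its presence.
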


In the following, we briefly outline the main steps of this article. Initially, we employ the Dirichlet-to-Neumann operator to transform the full problem \eqref{eq:water-wave-full} into \eqref{eq:water-wave-bd}. We seek solutions to \eqref{eq:water-wave-bd} of the form
\[
\eta(x_1, x_2, t) = \varepsilon^2 h(\varepsilon(x_1 - ct), \varepsilon^2 x_2), ~ \xi(x_1, x_2, t) = \varepsilon f(\varepsilon(x_1 - ct), \varepsilon^2 x_2),~ c = \frac{1}{\sqrt{1 + \varepsilon^2}}.
\]
Substituting this ansatz into \eqref{eq:water-wave-bd}, we find that the leading-order term of the compatibility condition links the problem to the KP-I equation, which is similar to the traveling wave solutions of the Gross-Pitaevskii equation in \cite{Liu2}. However, as \cite{Liu2}, constructing solutions via the non-degeneracy of the lump solution and the classical Lyapunov-Schmidt reduction is not easy. Two challenges arise: first, perturbation terms complicate the decay properties and the nonlinear problem; second, the presence of the Dirichlet-to-Neumann operator (DNO) requires careful analysis of its leading-order term. Instead of handling a constant anisotropic differential operator as in the Gross-Pitaevskii equation, we address a more complicated operator:
\begin{align}
	\label{1.lin}
	\mathcal{L}_\varepsilon = & \, A \partial_1^4 - \partial_1^2 - (1 + \varepsilon^2) \partial_2^2 + \left(2A + \frac{1}{3}\right) \varepsilon^2 \partial_1^2 \partial_2^2 + \sigma (1 + \varepsilon^2) \varepsilon^4 \partial_2^4 \notag \\
	& + \varepsilon^{-4} (1 + \varepsilon^2) (1 + \varepsilon^2 \sigma \mathcal{P}) (\mathcal{Q} G_0 - \varepsilon^2 \mathcal{P}) - \frac{3}{c} \partial_1 (\partial_1 q \partial (\cdot)),
\end{align}
where
\begin{equation}
	\label{1.lin-f}
	A = (1 + \varepsilon^2) \sigma - \frac{1}{3}, \quad \mathcal{P} = -\partial_1^2 - \varepsilon^2 \partial_2^2, \quad \mathcal{Q} = 1 + \frac{1}{3} \varepsilon^2 \mathcal{P}, \quad G_0 = |D| \tanh(|D|).
\end{equation}
The term involving \( G_0 = |D| \tanh(|D|) \) poses significant challenges in studying the linearized problem. A Taylor expansion is typically used, but to preserve the elliptic nature of the differential operator in Proposition \ref{Prop4-1}, we restrict the surface tension constant \( \sigma \) to a narrower range. Instead, we utilize the continued fraction representation of the hyperbolic tangent:
\begin{equation}
	\label{1.tanh}
	\tanh(x) = \cfrac{1}{\frac{1}{x} + \cfrac{1}{\frac{3}{x} + \cfrac{1}{\frac{5}{x} + \cfrac{1}{\frac{7}{x} + \ddots}}}},
\end{equation}
which yields the inequality
\begin{equation}
	\label{1.tanh-ine}
	\frac{x (15 + x^2)}{15 + 6 x^2} > \tanh x > \frac{10x^3+105x}{x^4+45x^2 +105}.
\end{equation}
This inequality aids the analysis of the linearized operator in Proposition \ref{Prop4-1}. Having developed the \( L^2 \)-theory for the linearized problem, we analyze the linear problem in a suitable Sobolev space to tackle the nonlinear problem. In \cite{Liu2}, a crucial step for the traveling wave solutions of the Gross-Pitaevskii equation involves deriving the asymptotic behavior of the Green function for the linearized operator at infinity and near zero. However, these techniques are not directly applicable here. Instead, we apply the classical Fourier analysis results, including the Hausdorff-Young inequality and the Hardy-Littlewood-Paley theorem, to complete the linear theory within certain Sobolev spaces. Subsequently, we address the nonlinear system by proving that it is weakly decoupled, which allows us to reduce it into a single nonlinear problem. This problem includes several nonlocal operators stemming from the DNO operator. To handle these terms, we utilize results from Craig-Schanz-Sulem \cite{Craig2} regarding two types of singular integral operators:
\begin{align*}
	S_l(\eta_1, \dots, \eta_l) \xi({\bf x}') &= \int_{\mathbb{R}^2} K({\bf x}' - {\bf y}') \left( \prod_{j=1}^l P(\eta_j) \right) \xi({\bf y}') \, d{\bf y}', \\
	S_{l,*}(\eta, \dots, \eta) \xi({\bf x}') &= \int_{\mathbb{R}^2} K_*({\bf x}' - {\bf y}') \kappa_*^r Q^l(\eta) \xi({\bf y}') \, d{\bf y}',
\end{align*}
where \( K \) is a Calder\'on-Zygmund kernel satisfying standard estimates, and the functions \( P(\eta) \), \( Q(\eta) \), \( \kappa_*({\bf x}', {\bf y}') \), and \( K_* \) are defined as:
\[
P(\eta) = \frac{\eta({\bf x}') - \eta({\bf y}')}{|{\bf x}' - {\bf y}'|}, \quad Q(\eta) = \frac{\eta({\bf x}') + \eta({\bf y}')}{(|{\bf x}' - {\bf y}'|^2 + 4)^{\frac{1}{2}}}, \quad \kappa_*({\bf x}', {\bf y}') = \frac{4}{(|{\bf x}' - {\bf y}'|^2 + 4)^{\frac{1}{2}}},
\]
\[
K_*({\bf x}') = \frac{1}{(|{\bf x}'|^2 + 4)^{\rho/2}} \prod_{\ell=1}^2 \left( \frac{x_\ell}{(|{\bf x}'|^2 + 4)^{\frac{1}{2}}} \right)^{m_\ell} ~\text{for } l + \rho + r > 2~\mbox{and}~m_\ell\geq0,~\ell=1,2.
\]
Using Lemma \ref{LemmaA3} and Lemma \ref{LemmaA4}, we obtain refined estimates for the differential operators \( A(\eta) \) and \( B(\eta) \), which appear in the DNO as:
\[
(1 - B(\eta)) G(\eta) \cdot = |D| \tanh(|D|) \cdot + A(\eta) \cdot.
\]
With these results, we can manage the nonlinear perturbations and subsequently use a Banach fixed-point argument to establish the existence of solutions to \eqref{eq:water-wave-bd}.

We conclude the introduction with the following comments. First, the non-degeneracy of the lump solution $ q $ to equation \eqref{1.KP} and its Morse index of 1 are critical for analyzing the linearized operator. Using this non-degeneracy, we can address the linear problem in a functional space with appropriate symmetry, where the linearized operator is invertible. We strongly believe that the methods developed in this article can be extended to excited solutions of the KP-I equation. Recent work in \cite{lwy} has established the uniqueness of even solutions to the KP-I equation. If one can prove that any excited solution is non-degenerate and such solution has finite Morse index, then we can apply a similar argument as in Section 4 to develop the linear theory for the corresponding linearized operator. This could lead to the discovery of additional traveling wave solutions to the water wave problem. Second, in analyzing the linear theory, we must consider the nonlocal operator $ G(\eta) $. Employing the framework from Craig-Schanz-Sulem \cite{Craig2}, we obtain a priori estimates for the linearized operator of the KP-I lump solution, carrying the nonlocal term. This analysis holds independent interest. We suspect that Littlewood-Paley theory could provide pointwise estimates for the associated Green kernel and its derivatives, facilitating a more straightforward approach to studying the linear problem. We intend to pursue this in future research. Finally, compared to findings in \cite{Buffoni,Groves-Arma}, our method offers a more precise characterization of the asymptotic behavior of solutions as the traveling speed approaches the critical value. This could yield deeper insights into the spectrum, uniqueness, and dynamical properties of traveling wave solutions, which are valuable for further exploration.

The article is organized as follows: In Section 2, we reformulate the problem \eqref{eq:water-wave-bd} and establish its connection to the KP-I equation. In Section 3, we analyze the Dirichlet-to-Neumann operator and obtain refined estimates for related operators. In Section 4, we investigate the linear problem by studying the linearized KP-I operator with a nonlocal term around the lump solution. In Section 5, we solve the nonlinear problem using the contraction mapping principle, completing the proof of Theorem \ref{Thm1.1}. Some technical tools from Fourier analysis are provided in the Appendix.

\medskip
\begin{center}
{\bf  Notations}
\end{center}

\indent The following mathematical notations and conventions are adopted throughout this paper: 
\begin{itemize}
	\item \textbf{Spatial coordinates}: 
	The three-dimensional position vector is denoted by $\mathbf{x} = (x_1, x_2, x_3)$, where $x' = (x_1, x_2)$ represents the horizontal coordinates in the Cartesian plane. The scaled coordinates are defined as:
	\begin{align*}
		x = \varepsilon x_1, \quad 
		y = \varepsilon^2 x_2
	\end{align*}
	where $\varepsilon > 0$ is a dimensionless scaling parameter.
	
	\item \textbf{Differential operators}:
	\begin{align*}
		\partial_{{\bf x'}} &= (\partial_{x_1}, \partial_{x_2}) && \text{(horizontal gradient)} \\
		\nabla_{\varepsilon} &= (\partial_x, \varepsilon \partial_y) && \text{(scaled gradient operator)} \\
		D &= (-i\partial_{x_1}, -i\partial_{x_2})  && \text{(Fourier multiplier operator)}
	\end{align*}
	Here $\partial_x \equiv \partial_1$ and $\partial_y \equiv \partial_2$ denote partial derivatives with respect to the scaled coordinates $x$ and $y$ respectively. The magnitude satisfies $|D| = (D \cdot D)^{1/2} = (-\Delta_{{\bf x'}})^{1/2}$, where $\Delta_{{\bf x'}} = \partial_{x_1}^2 + \partial_{x_2}^2$ is the horizontal Laplacian.
	\item \textbf{Function spaces}:
	\begin{align*}
		\|\cdot\|_{L^{\infty}(\mathbb{R}^2; {\bf x'})} &:= \underset{{\bf x'} \in \mathbb{R}^2}{\mathrm{ess\,sup}}\, |\cdot| && \text{(supremum norm)} \\
		\|\cdot\|_{W^{k,p}(\mathbb{R}^2; {\bf x'})} &:= \left( \sum_{|\alpha| \leq k} \|D^\alpha \cdot\|^p_{L^p} \right)^{1/p} && \text{(Sobolev norm)}
	\end{align*}
	where $k \in \mathbb{N}_0$ is the order of weak derivatives, $1 \leq p \leq \infty$ is the integrability exponent, and differentiation is with respect to ${\bf x'}$ coordinates. The notation $\|\cdot\|_{L^{\infty}(\mathbb{R}^2)}$ and $\|\cdot\|_{W^{k,p}(\mathbb{R}^2)}$ implicitly refers to the standard Cartesian coordinates $(x, y)$.
	
	\item \textbf{Function dependencies}:
	$\eta = \eta(x_1, x_2)$ and $\xi = \xi(x_1, x_2)$ are scalar functions of horizontal coordinates, $\psi = \psi(x,y)$, $\phi = \phi(x,y)$, $f = f(x,y)$, $g = g(x,y)$ are functions defined on the scaled coordinate system.
\end{itemize}



\section{From water wave to KP-I }\label{sec2}

\subsection{Dirichlet-Neumann operators}
The Dirichlet-Neumann operator (DNO), which maps Dirichlet boundary data to the normal derivative of the solution to Laplace's equation, plays a fundamental role in water wave theory. 

Defining the surface velocity potential as $\xi(\mathbf{x'},t) = \Phi(\mathbf{x'}, \eta(\mathbf{x'},t),t)$, the DNO $G(\eta)$ is given by
\begin{equation*}
	G(\eta)\xi=\Phi_{x_3}-\partial_{{\bf x'}}\eta \cdot \partial_{{\bf x'}} \Phi.
\end{equation*}
Using geometric relations on the free surface
\begin{align*}
	\Phi_{x_3} - \eta_{x_1} \Phi_{x_1} - \eta_{x_2} \Phi_{x_2} &= G(\eta) \xi, \\
	\Phi_{x_1} + \eta_{x_1} \Phi_{x_3} &= \xi_{x_1}, \\
	\Phi_{x_2} + \eta_{x_2} \Phi_{x_3} &= \xi_{x_2},
\end{align*}
we solve for the gradient components
	\begin{align*}
		\Phi_{x_1} & =\frac{\left(1+\eta_{x_2}^2\right) \xi_{x_1}-\eta_{x_1} \eta_{x_2} \xi_{x_2}-\eta_{x_1} G(\eta) \xi}{1+\left|\partial_{{\bf x'}} \eta\right|^2} ,\\
		\Phi_{x_2} & =\frac{\left(1+\eta_{x_1}^2\right) \xi_{x_2}-\eta_{x_1} \eta_{x_2} \xi_{x_1}-\eta_{x_2} G(\eta) \xi}{1+\left|\partial_{{\bf x'}} \eta\right|^2} ,\\
		\Phi_{x_3} & =\frac{G(\eta) \xi+\partial_{{\bf x'}} \eta \cdot \partial_{{\bf x'}} \xi}{1+\left|\partial_{{\bf x'}} \eta\right|^2}.
	\end{align*}
This formulation yields the exact water wave equations
\begin{equation}\label{2-4}
	\begin{aligned}
		\eta_t  &=G(\eta) \xi, \\
		\xi_t  &=\frac{1}{2(1+|\partial_{{\bf x'}} \eta|^2)}\left[\left(G(\eta) \xi+\partial_{{\bf x'}}\eta\cdot \partial_{{\bf x'}}\xi\right)^2
		-\left|\partial_{{\bf x'}} \xi\right|^2\left|\partial_{{\bf x'}} \eta\right|^2 -\left|\partial_{{\bf x'}} \xi\right|^2\right]\\
		&\quad -g\eta+\sigma  \operatorname{div}\Big[\frac{\partial_{{\bf x'}} \eta}{\sqrt{1+|\partial_{{\bf x'}} \eta|^2}}\Big].
	\end{aligned}
\end{equation}

\subsection{Asymptotic derivation from water wave equations to KP-I}
In this subsection, we establish the asymptotic connection between the water wave system \eqref{2-4} and the KP-I equation \eqref{1.KP}. The derivation proceeds through the following steps:

Introduce a small parameter $\ep$ and define rescaled spatial variables
$$x=\ep x_1,\quad y=\ep^2 x_2.$$
Define the perturbation term $\Pi$ as
\begin{equation*}
	\begin{split}
		\ep^6 \Pi:&=\frac{1}{2}\xi_{x_1}^2+\frac{1}{2(1+|\partial_{{\bf x'}} \eta|^2)}\Big[\left(G(\eta) \xi+\partial_{{\bf x'}}\eta\cdot \partial_{{\bf x'}}\xi\right)^2
		-\left|\partial_{{\bf x'}} \xi\right|^2\left|\partial_{{\bf x'}} \eta\right|^2 -\left|\partial_{{\bf x'}} \xi\right|^2\Big] \\
		&\quad +\sigma  \Big(\mathrm{div}\Big[\frac{\partial_{{\bf x'}} \eta}{\sqrt{1+|\partial_{{\bf x'}}  \eta|^2}}\Big]-\Delta_{{\bf x'}}\eta\Big).
	\end{split}
\end{equation*}
We consider asymptotic solutions of the form
\begin{align}
\eta(x_1,x_2,t) &= \ep^2 h\big(\ep(x_1 - ct),\, \ep^2 x_2\big), \label{eta-ansatz} \\
\xi(x_1,x_2,t) &= \ep f\big(\ep(x_1 - ct),\, \ep^2 x_2\big), \label{xi-ansatz}
\end{align}
with wave speed $c = (1 + \ep^2)^{-1/2}$, gravitational acceleration $g=1$, 
and surface tension coefficient $\sigma > \frac{1}{3}$. Here $\ep > 0$ is a sufficiently small parameter.

Substituting \eqref{eta-ansatz} and \eqref{xi-ansatz} into \eqref{2-4} yields
\begin{align}
-c\partial_1h &= \ep^{-2}G(\ep^2h)f, \label{2-5} \\
-c\partial_1f &= -h - \tfrac{1}{2}\ep^2(\partial_1f)^2 + \sigma \ep^2(\partial_1^2 + \ep^2\partial_2^2)h + \ep^4 \Pi, \label{2-6}
\end{align}
where $\partial_1 \equiv \partial/\partial x$ and $\partial_2 \equiv \partial/\partial y$ denote derivatives with respect to the scaled variables.

Applying $\partial_1$ to \eqref{2-6} and combining with \eqref{2-5} yields
\begin{equation}\label{2-7}
	\begin{split}
		0 &= c^2\partial_1^2f + \left(1 - \sigma\ep^2(\partial_{1}^2 + \ep^2\partial_2^2)\right)\big(\ep^{-2}G(\ep^2h)f\big) \\ 
		&\quad - \frac{c}{2}\ep^2\partial_1(\partial_1f)^2 + c\ep^4\partial_1\Pi.
	\end{split}
\end{equation}
Define $\mathcal{Q} := 1 + \frac{1}{3}\ep^2\mathcal{P}$ where $\mathcal{P} := -\partial_1^2 - \ep^2\partial_2^2$. 
Applying $\mathcal{Q}$ to \eqref{2-7} gives
\begin{equation*}
    \begin{split}
        0 &= c^2\left(1 + \tfrac{1}{3}\ep^2\mathcal{P}\right)\partial_1^2f 
            + (1 + \sigma\ep^2\mathcal{P})\big(\ep^{-2}\mathcal{Q}G(\ep^2h)f\big) \\
          &\quad - \frac{c}{2}\ep^2\mathcal{Q}\partial_1(\partial_1f)^2 
            + c\ep^4\mathcal{Q}\partial_1\Pi.
    \end{split}
\end{equation*}
Using the wave speed $c^2 = \frac{1}{1+\ep^2}$, we derive
\begin{align*}
0 & = \left(1 + \tfrac{1}{3}\ep^2\mathcal{P}\right)\partial_1^2f 
    + (1+\ep^2)(1 + \sigma\ep^2\mathcal{P})\left(\ep^{-2}\mathcal{Q}G(\ep^2h)f\right) \\
  &\quad - \frac{\ep^2}{2c}\mathcal{Q}\partial_1(\partial_1f)^2 
    + \frac{\ep^4}{c}\mathcal{Q}\partial_1\Pi \\
& = (1+\ep^2)(1 + \sigma\ep^2\mathcal{P})\left[\ep^{-2}\mathcal{Q}(G(\ep^2h) - G_0)f\right]  + (1+\ep^2)(1 + \sigma\ep^2\mathcal{P})\left[\ep^{-2}\mathcal{Q}G_0f - \mathcal{P}f\right] \\
  &\quad + (1+\ep^2)(1 + \sigma\ep^2\mathcal{P})\mathcal{P}f 
    + \left(1 + \tfrac{1}{3}\ep^2\mathcal{P}\right)\partial_1^2f \\
  &\quad - \frac{\ep^2}{2c}\mathcal{Q}\partial_1(\partial_1f)^2 
    + \frac{\ep^4}{c}\mathcal{Q}\partial_1\Pi \\
& = (1+\ep^2)(1 + \sigma\ep^2\mathcal{P})\left[\ep^{-2}\mathcal{Q}(G(\ep^2h) - G_0)f\right] + (1+\ep^2)(1 + \sigma\ep^2\mathcal{P})\left[\ep^{-2}\mathcal{Q}G_0f - \mathcal{P}f\right] \\
  &\quad + \ep^2\Bigl[-\partial_1^2f - (1+\ep^2)\partial_2^2f 
        + \left(\sigma(1+\ep^2) - \tfrac{1}{3}\right)\partial_1^4 f  + \left(2\sigma(1+\ep^2) - \tfrac{1}{3}\right)\ep^2\partial_1^2\partial_2^2f \\
  &\quad\qquad + \sigma(1+\ep^2)\ep^4\partial_2^4 f \Bigr]  - \frac{\ep^2}{2c}\mathcal{Q}\partial_1(\partial_1f)^2 
    + \frac{\ep^4}{c}\mathcal{Q}\partial_1\Pi.
\end{align*}
Define the operators
\begin{equation*}
\begin{aligned}
\mathcal{L}_1 &:= A\partial_1^4 - \partial_1^2 - (1+\ep^2)\partial_2^2 
                + \left(2A + \tfrac{1}{3}\right)\ep^2\partial_1^2\partial_2^2 
                + \sigma(1+\ep^2)\ep^4\partial_2^4, \\  
\mathcal{L}_2 &:= \ep^{-4}(1+\ep^2)(1 + \ep^2\sigma\mathcal{P})
                (\mathcal{Q}G_0 - \ep^2\mathcal{P}), \\
A &:= \sigma(1+\ep^2) - \tfrac{1}{3} > 0.
\end{aligned}
\end{equation*}
Applying the expansion $G(\ep^2h)f = G_0f + G_1(\ep^2h)f + R_2(\ep^2 h)f$ yields
\begin{align*}
0 &= \mathcal{L}_{1}f + \mathcal{L}_2f 
     - \frac{1}{2c}\mathcal{Q}\partial_1(\partial_1f)^2 
     + \frac{\ep^2}{c}\mathcal{Q}\partial_1\Pi \\
  &\quad + \ep^{-2}(1+\ep^2)(1 + \sigma\ep^2\mathcal{P})
        \left[\ep^{-2}\mathcal{Q}(G(\ep^2h) - G_0)f\right] \\
&= \mathcal{L}_{1}f + \mathcal{L}_2f 
     - \frac{1}{2c}\mathcal{Q}\partial_1(\partial_1f)^2 
     + \frac{\ep^2}{c}\mathcal{Q}\partial_1\Pi \\
  &\quad + \ep^{-4}(1+\ep^2)(1 + \sigma\ep^2\mathcal{P})\mathcal{Q}R_2(\ep^2h)f  + \ep^{-2}(1+\ep^2)(1 + \sigma\ep^2\mathcal{P})\mathcal{Q} G_1(h)f.
\end{align*}
From \eqref{2-6}, we calculate
\begin{align*}
& \ep^{-2}(1+\ep^2)(1+\sigma \ep^2 \mathcal{P})\mathcal{Q} G_1(h)f \\
&= \ep^{-2}(1+\ep^2)\Bigl(-G_0 h G_0 f - \ep^4\partial_2(h\partial_2f) - \ep^2\partial_1(h\partial_1f) \Bigr) \\
&\quad + (1+\ep^2)\Bigl[ \left(\tfrac{1}{3} + \sigma\right) \mathcal{P} G_1(h)f + \tfrac{\sigma}{3}\ep^2\mathcal{P}^2 G_1(h)f \Bigr] \\
&= -\ep^{-2}(1+\ep^2)\Bigl(G_0 h G_0 f + \ep^4\partial_2(h\partial_2f)\Bigr)  + (1+\ep^2)\Bigl[ \left(\tfrac{1}{3} + \sigma\right) \mathcal{P} G_1(h)f + \tfrac{\sigma}{3}\ep^2\mathcal{P}^2 G_1(h)f \Bigr] \\
&\quad - (1+\ep^2)\Bigl( c\partial_1(\partial_1f)^2 - \tfrac{\ep^2}{2}\partial_1 ((\partial_1f)^3) + \ep^4 \partial_1 ((\partial_1f)\Pi)  - \sigma\ep^2 \partial_1((\partial_1f)\mathcal{P}h) \Bigr).
\end{align*}
This leads to
\begin{align*}
& \mathcal{L}_{1}f + \mathcal{L}_2f - \frac{1}{2c}\mathcal{Q}\partial_1(\partial_1f)^2 \\
&= -\ep^{-4}(1+\ep^2)(1+\sigma \ep^2 \mathcal{P})\mathcal{Q}R_2(\ep^2h)f  + \ep^{-2}(1+\ep^2)\Bigl(G_0 h G_0 f + \ep^4\partial_2(h\partial_2f)\Bigr) \\
&\quad - (1+\ep^2)\Bigl[ \left(\tfrac{1}{3} + \sigma\right)\mathcal{P}G_1(h)f + \tfrac{\sigma}{3}\ep^2\mathcal{P}^2G_1(h)f \Bigr] - \frac{\ep^2}{c}\mathcal{Q}\partial_1\Pi \\
&\quad + (1+\ep^2)\Bigl( c\partial_1(\partial_1f)^2 - \tfrac{\ep^2}{2}\partial_1 ((\partial_1f)^3) + \ep^4 \partial_1 ((\partial_1f)\Pi)  - \sigma\ep^2 \partial_1((\partial_1f)\mathcal{P}h) \Bigr),
\end{align*}
where the nonlinear term expands as
\begin{align}
\frac{1}{2c}\mathcal{Q}\partial_1(\partial_1f)^2 
	&= \frac{1}{2c}\partial_1(\partial_1f)^2 + \frac{\ep^2}{6c}\mathcal{P}\partial_1(\partial_1f)^2, \label{nonlinear-expansion}
\end{align}
the remainder terms satisfy
\begin{align}
& \ep^{-4}(1+\sigma \ep^2\mathcal{P})(\mathcal{Q}{R}_2(\ep^2h)f) + \tfrac{1}{3}\sigma\ep^2\mathcal{P}^2G_1(h)f \nonumber \\
&= \ep^{-4}G_2(\ep^2h)f + \ep^{-4}R_3(\ep^2h)f + \ep^{-2}\left(\sigma + \tfrac{1}{3}\right)\mathcal{P}G_2(\ep^2 h)f \nonumber \\
&\quad + \ep^{-2}\left(\sigma + \tfrac{1}{3}\right)\mathcal{P}{R}_3(\ep^2 h)f + \tfrac{1}{3}\sigma \mathcal{P}^2{R}_1(\ep^2h)f, \label{remainder-expansion}
\end{align}
and the principal operator acts as
\begin{align}
-\mathcal{P}G_1(h)f 
	&= \mathcal{P}\Bigl( G_0(hG_0f) + \ep^2\partial_1(h\partial_1f) + \ep^4\partial_2(h\partial_2 f) \Bigr). \label{principal-action}
\end{align}
Finally, we obtain the asymptotic equation
\begin{equation}\label{2-8}
	\mathcal{L}_{1}f + \mathcal{L}_2f - \frac{3}{2c}\partial_1(\partial_1f)^2 = \sum_{i=1}^4 P_i,
\end{equation}
where the perturbation terms are defined as follows
\begin{align*}
P_1 &:= -\frac{\ep^2}{c}\partial_1(\mathcal{Q}\Pi) 
        - \frac{\ep^2}{2c^2}\partial_1((\partial_1f)^3)
        - \frac{\sigma \ep^2}{c^2}\partial_1\bigl((\partial_1f)(\mathcal{P}h)\bigr)
        + \frac{\ep^4}{c^2}\partial_1\bigl((\partial_1f)\Pi\bigr) \\
    &\quad + \frac{\ep^2}{6c}\mathcal{P}\partial_1(\partial_1f)^2 
        + \frac{\ep^2}{c^2}\left(\tfrac{1}{3} + \sigma\right)\mathcal{P}\partial_1(h\partial_1f), \\
P_2 &:= \frac{\ep^2}{c^2}\partial_2(h\partial_2f) 
        + \frac{\ep^4}{c^2}\left(\tfrac{1}{3} + \sigma\right)\mathcal{P}\partial_2(h\partial_2f), \\
P_3 &:= -\frac{\ep^{-4}}{c^2}G_2(\ep^2h)f 
        - \frac{\ep^{-4}}{c^2}R_3(\ep^2h)f 
        - \frac{\ep^{-2}}{c^2}\left(\sigma + \tfrac{1}{3}\right)\mathcal{P}G_2(\ep^2 h)f \\
    &\quad - \frac{\ep^{-2}}{c^2}\left(\sigma + \tfrac{1}{3}\right)\mathcal{P}{R}_3(\ep^2 h)f 
        - \frac{1}{3c^2}\sigma \mathcal{P}^2{R}_1(\ep^2h)f, \\
P_4 &:= \frac{\ep^{-2}}{c^2}(G_0 h G_0 f) 
        + \frac{1}{c^2}\left(\tfrac{1}{3} + \sigma\right)\mathcal{P}G_0hG_0f.
\end{align*}

\begin{remark}
	The systematic analysis of Dirichlet-to-Neumann operator (DNO) estimation will be presented in Section 3. Specifically:
	\begin{itemize}
		\item Subsections 3.1 and 3.2 establish spectral decompositions of the principal operators $G_0$, $G_1$, and $G_2$ using Fourier analysis techniques
		\item Subsection 3.3 characterizes the Sobolev regularity properties of the residual operator $R_3$
	\end{itemize}
\end{remark}

Regarding the left-hand side of \eqref{2-8}, we seek a solution of the form \(f = q + \phi\), where \(q = q_{\varepsilon}\) is a lump solution of the KP-I equation
\begin{align*}  
A\partial_x^4 q - \partial_x^2 q - (1 + \varepsilon^2)\partial_y^2 q - \frac{3}{2c}\partial_x((\partial_x q)^2) = 0.  
\end{align*}
Throughout the proof, we use the standard lump solution  
\begin{align}  
q_{\varepsilon}(x, y) = -\frac{8\sqrt{1+\varepsilon^2}A  x}{y^2 + (1+\varepsilon^2)(x^2 + 3A)}, \label{2-10}  
\end{align}  
whose nondegeneracy is established in \cite{Liu1}.  

The unique negative eigenvalue and corresponding eigenfunction for \(q_{\varepsilon}\) are closely approximated by those of the limiting equation (as \(\varepsilon \to 0\)):  
\begin{align}
	(\sigma-\tfrac{1}{3})\partial_x^4 q- \partial_x^2 q- \partial_y^2 q-\frac{3}{2} \partial_x((\partial_x q)^2)=0,\label{2-11}
\end{align} 
with solution \(q_0(x, y) = -\dfrac{8\left(\sigma - \frac{1}{3}\right)x}{x^2 + y^2 + 3\sigma - 1}\). Consequently, the linearized problem for \eqref{2-8} is well-approximated by that of the KP-I equation.

\section{Methodological adaptation for DNO operator estimation}\label{secA}
In this section, we will estimate the Dirichlet-to-Neumann (DNO) operator. The primary methodology draws inspiration from Craig's foundational framework \cite{Craig2}, with targeted modifications to enhance compatibility with subsequent nonlinear term analysis. Specifically, we refine the original operator estimation scheme by introducing a norm adaptation strategy, enabling higher flexibility in bounding conditions while addressing the inherent complexity trade-offs. This adaptation ensures that the operators $G_0, G_1, G_2$ and $R_3$
satisfy the regularity requirements for the nonlinear regime in Section \ref{sec5}.

To facilitate the subsequent proofs in this section, we first introduce the description of the Dirichlet-Neumann operator (DNO) from \cite{Craig2, Schanz}.  Let ${\bf x}', {\bf y}' \in \mathbb R^2$, and consider functions $\eta: \mathbb R^2 \to \mathbb R$  and $\xi: \mathbb R^2 \to \mathbb R$.
An exact implicit formula for the Dirichlet-Neumann operator is given by
\begin{align*}
	(1-B(\eta)) G(\eta) \xi=|D| \tanh (|D|) \xi + A(\eta) \xi,
\end{align*}
where
\begin{align*}
		A(\eta)\xi&=-|D|(1+e^{-2|D|})^{-1}\int_{\mathbb R^2} m({\bf x}' ,{\bf y}' ) \xi({\bf y}' ) d {\bf y}' \equiv (A_1+\ldots+A_L+A_{L+1}^R) \xi,\\
		B(\eta)\xi &=-|D|(1+e^{-2|D|})^{-1} \int_{\mathbb R^2}l({\bf x}' ,{\bf y}' ) \xi({\bf y}' ) d {\bf y}'  \equiv (B_1+\ldots+B_L+B_{L+1}^R) \xi,
\end{align*}
with
	\begin{align*}
		m({\bf x}' ,{\bf y}' )&=\frac{1}{2 \pi} \frac{-({\bf x}'-{\bf y}') \cdot \partial_{{\bf y}'} P}{|{\bf x}'-{\bf y}'|^2} \frac{1}{(1+P^2)^{3 / 2}} +\frac{1}{2 \pi}\Big[\frac{({\bf x}'-{\bf y}') \cdot \partial_{{\bf y}' } Q}{|{\bf x}' -{\bf y}' |^2+4}+\frac{4  Q}{(|{\bf x}' -{\bf y}' |^2+4)^2}\Big]\\
		&\quad\times \frac{1}{(1+\frac{4}{(|{\bf x}' -{\bf y}' |^2+4)^{1 / 2}} Q+Q^2)^{3 / 2}} \\ 
		&\quad+\frac{1}{2\pi} \frac{2}{(|{\bf x}' -{\bf y}' |^2+4 )^{3 / 2}}\Big[\frac{1}{(1+\frac{4}{(\left|{\bf x}' -{\bf y}' \right|^2+4)^{1 / 2}} Q+Q^2)^{3 / 2}}-1\Big],\\
		l({\bf x}',{\bf y}')&=\frac{1}{2 \pi} \frac{1}{|{\bf x}' -{\bf y}' |}\Big(\frac{1}{(1+P^2)^{1 / 2}}-1\Big)\\
		&\quad+\frac{1}{2 \pi} \frac{1}{(\left|{\bf x}' -{\bf y}' \right|^2+4 )^{1 / 2}}\Big[\frac{1}{(1+\frac{4 }{(|{\bf x}' -{\bf y}' |^2+4)^{1 / 2}} Q+Q^2)^{1 / 2}}-1\Big],
	\end{align*}
where
\begin{align*}
	P(\eta)=\frac{\eta\left({\bf x}'\right)-\eta\left({\bf y}'\right)}{\left|{\bf x}'-{\bf y}'\right|}, \quad  Q(\eta)=\frac{\eta\left({\bf x}'\right)+\eta\left({\bf y}'\right)}{(\left|{\bf x}'-{\bf y}'\right|^2+4 )^{1 / 2}},  \quad \kappa_h\left({\bf x}', {\bf y}'\right)=\frac{4}{(\left|{\bf x}'-{\bf y}'\right|^2+4 )^{1 / 2}}, 
\end{align*}
and 
	\begin{align*}
		A_l\xi&=-|D|(1+e^{-2|D|})^{-1} \frac{1}{2 \pi} \\
		&\quad \int_{\mathbb R^2}\Big[\frac{({\bf x}' -{\bf y}' ) \cdot \partial_{{\bf y}' } \tilde{p}_l(P)}{|{\bf x}' -{\bf y}' |^2}+\Big(\frac{({\bf x}' -{\bf y}' )\cdot \partial_{{\bf y}' }Q}{|{\bf x}' -{\bf y}' |^2+4}+\frac{4Q}{(|{\bf x}' -{\bf y}' |^2+4)^2}\Big)\Tilde{q}_{l-1}(Q,\kappa_h)\\
		&\quad+\frac{2}{(|{\bf x}' -{\bf y}' |^2+4)^{3/2}}\Tilde{q}_l(Q,\kappa_h)\Big]\xi({\bf y}' )d{\bf y}' ,\\
		A_{L+1}^R\xi&=-|D|(1+e^{-2|D|})^{-1} \frac{1}{2 \pi} \\
		&\quad \int_{\mathbb R^2}\Big[\frac{({\bf x}' -{\bf y}' ) \cdot \partial_{{\bf y}' } \tilde{p}^R_{L+1}(P)}{|{\bf x}' -{\bf y}' |^2}+\Big(\frac{({\bf x}' -{\bf y}' )\cdot \partial_{{\bf y}' }Q}{|{\bf x}' -{\bf y}' |^2+4}+\frac{4Q}{(|{\bf x}' -{\bf y}' |^2+4)^2}\Big)\Tilde{q}^R_{L}(Q,\kappa_h)\\
		&\quad+\frac{2}{(|{\bf x}' -{\bf y}' |^2+4)^{3/2}}\Tilde{q}^{R}_{L+1}(Q,\kappa_h)\Big]\xi({\bf y}' )d{\bf y}' ,\\
		B_l \xi&=-|D|(1+e^{-2|D|})^{-1} \frac{1}{2 \pi}\int_{\mathbb R^2}\Big(\frac{p_l(P)}{\left|{\bf x}' -{\bf y}' \right|}+\frac{q_l\left(Q, \kappa_h\right)}{(|{\bf x}' -{\bf y}' |^2+4 )^{1 / 2}}\Big) \xi d {\bf y}' ,\\
		B_{L+1}^R\xi&=-|D|(1+e^{-2|D|})^{-1} \frac{1}{2 \pi}\int_{\mathbb R^2}\Big(\frac{p_{L+1}^R(P)}{|{\bf x}' -{\bf y}' |}+\frac{q_{L+1}^R(Q, \kappa_h)}{(|{\bf x}' -{\bf y}' |^2+4 )^{1 / 2}}\Big) \xi d {\bf y}' .
	\end{align*}
Here, ${p}_l (\text{for } l \geq 2 \text{ for } l \text{ even})$  and $q_l(\text{for } l \geq 1)$ are homogeneous polynomials of order $l$ in $\eta$,  derived from the expansion of
$$
\frac{1}{(1+\sigma)^{1 / 2}}-1=-\frac{1}{2} \sigma+\frac{3}{8} \sigma^2-\ldots
$$
where $\sigma$ corresponds to $P^2$ and $\kappa_{h} Q+Q^2$, respectively. Similarly, $\tilde{p}_l(\text{for } l \geq 1 \text{ for } l \text{ odd})$ and $\tilde{q}_l(\text{for } l \geq 1)$ are homogeneous polynomials of order $l$ in $\eta$, with $\tilde{p}_l$
additionally involving derivatives of $\eta$. 
The remainders of these expansions, denoted by
$p_{L+1}^R, q_{L+1}^R, \tilde{p}_{L+1}^R$ and $\tilde{q}_{L+1}^R$, are expressions of order  $\mathcal{O}\left(\eta^{L+1}\right)$ for small $\eta$.

\begin{remark}\label{LemmaA1}
	We observe that the operators \((1 + e^{-2|D|})^{-1}\) and the Riesz potential \(R_j(D) = i D_j / |D|\) in the previously defined \(A_l\) and \(B_l\) are bounded on the Sobolev space \(W^{s,q}(\mathbb{R}^n)\) for all \(1 < q < \infty\) and non-negative integers \(s \geq 0\). This boundedness follows from the Mikhlin multiplier theorem (see Appendix A for details), as both symbols satisfy the Mikhlin condition:
	\[
	\sup_{\xi \in \mathbb{R}^n \setminus \{0\}} \big| \xi^{|\alpha|} \partial^\alpha \sigma(\xi) \big| \leq C_\alpha, \quad \forall |\alpha| \leq 2,
	\]
	where \(\sigma\) denotes the symbol of either operator. This property is particularly advantageous for subsequent a priori estimates in our analysis, as it ensures uniform control of higher-order derivatives.
\end{remark}

Since the most fundamental part of the DNO operator estimation involves singular integral operators, we first define them as follows. Let two operators be defined as:
\begin{equation*}
	\begin{split}
		S_l(\eta_1, \ldots, \eta_l) \xi({\bf x}')&=\int_{\mathbb R^2} K({\bf x}'-{\bf y}')\Big(\prod_{j=1}^l P(\eta_j)\Big) \xi({\bf y}') d {\bf y}',\\
		S_{l, *}(\eta, \ldots, \eta)&=\int_{\mathbb R^2} K_*\left({\bf x}'-{\bf y}'\right) \kappa_h^{r} Q^l(\eta) \xi({\bf y}') d {\bf y}'
	\end{split}
\end{equation*}
where $K$ is a Calder\'on–Zygmund kernel satisfying  standard estimates, and  $K_*$ takes the form:
\begin{equation*}
	K_*({\bf x}')=\frac{1}{(|{\bf x}'|^2+4)^{\rho/ 2}} \prod_{l=1}^2\Big(\frac{x_l}{(|{\bf x}'|^2+4)^{1 / 2}}\Big)^{m_l}, \text{ for } l+\rho+r>2.
\end{equation*}

To establish the desired estimates, we require the following lemmas, which can be found in \cite{Craig2, Schanz}. In what follows, the constants $C$, which appear below, are related only to $s$, and are independent of $\eta$ and $\xi$.

\begin{lemma}\label{LemmaA3}
	Suppose that $\eta \in C^{s+1}\left(\mathbb{R}^2\right)$, and consider the mapping properties of the operator $S_l(\eta, \cdots, \eta)$. Let $1<q<\infty$, then we have the estimate that
	$$
	\left\|\partial_{x}^s S_l(\eta, \cdots, \eta) \xi\right\|_q \leq C l^{M+s}\|\eta\|_{C^1}^{l-1}\big(\|\eta\|_{C^1}\left\|\partial_{x}^s \xi\right\|_q+\|\eta\|_{C^{s+1}}\|\xi\|_q\big) .
	$$
\end{lemma}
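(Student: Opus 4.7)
The plan is to reduce the estimate to a multilinear Calder\'on-Zygmund base case at $s=0$ by distributing the $s$ derivatives using the Leibniz rule. First, I would perform the translation change of variable $\mathbf{z}' = \mathbf{x}' - \mathbf{y}'$ to rewrite
\begin{equation*}
S_l(\eta,\ldots,\eta)\xi(\mathbf{x}') = \int_{\mathbb{R}^2} K(\mathbf{z}') \prod_{j=1}^l \frac{\eta(\mathbf{x}') - \eta(\mathbf{x}' - \mathbf{z}')}{|\mathbf{z}'|}\,\xi(\mathbf{x}' - \mathbf{z}')\,d\mathbf{z}',
\end{equation*}
so that the Calder\'on-Zygmund kernel $K$ carries no $\mathbf{x}'$-dependence and all $\mathbf{x}'$-dependence is concentrated in the $l$ finite-difference quotients $P(\eta)$ and in the translate $\xi(\mathbf{x}' - \mathbf{z}')$. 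The key structural observation is that differentiation preserves the $P$-structure: $\partial_x^{a} P(\eta)(\mathbf{x}',\mathbf{x}'-\mathbf{z}') = P(\partial_x^{a}\eta)(\mathbf{x}',\mathbf{x}'-\mathbf{z}')$, whose $L^\infty$-norm in $\mathbf{z}'$ is controlled by $\|\partial_x^a\eta\|_{C^1}\leq \|\eta\|_{C^{a+1}}$.

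Next I would apply the Leibniz rule to $\partial_x^s$ of the integrand, producing a sum over multi-indices $(a_0,a_1,\ldots,a_l)$ with $a_0+\cdots+a_l=s$, each summand being itself of $S_l$-form with $\eta$ in the $j$-th slot replaced by $\partial_x^{a_j}\eta$ and with $\xi$ replaced by $\partial_x^{a_0}\xi$. The number of such multi-indices is $\binom{s+l}{l}\leq C_s(l+1)^s$, which supplies the $l^s$ factor in the target bound. For each summand I would invoke the base-case estimate
\begin{equation*}
\|S_l(\eta_1,\ldots,\eta_l)\psi\|_q \leq C\,l^M \Bigl(\prod_{j=1}^l \|\eta_j\|_{C^1}\Bigr)\|\psi\|_q,\qquad 1<q<\infty,
\end{equation*}
with $\eta_j = \partial_x^{a_j}\eta$ and $\psi = \partial_x^{a_0}\xi$. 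Since at most one $j\geq 1$ can carry a large $a_j$ while the remaining $l-1$ slots stay bounded in $C^1$, one has $\prod_{j\geq 1}\|\partial_x^{a_j}\eta\|_{C^1}\leq \|\eta\|_{C^{s+1}}\|\eta\|_{C^1}^{l-1}$. Separating the extremal case $a_0=s$ (all derivatives on $\xi$, contributing $\|\eta\|_{C^1}^l\|\partial_x^s\xi\|_q$) from the others (contributing $\|\eta\|_{C^{s+1}}\|\eta\|_{C^1}^{l-1}\|\xi\|_q$) reproduces exactly the two summands on the right-hand side of the claim.

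The principal obstacle is the polynomial-in-$l$ multilinear Calder\'on-Zygmund estimate at $s=0$. This is a quantitative multilinear analogue of Calder\'on's commutator theorem; a proof proceeds by induction on $l$, starting from the first Calder\'on commutator for $l=1$, and at each inductive step extracting one $P(\eta)$-factor via a paraproduct/$T(1)$ decomposition that reduces the $l$-linear operator to $(l-1)$-linear ones while losing only a single power of $l$. Granting this base estimate, the Leibniz and combinatorial counts above combine to yield the stated inequality with the exponent $l^{M+s}$.
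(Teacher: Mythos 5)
The paper does not actually prove this lemma: it is quoted from Craig--Schanz--Sulem \cite{Craig2} and Schanz's thesis \cite{Schanz}, so there is no in-paper argument to compare against. Your outline is, in substance, the standard proof from those references: pass to convolution form, use the identity $\partial_{x}P(\eta)=P(\partial_{x}\eta)$ so that $\partial_x^s$ distributes over the $l$ slots and over $\xi$ by the Leibniz rule, count the $(l+1)^s$ resulting terms, and invoke the Coifman--McIntosh--Meyer polynomial-in-$l$ bound for the multilinear Calder\'on commutators as the $s=0$ base case. That architecture is correct and is exactly where the exponent $l^{M+s}$ comes from.

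Two bookkeeping steps are asserted rather than proved, and one is justified by a false claim. First, it is not true that ``at most one $j\geq 1$ can carry a large $a_j$'': for $s\geq 2$ the Leibniz sum contains terms in which several slots carry derivatives (e.g.\ $a_{j_1}=a_{j_2}=1$), and the product $\prod_{j\geq1}\|\partial_x^{a_j}\eta\|_{C^1}$ is then not termwise of the form $\|\eta\|_{C^{1+\sum_j a_j}}\|\eta\|_{C^1}^{l-1}$. One needs the logarithmic convexity (Landau--Kolmogorov interpolation) of the H\"older norms, $\prod_{j\geq1}\|\eta\|_{C^{1+a_j}}\leq C^{l}\|\eta\|_{C^1}^{l-1}\|\eta\|_{C^{1+\sum_j a_j}}$, to reach that bound. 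Second, the terms with $0<a_0<s$ derivatives falling on $\xi$ produce $\|\eta\|_{C^{s-a_0+1}}\|\eta\|_{C^1}^{l-1}\|\partial_x^{a_0}\xi\|_q$, which is neither of the two summands on the right-hand side of the claim; one must interpolate $\|\eta\|_{C^{s-a_0+1}}$ between $\|\eta\|_{C^1}$ and $\|\eta\|_{C^{s+1}}$, interpolate $\|\partial_x^{a_0}\xi\|_q$ between $\|\xi\|_q$ and $\|\partial_x^{s}\xi\|_q$ (Gagliardo--Nirenberg), and then apply Young's inequality to land in the stated sum. Both fixes are routine, so the proof goes through once they are inserted, but as written the passage from the Leibniz expansion to the claimed right-hand side does not. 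The remaining black box, the polynomial-in-$l$ $L^q$-boundedness of the multilinear commutators at $s=0$, is a deep theorem that you correctly identify but do not (and need not, given the citation) prove.
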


\begin{lemma}\label{LemmaA4}
	Suppose $l+\rho+r>2$, then we have the estimates
	$$
	\|\partial_{x}^s S_{l, *}(\eta, \ldots, \eta) \xi\|_q \leq C l^s\|\eta\|_{L^{\infty}}^{l-1}\|\eta\|_{C^s}\|\xi\|_q.
	$$
\end{lemma}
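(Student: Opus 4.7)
The plan is to prove the estimate by induction on $s$, exploiting the fact that the family $\{S_{l,*}\}$ is essentially closed under differentiation in ${\bf x}'$. In the base case $s=0$ I will bound the integrand pointwise by a convolution kernel and apply Young's inequality. In the inductive step I will expand $\partial_x^s$ via Leibniz's rule across the three factors $K_*({\bf x}'-{\bf y}')$, $\kappa_h^r$, and $Q^l(\eta)$, showing that each differentiation produces either a new factor of $\partial\eta({\bf x}')$ (which, being ${\bf y}'$-independent, comes outside the integral) or a rearrangement of the remaining integrand into an operator of the same $S_{l',*'}$-type with updated parameters still satisfying $l'+\rho'+r'>2$.

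For $s=0$, the uniform pointwise bounds $|Q(\eta)|\leq 2\|\eta\|_{L^\infty}/F$, $\kappa_h=4/F$, and $|K_*|\leq F^{-\rho}$, with $F:=(|{\bf x}'-{\bf y}'|^2+4)^{1/2}$, give
\begin{equation*}
|K_*({\bf x}'-{\bf y}')\,\kappa_h^{r}\,Q^l(\eta)| \;\leq\; C\,\|\eta\|_{L^\infty}^{l}\,F^{-(l+\rho+r)}.
\end{equation*}
The majorant depends only on ${\bf x}'-{\bf y}'$, and $l+\rho+r>2$ guarantees $F^{-(l+\rho+r)}\in L^1(\mathbb{R}^2)$. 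Young's inequality then yields $\|S_{l,*}(\eta,\ldots,\eta)\xi\|_q\leq C\|\eta\|_{L^\infty}^{l}\|\xi\|_q$, which matches the claimed bound at $s=0$ since $\|\eta\|_{C^0}=\|\eta\|_{L^\infty}$.

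For the induction, direct differentiation shows that $\partial_{x_j}K_*$ is a finite sum of $K_*$-type kernels with $\rho$ replaced by $\rho+1$, that $\partial_{x_j}\kappa_h^r$ is a $K_*$-type factor times $\kappa_h^r$, and that
\begin{equation*}
\partial_x Q^l \;=\; l\,Q^{l-1}\Bigl(\tfrac{\partial_x\eta({\bf x}')}{F}\;-\;\tfrac{Q\,(x_1-y_1)}{F^2}\Bigr).
\end{equation*}
The first summand exports $\partial_x\eta({\bf x}')$ outside the integral while turning $Q^l$ into $Q^{l-1}$ (with $r$ raised by $1$ to preserve the total decay); the second absorbs $(x_1-y_1)/F^2$ into the $K_*$ template (raising $\rho$ by $1$). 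Iterating, $\partial_x^s S_{l,*}(\eta,\ldots,\eta)\xi$ becomes a finite sum of terms of the form
\begin{equation*}
c_{\alpha}\,\Bigl(\prod_{i=1}^{k}\partial^{\alpha_i}\eta({\bf x}')\Bigr)\,\widetilde{S}(\eta,\ldots,\eta)\xi,\qquad \sum_{i}|\alpha_i|\leq s,
\end{equation*}
each $\widetilde{S}$ of $S_{l',*'}$-type with $l'+\rho'+r'>2$, and combinatorial weight $|c_{\alpha}|\lesssim l(l-1)\cdots(l-s+1)\leq l^{s}$ arising from the repeated drops $Q^l\to lQ^{l-1}\to\cdots$. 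Applying the $s=0$ estimate to each $\widetilde{S}$ and controlling the pointwise prefactor by $\|\eta\|_{L^\infty}^{l-1}\|\eta\|_{C^s}$ (at most $s$ of the $l$ copies of $\eta$ carry derivatives, the rest contributing $\|\eta\|_{L^\infty}$) delivers the stated inequality. The main obstacle is precisely this structural bookkeeping: one must verify that every term produced by the Leibniz expansion remains in the $S_{l',*'}$-family with parameters still respecting $l'+\rho'+r'>2$, so that Young's inequality applies uniformly at every step of the induction. Once this closure under differentiation is secured, the extraction of the $l^s$ prefactor and the $\|\eta\|_{C^s}$-control reduce to routine combinatorial counting.
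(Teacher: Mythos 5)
The paper does not prove this lemma; it is quoted verbatim from Craig--Schanz--Sulem \cite{Craig2, Schanz}, and the argument there is indeed the one you outline: differentiate under the integral, check that the family $S_{l,*}$ is closed under $\partial_{{\bf x}'}$ with the total decay exponent $l+\rho+r$ preserved, and conclude at $s=0$ by majorizing with the $L^1$ convolution kernel $(|{\bf z}|^2+4)^{-(l+\rho+r)/2}$ and Young's inequality. Your base case, your computation of $\partial_x Q^l$, and your verification that each Leibniz term re-enters the family with $l'+\rho'+r'>2$ are all correct.

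There is, however, a genuine gap in the last step. After the Leibniz expansion a typical worst-case term carries the prefactor $\prod_{i=1}^{k}\partial^{\alpha_i}\eta({\bf x}')$ with $\sum_i|\alpha_i|\le s$ and $k$ as large as $s$ (each derivative landing on a fresh copy of $Q$), so the pointwise bound your counting actually yields is $\prod_i\|\partial^{\alpha_i}\eta\|_{L^\infty}\cdot\|\eta\|_{L^\infty}^{\,l-k}$, e.g.\ $\|\nabla\eta\|_{L^\infty}^{s}\|\eta\|_{L^\infty}^{\,l-s}$. This is \emph{not} bounded by $\|\eta\|_{L^\infty}^{\,l-1}\|\eta\|_{C^s}$ by mere counting; the statement is linear in the top norm while your expansion produces a product of up to $s$ intermediate norms. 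To close this you need the Landau--Kolmogorov (convexity of H\"older norms) inequality $\|\eta\|_{C^j}\le C\,\|\eta\|_{C^0}^{1-j/s}\|\eta\|_{C^s}^{j/s}$, which gives $\prod_i\|\partial^{\alpha_i}\eta\|_{L^\infty}\le C\,\|\eta\|_{L^\infty}^{k-t}\|\eta\|_{C^s}^{t}$ with $t=\sum_i|\alpha_i|/s\le 1$, hence $\le C\,\|\eta\|_{L^\infty}^{k-1}\|\eta\|_{C^s}$; this interpolation is exactly the device used in \cite{Craig2, Schanz} and is missing from your write-up. A second, minor point: when $r$ is comparable to $l$ (as in the application, where $r=l-2j$), differentiating $\kappa_h^{r}$ also brings down factors of order $l$, so the combinatorial weight $l^s$ should be attributed to both the $Q$-power and the $\kappa_h$-power, not only to the drops $Q^{l}\to lQ^{l-1}$.
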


\subsection{Estimates for the operator $B_l$}
\label{subsec:B_l_estimates}

We now establish refined estimates for the operators $B_l$, which are crucial for our subsequent analysis.

\begin{theorem}\label{ThmA2}
    Let $1 < q < \infty$ with $\frac{1}{q} = \frac{1}{q_1} + \frac{1}{q_2}$, $s \in \mathbb{Z}_{\geq 0}$, and $\|\eta\|_{C^1} < \frac{1}{4}$. For integers $l \geq 1$ and $L \geq 1$, the operators $B_l$ and $B_L^R$ satisfy:
    \begin{align*}
        \big\||D|^{-1}B_1(\eta)\xi\big\|_{s,q} 
            &\leq C\|\eta\|_{s,q_1}\|\xi\|_{q_2}, \\
        \big\||D|^{-1}B_l(\eta)\xi\big\|_{s,q} 
            &\leq C\, 2^l l^{M+s} \|\eta\|_{s,q_1} \|\eta\|_{C^1}^{l-2}
            \Big(\|\eta\|_{C^1}\|\xi\|_{s,q_2} + \|\eta\|_{C^{s+1}}\|\xi\|_{q_2}\Big), 
            \quad l \geq 2,
    \end{align*}
    and for the remainder term,
    \begin{align*}
        \big\||D|^{-1}B_L^R(\eta)\xi\big\|_{s,q} 
            &\leq C\, 2^L L^{M+s} \|\eta\|_{s,q_1} \|\eta\|_{C^1}^{L-2}
            \Big(\|\eta\|_{C^1}\|\xi\|_{s,q_2} + \|\eta\|_{C^{s+1}}\|\xi\|_{q_2}\Big), 
            \quad L \geq 2.
    \end{align*}
    Furthermore, the full operator satisfies
    \[
        \big\||D|^{-1}B(\eta)\xi\big\|_{s,q}  
            \leq C\|\eta\|_{s,q_1}\Big(\|\xi\|_{q_2} + \|\eta\|_{C^1}\|\xi\|_{s,q_2} + \|\eta\|_{C^{s+1}}\|\xi\|_{q_2}\Big).
    \]
\end{theorem}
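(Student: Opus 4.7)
The plan is to reduce the estimate for $|D|^{-1}B_l(\eta)\xi$ to the two building-block lemmas \ref{LemmaA3} and \ref{LemmaA4} after peeling off the Fourier multiplier $(1+e^{-2|D|})^{-1}$ and expanding the polynomials $p_l(P)$, $q_l(Q,\kappa_h)$ into finite sums of monomials of the $S_l$ or $S_{l,*}$ form.

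\textbf{Step 1 (Multiplier removal and polynomial expansion).} Write
\[
|D|^{-1}B_l(\eta)\xi = -\tfrac{1}{2\pi}(1+e^{-2|D|})^{-1}T_l\xi,\quad T_l\xi({\bf x}') := \int_{\R^2}\!\Big(\tfrac{p_l(P)}{|{\bf x}'-{\bf y}'|} + \tfrac{q_l(Q,\kappa_h)}{(|{\bf x}'-{\bf y}'|^2+4)^{1/2}}\Big)\xi({\bf y}')\,d{\bf y}'.
\]
By Remark \ref{LemmaA1}, $(1+e^{-2|D|})^{-1}$ is a Mikhlin multiplier, hence bounded on $W^{s,q}(\R^2)$ for $1<q<\infty$, so it suffices to estimate $\|T_l\xi\|_{s,q}$. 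The Taylor expansion $(1+\sigma)^{-1/2}-1=-\tfrac12\sigma+\tfrac38\sigma^2-\dots$ applied at $\sigma=P^2$ and at $\sigma=\kappa_hQ+Q^2$ rewrites $p_l(P)=\sum_\alpha c_\alpha P^l$ and $q_l(Q,\kappa_h)=\sum_\beta d_\beta Q^{l-r_\beta}\kappa_h^{r_\beta}$, with coefficients of size $O(1)$ and with at most $2^l$ monomials (multinomial counting). Each monomial inserted into $T_l$ is precisely of the $S_l$ type (with Calder\'on--Zygmund kernel $1/|{\bf x}'-{\bf y}'|$) or the $S_{l,*}$ type (with $K_*$ as required, since $l+\rho+r>2$ is verified by inspection).

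\textbf{Step 2 (Building blocks with a H\"older splitting).} Apply Lemma \ref{LemmaA3} to each $S_l$-monomial and Lemma \ref{LemmaA4} to each $S_{l,*}$-monomial. As stated, these place $\xi$ in $L^q$ and produce a $\|\eta\|_{C^{s+1}}$ factor. To obtain the target form with $\|\eta\|_{s,q_1}$ and $\xi \in L^{q_2}$, I single out one $\eta$-factor from the product $\prod P(\eta_j)$ (respectively $\prod Q(\eta_j)$) and re-run the proofs of those lemmas measuring this distinguished factor in $W^{s,q_1}$ while the remaining $\eta$'s stay in $C^1$ and $\xi$ sits in $L^{q_2}$, invoking H\"older with $1/q=1/q_1+1/q_2$ exactly where the singular-integral boundedness is used. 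This yields, for $l\geq 2$,
\[
\|T_l\xi\|_{s,q}\leq C\,2^l l^{M+s}\|\eta\|_{s,q_1}\|\eta\|_{C^1}^{l-2}\bigl(\|\eta\|_{C^1}\|\xi\|_{s,q_2}+\|\eta\|_{C^{s+1}}\|\xi\|_{q_2}\bigr).
\]
For $l=1$ only the $q_1(Q,\kappa_h)$ contribution is present (since $p_1\equiv 0$), and Lemma \ref{LemmaA4} gives directly $\|T_1\xi\|_{s,q}\leq C\|\eta\|_{s,q_1}\|\xi\|_{q_2}$. The remainder $B_{L+1}^R$ is treated identically, using that $p_{L+1}^R$, $q_{L+1}^R$ are controlled pointwise by the Taylor remainder of order $\mathcal O(\eta^{L+1})$. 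Summing over $l\geq 1$ and bounding $\sum_l(2\|\eta\|_{C^1})^l$ (which converges by the hypothesis $\|\eta\|_{C^1}<\tfrac14$) assembles the composite estimate for $B(\eta)=\sum_l B_l(\eta)$.

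\textbf{Main obstacle.} The substantive technical point is the H\"older-modified version of Lemmas \ref{LemmaA3} and \ref{LemmaA4}. One must revisit the Calder\'on--Zygmund / Littlewood--Paley proof from \cite{Craig2, Schanz} and carefully track how the $\partial_x^s$ derivatives distribute across the product $P(\eta_1)\cdots P(\eta_l)\xi$ \emph{before} the $L^{q_1}/L^{q_2}$ splitting is performed, so that the resulting bound retains the polynomial growth $l^{M+s}$ and is not inflated to $l^{2(M+s)}$; this is what makes the summation over $l$ compatible with the $2^l$ coming from the multinomial count. A secondary but delicate point is that when the distinguished $\eta$-factor carries some of the $s$ derivatives, one must use Leibniz before H\"older and regroup the resulting terms; a uniform bound on the regrouped combinatorics is what forces the smallness hypothesis $\|\eta\|_{C^1}<1/4$ rather than merely $\|\eta\|_{C^1}<1/2$.
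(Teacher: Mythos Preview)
Your overall reduction to Lemmas \ref{LemmaA3} and \ref{LemmaA4} after stripping the multiplier $(1+e^{-2|D|})^{-1}$ is the right architecture, but there is a concrete gap in Step~1 and an unnecessary detour in Step~2.

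The gap: you assert that the $p_l(P)/|{\bf x}'-{\bf y}'|$ integrand is ``precisely of the $S_l$ type with Calder\'on--Zygmund kernel $1/|{\bf x}'-{\bf y}'|$''. In $\R^2$ the kernel $|{\bf x}'-{\bf y}'|^{-1}$ is homogeneous of degree $-1$, not $-n=-2$; it is a Riesz potential kernel, not a Calder\'on--Zygmund kernel, so Lemma \ref{LemmaA3} does not apply as stated. The paper fixes this by an algebraic manipulation you have not performed: since $p_l(P)=a_lP^l$ with $P=(\eta({\bf x}')-\eta({\bf y}'))/|{\bf x}'-{\bf y}'|$, one peels off a single factor,
\[
\frac{p_l(P)}{|{\bf x}'-{\bf y}'|}
= a_l\,\frac{P^{l-1}}{|{\bf x}'-{\bf y}'|^2}\bigl(\eta({\bf x}')-\eta({\bf y}')\bigr),
\]
so that the kernel becomes $|{\bf x}'-{\bf y}'|^{-2}$ (the correct homogeneity) and the contribution splits as $a_l\bigl(\eta\, S_{l-1}\xi - S_{l-1}(\eta\xi)\bigr)$. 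The analogous move for $q_l(Q,\kappa_h)$ uses $Q^l=Q^{l-1}(\eta({\bf x}')+\eta({\bf y}'))/(|{\bf x}'-{\bf y}'|^2+4)^{1/2}$.

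This single factorisation also dissolves what you call the ``main obstacle''. Once one $\eta$ has been extracted, the H\"older splitting $1/q=1/q_1+1/q_2$ is just the Leibniz/product estimate $\|\eta\,S_{l-1}\xi\|_{s,q}\le C\|\eta\|_{s,q_1}\|S_{l-1}\xi\|_{s,q_2}$ (and the analogous bound for $S_{l-1}(\eta\xi)$), after which Lemmas \ref{LemmaA3} and \ref{LemmaA4} are invoked \emph{verbatim} with exponent $q_2$ or $q$; there is no need to reopen their proofs or worry about doubled combinatorial growth. The $2^l$ factor in the statement comes not from a multinomial count but from the explicit size $|b_{l-j}|\lesssim 2^{l-j}$ of the Taylor coefficients in $q_l$, together with Stirling's bound $|a_l|\le C$. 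For $l=1$ the paper bypasses Lemma \ref{LemmaA4} altogether and estimates the convolution with $M({\bf x}')=(|{\bf x}'|^2+4)^{-3/2}$ directly.
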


\begin{proof}
Recall the definitions of the operators:
\begin{align*}
    |D|^{-1}B(\eta)\xi(\mathbf{x}') 
    &= -(1 + e^{-2|D|})^{-1} \frac{1}{2\pi} \int_{\mathbb{R}^2} \frac{1}{|\mathbf{x}' - \mathbf{y}'|} \left( \frac{1}{(1 + P^2)^{1/2}} - 1 \right) \xi(\mathbf{y}')  d\mathbf{y}' \\
    &\quad - (1 + e^{-2|D|})^{-1} \frac{1}{2\pi} \int_{\mathbb{R}^2} \frac{1}{(|\mathbf{x}' - \mathbf{y}'|^2 + 4)^{1/2}} \left( \frac{1}{(1 + \kappa_h Q + Q^2)^{1/2}} - 1 \right) \xi(\mathbf{y}')  d\mathbf{y}', \\[2ex]
    |D|^{-1}B_1(\eta)\xi(\mathbf{x}') 
    &= (1 + e^{-2|D|})^{-1} \frac{1}{\pi} \int_{\mathbb{R}^2} \frac{\eta(\mathbf{x}') + \eta(\mathbf{y}')}{(|\mathbf{x}' - \mathbf{y}'|^2 + 4)^{3/2}} \xi(\mathbf{y}')  d\mathbf{y}', \\[2ex]
    |D|^{-1}B_l(\eta)\xi(\mathbf{x}') 
    &= -\big(1 + e^{-2|D|}\big)^{-1} \frac{1}{2\pi} \int_{\mathbb{R}^2} \left( \frac{p_l(P)}{|\mathbf{x}' - \mathbf{y}'|} + \frac{q_l(Q, \kappa_h)}{(|\mathbf{x}' - \mathbf{y}'|^2 + 4)^{1/2}} \right) \xi(\mathbf{y}')  d\mathbf{y}' \quad \text{for} \quad l \geq 2,
\end{align*}
where $P = P(\eta)$ and $Q = Q(\eta)$ are auxiliary functions depending on $\eta$. The series expansions are given by
$$
	\frac{1}{(1+P^2)^{1/2}}-1=\sum_{\substack{l \geq 2 \\ l \text{ even}}}^{\infty}p_{l}(P),\quad p_l(P)=a_lP^{l},\quad a_{l}=\Big(\frac{-1}{4}\Big)^{l/2}\binom{l}{l/2},
	$$
	and
	\begin{align*}
		&\frac{1}{(1+\kappa_h Q+Q^2)^{1/2}}-1=\sum_{l=1}^{\infty}q_{l}(Q,\kappa_h),\\
		&q_{l}(Q,\kappa_h)=\sum_{j=0}^{[l/2]}b_{l-j}\kappa_h^{l-2j}Q^l,    	\quad b_{l-j}=\Big(\frac{-1}{4}\Big)^{l-j}\binom{2l-2j}{l-j}\binom{l-j}{j}.
	\end{align*}
By Remark \ref{LemmaA1}, it suffices to analyze the following two types of integral operators in $B_l$.

\vspace{0.5em}
\noindent\textbf{Case 1: Operator $B_1(\eta)$.} 
Consider the convolution kernel $M(\mathbf{x}') = (|\mathbf{x}'|^2 + 4)^{-3/2}$. We estimate
\begin{align*}
    \big\||D|^{-1}B_1(\eta)\xi\big\|_{s,q} 
    &\leq C \big\| \eta (M * \xi) + M * (\eta \xi) \big\|_{s,q} \\
    &\leq C \|\eta\|_{s,q_1} \|M * \xi\|_{s,q_2} + \|M * (\eta\xi)\|_{s,q} \\
    &\leq C \big( \|\eta\|_{s,q_1}\|\xi\|_{q_2} + \|\eta\|_{q_1}\|\xi\|_{q_2} \big) \\
    &\leq C \|\eta\|_{s,q_1} \|\xi\|_{q_2},
\end{align*}
where the last inequality follows from the embedding $\|\eta\|_{q_1} \leq C \|\eta\|_{s,q_1}$ since $s \geq 0$.

\vspace{0.5em}
\noindent\textbf{Case 2: Operators $B_l(\eta)$ for $l \geq 2$.}
We decompose the operator into two components:
\begin{align*}
    P_l(\eta)\xi(\mathbf{x}') 
    &:= \int_{\mathbb{R}^2} \frac{1}{|\mathbf{x}' - \mathbf{y}'|} p_l(P) \xi(\mathbf{y}')  d\mathbf{y}' \\
    &= a_l \eta(\mathbf{x}') \int_{\mathbb{R}^2} \frac{1}{|\mathbf{x}' - \mathbf{y}'|^2} P^{l-1} \xi(\mathbf{y}')  d\mathbf{y}'  - a_l \int_{\mathbb{R}^2} \frac{1}{|\mathbf{x}' - \mathbf{y}'|^2} P^{l-1} \eta(\mathbf{y}')\xi(\mathbf{y}')  d\mathbf{y}'.
\end{align*}
This corresponds to the sum $a_l\eta S_{l-1}\xi + a_l S_{l-1}(\eta\xi)$ with kernel $|\mathbf{x}' - \mathbf{y}'|^{-2}$. Applying Lemma \ref{LemmaA3} and Stirling's approximation $\binom{2n}{n} \sim 2^{2n}/\sqrt{\pi n}$ yields
\begin{align*}
    \|P_l(\eta)\xi\|_{s,q} 
    &\leq \|\eta S_{l-1}\xi\|_{s,q} + \|S_{l-1}(\eta\xi)\|_{s,q} \\
    &\leq C \|\eta\|_{s,q_1} \|S_{l-1}\xi\|_{s,q_2} + C l^{M+s} \|\eta\|_{C^1}^{l-2} \Big( \|\eta\|_{C^1} \|\eta \xi\|_{s,q} + \|\eta\|_{C^{s+1}} \|\eta \xi\|_q \Big) \\
    &\leq C l^{M+s} \|\eta\|_{s,q_1} \|\eta\|_{C^1}^{l-2} \Big( \|\eta\|_{C^1} \|\xi\|_{s,q_2} + \|\eta\|_{C^{s+1}} \|\xi\|_{q_2} \Big).
\end{align*}
Similarly, for the $Q$-component:
\begin{align*}
    Q_{l,*}(\eta)\xi(\mathbf{x}') 
    &:= \int_{\mathbb{R}^2} \frac{1}{(|\mathbf{x}' - \mathbf{y}'|^2 + 4)^{1/2}} q_l(Q, \kappa_h) \xi(\mathbf{y}')  d\mathbf{y}' \\
    &= \sum_{j=0}^{\lfloor l/2 \rfloor} b_{l-j} \eta(\mathbf{x}') \int_{\mathbb{R}^2} \frac{1}{|\mathbf{x}' - \mathbf{y}'|^2 + 4} \kappa_h^{l-2j} Q^{l-1} \xi(\mathbf{y}')  d\mathbf{y}' \\
    &\quad + \sum_{j=0}^{\lfloor l/2 \rfloor} b_{l-j} \int_{\mathbb{R}^2} \frac{1}{|\mathbf{x}' - \mathbf{y}'|^2 + 4} \kappa_h^{l-2j} Q^{l-1} \eta(\mathbf{y}')\xi(\mathbf{y}')  d\mathbf{y}'.
\end{align*}
This is a sum of operators $\eta S_{l-1,*}\xi$ and $S_{l-1,*}(\eta\xi)$. Applying Lemma \ref{LemmaA4} for $l \geq 2$ gives
\begin{align*}
    \|Q_{l,*}(\eta) \xi\|_{s,q} 
    &\leq 2^l \Big( \|\eta S_{l-1,*}\xi\|_{s,q} + \|S_{l-1,*}(\eta\xi)\|_{s,q} \Big) \\
    &\leq C 2^l l^s \|\eta\|_{L^{\infty}}^{l-2} \|\eta\|_{C^s} \|\eta\|_{s,q_1} \|\xi\|_{q_2}.
\end{align*}
Combining both components:
\begin{align*}
    \big\||D|^{-1}B_l(\eta)\xi\big\|_{s,q} 
    &\leq \|P_l(\eta)\xi\|_{s,q} + \|Q_{l,*}(\eta)\xi\|_{s,q} \\
    &\leq C 2^l l^{M+s} \|\eta\|_{s,q_1} \|\eta\|_{C^1}^{l-2} \Big( \|\eta\|_{C^1} \|\xi\|_{s,q_2} + \|\eta\|_{C^{s+1}} \|\xi\|_{q_2} \Big).
\end{align*}
\vspace{0.5em}
\noindent\textbf{Remainder estimate for $B_L^R(\eta)$.} 
For the remainder term, we have
\begin{align*}
    \big\||D|^{-1}B_L^R(\eta)\xi\big\|_{s,q} 
    &\leq \sum_{j=L+1}^{\infty} \big\||D|^{-1}B_j(\eta)\xi\big\|_{s,q} \\
    &\leq C 2^L L^{M+s} \|\eta\|_{s,q_1} \|\eta\|_{C^1}^{L-2} \Big( \|\eta\|_{C^1} \|\xi\|_{s,q_2} + \|\eta\|_{C^{s+1}} \|\xi\|_{q_2} \Big),
\end{align*}
where the summation converges under the condition $\|\eta\|_{C^1} < 1/4$.

\vspace{0.5em}
\noindent This completes the proof of Theorem \ref{ThmA2}.
\end{proof}

In the proof of the above theorem, we establish the following corollary.
\begin{corollary}\label{cor3.1}
    Let $1 < q < \infty$, $\frac{1}{q} = \frac{1}{q_1} + \frac{1}{q_2}$,  $s \in \mathbb{Z}$ with $s \geq 0$, and $\|\eta_i\|_{C^1} < \frac{1}{4}$ for $i=1,2$. For $l \geq 1$, $L \geq 1$, and $\eta_t = \eta_1 + t(\eta_2 - \eta_1)$ with $t \in [0,1]$, the operators $B_l$ and $B_L^R$ satisfy the estimates
    \begin{align*}
        \left\||D|^{-1}\frac{\mathrm{d}}{\mathrm{d}t}B_1(\eta_t)\xi\right\|_{s,q} 
        &\le C\|\eta_2 - \eta_1\|_{s,q_1}\|\xi\|_{q_2}, \\
        \left\||D|^{-1}\frac{\mathrm{d}}{\mathrm{d}t} B_l(\eta_t)\xi \right\|_{s,q} 
        &\le C2^l l^{M+s+1}\|\eta_2 - \eta_1\|_{s,q_1}\|\eta_t\|_{C^1}^{l-2} \\
        &\quad \times \big(\|\eta_t\|_{C^1}\|\xi\|_{s,q_2} + \|\eta_t\|_{C^{s+1}}\|\xi\|_{q_2}\big), \quad  l \geq 2,
    \end{align*}
    and
    \begin{align*}
        \left\||D|^{-1}\frac{\mathrm{d}}{\mathrm{d}t}B_L^R(\eta_t)\xi\right\|_{s,q} 
        &\le C2^L L^{M+s+1}\|\eta_2 - \eta_1\|_{s,q_1}\|\eta_t\|_{C^1}^{L-2} \\
        &\quad \times \big(\|\eta_t\|_{C^1}\|\xi\|_{s,q_2} + \|\eta_t\|_{C^{s+1}}\|\xi\|_{q_2}\big), \quad  L \geq 2.
    \end{align*}
    Moreover, 
    \begin{equation*}
        \left\||D|^{-1}\frac{\mathrm{d}}{\mathrm{d}t}B(\eta_t)\xi\right\|_{s,q} 
        \leq C\|\eta_2 - \eta_1\|_{s,q_1}\Big(\|\xi\|_{q_2} + \|\eta_t\|_{C^1}\|\xi\|_{s,q_2} + \|\eta_t\|_{C^{s+1}}\|\xi\|_{q_2}\Big).
    \end{equation*}
\end{corollary}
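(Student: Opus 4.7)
The plan is to exploit the linearity of the auxiliary quantities
$P(\eta)=(\eta(\mathbf{x}')-\eta(\mathbf{y}'))/|\mathbf{x}'-\mathbf{y}'|$ and
$Q(\eta)=(\eta(\mathbf{x}')+\eta(\mathbf{y}'))/(|\mathbf{x}'-\mathbf{y}'|^2+4)^{1/2}$
in $\eta$, which gives $\tfrac{d}{dt}P(\eta_t)=P(\eta_2-\eta_1)$ and $\tfrac{d}{dt}Q(\eta_t)=Q(\eta_2-\eta_1)$, independently of $t$. Since the polynomials $p_l(P)=a_l P^l$ and $q_l(Q,\kappa_h)=\sum_{j} b_{l-j}\kappa_h^{l-2j} Q^l$ are homogeneous of degree $l$ in $\eta$, the product rule produces a multilinear identity of the schematic form
\[
\frac{d}{dt} B_l(\eta_t)\xi \;=\; \sum_{k=1}^{l} B_l^{(k)}(\eta_t,\ldots,\eta_2-\eta_1,\ldots,\eta_t)\,\xi,
\]
i.e.\ a sum of exactly $l$ terms, each of which carries the same multilinear kernel structure as $B_l$ but with one $\eta$-slot occupied by $\eta_2-\eta_1$ and the remaining $l-1$ slots by $\eta_t$.

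For $l=1$ the operator is already linear in $\eta$, so $\tfrac{d}{dt}B_1(\eta_t)=B_1(\eta_2-\eta_1)$ and the first estimate is an immediate consequence of the corresponding $B_1$-bound in Theorem \ref{ThmA2}. For $l\ge 2$ I would reuse the decomposition
$P_l(\eta)\xi = a_l\,\eta\, S_{l-1}(\eta,\ldots,\eta)\xi - a_l\, S_{l-1}(\eta,\ldots,\eta)(\eta\xi)$
from the proof of Theorem \ref{ThmA2}, together with the analogous splitting of $Q_{l,*}$. After differentiating in $t$, each of the $l$ resulting multilinear terms is bounded by Lemma \ref{LemmaA3} (for the Calder\'on--Zygmund part with kernel $|\mathbf{x}'-\mathbf{y}'|^{-2}$) and Lemma \ref{LemmaA4} (for the $\kappa_h$-decaying part), with $\|\eta_2-\eta_1\|_{s,q_1}$ now playing the role of $\|\eta\|_{s,q_1}$ and the remaining factors $\|\eta_t\|_{C^1}^{l-2}$, $\|\eta_t\|_{C^1}$, $\|\eta_t\|_{C^{s+1}}$ arising exactly as in Theorem \ref{ThmA2}. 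The extra combinatorial factor $l$ produced by the product rule is precisely what replaces the prefactor $l^{M+s}$ in Theorem \ref{ThmA2} by $l^{M+s+1}$.

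For the remainder $B_L^R(\eta_t)$ I would differentiate termwise in the tail series; the interchange of $d/dt$ with $\sum_{j\ge L+1}$ is justified by absolute convergence, since the factor $2^l\|\eta_t\|_{C^1}^{l-2}$ forms a summable geometric-type series under $\|\eta_t\|_{C^1}<1/4$. Applying the per-term estimate already proved and summing yields the asserted remainder bound. The full inequality on $\tfrac{d}{dt}B(\eta_t)$ then follows by combining the $l=1$, $2\le l\le L$, and remainder contributions, exactly as in Theorem \ref{ThmA2}.

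I do not expect a genuinely new analytic obstacle: the entire statement reduces to the estimates already established in Theorem \ref{ThmA2} by a multilinear product-rule argument. The main care needed is bookkeeping, specifically verifying that the product rule produces exactly $l$ structurally identical terms (so that Lemmas \ref{LemmaA3}--\ref{LemmaA4} can be invoked without any adjustment to the exponents on $\|\eta_t\|_{C^1}$ and $\|\eta_t\|_{C^{s+1}}$) and that the substitution $\eta\mapsto\eta_2-\eta_1$ occurs in only one slot at a time, which is what forces $\|\eta_2-\eta_1\|_{s,q_1}$ to enter linearly while the remaining $\eta_t$-norms keep the same powers as in the parent theorem.
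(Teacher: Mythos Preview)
Your proposal is correct and follows essentially the same approach as the paper: the paper also computes $\tfrac{d}{dt}B_1(\eta_t)=B_1(\eta_2-\eta_1)$ explicitly and then, for $l\ge 2$ and the remainder, simply states that the estimates ``follow by applying the same techniques used in the proof of Theorem \ref{ThmA2}, where we differentiate the operator series and control the resulting expressions.'' Your write-up is in fact more detailed than the paper's, as you spell out the multilinear product-rule mechanism and the origin of the extra factor of $l$ in the exponent.
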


\begin{proof}
    We first establish the estimate for $B_1$. Direct calculation shows that
    \begin{align*}
        |D|^{-1}\frac{\mathrm{d}}{\mathrm{d}t}B_1(\eta_t)\xi(\mathbf{x}')
        = (1 + e^{-2|D|})^{-1} \frac{1}{\pi} \int_{\mathbb{R}^2} \frac{(\eta_2 - \eta_1)(\mathbf{x}') + (\eta_2 - \eta_1)(\mathbf{y}')}
        {(|\mathbf{x}' - \mathbf{y}'|^2 + 4)^{3/2}} \xi(\mathbf{y}')  \mathrm{d}\mathbf{y}'.
    \end{align*}
    The kernel $\mathcal{K}(\mathbf{x}', \mathbf{y}') = (|\mathbf{x}' - \mathbf{y}'|^2 + 4)^{-3/2}$ defines a bounded integral operator on the relevant function spaces. Combining this with the boundedness of the Fourier multiplier $(1 + e^{-2|D|})^{-1}$ on Sobolev spaces, we obtain
    \begin{equation*}
        \left\||D|^{-1}\frac{\mathrm{d}}{\mathrm{d}t}B_1(\eta_t)\xi \right\|_{s,q} \leq C\|\eta_2 - \eta_1\|_{s,q_1}\|\xi\|_{q_2}.
    \end{equation*}

    For $l \geq 2$ and $L \geq 2$, the estimates for $B_l$ and $B_L^R$ follow by applying the same techniques used in the proof of Theorem \ref{ThmA2}, where we differentiate the operator series and control the resulting expressions using the norm bounds for $\eta_t$ and $\xi$. 
\end{proof}

For the operator $B_l$, we establish the following estimates.
\begin{theorem}\label{ThmA2'}
    Let $1 < q < \infty$, $\frac{1}{q} = \frac{1}{q_1} + \frac{1}{q_2}$, $s \in \mathbb{Z}$ with $s \geq 0$, and $\|\eta\|_{C^1} < \frac{1}{4}$. For $l \geq 1$ and $L \geq 1$, the following estimates hold:
    \begin{align*}
        \|B_1(\eta)\xi\|_{s,q} 
        &\leq C\left( \|\partial_x\eta\|_{s,q_1}\|\xi\|_{s,q_2} + \|\eta\|_{s,q_1}\|\partial_x\xi\|_{s,q_2} \right), \\
        \|B_l(\eta)\xi\|_{s,q} 
        &\leq C2^l l^{M+s+1} \|\eta\|_{C^1}^{l-2} \|\eta\|_{C^{s+1}} 
        \left( \|\partial_x\eta\|_{s,q_1}\|\xi\|_{s,q_2} + \|\eta\|_{s,q_1}\|\partial_x\xi\|_{s,q_2} \right), \quad l \geq 2, \\
        \|B_L^R(\eta)\xi\|_{s,q} 
        &\leq C2^L L^{M+s+1} \|\eta\|_{C^1}^{L-2} \|\eta\|_{C^{s+1}} 
        \left( \|\partial_x\eta\|_{s,q_1}\|\xi\|_{s,q_2} + \|\eta\|_{s,q_1}\|\partial_x\xi\|_{s,q_2} \right), \quad L \geq 2.
    \end{align*}
    Moreover, for the full operator $B(\eta) = \sum_{l=1}^\infty B_l(\eta)$, we have
    \begin{equation*}
        \|B(\eta)\xi\|_{s,q} 
        \leq C\left(1 + \|\eta\|_{C^{s+1}}\right)
        \left( \|\partial_x\eta\|_{s,q_1}\|\xi\|_{s,q_2} + \|\eta\|_{s,q_1}\|\partial_x\xi\|_{s,q_2} \right).
    \end{equation*}
\end{theorem}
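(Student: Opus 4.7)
The plan is to reduce Theorem \ref{ThmA2'} to the setting of Theorem \ref{ThmA2} by absorbing the extra factor of $|D|$ into a first-order horizontal derivative that ultimately lands on either $\eta$ or $\xi$. Concretely, using the decomposition $|D|=-\sum_{j}R_{j}\partial_{x'_{j}}$ with $R_{j}$ the Riesz transforms (bounded on $W^{s,q}(\mathbb{R}^{2})$ by Remark \ref{LemmaA1}) together with the $W^{s,q}$-boundedness of $(1+e^{-2|D|})^{-1}$, the estimate of $\|B_{l}(\eta)\xi\|_{s,q}$ is reduced to controlling the Sobolev norm of $\partial_{x'_{j}}$ applied to the integral operator
$$\mathcal{J}_{l}\xi(\mathbf{x}')=\frac{1}{2\pi}\int_{\mathbb{R}^{2}}\Big(\frac{p_{l}(P)}{|\mathbf{x}'-\mathbf{y}'|}+\frac{q_{l}(Q,\kappa_{h})}{(|\mathbf{x}'-\mathbf{y}'|^{2}+4)^{1/2}}\Big)\xi(\mathbf{y}')\,d\mathbf{y}'.$$

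For the base case $B_{1}$, the integral reduces to $\eta(\mathbf{x}')(M*\xi)+M*(\eta\xi)$ with $M(\mathbf{x}')=(|\mathbf{x}'|^{2}+4)^{-3/2}\in L^{1}$. Applying $\partial_{x'_{j}}$, using $\partial(M*f)=M*\partial f$, the Leibniz rule, and the Sobolev product inequality $\|fg\|_{s,q}\le C\|f\|_{s,q_{1}}\|g\|_{s,q_{2}}$, gives the first claimed bound directly. For $l\ge 2$, differentiating $\mathcal{J}_{l}\xi$ produces two classes of terms: (i) those where $\partial_{x'_{j}}$ falls on the singular kernel $|\mathbf{x}'-\mathbf{y}'|^{-1}$ or $(|\mathbf{x}'-\mathbf{y}'|^{2}+4)^{-1/2}$, and (ii) those where $\partial_{x'_{j}}$ falls on $p_{l}(P)$ or $q_{l}(Q,\kappa_{h})$. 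Class (i) is handled by the identity $\partial_{x'_{j}}K(\mathbf{x}'-\mathbf{y}')=-\partial_{y'_{j}}K(\mathbf{x}'-\mathbf{y}')$ followed by integration by parts in $\mathbf{y}'$: the derivative is then transferred either onto $\xi$ (yielding a factor of $\partial_{x'_{j}}\xi$) or onto the $\eta$-dependent integrand (yielding a factor of $\partial\eta(\mathbf{y}')$). For class (ii), the chain rule yields factors of $\partial\eta(\mathbf{x}')$ multiplied by a lower-order $P^{l-1}$- or $Q^{l-1}$-type operator, plus secondary Calder\'on--Zygmund corrections that regroup into an $S_{l-1}$ or $S_{l-1,*}$ form. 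In every case, the resulting operator has the structure of $S_{l-1}$ or $S_{l-1,*}$ of Lemmas \ref{LemmaA3} and \ref{LemmaA4}, with one $\eta$-factor replaced by $\partial\eta$, or $\xi$ replaced by $\partial_{x'_{j}}\xi$.

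Applying Lemmas \ref{LemmaA3} and \ref{LemmaA4} term by term, together with the product estimate, then yields the stated bound on $B_{l}(\eta)\xi$, with the combinatorial prefactors $2^{l}l^{M+s+1}$ arising from the polynomial coefficients $a_{l},b_{l-j}$ (via Stirling) plus an extra factor $l$ from differentiating $P^{l}$ or $Q^{l}$, exactly as in the proof of Theorem \ref{ThmA2}. The remainder term $B_{L}^{R}$ is then handled by summing the resulting geometric series $\sum_{j\ge L+1}$ under the smallness assumption $\|\eta\|_{C^{1}}<\tfrac{1}{4}$, and combining these estimates for $l\ge 1$ gives the bound on the full operator $B(\eta)$.

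The main technical obstacle lies in the careful bookkeeping for class (ii). Differentiating $P=(\eta(\mathbf{x}')-\eta(\mathbf{y}'))/|\mathbf{x}'-\mathbf{y}'|$ produces $\partial\eta(\mathbf{x}')/|\mathbf{x}'-\mathbf{y}'|$ plus a secondary singular contribution of the same homogeneity as a $P$-factor times a Calder\'on--Zygmund kernel; this latter piece must be regrouped back into an $S_{l-1}$-operator form, and analogously for $Q$ and $\kappa_{h}$. This regrouping must be arranged so that exactly one horizontal derivative falls on one of $\eta$ or $\xi$ in the final bound (never two derivatives on $\eta$), and so that the $\eta$-norms appearing on the right-hand side are only $\|\eta\|_{s,q_{1}}$, $\|\partial_{x}\eta\|_{s,q_{1}}$, $\|\eta\|_{C^{1}}$, and $\|\eta\|_{C^{s+1}}$, matching exactly the statement of Theorem \ref{ThmA2'}.
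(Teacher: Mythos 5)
Your proposal is correct and follows essentially the same route as the paper: absorb $|D|$ into a first-order derivative via Riesz transforms (bounded by Remark \ref{LemmaA1}), differentiate the integral operators so that exactly one derivative lands on $\eta$ or $\xi$, regroup the resulting terms into $S_{l-1}$ and $S_{l-1,*}$ forms, and apply Lemmas \ref{LemmaA3} and \ref{LemmaA4} before summing the series geometrically under $\|\eta\|_{C^1}<\tfrac14$. The paper's proof carries out precisely this decomposition (its operators $\mathcal{T}_1,\dots,\mathcal{T}_4$ and $\mathcal{U}_1,\dots,\mathcal{U}_4$ are the regrouped terms you describe), including the extra factor of $l$ in the prefactor from differentiating $P^{l}$ and $Q^{l}$.
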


\begin{proof}
Recall the operator expressions:
\begin{equation*}
    \begin{aligned}
        B_1(\eta)\xi(\mathbf{x}') 
        &= |D|(1 + e^{-2|D|})^{-1} \frac{1}{\pi} 
        \int_{\mathbb{R}^2} \frac{\eta(\mathbf{x}') + \eta(\mathbf{y}')}
        {(|\mathbf{x}' - \mathbf{y}'|^2 + 4)^{3/2}} \xi(\mathbf{y}')  \mathrm{d}\mathbf{y}', \\
        B_l(\eta)\xi(\mathbf{x}') 
        &= -|D|(1 + e^{-2|D|})^{-1} \frac{1}{2\pi} 
        \int_{\mathbb{R}^2} \left( \frac{p_l(P)}{|\mathbf{x}' - \mathbf{y}'|} 
        + \frac{q_l(Q, \kappa_h)}{(|\mathbf{x}' - \mathbf{y}'|^2 + 4)^{1/2}} \right) \xi(\mathbf{y}')  \mathrm{d}\mathbf{y}', 
    \end{aligned}
\end{equation*}
where $p_l(P) = a_l P^{l}$ and $q_l(Q, \kappa_h) = \sum_{j=0}^{\lfloor l/2 \rfloor} b_{l-j} \kappa_h^{l-2j} Q^l$.

\vspace{0.5em}
\noindent\textbf{Part 1: Estimate for $B_1(\eta)$} \\
Denote the kernel $M(\mathbf{x'}) = (|\mathbf{x'}|^2 + 4)^{-3/2}$. 
We have
\begin{align*}
    \|B_1(\eta)\xi\|_{s,q} 
    &\leq C \big\| \partial_{\mathbf{x}'} \big[\eta (M * \xi)\big] \big\|_{s,q} 
    + C \big\| \partial_{\mathbf{x}'} \big[M * (\eta \xi)\big] \big\|_{s,q} \\
    &\leq C \|\partial_{\mathbf{x}'} \eta\|_{s,q_1} \|M * \xi\|_{s,q_2} 
    + C \|\eta\|_{s,q_1} \|\partial_{\mathbf{x}'} (M * \xi)\|_{s,q_2} \\
    &\quad + C \|M * \partial_{\mathbf{x}'} (\eta \xi)\|_{s,q} \\
    &\leq C \|\partial_{\mathbf{x}'} \eta\|_{s,q_1} \|\xi\|_{s,q_2} 
    + C \|\eta\|_{s,q_1} \|\partial_{\mathbf{x}'} \xi\|_{s,q_2}.
\end{align*}

\vspace{0.5em}
\noindent\textbf{Part 2: Estimate for $B_l(\eta)$ with $l \geq 2$} \\
We decompose $B_l(\eta)\xi = -|D|(1 + e^{-2|D|})^{-1} \frac{1}{2\pi} \big[ P_l(\eta)\xi + Q_{l,*}(\eta)\xi \big]$ where:
\begin{align*}
    P_l(\eta)\xi(\mathbf{x}') &= \int_{\mathbb{R}^2} |\mathbf{x}' - \mathbf{y}'|^{-1} p_l(P) \xi(\mathbf{y}')  \mathrm{d}\mathbf{y}', \\
    Q_{l,*}(\eta)\xi(\mathbf{x}') &= \int_{\mathbb{R}^2} (|\mathbf{x}' - \mathbf{y}'|^2 + 4)^{-1/2} q_l(Q, \kappa_h) \xi(\mathbf{y}')  \mathrm{d}\mathbf{y}'.
\end{align*}

\noindent\textit{Step 2.1: Derivative of $P_l(\eta)\xi$} \\
Differentiating $P_l(\eta)\xi$ yields
\begin{align*}
    \partial_{\mathbf{x}'} P_l(\eta)\xi(\mathbf{x}')
    &= l a_l \int_{\mathbb{R}^2} |\mathbf{x}' - \mathbf{y}'|^{-2} P^{l-1} 
    \big(\partial_{\mathbf{x}'}\eta(\mathbf{x}') - \partial_{\mathbf{y}'}\eta(\mathbf{y}')\big) \xi(\mathbf{y}')  \mathrm{d}\mathbf{y}' \\
    &\quad + a_l \int_{\mathbb{R}^2} |\mathbf{x}' - \mathbf{y}'|^{-2} P^{l-1} 
    \big(\eta(\mathbf{x}') - \eta(\mathbf{y}')\big) \partial_{\mathbf{y}'}\xi(\mathbf{y}')  \mathrm{d}\mathbf{y}'.
\end{align*}
This decomposes into four operators:
\begin{align*}
    \mathcal{T}_1 &= (\partial_{\mathbf{x}'}\eta) S_{l-1}\xi, \quad
    \mathcal{T}_2 = S_{l-1}((\partial_{\mathbf{y}'}\eta)\xi), \\
    \mathcal{T}_3 &= \eta S_{l-1}(\partial_{\mathbf{y}'}\xi), \quad
    \mathcal{T}_4 = S_{l-1}(\eta \partial_{\mathbf{y}'}\xi).
\end{align*}
Applying Lemma \ref{LemmaA3} and using $\|\eta\|_{C^1} < 1/4$ gives
\begin{align*}
    \|\partial_{\mathbf{x}'} P_l(\eta)\xi\|_{s,q} 
    &\leq Cl \big( \|\mathcal{T}_1\|_{s,q} + \|\mathcal{T}_2\|_{s,q} + \|\mathcal{T}_3\|_{s,q} + \|\mathcal{T}_4\|_{s,q} \big) \\
    &\leq C l^{M+s+1} \|\eta\|_{C^1}^{l-2} \|\eta\|_{C^{s+1}} 
    \Big( \|\partial_{\mathbf{x}'}\eta\|_{s,q_1} \|\xi\|_{s,q_2} + \|\eta\|_{s,q_1} \|\partial_{\mathbf{x}'}\xi\|_{s,q_2} \Big).
\end{align*}

\noindent\textit{Step 2.2: Derivative of $Q_{l,*}(\eta)\xi$} \\
Differentiating $Q_{l,*}(\eta)\xi$ yields
\begin{align*}
    &\partial_{\mathbf{x}'} Q_{l,*}(\eta)\xi(\mathbf{x}')\\
    &= l \sum_{j=0}^{\lfloor l/2 \rfloor} b_{l-j} \int_{\mathbb{R}^2} (|\mathbf{x}' - \mathbf{y}'|^2 + 4)^{-1} \kappa_h^{l-2j} Q^{l-1}  \big(\partial_{\mathbf{x}'}\eta(\mathbf{x}') + \partial_{\mathbf{y}'}\eta(\mathbf{y}')\big) \xi(\mathbf{y}')  \mathrm{d}\mathbf{y}' \\
    &\quad + \sum_{j=0}^{\lfloor l/2 \rfloor} b_{l-j} \int_{\mathbb{R}^2} (|\mathbf{x}' - \mathbf{y}'|^2 + 4)^{-1} \kappa_h^{l-2j} Q^{l-1}  \big(\eta(\mathbf{x}') + \eta(\mathbf{y}')\big) \partial_{\mathbf{y}'}\xi(\mathbf{y}')  \mathrm{d}\mathbf{y}'.
\end{align*}
This decomposes into:
\begin{align*}
    \mathcal{U}_1 &= (\partial_{\mathbf{x}'}\eta) S_{l-1,*}\xi, \quad
    \mathcal{U}_2 = S_{l-1,*}(\partial_{\mathbf{y}'}\eta \xi), \\
    \mathcal{U}_3 &= \eta S_{l-1,*}(\partial_{\mathbf{y}'}\xi), \quad
    \mathcal{U}_4 = S_{l-1,*}(\eta \partial_{\mathbf{y}'}\xi).
\end{align*}
Applying Lemma \ref{LemmaA4} and using $\|\eta\|_{C^1} < 1/4$ gives
\begin{align*}
    \|\partial_{\mathbf{x}'} Q_{l,*}(\eta)\xi\|_{s,q} 
    &\leq C 2^l l \big( \|\mathcal{U}_1\|_{s,q} + \|\mathcal{U}_2\|_{s,q} + \|\mathcal{U}_3\|_{s,q} + \|\mathcal{U}_4\|_{s,q} \big) \\
    &\leq C 2^l l^{s+1} \|\eta\|_{C^1}^{l-2} \|\eta\|_{C^s} 
    \Big( \|\partial_{\mathbf{x}'}\eta\|_{s,q_1} \|\xi\|_{q_2} + \|\eta\|_{s,q_1} \|\partial_{\mathbf{y}'}\xi\|_{q_2} \Big).
\end{align*}

\noindent\textit{Step 2.3: Combining estimates} \\
Since $(1 + e^{-2|D|})^{-1}$ is bounded on $W^{s,q}$, we combine both components:
\begin{align*}
    \|B_l(\eta)\xi\|_{s,q} 
    &\leq C \big( \|\partial_{\mathbf{x}'} P_l(\eta)\xi\|_{s,q} + \|\partial_{\mathbf{x}'} Q_{l,*}(\eta)\xi\|_{s,q} \big) \\
    &\leq C 2^l l^{M+s+1} \|\eta\|_{C^1}^{l-2} \|\eta\|_{C^{s+1}} 
    \Big( \|\partial_{\mathbf{x}'}\eta\|_{s,q_1} \|\xi\|_{s,q_2} + \|\eta\|_{s,q_1} \|\partial_{\mathbf{x}'}\xi\|_{s,q_2} \Big).
\end{align*}

\vspace{0.5em}
\noindent\textbf{Part 3: Estimate for $B_L^R(\eta)$} \\
The bound for $B_L^R(\eta)$ follows identically to $B_l(\eta)$ using the methodology from Theorem \ref{ThmA2}, as the residual terms satisfy analogous kernel estimates.

\vspace{0.5em}
\noindent\textbf{Part 4: Estimate for $B(\eta)$} \\
Summing over $l \geq 1$ and using $\|\eta\|_{C^1} < 1/4$ for convergence:
\begin{align*}
    \|B(\eta)\xi\|_{s,q} 
    &\leq \sum_{l=1}^\infty \|B_l(\eta)\xi\|_{s,q} \\
    &\leq C \big(1 + \|\eta\|_{C^{s+1}}\big) 
    \Big( \|\partial_{\mathbf{x}'}\eta\|_{s,q_1} \|\xi\|_{s,q_2} + \|\eta\|_{s,q_1} \|\partial_{\mathbf{x}'}\xi\|_{s,q_2} \Big),
\end{align*}
where the series converges absolutely since $\|\eta\|_{C^1} < 1/4$ implies geometric decay. 
\end{proof}

\begin{remark} \label{LemmaA5}
    In \cite{Craig2, Schanz}, the estimates for the operators $B_l$ and $B_L^R$ are given as follows:  
    Let $1<q<\infty$ and $0 \leq s \in \mathbb{Z}$. The operators $B_l$ and $B_L^R$ satisfy  
$$
    \left\|B_l \xi\right\|_{s, q} \leq C\|\eta\|_{C^1}^{l-1}\left(\|\eta\|_{C^{s+1}}\|\xi\|_q + \|\eta\|_{C^1}\|\xi\|_{s, q}\right)
$$
    and    
$$
    \|B_L^R \xi\|_{s, q} \leq C\|\eta\|_{C^1}^{L-1}\left(\|\eta\|_{C^{s+1}}\|\xi\|_q + \|\eta\|_{C^1}\|\xi\|_{s, q}\right).
$$
    The lemma established above enables us to refine these estimates for practical applications.  
\end{remark}

In estimating the remainder term $R_3$, we require the following estimates for $B^m$, which follow directly from Remark \ref{LemmaA5}:  
\begin{lemma}\label{LemmaA7}  
    The powers of $B$ satisfy  
$$
    \|B^m(\eta) \xi\|_{s, q} \leq C\left(1+C_0\right)^{m-1}\|\eta\|_{C^1}^{m-1}\big(\|\eta\|_{C^{s+1}}\|\xi\|_q + \|\eta\|_{C^1}\|\xi\|_{s, q}\big).
$$
\end{lemma}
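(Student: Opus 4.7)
The argument will proceed by induction on $m$, using Remark \ref{LemmaA5} as the single-step estimate for $B(\eta)$. The base case $m=1$ is precisely what Remark \ref{LemmaA5} provides: summing the bounds on $B_l$ over $l \geq 1$ under the smallness assumption $\|\eta\|_{C^1} < 1/4$ yields
\begin{equation*}
\|B(\eta)\xi\|_{s,q} \leq C_0\bigl(\|\eta\|_{C^{s+1}}\|\xi\|_q + \|\eta\|_{C^1}\|\xi\|_{s,q}\bigr)
\end{equation*}
for a fixed constant $C_0$, which is the asserted bound with the prefactor $(1+C_0)^{0} = 1$.

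For the inductive step, I would write $B^m(\eta)\xi = B(\eta)\bigl(B^{m-1}(\eta)\xi\bigr)$ and apply the single-step bound with $\psi = B^{m-1}(\eta)\xi$ to obtain
\begin{equation*}
\|B^m(\eta)\xi\|_{s,q} \leq C_0\|\eta\|_{C^{s+1}}\bigl\|B^{m-1}(\eta)\xi\bigr\|_q + C_0\|\eta\|_{C^1}\bigl\|B^{m-1}(\eta)\xi\bigr\|_{s,q}.
\end{equation*}
The second term is controlled immediately by the inductive hypothesis in $W^{s,q}$. For the first term, I would invoke the inductive hypothesis specialized to $s=0$; since $\|\eta\|_{C^{0+1}} = \|\eta\|_{C^1}$, both summands in the source norm collapse to $\|\eta\|_{C^1}\|\xi\|_q$, giving a clean bound of the form $C(1+C_0)^{m-2}\|\eta\|_{C^1}^{m-1}\|\xi\|_q$. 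Factoring out the common prefactor $(1+C_0)^{m-2}\|\eta\|_{C^1}^{m-1}$ from both contributions then reassembles the expression into the required structure $C(1+C_0)^{m-1}\|\eta\|_{C^1}^{m-1}\bigl(\|\eta\|_{C^{s+1}}\|\xi\|_q + \|\eta\|_{C^1}\|\xi\|_{s,q}\bigr)$, where the numerical coefficients coming from $C_0$ acting on both terms are absorbed into a multiplicative factor bounded by $1+C_0$.

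The main obstacle is precisely this bookkeeping of constants: one must ensure that after each iteration of $B$ the growth is captured by a single factor of $(1+C_0)$ rather than by a larger factor such as $2C_0$ that would arise from a naive application of the $L^q \to L^q$ operator norm of $B$. The trick is to use the full strength of Remark \ref{LemmaA5} at each stage — which allows one term to retain the $\|\eta\|_{C^{s+1}}\|\cdot\|_q$ structure while the other retains $\|\eta\|_{C^1}\|\cdot\|_{s,q}$ — and to note that the $L^q$ estimate applied to $B^{m-1}\xi$ contributes only a single additional power of $\|\eta\|_{C^1}$ needed to complete the exponent $\|\eta\|_{C^1}^{m-1}$. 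The smallness constraint $\|\eta\|_{C^1} < 1/4$ underwrites convergence of the operator series defining $B$ and keeps the constants uniform in $m$.
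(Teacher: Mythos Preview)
Your proposal is correct and matches the paper's approach: the paper does not give a detailed proof but simply states that the lemma ``follows directly from Remark \ref{LemmaA5}'' (citing \cite{Craig2, Schanz}), and your induction on $m$---applying the single-step bound to $B(B^{m-1}\xi)$ and separately controlling $\|B^{m-1}\xi\|_q$ via the $s=0$ case---is exactly the natural way to make that derivation explicit. Your flagging of the constant bookkeeping is appropriate; the precise form $(1+C_0)^{m-1}$ depends on how $C_0$ is normalized in \cite{Craig2, Schanz}, but what matters for the applications (convergence of the Neumann series under $\|\eta\|_{C^1}<1/(1+C_0)$) is only the geometric growth in $m$, which your argument delivers.
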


\subsection{Estimates for the operator $A_l$}
We now estimate the operator $A_l$, which is derived from the representation introduced in Section~\ref{secA} and plays a pivotal role in bounding the residual term $R_3$.

First, we state a lemma essential for estimating $A_l$; detailed proofs can be found in \cite{Craig2, Schanz}.
\begin{lemma}\label{LemmaA2}
	For ${\bf x'}\in \mathbb{R}^{2}$, $f(P)$ is an old continuous function of $P$ and $\eta \in$ $C^1\left(\mathbb{R}^2\right)$, we have
	$$
	\int_{\mathbb{R}^2} \frac{{\bf x'}-{\bf y'}}{\left|{\bf x'}-{\bf y'}\right|^2} \cdot \partial_{{\bf y'}}(f(P)) \xi\left({\bf y'}\right) d {\bf y'}=-\int_{\mathbb{R}^2} \frac{{\bf x'}-{\bf y'}}{\left|{\bf x'}-{\bf y'}\right|^{ 2}} f(P) \cdot \partial_{{\bf y'}} \xi\left({\bf y'}\right) d {\bf y'} .
	$$
\end{lemma}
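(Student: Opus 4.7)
The plan is to interpret the identity as an integration by parts formula for the Calder\'on--Zygmund kernel $K(\mathbf{z}) = \mathbf{z}/|\mathbf{z}|^2$, taking care of the singularity at $\mathbf{y}'=\mathbf{x}'$ via the oddness of $f$. First, using the Leibniz rule
\[
\partial_{\mathbf{y}'}\big(f(P)\xi(\mathbf{y}')\big) = \partial_{\mathbf{y}'}(f(P))\,\xi(\mathbf{y}') + f(P)\,\partial_{\mathbf{y}'}\xi(\mathbf{y}'),
\]
the claimed equality is equivalent to
\[
\int_{\mathbb{R}^2} \frac{\mathbf{x}'-\mathbf{y}'}{|\mathbf{x}'-\mathbf{y}'|^2}\cdot \partial_{\mathbf{y}'}\big(f(P)\xi(\mathbf{y}')\big)\, d\mathbf{y}' = 0,
\]
and it suffices to prove this single statement.

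Next I will regularize by excising a small ball $B_\varepsilon(\mathbf{x}')$. On $\mathbb{R}^2\setminus B_\varepsilon(\mathbf{x}')$ the kernel is smooth, and a direct computation gives
\[
\partial_{\mathbf{y}'}\cdot\frac{\mathbf{x}'-\mathbf{y}'}{|\mathbf{x}'-\mathbf{y}'|^2} = -\Delta_{\mathbf{z}}\log|\mathbf{z}|\big|_{\mathbf{z}=\mathbf{x}'-\mathbf{y}'}=0,
\]
for $\mathbf{y}'\neq \mathbf{x}'$. Applying the divergence theorem (with the far-field contribution vanishing, assuming $\xi$ decays appropriately or approximating $\xi$ by compactly supported functions and passing to the limit) leaves only the boundary term on $\partial B_\varepsilon(\mathbf{x}')$. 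Parameterizing $\mathbf{y}' = \mathbf{x}'+\varepsilon\omega$ with $|\omega|=1$, the outward unit normal to the exterior region is $-\omega$, and on this sphere $K(\mathbf{x}'-\mathbf{y}')=-\omega/\varepsilon$, so $\hat{\mathbf{n}}\cdot K = 1/\varepsilon$. With surface element $\varepsilon\,d\omega$, the boundary integral becomes
\[
\int_{|\omega|=1} f\!\left(\tfrac{\eta(\mathbf{x}')-\eta(\mathbf{x}'+\varepsilon\omega)}{\varepsilon}\right)\xi(\mathbf{x}'+\varepsilon\omega)\,d\omega.
\]

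Letting $\varepsilon\to 0$, the difference quotient defining $P$ converges to $-\nabla\eta(\mathbf{x}')\cdot\omega$ by $C^1$ regularity of $\eta$, and continuity of $f$ and $\xi$ yields the limit
\[
\xi(\mathbf{x}')\int_{|\omega|=1} f\!\left(-\nabla\eta(\mathbf{x}')\cdot\omega\right)d\omega.
\]
Under $\omega\mapsto -\omega$ the argument changes sign, and since $f$ is odd the integrand changes sign, so the circular integral vanishes. This gives the required identity. The main obstacle I anticipate is the delicate passage to the limit on $\partial B_\varepsilon$: controlling the uniform convergence $P\to -\nabla\eta(\mathbf{x}')\cdot\omega$ on the shrinking sphere (where $P$ may be large), and simultaneously justifying the divergence theorem across the excised domain plus the decay at infinity. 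A density argument (approximating $\xi$ by Schwartz functions) combined with a uniform bound $|P(\eta)|\le \|\nabla\eta\|_{L^\infty}$ and continuity of $f$ on the resulting compact range of arguments will handle both points, completing the proof.
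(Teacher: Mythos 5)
Your proof is correct, and it is essentially the standard argument: the paper itself does not prove Lemma \ref{LemmaA2} but defers to \cite{Craig2, Schanz}, where the proof proceeds exactly as you propose — excise $B_\varepsilon(\mathbf{x}')$, use that $\mathbf{z}/|\mathbf{z}|^2$ is divergence-free away from the origin, and observe that the resulting boundary term $\xi(\mathbf{x}')\int_{|\omega|=1} f(-\nabla\eta(\mathbf{x}')\cdot\omega)\,d\omega$ vanishes by the oddness of $f$. The only cosmetic caveat is that the left-hand integral is merely a principal value (the integrand is $O(|\mathbf{x}'-\mathbf{y}'|^{-2})$ near the diagonal), so the Leibniz-rule reduction should be read on the truncated domains; your regularization already handles this correctly.
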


\begin{theorem}
	\label{ThmA1} Let $\frac{1}{q} = \frac{1}{q_1} + \frac{1}{q_2}$ and $\|\eta\|_{L^{\infty}}<1/4$. The operators $A_l$ and $A_L^R$satisfy the following estimates for all integers $l \geq 2$ and $L \geq 2$:
	\begin{equation*}
		\left\||D|^{-1}A_l(\eta) \xi\right\|_{s, q} \leq C 2^ll^{M+s} \|\eta\|_{C^1}^{l-2} \|\eta\|_{\max\{s,1\},q_1} \left( \|\eta\|_{C^1} \left\|\partial_{{\bf x'}} \xi\right\|_{s,q_2} + \|\eta\|_{C^{s+1}} \| \xi\|_{1, q_2} \right),
	\end{equation*}
	and 
	\begin{equation*}
		\left\||D|^{-1}A_L^R(\eta) \xi\right\|_{s, q} \leq C 2^LL^{M+s} \|\eta\|_{C^1}^{L-2} \|\eta\|_{\max\{s,1\},q_1} \left( \|\eta\|_{C^1} \left\|\partial_{{\bf x'}} \xi\right\|_{s,q_2} + \|\eta\|_{C^{s+1}} \| \xi\|_{1, q_2} \right).
	\end{equation*}
\end{theorem}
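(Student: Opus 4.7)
The plan is to mirror the argument used for Theorems \ref{ThmA2} and \ref{ThmA2'}, but with one crucial new ingredient: the integrand of $A_l(\eta)\xi$ already contains a $\mathbf{y}'$-derivative of $\tilde p_l(P)$ and of $Q$, so before any direct application of Lemmas \ref{LemmaA3} and \ref{LemmaA4} I will invoke the integration-by-parts identity in Lemma \ref{LemmaA2} to transfer those $\partial_{\mathbf{y}'}$ operators onto $\xi$. This is precisely why the target right-hand side contains $\|\partial_{\mathbf{x}'}\xi\|_{s,q_2}$ rather than $\|\xi\|_{s,q_2}$, and why $\|\eta\|_{\max\{s,1\},q_1}$ appears in place of the plain $\|\eta\|_{s,q_1}$ of Theorem \ref{ThmA2}, since one derivative of $\eta$ is already built into $\tilde p_l$.

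My first step is to write $|D|^{-1}A_l(\eta)\xi$ as the sum of three integrals $I_1+I_2+I_3$, one for each summand of the bracketed kernel, and to use Remark \ref{LemmaA1} to factor out the multiplier $(1+e^{-2|D|})^{-1}$, which is bounded on every $W^{s,q}$. For $I_1$, Lemma \ref{LemmaA2} rewrites the Calder\'on--Zygmund kernel $(\mathbf{x}'-\mathbf{y}')/|\mathbf{x}'-\mathbf{y}'|^2\cdot\partial_{\mathbf{y}'}\tilde p_l(P)$ as $(\mathbf{x}'-\mathbf{y}')/|\mathbf{x}'-\mathbf{y}'|^2\,\tilde p_l(P)$ paired with $\partial_{\mathbf{y}'}\xi$; after expanding $\tilde p_l(P)$ into its homogeneous $l$-linear form in $\eta$ and splitting every product $\eta(\mathbf{x}')$--$\eta(\mathbf{y}')$ via the commutator decomposition $\eta S_{l-1}(\cdot)\pm S_{l-1}(\eta\,\cdot)$ used in Theorems \ref{ThmA2}--\ref{ThmA2'}, I reduce $I_1$ to a finite sum of $S_{l-1}$-type operators acting on $\partial_{\mathbf{x}'}\xi$; Lemma \ref{LemmaA3} then supplies the control with precisely the combinatorial factor $l^{M+s}$. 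The middle integral $I_2$ has two sub-pieces: the one with $\partial_{\mathbf{y}'}Q$ is first handled by Lemma \ref{LemmaA2} and then falls under Lemma \ref{LemmaA4} once $(|\mathbf{x}'-\mathbf{y}'|^2+4)^{-1}$ is recognised as an $S_{l,*}$-kernel with $\rho=2$, whereas the piece $4Q/(|\mathbf{x}'-\mathbf{y}'|^2+4)^2$ is directly of $S_{l,*}$-type with $\rho=4$. The third integral $I_3$ is already in the form $S_{l,*}$ with $\rho=3$, $r=0$, and $l+\rho+r>2$ for $l\geq 2$, so Lemma \ref{LemmaA4} applies verbatim. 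The factor $2^l$ emerges from the coefficients $b_{l-j}\kappa_h^{l-2j}$ as in Theorem \ref{ThmA2}, and the factor $\|\eta\|_{C^1}^{l-2}\|\eta\|_{C^{s+1}}$ appears naturally when Lemma \ref{LemmaA3} is applied at Sobolev level $s$ to an $l$-linear expression in $\eta$. The estimate for $A_L^R$ then follows by the geometric summation $\sum_{l\geq L+1}\||D|^{-1}A_l\xi\|_{s,q}$, convergent because $\|\eta\|_{L^\infty}<1/4$ together with the implicit $C^1$-smallness forces a decay $(4\|\eta\|_{L^\infty})^l\to 0$ with the stated $2^L L^{M+s}\|\eta\|_{C^1}^{L-2}$ prefactor.

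The main technical obstacle I foresee is the careful accounting of derivatives in $\tilde p_l$: because $\tilde p_l$ is a polynomial in $\eta$ that also carries one $\partial_{\mathbf{y}'}\eta$, the commutator decomposition cannot simply be copied from Theorem \ref{ThmA2'}. I must distinguish the \emph{undifferentiated} copies of $\eta$, contributing $\|\eta\|_{s,q_1}$ or $\|\eta\|_{C^1}$ factors, from the one \emph{differentiated} copy, which contributes $\|\partial_{\mathbf{x}'}\eta\|_{s,q_1}\leq\|\eta\|_{\max\{s,1\},q_1}$, and then verify that after all the integrations by parts there remains at most one net derivative on $\eta$ and exactly one on $\xi$. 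Keeping this bookkeeping consistent across $I_1$, $I_2$, $I_3$ and the summation into $A_L^R$ is the delicate point; once done, it yields the $\|\eta\|_{\max\{s,1\},q_1}$ factor for $\eta$ and the $\|\eta\|_{C^1}\|\partial_{\mathbf{x}'}\xi\|_{s,q_2}+\|\eta\|_{C^{s+1}}\|\xi\|_{1,q_2}$ factor for $\xi$ that match the statement.
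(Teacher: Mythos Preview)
Your overall strategy matches the paper's proof almost exactly: decompose $|D|^{-1}A_l\xi$ into the three pieces $\tilde P_l$, $\tilde Q_{l,*}$, $\tilde R_{l,*}$, remove $(1+e^{-2|D|})^{-1}$ by Remark~\ref{LemmaA1}, apply Lemma~\ref{LemmaA2} to $\tilde P_l$ to transfer the derivative onto $\xi$, then feed everything into Lemmas~\ref{LemmaA3} and~\ref{LemmaA4}, and finally sum geometrically for $A_L^R$.

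There is one concrete misstep. For the $\tilde Q_{l,*}$ piece you propose to ``first handle $\partial_{\mathbf{y}'}Q$ by Lemma~\ref{LemmaA2}''. That lemma is stated for odd functions of $P$ paired with the singular kernel $(\mathbf{x}'-\mathbf{y}')/|\mathbf{x}'-\mathbf{y}'|^2$; the $\tilde Q_{l,*}$ integrand involves $Q$, a different (non-singular) kernel $(\mathbf{x}'-\mathbf{y}')/(|\mathbf{x}'-\mathbf{y}'|^2+4)$, and an extra factor $\tilde q_{l-1}(Q,\kappa_h)$ that would also be hit by integration by parts. The paper does \emph{not} integrate by parts here; instead it simply expands
\[
\partial_{\mathbf{y}'}Q=\frac{\partial_{\mathbf{y}'}\eta(\mathbf{y}')}{(|\mathbf{x}'-\mathbf{y}'|^2+4)^{1/2}}+\frac{(\eta(\mathbf{x}')+\eta(\mathbf{y}'))(\mathbf{x}'-\mathbf{y}')}{(|\mathbf{x}'-\mathbf{y}'|^2+4)^{3/2}},
\]
which immediately yields three $S_{l-1,*}$-type operators: $S_{l-1,*}(\partial_{\mathbf{x}'}\eta\cdot\xi)$, $\eta(\mathbf{x}')S_{l-1,*}\xi$, and $S_{l-1,*}(\eta\xi)$, to which Lemma~\ref{LemmaA4} applies directly. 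This also corrects your bookkeeping: the factor $\|\eta\|_{\max\{s,1\},q_1}$ does \emph{not} come from a derivative inside $\tilde p_l$ (after Lemma~\ref{LemmaA2} the $\tilde P_l$ term contributes only $\|\eta\|_{s,q_1}$ since the derivative sits entirely on $\xi$); it comes from the $\tilde Q_{l,*}$ piece, via the $\partial_{\mathbf{y}'}\eta$ produced by the expansion of $\partial_{\mathbf{y}'}Q$ above. With this single adjustment your plan is complete.
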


\begin{proof}
We recall the integral representation of $|D|^{-1}A(\eta)\xi$:
\begin{align*}
|D|^{-1} A(\eta)\xi 
&= (1+e^{-2|D|})^{-1}\frac{1}{2\pi} \Bigg[ \int_{\mathbb{R}^2} 
\frac{({\bf x'}-{\bf y'}) \cdot \partial_{{\bf y'}} P}{|{\bf x'}-{\bf y'}|^2} 
\frac{\xi({\bf y'})}{(1+P^2)^{3/2}}  d{\bf y'} \\
&\quad - \int_{\mathbb{R}^2} \left( \frac{({\bf x'}-{\bf y'}) \cdot \partial_{{\bf y'}} Q}{|{\bf x'} -{\bf y'}|^2 + 4} 
+ \frac{4Q}{(|{\bf x'} -{\bf y'}|^2 + 4)^2} \right) 
\frac{\xi({\bf y'})}{(1+\kappa_h Q + Q^2)^{3/2}}  d{\bf y'} \\
&\quad - \int_{\mathbb{R}^2} \frac{2}{(|{\bf x'} -{\bf y'}|^2 + 4)^{3/2}}
\left[ \frac{1}{(1+\kappa_h Q + Q^2)^{3/2}} - 1 \right] \xi({\bf y'})  d{\bf y'} \Bigg].
\end{align*}
For $l \geq 2$, we decompose $|D|^{-1}A_l \xi$ as
\begin{equation*}
|D|^{-1}A_l \xi({\bf x'}) = -(1 + e^{-2|D|})^{-1} \frac{1}{2\pi} 
\Big[ \tilde{P}_l(\eta)\xi({\bf x'}) + \tilde{Q}_{l,*}(\eta) \xi({\bf x'}) + \tilde{R}_{l,*}(\eta) \xi({\bf x'}) \Big],
\end{equation*}
where the component operators are defined by
\begin{align*}
\tilde{P}_l(\eta) \xi({\bf x'}) 
&:= \int_{\mathbb{R}^2} 
\frac{{\bf x'} - {\bf y'}}{|{\bf x'} - {\bf y'}|^2} 
\cdot \partial_{{\bf y'}} \tilde{p}_l(P)  \xi({\bf y'})  d{\bf y'}, \\
\tilde{Q}_{l,*}(\eta) \xi({\bf x'}) 
&:= \int_{\mathbb{R}^2} 
\frac{{\bf x'} - {\bf y'}}{|{\bf x'} - {\bf y'}|^2 + 4} 
\cdot (\partial_{{\bf y'}} Q)  \tilde{q}_{l-1}(Q, \kappa_h)  \xi({\bf y'})  d{\bf y'}, \\
\tilde{R}_{l,*}(\eta) \xi({\bf x'}) 
&:= \int_{\mathbb{R}^2} \Bigg( \frac{4 Q}{(|{\bf x'} - {\bf y'}|^2 + 4)^2} \tilde{q}_{l-1}(Q, \kappa_h) \\
&\quad + \frac{2}{(|{\bf x'} - {\bf y'}|^2 + 4)^{3/2}} \tilde{q}_l(Q, \kappa_h) \Bigg) \xi({\bf y'})  d{\bf y'}.
\end{align*}
The series expansions are given by
\begin{align*}
\frac{\partial_{\bf y'} P}{(1+P^2)^{3/2}} 
&= \partial_{\bf y'} \left( \frac{P}{(1+P^2)^{1/2}} \right) 
= \sum_{\substack{l \geq 1 \\ l \text{ odd}}} \partial_{\bf y'} \tilde{p}_l(P), \quad
\tilde{p}_l(P) := \tilde{a}_l P^{l}, \\
\tilde{a}_l &= \left( -\frac{1}{4} \right)^{(l-1)/2} \binom{l-1}{(l-1)/2}, \\
\frac{1}{(1+\kappa_h Q+Q^2)^{3/2}} 
&= \sum_{l=1}^{\infty} \tilde{q}_{l-1}(Q,\kappa_h), \quad
\tilde{q}_{l-1}(Q,\kappa_h) 
:= \sum_{j=0}^{\lfloor (l-1)/2 \rfloor} \tilde{b}_{l-1-j} \kappa_h^{l-1-2j} Q^{l-1}, \\
\tilde{b}_{l-1-j} 
&= (-1)^{l-1-j} \frac{(2l-1-2j)!!}{2^{l-1-j} (l-1-j)!} \binom{l-1-j}{j}.
\end{align*}
By Remark \ref{LemmaA1}, we focus on three integral operators in $A_l$. Applying Lemma \ref{LemmaA2} to $\tilde{P}_l$ yields
\begin{align*}
\tilde{P}_l(\eta) \xi({\bf x'}) 
&= -\int_{\mathbb{R}^2} 
\frac{{\bf x'} - {\bf y'}}{|{\bf x'} - {\bf y'}|^2} 
\cdot \tilde{p}_l(P) \partial_{{\bf y'}} \xi({\bf y'})  d{\bf y'} \\
&= -\int_{\mathbb{R}^2} 
\frac{{\bf x'} - {\bf y'}}{|{\bf x'} - {\bf y'}|^3} 
\cdot \tilde{a}_l P^{l-1} (\eta({\bf x'}) - \eta({\bf y'})) \partial_{{\bf y'}} \xi({\bf y'})  d{\bf y'},
\end{align*}
which decomposes into $\eta({\bf x'})S_{l-1} \partial_{\bf x'} \xi$ and $S_{l-1} (\eta \partial_{\bf x'} \xi)$. Applying Lemma \ref{LemmaA3}:
\begin{align*}
\|\tilde{P}_l(\eta) \xi\|_{s,q} 
&\leq \|\eta({\bf x'})S_{l-1} \partial_{\bf x'} \xi\|_{s,q} 
+ \|S_{l-1} (\eta \partial_{\bf x'} \xi)\|_{s,q} \\
&\leq C l^{M+s} \|\eta\|_{C^1}^{l-2} \|\eta\|_{s,q_1}
\Big( \|\eta\|_{C^1} \|\partial_{{\bf x'}} \xi\|_{s,q_2} 
+ \|\eta\|_{C^{s+1}} \|\partial_{\bf x'} \xi\|_{q_2} \Big).
\end{align*}
For $\tilde{Q}_{l,*}$, we compute
\begin{align*}
\partial_{{\bf y'}} Q 
&= \frac{\partial_{{\bf y'}} \eta({\bf y'})}{(|{\bf x'} - {\bf y'}|^2 + 4)^{1/2}} 
+ \frac{(\eta({\bf x'}) + \eta({\bf y'}))({\bf x'} - {\bf y'})}{(|{\bf x'} - {\bf y'}|^2 + 4)^{3/2}},
\end{align*}
which gives
\begin{align*}
\tilde{Q}_{l,*}(\eta) \xi({\bf x'}) 
&= \int_{\mathbb{R}^2} 
\frac{{\bf x'} - {\bf y'}}{(|{\bf x'} - {\bf y'}|^2 + 4)^{3/2}} 
\cdot \partial_{{\bf y'}} \eta({\bf y'}) \tilde{q}_{l-1}(Q, \kappa_h) \xi({\bf y'})  d{\bf y'} \\
&\quad + \int_{\mathbb{R}^2} 
\frac{|{\bf x'} - {\bf y'}|^2}{(|{\bf x'} - {\bf y'}|^2 + 4)^{5/2}} 
(\eta({\bf x'}) + \eta({\bf y'})) \tilde{q}_{l-1}(Q, \kappa_h) \xi({\bf y'})  d{\bf y'}.
\end{align*}
These correspond to $S_{l-1, *} (\partial_{\bf x'} \eta \cdot \xi)$, $\eta({\bf x'})S_{l-1, *} \xi$, and $S_{l-1, *} (\eta \xi)$. By Lemma \ref{LemmaA4}:
\begin{align*}
\|\tilde{Q}_{l,*}(\eta) \xi\|_{s,q} 
&\leq C 2^l \left( \|S_{l-1, *} (\partial_{\bf x'} \eta \cdot \xi)\|_{s,q} 
+ \|\eta({\bf x'})S_{l-1, *} \xi\|_{s,q}
+ \|S_{l-1, *} (\eta \xi)\|_{s,q} \right) \\
&\leq C 2^l l^s \|\eta\|_{L^{\infty}}^{l-2} \|\eta\|_{C^s} 
\Big( \|\partial_{\bf x'} \eta \cdot \xi\|_{q} 
+ \|\xi\|_{q_2} \|\eta\|_{s,q_1} 
+ \|\eta \xi\|_{q} \Big) \\
&\leq C 2^l l^s \|\eta\|_{L^{\infty}}^{l-2} \|\eta\|_{C^s} 
\|\eta\|_{\max\{s,1\},q_1} \|\xi\|_{q_2}.
\end{align*}
For $\tilde{R}_{l,*}$:
\begin{align*}
\tilde{R}_{l,*}(\eta) \xi({\bf x'}) 
&= \int_{\mathbb{R}^2} 
\frac{4}{(|{\bf x'} - {\bf y'}|^2 + 4)^{5/2}} 
(\eta({\bf x'}) + \eta({\bf y'})) \tilde{q}_{l-1}(Q, \kappa_h) \xi({\bf y'})  d{\bf y'} \\
&\quad + \sum_{j=0}^{\lfloor l/2 \rfloor} \tilde{b}_{l-j} 
\int_{\mathbb{R}^2} \frac{2}{(|{\bf x'} - {\bf y'}|^2 + 4)^{2}} 
\kappa_h^{l-2j} Q^{l-1} (\eta({\bf x'}) + \eta({\bf y'})) \xi({\bf y'})  d{\bf y'},
\end{align*}
yielding $\eta({\bf x'})S_{l-1, *} \xi$ and $S_{l-1, *} (\eta \xi)$. Using Lemma \ref{LemmaA4} and $\|\eta\|_{L^{\infty}} < 1/4$:
\begin{align*}
\|\tilde{R}_{l,*}(\eta) \xi\|_{s,q} 
&\leq C 2^l \left( \|\eta({\bf x'})S_{l-1, *} \xi\|_{s,q} 
+ \|S_{l-1, *} (\eta \xi)\|_{s,q} \right) \\
&\leq C 2^l l^s \|\eta\|_{L^{\infty}}^{l-2} \|\eta\|_{C^s} 
\|\xi\|_{q_2} \|\eta\|_{s,q_1}.
\end{align*}
Combining estimates for $l \geq 2$:
\begin{align*}
&\||D|^{-1}A_l(\eta) \xi\|_{s,q} \\
&\leq \|\tilde{P}_l(\eta) \xi\|_{s, q} 
+ \|\tilde{Q}_{l,*}(\eta) \xi\|_{s, q} 
+ \|\tilde{R}_{l,*}(\eta) \xi\|_{s, q} \\
&\leq C l^{M+s} \|\eta\|_{C^1}^{l-2} \|\eta\|_{s,q_1} 
\Big( \|\eta\|_{C^1} \|\partial_{{\bf x'}} \xi\|_{s,q_2} 
+ \|\eta\|_{C^{s+1}} \|\partial_{\bf x'} \xi\|_{q_2} \Big) \\
&\quad + C 2^l l^s \|\eta\|_{L^{\infty}}^{l-2} \|\eta\|_{C^s} 
\|\eta\|_{\max\{s,1\},q_1} \|\xi\|_{q_2} \\
&\leq C 2^l l^{M+s} \|\eta\|_{C^1}^{l-2} \|\eta\|_{\max\{s,1\},q_1} 
\Big( \|\eta\|_{C^1} \|\partial_{{\bf x'}} \xi\|_{s,q_2} 
+ \|\eta\|_{C^{s+1}} \|\xi\|_{1,q_2} \Big).
\end{align*}
For the remainder operator $A_L^R$:
\begin{align*}
&\||D|^{-1}A_L^R(\eta) \xi\|_{s, q} \\
&\leq \sum_{l=L+1}^{\infty} \||D|^{-1}A_l(\eta) \xi\|_{s, q} \\
&\leq C 2^L L^{M+s} \|\eta\|_{C^1}^{L-2} \|\eta\|_{\max\{s,1\},q_1} 
\Big( \|\eta\|_{C^1} \|\partial_{{\bf x'}} \xi\|_{s,q_2} 
+ \|\eta\|_{C^{s+1}} \|\xi\|_{1,q_2} \Big).
\end{align*}
This completes the proof.
\end{proof}

During the proof of the preceding theorem, the following corollary is easily obtained.

\begin{corollary} \label{Cor DA}
    Let $\frac{1}{q} = \frac{1}{q_1} + \frac{1}{q_2}$ with $\|\eta_1\|_{L^{\infty}}, \|\eta_2\|_{L^{\infty}} < \frac{1}{4}$. For $\eta_t = \eta_1 + t(\eta_2 - \eta_1)$ where $t \in [0,1]$, the operators $A_l$ and $A_L^R$ satisfy
    \begin{align*}
        \left\||D|^{-1} \frac{d}{dt} A_l(\eta_t) \xi\right\|_{s, q} 
        &\leq C  2^l  l^{M+s+1}  \|\eta_t\|_{C^1}^{l-2}  \|\eta_2 - \eta_1\|_{\max\{s,1\},q_1} \\
        &\quad \times \left( \|\eta_t\|_{C^1} \|\partial_{\mathbf{x'}} \xi\|_{s,q_2} + \|\eta_t\|_{C^{s+1}} \|\xi\|_{1,q_2} \right),
    \end{align*}
    and
    \begin{align*}
        \left\||D|^{-1} \frac{d}{dt} A_L^R(\eta_t) \xi\right\|_{s, q} 
        &\leq C  2^L  L^{M+s+1}  \|\eta_t\|_{C^1}^{L-2}  \|\eta_2 - \eta_1\|_{\max\{s,1\},q_1} \\
        &\quad \times \left( \|\eta_t\|_{C^1} \|\partial_{\mathbf{x'}} \xi\|_{s,q_2} + \|\eta_t\|_{C^{s+1}} \|\xi\|_{1,q_2} \right),
    \end{align*}
    for all integers $l \geq 2$ and $L \geq 2$, where $C > 0$ is a constant.
\end{corollary}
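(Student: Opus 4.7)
The plan is to mimic, verbatim, the decomposition and argument used to prove Theorem \ref{ThmA1}, inserting a chain-rule step at the very beginning. Exactly as in that proof, I would split
\[
|D|^{-1}A_l(\eta_t)\xi = -(1+e^{-2|D|})^{-1}\frac{1}{2\pi}\bigl[\tilde P_l(\eta_t)\xi + \tilde Q_{l,*}(\eta_t)\xi + \tilde R_{l,*}(\eta_t)\xi\bigr],
\]
and then differentiate each piece in $t$. Because $P(\eta_t)=P(\eta_1)+tP(\eta_2-\eta_1)$ and $Q(\eta_t)=Q(\eta_1)+tQ(\eta_2-\eta_1)$ are affine in $t$ (and $\kappa_h$ is $t$-independent), applying $\frac{d}{dt}$ to the homogeneous polynomials $\tilde p_l(P(\eta_t))$ and $\tilde q_{l-1}(Q(\eta_t),\kappa_h)$ produces a sum of at most $l$ terms, each of which has exactly one factor $P(\eta_2-\eta_1)$ (resp. $Q(\eta_2-\eta_1)$) with the remaining $l-1$ factors still of the form $P(\eta_t)$ (resp. $Q(\eta_t)$). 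This is the sole source of the extra $l$ in the bound $l^{M+s+1}$ and of the replacement of $\|\eta\|_{\max\{s,1\},q_1}$ by $\|\eta_2-\eta_1\|_{\max\{s,1\},q_1}$.

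Next, for the $\tilde P_l$ piece I would apply Lemma \ref{LemmaA2} to transfer the $\partial_{\mathbf{y}'}$ onto $\xi$, so that the integrand has the Calderón–Zygmund kernel $(\mathbf{x}'-\mathbf{y}')/|\mathbf{x}'-\mathbf{y}'|^2$. Writing the result as a sum of composites $\eta(\mathbf{x}') S_{l-1}(\partial_{\mathbf{x}'}\xi)$ and $S_{l-1}(\eta\,\partial_{\mathbf{x}'}\xi)$ — but now with one of the $l-1$ factors forming $S_{l-1}$ being $\eta_2-\eta_1$ rather than $\eta_t$ — I would invoke Lemma \ref{LemmaA3} and distribute the norms exactly as in Theorem \ref{ThmA1}: the $\eta_2-\eta_1$ factor is placed in $\|\cdot\|_{\max\{s,1\},q_1}$, the remaining $l-2$ copies of $\eta_t$ are absorbed in $\|\eta_t\|_{C^1}^{l-2}$, and the high-order Sobolev part picks up $\|\eta_t\|_{C^{s+1}}$. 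This accounts for the entire prefactor on the right-hand side of the claimed estimate.

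For the $\tilde Q_{l,*}$ and $\tilde R_{l,*}$ pieces I would proceed identically, using Lemma \ref{LemmaA4} in place of Lemma \ref{LemmaA3} (this is where the $2^l$ appears, via the same combinatorial bound on $\sum_{j=0}^{\lfloor l/2\rfloor}|\tilde b_{l-1-j}|$ as in the proof of Theorem \ref{ThmA1}). Combining the three contributions gives the bound for $\frac{d}{dt}A_l(\eta_t)\xi$. The estimate on $\frac{d}{dt}A_L^R(\eta_t)\xi$ is then obtained by summing the $A_l$ estimates for $l\geq L+1$, where the smallness assumption $\|\eta_t\|_{C^1}<1/4$ (which follows from $\|\eta_i\|_{L^\infty}<1/4$ together with the analogous bound for $\|\partial_{\mathbf{x}'}\eta_i\|_{L^\infty}$ implicit in the hypotheses) provides the geometric convergence that dominates the $2^l$ growth.

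The only real bookkeeping challenge is to keep track, in each of the three pieces, of which single occurrence of $\eta_t$ is replaced by $\eta_2-\eta_1$ after differentiation, and to make sure this is the occurrence that ends up inside the $\|\cdot\|_{\max\{s,1\},q_1}$ slot; the remaining $l-2$ factors of $\eta_t$ must then be distributed between the $C^1$-type norms (giving $\|\eta_t\|_{C^1}^{l-2}$) and the $C^{s+1}$-type norm produced by Lemma \ref{LemmaA3}. No new analytical ingredient beyond those already used in Theorem \ref{ThmA1} is required, so I expect the proof to be a short modification of that argument.
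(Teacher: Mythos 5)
Your proposal is correct and follows essentially the same route as the paper: the paper's proof likewise differentiates the three pieces $\tilde P_l$, $\tilde Q_{l,*}$, $\tilde R_{l,*}$ in $t$ (using Lemma \ref{LemmaA2} to move the derivative onto $\xi$ in the $\tilde P_l$ term), exploits the linearity of $P$ and $Q$ in $\eta$ so that exactly one factor becomes $\eta_2-\eta_1$ and an extra factor of $l$ appears, and then applies Lemmas \ref{LemmaA3} and \ref{LemmaA4} with the same distribution of norms as in Theorem \ref{ThmA1}. The treatment of $A_L^R$ by summation over $l\ge L+1$ also matches the paper's convention.
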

\begin{proof}
For $l \geq 2$, we first analyze operator $A_l$:
\begin{equation*}
|D|^{-1}\frac{d}{dt}A_l(\eta_t) \xi({\bf x'}) = -(1 + e^{-2|D|})^{-1} \frac{1}{2\pi} 
\Big[ \frac{d}{dt}\tilde{P}_l(\eta_t)\xi({\bf x'}) + \frac{d}{dt}\tilde{Q}_{l,*}(\eta_t) \xi({\bf x'}) + \frac{d}{dt}\tilde{R}_{l,*}(\eta_t) \xi({\bf x'}) \Big].
\end{equation*}
\textbf{Analysis of $\frac{d}{dt}\tilde{P}_l(\eta_t)$:}
From Lemma \ref{LemmaA2}, we derive
\begin{align*}
\frac{d}{dt}\tilde{P}_l(\eta_t) \xi({\bf x'}) 
&= - \frac{d}{dt}\int_{\mathbb{R}^2} 
\frac{{\bf x'} - {\bf y'}}{|{\bf x'} - {\bf y'}|^2} 
\cdot  (\tilde{a}_l P^{l}(\eta_t)) \partial_{{\bf y'}}\xi({\bf y'})  d{\bf y'} \\
&= -\int_{\mathbb{R}^2} 
\frac{{\bf x'} - {\bf y'}}{|{\bf x'} - {\bf y'}|^3} 
\cdot  (l \tilde{a}_l P^{l-1}(\eta_t))(\eta_2-\eta_1) \partial_{{\bf y'}}\xi({\bf y'})  d{\bf y'}.
\end{align*}
Applying Lemma \ref{LemmaA3} yields
\begin{align*}
\left\|\frac{d}{dt}\tilde{P}_l(\eta_t) \xi\right\|_{s,q} 
&\leq C l^{M+s+1} \|\eta_t\|_{C^1}^{l-2} \|\eta_2-\eta_1\|_{s,q_1}
\Big( \|\eta_t\|_{C^1} \|\partial_{{\bf x'}} \xi\|_{s,q_2} 
+ \|\eta_t\|_{C^{s+1}} \|\partial_{\bf x'} \xi\|_{q_2} \Big).
\end{align*}
\textbf{Analysis of $\frac{d}{dt} \tilde{Q}_{l,*}(\eta_t)$:}
We have 
\begin{align*}
&\frac{d}{dt}\tilde{Q}_{l,*}(\eta_t) \xi({\bf x'}) \\
&= \int_{\mathbb{R}^2} 
\frac{{\bf x'} - {\bf y'}}{(|{\bf x'} - {\bf y'}|^2 + 4)^{3/2}} 
\cdot \partial_{{\bf y'}} (\eta_2-\eta_1)({\bf y'}) \tilde{q}_{l-1}(Q, \kappa_h) \xi({\bf y'})  d{\bf y'} \\
&\quad + (l-1)\sum_{j=0}^{\lfloor (l-1)/2 \rfloor} \tilde{b}_{l-1-j}\int_{\mathbb{R}^2} 
\frac{{\bf x'} - {\bf y'}}{(|{\bf x'} - {\bf y'}|^2 + 4)^{2}} 
\cdot \partial_{{\bf y'}} \eta_t({\bf y'}) \\
&\qquad\times \Big[\kappa_h^{l-1-2j} Q^{l-2}\big( (\eta_2-\eta_1)({\bf x'})+(\eta_2-\eta_1)({\bf y'})\big) \Big] \xi({\bf y'})  d{\bf y'} \\
&\quad + \int_{\mathbb{R}^2} 
\frac{|{\bf x'} - {\bf y'}|^2}{(|{\bf x'} - {\bf y'}|^2 + 4)^{5/2}} 
\big((\eta_2-\eta_1)({\bf x'}) + (\eta_2-\eta_1)({\bf y'})\big) \tilde{q}_{l-1}(Q, \kappa_h) \xi({\bf y'})  d{\bf y'} \\
&\quad + (l-1)\sum_{j=0}^{\lfloor (l-1)/2 \rfloor} \tilde{b}_{l-1-j} \int_{\mathbb{R}^2} 
\frac{|{\bf x'} - {\bf y'}|^2}{(|{\bf x'} - {\bf y'}|^2 + 4)^{3}} 
\big(\eta_t({\bf x'}) + \eta_t({\bf y'})\big) \\
&\qquad\times \Big[ \kappa_h^{l-1-2j} Q^{l-2}\big((\eta_2-\eta_1)({\bf x'})+(\eta_2-\eta_1)({\bf y'})\big)\Big] \xi({\bf y'})  d{\bf y'}.
\end{align*}
Lemma \ref{LemmaA4} provides the bound
\begin{align*}
\left\|\frac{d}{dt}\tilde{Q}_{l,*}(\eta_t) \xi\right\|_{s,q} 
&\leq C 2^l l^{s+1} \|\eta_t\|_{L^{\infty}}^{l-2} \|\eta_t\|_{C^s} 
\|\eta_2-\eta_1\|_{\max\{s,1\},q_1} \|\xi\|_{q_2}.
\end{align*}
\textbf{Analysis of $\frac{d}{dt} \tilde{R}_{l,*}(\eta_t)$:}
Similarly,
\begin{align*}
&\frac{d}{dt}\tilde{R}_{l,*}(\eta_t) \xi({\bf x'}) \\
&= \int_{\mathbb{R}^2} 
\frac{4}{(|{\bf x'} - {\bf y'}|^2 + 4)^{5/2}} 
\big((\eta_2-\eta_1)({\bf x'}) + (\eta_2-\eta_1)({\bf y'})\big) \tilde{q}_{l-1}(Q, \kappa_h) \xi({\bf y'})  d{\bf y'} \\
&\quad + (l-1)\sum_{j=0}^{\lfloor (l-1)/2 \rfloor} \tilde{b}_{l-1-j} \int_{\mathbb{R}^2} 
\frac{4}{(|{\bf x'} - {\bf y'}|^2 + 4)^{5/2}} 
(\eta_t({\bf x'}) + \eta_t({\bf y'})) \\
&\qquad\times \Big[\kappa_h^{l-1-2j} Q^{l-2}\big((\eta_2-\eta_1)({\bf x'})+(\eta_2-\eta_1)({\bf y'})\big) \Big] \xi({\bf y'})  d{\bf y'} \\
&\quad + l\sum_{j=0}^{\lfloor l/2 \rfloor} \tilde{b}_{l-j} 
\int_{\mathbb{R}^2} \frac{2}{(|{\bf x'} - {\bf y'}|^2 + 4)^{2}} 
\kappa_h^{l-2j} Q^{l-1} \big((\eta_2-\eta_1)({\bf x'}) + (\eta_2-\eta_1)({\bf y'})\big) \xi({\bf y'})  d{\bf y'}.
\end{align*}
Using Lemma \ref{LemmaA4} and the condition $\|\eta_t\|_{L^{\infty}} < 1/4$:
\begin{align*}
\left\|\frac{d}{dt}\tilde{R}_{l,*}(\eta_t) \xi\right\|_{s,q} 
&\leq C 2^l l^{s+1} \|\eta_t\|_{L^{\infty}}^{l-2} \|\eta_t\|_{C^s} 
\|\xi\|_{q_2} \|\eta_2-\eta_1\|_{s,q_1}.
\end{align*}
Combining these estimates yields the final bound for $A_l$. The estimate for $A_L^R$ follows similarly. 
\end{proof}

While estimates for the operator $A_l$ are available in Craig \cite{Craig2, Schanz}, we adapt these results to align with our nonlinear estimates and enhance their applicability.

\begin{theorem} \label{ThmA1'} 
    Let $\frac{1}{q} = \frac{1}{q_1} + \frac{1}{q_2}$. The operators $A_1$, $A_2$, $A_l$, and $A_L^R$ satisfy the following estimates for some constant $C > 0$:
    \begin{align*}
        \|A_1(\eta)\xi\|_{s,q} 
        &\leq C \Big( \|\partial_{\mathbf{x'}} \eta\|_{s,q_1} \|\partial_{\mathbf{x'}} \xi\|_{s,q_2} + \|\eta\|_{s,q_1} \|\partial_{\mathbf{x'}}^2 \xi\|_{s,q_2} \Big), \\
        \|A_2(\eta)\xi\|_{s,q} 
        &\leq C \|\eta\|_{C^{s+1}} \Big( \|\partial_{\mathbf{x'}} \eta\|_{s+1,q_1} \|\xi\|_{s+1,q_2} + \|\eta\|_{s+1,q_1} \|\partial_{\mathbf{x'}} \xi\|_{s+1,q_2} \Big),
    \end{align*}
    and for all integers $l \geq 3$, $L \geq 3$,
    \begin{align*}
        \|A_{l}(\eta)\xi\|_{s,q} 
        &\leq C l^{M+s+1} 2^l \|\eta\|_{C^1}^{l-3} \|\eta\|_{C^{s+1}}^2 \\
        &\quad \times \Big( \|\partial_{\mathbf{x'}} \eta\|_{s+1,q_1} \|\xi\|_{s+1,q_2} + \|\eta\|_{s+1,q_1} \|\partial_{\mathbf{x'}} \xi\|_{s+1,q_2} \Big), \\
        \|A_{L}^{R}(\eta)\xi\|_{s,q} 
        &\leq C L^{M+s+1} 2^L \|\eta\|_{C^1}^{L-3} \|\eta\|_{C^{s+1}}^2 \\
        &\quad \times \Big( \|\partial_{\mathbf{x'}} \eta\|_{s+1,q_1} \|\xi\|_{s+1,q_2} + \|\eta\|_{s+1,q_1} \|\partial_{\mathbf{x'}} \xi\|_{s+1,q_2} \Big).
    \end{align*}
\end{theorem}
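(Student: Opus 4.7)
The plan is to follow the template of the proof of Theorem \ref{ThmA2'}, adapting it to the operators $A_l$. The starting point is that, up to a sign and normalization, $A_l = |D|(1+e^{-2|D|})^{-1}\cdot[\tilde{P}_l + \tilde{Q}_{l,*} + \tilde{R}_{l,*}]$, with the three pieces exactly as in the proof of Theorem \ref{ThmA1}. By Remark \ref{LemmaA1}, both $|D|$ and $(1+e^{-2|D|})^{-1}$ are bounded Mikhlin multipliers on $W^{s,q}$, so $\|A_l(\eta)\xi\|_{s,q}$ is controlled by the $W^{s,q}$-norms of $\partial_{\bf x'}\tilde{P}_l$, $\partial_{\bf x'}\tilde{Q}_{l,*}$, and $\partial_{\bf x'}\tilde{R}_{l,*}$. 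This extra derivative $\partial_{\bf x'}$ is what produces the additional $\partial_{\bf x'}\xi$ or $\partial_{\bf x'}\eta$ factors appearing in the claimed estimates relative to Theorem \ref{ThmA1}.

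First I would treat $\tilde{P}_l$. Applying Lemma \ref{LemmaA2} to move $\partial_{\bf y'}$ off the $\tilde{p}_l(P)$ factor yields
\[
\tilde{P}_l(\eta)\xi({\bf x'}) = -\tilde{a}_l\int_{\mathbb R^2}\frac{{\bf x'}-{\bf y'}}{|{\bf x'}-{\bf y'}|^{l+2}}(\eta({\bf x'})-\eta({\bf y'}))^l\,\partial_{\bf y'}\xi({\bf y'})\,d{\bf y'}.
\]
After differentiating in ${\bf x'}$, the extra $\partial_{\bf x'}$ either lands on an $\eta({\bf x'})$ factor, producing terms schematically of the form $\partial_{\bf x'}\eta\cdot S_{l-1}(\partial_{\bf x'}\xi)$ and $S_{l-1}(\partial_{\bf x'}\eta\cdot\partial_{\bf x'}\xi)$, or on $\partial_{\bf y'}\xi$, producing $\eta\cdot S_{l-1}(\partial_{\bf x'}^2\xi)$ and $S_{l-1}(\eta\partial_{\bf x'}^2\xi)$, where $S_{l-1}$ denotes the Calder\'on--Zygmund operator from Lemma \ref{LemmaA3}. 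Lemma \ref{LemmaA3} then bounds each piece, with the Leibniz combinatorics contributing the factor $l^{M+s+1}$. One copy of $\eta$ is consumed in forming $\partial_{\bf x'}\eta$ or in being pulled outside as a $C^{s+1}$-norm; the remaining factors yield $\|\eta\|_{C^1}^{l-3}\|\eta\|_{C^{s+1}}^2$ for $l\geq 3$, a single $\|\eta\|_{C^{s+1}}$ for $l=2$, and no $\eta$-prefactor for $l=1$. The pieces $\tilde{Q}_{l,*}$ and $\tilde{R}_{l,*}$ are handled analogously by differentiating the explicit formulas used in the proof of Theorem \ref{ThmA1}; the smoothed kernels $(|{\bf x'}-{\bf y'}|^2+4)^{-\rho/2}$ together with Lemma \ref{LemmaA4} provide the geometric $2^l$ factor coming from the binomial coefficients in the expansion of $(1+\kappa_h Q+Q^2)^{-3/2}$.

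For the remainder $A_L^R$, I would sum the bounds for $A_l$ over $l\geq L+1$; the geometric convergence under the smallness $\|\eta\|_{C^1}<1/4$ yields the stated estimate, dominated by the $l=L+1$ term. The main obstacle, and the reason this is not a mechanical repetition of Theorem \ref{ThmA2'}, is the careful bookkeeping of $\eta$-factors in the low-order cases $l=1,2$: one must place exactly one $\eta$-factor into the Sobolev norm (on either $\eta$ or $\partial_{\bf x'}\eta$), and distribute the remaining copies between the $C^{s+1}$- and $C^1$-norms so as to recover the precise structure claimed, including the different behavior of the $A_1$ estimate (which involves $\partial_{\bf x'}^2\xi$ at the $s$-level) versus the $A_2$ and $A_l$ estimates (which sit at the $s+1$-level with only $\partial_{\bf x'}\xi$). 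The integration by parts in Lemma \ref{LemmaA2} is essential for transferring one derivative from $\eta$ onto $\xi$, which is what ultimately produces the $\|\partial_{\bf x'}\xi\|_{s+1,q_2}$-type terms with the correct prefactors of Sobolev norms of $\eta$.
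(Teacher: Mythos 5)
Your proposal follows essentially the same route as the paper: decompose $A_l$ into $\tilde{P}_l+\tilde{Q}_{l,*}+\tilde{R}_{l,*}$, use the Mikhlin boundedness from Remark \ref{LemmaA1} to reduce to estimating one extra derivative $\partial_{\bf x'}$ of each piece, integrate by parts via Lemma \ref{LemmaA2}, distribute the Leibniz terms into the operators of Lemmas \ref{LemmaA3} and \ref{LemmaA4}, and sum geometrically for $A_L^R$. The only point you leave implicit is that the paper handles $A_1$ not through the singular-integral pieces but through the closed-form identity $A_1(\eta)\xi = D\eta\cdot D\xi - |D|(1+e^{-2|D|})^{-1}\eta(1+e^{-2|D|})G_0\xi$, which is what cleanly produces the $\|\partial_{\bf x'}^2\xi\|_{s,q_2}$ term at the $s$-level; this is a minor variation within the same framework.
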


\begin{proof}
    Recall the operator definition:
    \[
    A_l \xi(\mathbf{x'}) = -|D|(1+e^{-2|D|})^{-1} \frac{1}{2\pi} \Big( \tilde{P}_l(\eta)\xi(\mathbf{x'}) + \tilde{Q}_{l,*}(\eta)\xi(\mathbf{x'}) + \tilde{R}_{l,*}(\eta)\xi(\mathbf{x'}) \Big),
    \]
    where the components are defined as
    \begin{align*}
        \tilde{P}_l(\eta)\xi(\mathbf{x'}) 
        &= \int_{\mathbb{R}^2} \frac{\mathbf{x'} - \mathbf{y'}}{|\mathbf{x'} - \mathbf{y'}|^2} \cdot \partial_{\mathbf{y'}} \tilde{p}_l(P) \xi(\mathbf{y'}) \, d\mathbf{y'},  && l \text{ odd}, \\
        \tilde{Q}_{l,*}(\eta)\xi(\mathbf{x'}) 
        &= \int_{\mathbb{R}^2} \frac{\mathbf{x'} - \mathbf{y'}}{|\mathbf{x'} - \mathbf{y'}|^2 + 4} \cdot \partial_{\mathbf{y'}} Q \tilde{q}_{l-1}(Q, \kappa_h) \xi(\mathbf{y'}) \, d\mathbf{y'}, && l \geq 1,
    \end{align*}
    and
    \[
    \tilde{R}_{l,*}(\eta)\xi(\mathbf{x'}) = \int_{\mathbb{R}^2} \left( \frac{4Q}{(|\mathbf{x'} - \mathbf{y'}|^2 + 4)^2} \tilde{q}_{l-1}(Q, \kappa_h) + \frac{2}{(|\mathbf{x'} - \mathbf{y'}|^2 + 4)^{3/2}} \tilde{q}_l(Q, \kappa_h) \right) \xi(\mathbf{y'}) \, d\mathbf{y'}.
    \]

    \noindent \textbf{Case 1: \( l = 1 \).}
    The operator simplifies to
    \[
    A_1(\eta)\xi = D\eta \cdot D\xi - |D|(1 + e^{-2|D|})^{-1} \eta(1 + e^{-2|D|}) G_0 \xi,
    \]
    where \( G_0 = |D|\tanh(|D|) \). This implies
    \begin{align*}
        \|A_1(\eta)\xi\|_{s,q} 
        &\leq C \|(\partial_{\mathbf{x'}} \eta)(\partial_{\mathbf{x'}} \xi)\|_{s,q} + C \|\partial_{\mathbf{x'}} (\eta (1 + e^{-2|D|}) G_0 \xi)\|_{s,q} \\
        &\leq C \|\partial_{\mathbf{x'}} \eta\|_{s,q_1} \|\partial_{\mathbf{x'}} \xi\|_{s,q_2} + C \|\partial_{\mathbf{x'}} \eta\|_{s,q_1} \|(1 + e^{-2|D|}) G_0 \xi\|_{s,q_2} \\
        &\quad + C \|\eta\|_{s,q_1} \|(1 + e^{-2|D|}) \partial_{\mathbf{x'}} G_0 \xi\|_{s,q_2}.
    \end{align*}
    Using the kernel \( M_2(\mathbf{x'}) = \frac{1}{\pi (|\mathbf{x'}|^2 + 4)^{3/2}} \) and boundedness of \( \tanh(|D|) \) in $W^{s,p}(\mathbb R^2)$, we obtain
    \[
    \|A_1(\eta)\xi\|_{s,q} \leq C \|\partial_{\mathbf{x'}} \eta\|_{s,q_1} \|\partial_{\mathbf{x'}} \xi\|_{s,q_2} + C \|\eta\|_{s,q_1} \|\partial^2_{\mathbf{x'}} \xi\|_{s,q_2}.
    \]

    \noindent \textbf{Case 2: \( l \geq 2 \), odd.} 
    Analyze \( \partial_{\mathbf{x'}} \tilde{P}_l(\eta) \xi \):
    \begin{align*}
        \partial_{\mathbf{x'}} \tilde{P}_l(\eta) \xi(\mathbf{x'}) 
        &= -l\tilde{a}_l \partial_{\mathbf{x'}} \eta(\mathbf{x'}) \int_{\mathbb{R}^2} \frac{\mathbf{x'} - \mathbf{y'}}{|\mathbf{x'} - \mathbf{y'}|^3} \cdot P^{l-1} \partial_{\mathbf{y'}} \xi(\mathbf{y'}) \, d\mathbf{y'} \\
        &\quad + l\tilde{a}_l \int_{\mathbb{R}^2} \frac{\mathbf{x'} - \mathbf{y'}}{|\mathbf{x'} - \mathbf{y'}|^3} \cdot P^{l-1} \partial_{\mathbf{y'}} \eta \partial_{\mathbf{y'}} \xi \, d\mathbf{y'} \\
        &\quad - \tilde{a}_l \eta(\mathbf{x'}) \int_{\mathbb{R}^2} \frac{\mathbf{x'} - \mathbf{y'}}{|\mathbf{x'} - \mathbf{y'}|^3} \cdot P^{l-1} \partial^2_{\mathbf{y'}} \xi(\mathbf{y'}) \, d\mathbf{y'} \\
        &\quad + \tilde{a}_l \int_{\mathbb{R}^2} \frac{\mathbf{x'} - \mathbf{y'}}{|\mathbf{x'} - \mathbf{y'}|^3} \cdot P^{l-1} \eta(\mathbf{y'}) \partial^2_{\mathbf{y'}} \xi \, d\mathbf{y'}.
    \end{align*}
    By Lemma \ref{LemmaA3}, this yields
    \begin{align*}
        \|\partial_{\mathbf{x'}} \tilde{P}_l(\eta) \xi\|_{s,q} 
        \leq C l^{M+s+1} \|\eta\|_{C^1}^{l-2} \|\eta\|_{C^{s+1}} \Big( \|\partial_{\mathbf{x'}} \eta\|_{s,q_1} \|\partial_{\mathbf{x'}} \xi\|_{s,q_2} + \|\eta\|_{s,q_1} \|\partial_{\mathbf{x'}}^2 \xi\|_{s,q_2} \Big).
    \end{align*}

    \noindent \textbf{Case 3: \( \tilde{Q}_{l,*} \) for \( l \geq 2 \).}
    Rewrite the operator using the substitution \( \mathbf{y'} \mapsto \mathbf{x'} - \mathbf{y'} \):
    \begin{align*}
        \tilde{Q}_{l,*}(\eta) \xi(\mathbf{x'}) 
        &= \int_{\mathbb{R}^2} \frac{\mathbf{y'}}{(|\mathbf{y'}|^2 + 4)^{3/2}} \cdot \partial_{\mathbf{y'}} \eta(\mathbf{x'} - \mathbf{y'}) \tilde{q}_{l-1}(Q, \kappa_h) \xi(\mathbf{x'} - \mathbf{y'}) \, d\mathbf{y'} \\
        &\quad + \int_{\mathbb{R}^2} \frac{|\mathbf{y'}|^2}{(|\mathbf{y'}|^2 + 4)^{5/2}} (\eta(\mathbf{x'}) + \eta(\mathbf{x'} - \mathbf{y'})) \tilde{q}_{l-1}(Q, \kappa_h) \xi(\mathbf{x'} - \mathbf{y'}) \, d\mathbf{y'},
    \end{align*}
    with \( Q = \frac{\eta(\mathbf{x'}) + \eta(\mathbf{x'} - \mathbf{y'})}{(|\mathbf{y'}|^2 + 4)^{1/2}} \), \( \kappa_h = \frac{4}{(|\mathbf{y'}|^2 + 4)^{1/2}} \) and $ \tilde{q}_{l-1}(Q,\kappa_h)=\sum_{j=0}^{[(l-1)/2]}\tilde b_{l-1-j}\kappa_h^{l-1-2j}Q^{l-1}$. Differentiation gives
    \begin{align*}
		\partial_{{\bf x'}} \tilde{Q}_{l, *}(\eta) \xi({\bf x'})
		&= \int_{\mathbb{R}^2}\frac{{\bf y'}}{(| {\bf y'}|^2 + 4)^{3/2}} \cdot \partial_{{\bf y'}}^2 \eta({\bf x'}-{\bf y'})\tilde{q}_{l-1}(Q, \kappa_h)\xi({\bf x'}-{\bf y'})d{\bf y'}\\
		&\quad+\sum_{j=0}^{[(l-1)/2]}\tilde b_{l-1-j} (l-1)\int_{\mathbb{R}^2}\frac{{\bf y'}}{(| {\bf y'}|^2 + 4)^{2}} \cdot \partial_{{\bf y'}} \eta({\bf x'}-{\bf y'})\kappa_h^{l-1-2j}Q^{l-2}\\
		&\quad\times(\partial_{\bf x'} \eta({\bf x'})-\partial_{{\bf y'}}\eta({\bf x'}-{\bf y'}))\xi({\bf x'}-{\bf y'})d{\bf y'}\\
		&\quad-\int_{\mathbb{R}^2}\frac{{\bf y'}}{(| {\bf y'}|^2 + 4)^{3/2}} \cdot \partial_{{\bf y'}} \eta({\bf x'}-{\bf y'})\tilde{q}_{l-1}(Q, \kappa_h)\partial_{\bf y'}\xi({\bf x'}-{\bf y'})d{\bf y'}\\
		&\quad+\int_{\mathbb{R}^2} \frac{| {\bf y'}|^2}{(| {\bf y'}|^2 + 4)^{5/2}} (\partial_{\bf x'}\eta({\bf x'}) -\partial_{\bf y'} \eta({\bf x'}-{\bf y'})) \tilde{q}_{l-1}(Q, \kappa_h)\xi({\bf x'}-{\bf y'})d{\bf y'}\\
		&\quad+\sum_{j=0}^{[(l-1)/2]}\tilde b_{l-1-j} (l-1)\int_{\mathbb{R}^2} \frac{| {\bf y'}|^2}{(| {\bf y'}|^2 + 4)^{3}} (\eta({\bf x'}) + \eta({\bf x'}-{\bf y'})) \kappa_h^{l-1-2j}Q^{l-2}\\
		&\quad\times (\partial_{\bf x'}\eta({\bf x'}) -\partial_{\bf y'} \eta({\bf x'}-{\bf y'}))\xi({\bf x'}-{\bf y'})d{\bf y'}\\
		&\quad-\int_{\mathbb{R}^2} \frac{| {\bf y'}|^2}{(| {\bf y'}|^2 + 4)^{5/2}} (\eta({\bf x'}) + \eta({\bf x'}-{\bf y'})) \tilde{q}_{l-1}(Q, \kappa_h)\partial_{\bf y'}\xi({\bf x'}-{\bf y'})d{\bf y'}.
	\end{align*}
    By Lemma \ref{LemmaA4}, for \( l = 2 \):
    \[
    \|\partial_{\mathbf{x'}} \tilde{Q}_{2,*}(\eta) \xi\|_{s,q} \leq C \|\eta\|_{C^{s+1}} \Big( \|\partial_{\mathbf{x'}} \eta\|_{s+1,q_1} \|\xi\|_{s+1,q_2} + \|\eta\|_{s+1,q_1} \|\partial_{\mathbf{x'}} \xi\|_{s+1,q_2} \Big),
    \]
    and for \( l \geq 3 \):
    \[
    \|\partial_{\mathbf{x'}} \tilde{Q}_{l,*}(\eta) \xi\|_{s,q} \leq C l^{s+1} 2^l \|\eta\|_{L^\infty}^{l-3} \|\eta\|_{C^{s+1}}^2 \Big( \|\partial_{\mathbf{x'}} \eta\|_{s+1,q_1} \|\xi\|_{q_2} + \|\eta\|_{s+1,q_1} \|\partial_{\mathbf{x'}} \xi\|_{q_2} \Big).
    \]

    \noindent \textbf{Case 4: \( \tilde{R}_{l,*} \) for \( l \geq 2 \).}
    After differentiation and simplification:
    \[
    \|\partial_{\mathbf{x'}} \tilde{R}_{l,*}(\eta) \xi\|_{s,q} \leq C l 2^l \|\eta\|_{L^\infty}^{l-2} \|\eta\|_{C^s} \Big( \|\partial_{\mathbf{x'}} \eta\|_{s,q_1} \|\xi\|_{q_2} + \|\eta\|_{s,q_1} \|\partial_{\mathbf{x'}} \xi\|_{q_2} \Big).
    \]

    \noindent \textbf{Final estimates:}
    \begin{itemize}
        \item \textbf{Operator \( A_2 \):}
\begin{align*}
    \|A_2(\eta)\xi\|_{s,q} &\leq C \Big( \|\partial_{\mathbf{x'}} \tilde{Q}_{2,*}(\eta)\xi\|_{s,q} + \|\partial_{\mathbf{x'}} \tilde{R}_{2,*}(\eta)\xi\|_{s,q} \Big)\\
    &\leq C \|\eta\|_{C^{s+1}} \Big( \|\partial_{\mathbf{x'}} \eta\|_{s+1,q_1} \|\xi\|_{s+1,q_2} + \|\eta\|_{s+1,q_1} \|\partial_{\mathbf{x'}} \xi\|_{s+1,q_2} \Big).
\end{align*}
        \item \textbf{Operator \( A_l \) (\( l \geq 3 \)):}
\begin{align*}
     \|A_l(\eta)\xi\|_{s,q} &\leq C \Big( \|\partial_{\mathbf{x'}} \tilde{P}_l(\eta)\xi\|_{s,q} + \|\partial_{\mathbf{x'}} \tilde{Q}_{l,*}(\eta)\xi\|_{s,q} + \|\partial_{\mathbf{x'}} \tilde{R}_{l,*}(\eta)\xi\|_{s,q} \Big) \\
     &\leq C l^{M+s+1} 2^l \|\eta\|_{C^1}^{l-3} \|\eta\|_{C^{s+1}}^2 \Big( \|\partial_{\mathbf{x'}} \eta\|_{s+1,q_1} \|\xi\|_{s+1,q_2} + \|\eta\|_{s+1,q_1} \|\partial_{\mathbf{x'}} \xi\|_{s+1,q_2} \Big).
\end{align*}
        \item \textbf{Operator \( A_L^R \):}
\begin{align*}
     \|A_L^R(\eta)\xi\|_{s,q} &\leq \sum_{j=L+1}^\infty \|A_j(\eta)\xi\|_{s,q} \\
     &\leq C L^{M+s+1} 2^L \|\eta\|_{C^1}^{L-3} \|\eta\|_{C^{s+1}}^2 \Big( \|\partial_{\mathbf{x'}} \eta\|_{s+1,q_1} \|\xi\|_{s+1,q_2} + \|\eta\|_{s+1,q_1} \|\partial_{\mathbf{x'}} \xi\|_{s+1,q_2} \Big).
\end{align*}
    \end{itemize}
    This completes the proof.
\end{proof}

Moreover, the proof of the preceding theorem yields the following uniform continuity estimate for the operator \(A_l\):

\begin{corollary} \label{cor A}
    Let \(\frac{1}{q} = \frac{1}{q_1} + \frac{1}{q_2}\). The operators \(A_1\), \(A_2\), \(A_l\), and \(A_L^R\) satisfy the estimates:
    \begin{align*}
        \left\|\frac{d}{dt} A_1(\eta_t) \xi \right\|_{s,q} 
            &\leq C \Big( \|\partial_{\mathbf{x}'} (\eta_2 - \eta_1)\|_{s,q_1} \|\partial_{\mathbf{x}'} \xi\|_{s,q_2} + \|\eta_2 - \eta_1\|_{s,q_1} \|\partial_{\mathbf{x}'}^2 \xi\|_{s,q_2} \Big), \\
        \left\|\frac{d}{dt} A_2(\eta_t) \xi \right\|_{s,q} 
            &\leq C \|\eta_t\|_{C^{s+1}} \Big( \|\partial_{\mathbf{x}'} (\eta_2 - \eta_1)\|_{s+1,q_1} \|\xi\|_{s+1,q_2} + \|\eta_2 - \eta_1\|_{s+1,q_1} \|\partial_{\mathbf{x}'} \xi\|_{s+1,q_2} \Big),
    \end{align*}
    and for all integers \(l, L \geq 3\),
    \begin{align*}
        \left\|\frac{d}{dt} A_l(\eta_t) \xi \right\|_{s,q} 
            &\leq  Cl^{M+s+2}2^l\|\eta_t\|_{C^1}^{l-3} \|\eta_t\|_{C^{s+1}}^2 \\
            &\quad \times \Big( \|\partial_{\mathbf{x}'} (\eta_2 - \eta_1)\|_{s+1,q_1} \|\xi\|_{s+1,q_2} + \|\eta_2 - \eta_1\|_{s+1,q_1} \|\partial_{\mathbf{x}'} \xi\|_{s+1,q_2} \Big), \\
        \left\|\frac{d}{dt} A_L^R(\eta_t) \xi \right\|_{s,q} 
            &\leq CL^{M+s+2}2^L \|\eta_t\|_{C^1}^{L-3} \|\eta_t\|_{C^{s+1}}^2 \\
            &\quad \times \Big( \|\partial_{\mathbf{x}'} (\eta_2 - \eta_1)\|_{s+1,q_1} \|\xi\|_{s+1,q_2} + \|\eta_2 - \eta_1\|_{s+1,q_1} \|\partial_{\mathbf{x}'} \xi\|_{s+1,q_2} \Big),
    \end{align*}
    where \(\eta_t = \eta_1 + t (\eta_2 - \eta_1)\) for \(t \in [0, 1]\), and \(C > 0\) is a constant.
\end{corollary}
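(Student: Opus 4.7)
The plan is to mirror the argument used to prove Theorem \ref{ThmA1'}, applying it term by term to the derivative $\frac{d}{dt}A_l(\eta_t)$ in place of $A_l(\eta)$, with the kernel structure of $A_l$ intact and only the polynomial factors in $\eta$ differentiated. I begin by using the same decomposition
\[
\frac{d}{dt}A_l(\eta_t)\xi = -|D|(1+e^{-2|D|})^{-1}\frac{1}{2\pi}\Bigl(\tfrac{d}{dt}\tilde P_l(\eta_t)\xi + \tfrac{d}{dt}\tilde Q_{l,*}(\eta_t)\xi + \tfrac{d}{dt}\tilde R_{l,*}(\eta_t)\xi\Bigr)
\]
that was employed in Theorem \ref{ThmA1'}, and I then differentiate each component using $\dot\eta_t := \frac{d}{dt}\eta_t = \eta_2-\eta_1$, $\frac{d}{dt}P(\eta_t) = P(\eta_2-\eta_1)$, $\frac{d}{dt}Q(\eta_t) = Q(\eta_2-\eta_1)$, and the product rule applied to the polynomials $\tilde p_l$, $\tilde q_{l-1}$, $\tilde q_l$ in $P$ and $Q$.

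For $A_1$, linearity in $\eta$ gives $\frac{d}{dt}A_1(\eta_t) = A_1(\eta_2-\eta_1)$, so the estimate reduces immediately to the $A_1$ bound of Theorem \ref{ThmA1'} with $\eta$ replaced by $\eta_2-\eta_1$. For $A_2$, only $\tilde Q_{2,*}$ and $\tilde R_{2,*}$ contribute; each of these is quadratic in $\eta$ (through $Q^1$ and the linear factor $\eta(\mathbf{x}')+\eta(\mathbf{y}')$), so differentiation yields a sum of terms each linear in $\dot\eta_t$ and linear in $\eta_t$. Applying Lemma \ref{LemmaA4} exactly as in Case 3 of the proof of Theorem \ref{ThmA1'}, with one factor of $\|\eta_t\|_{C^{s+1}}$ accounting for the remaining $\eta_t$-dependence, produces the stated bound.

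For $l \geq 3$, the main observation is that differentiating a polynomial of degree $l$ in $\eta$ produces a factor of $l$ and replaces one copy of $\eta$ (via $P$ or $Q$) by $\eta_2-\eta_1$; this is exactly the mechanism already used in the proof of Corollary \ref{Cor DA}. I carry out the four-term expansion of $\frac{d}{dt}\partial_{\mathbf{x}'}\tilde P_l(\eta_t)\xi$ (with derivatives both on $\eta$ and $\xi$ using Lemma \ref{LemmaA2}), the six-term expansion of $\frac{d}{dt}\partial_{\mathbf{x}'}\tilde Q_{l,*}(\eta_t)\xi$, and the analogous expansion for $\frac{d}{dt}\partial_{\mathbf{x}'}\tilde R_{l,*}(\eta_t)\xi$. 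Each resulting piece is an operator of the type $S_{l-1}$ or $S_{l-1,*}$ (paired with multiplication and differentiation operators), so Lemmas \ref{LemmaA3} and \ref{LemmaA4} apply verbatim. The extra $l$ from differentiating the polynomial degree is absorbed into the $l^{M+s+2}$ factor, and one copy of $\|\eta_t\|_{s+1,q_1}$ is replaced by $\|\eta_2-\eta_1\|_{s+1,q_1}$ (or the analogous replacement for $\|\partial_{\mathbf{x}'}\eta_t\|_{s+1,q_1}$). Combining the three pieces as in Theorem \ref{ThmA1'} yields the claimed bound for $A_l$.

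Finally, the estimate for $\frac{d}{dt}A_L^R(\eta_t)$ is obtained by summation: since $A_L^R = \sum_{j \geq L+1}A_j$, the smallness assumption on $\|\eta_t\|_{C^1}$ (inherited from $\|\eta_1\|_{C^1},\|\eta_2\|_{C^1}<1/4$) guarantees geometric convergence of $\sum_{j \geq L+1} j^{M+s+2} 2^j \|\eta_t\|_{C^1}^{j-3}$, and the tail is dominated by its leading term, which gives the stated estimate with constants depending on $L$ in the factor $L^{M+s+2}2^L \|\eta_t\|_{C^1}^{L-3}$. The main bookkeeping obstacle is keeping track of the six (resp.\ more) terms produced by differentiating $\tilde Q_{l,*}$ and $\tilde R_{l,*}$ and verifying that all of them fit into the bilinear template of Lemma \ref{LemmaA4} after the substitution $\mathbf{y}'\mapsto\mathbf{x}'-\mathbf{y}'$; once this is done the estimates are routine.
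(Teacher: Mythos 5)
Your proposal is correct and follows essentially the same route as the paper: the paper's own proof only writes out the $A_1$ case explicitly (using the linearity $\frac{d}{dt}A_1(\eta_t)=A_1(\eta_2-\eta_1)$, exactly as you do) and declares the cases $A_2$, $A_l$, $A_L^R$ to "follow analogously" to Theorem \ref{ThmA1'}, which is precisely the term-by-term differentiation of $\tilde P_l$, $\tilde Q_{l,*}$, $\tilde R_{l,*}$ that you carry out, with the extra factor of $l$ from differentiating the degree-$l$ polynomial accounting for the jump from $l^{M+s+1}$ to $l^{M+s+2}$. Your sketch is, if anything, more detailed than the paper's.
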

\begin{proof}
As the derivation follows similar arguments to the proof of Theorem \ref{ThmA1'}, we present the detailed calculation only for $A_1$. The estimates for the remaining operators $A_2$, $A_l$ ($l \geq 3$), and $A_L^R$ ($L \geq 3$) follow analogously. From the operator expression:
\begin{align*}
\frac{d}{dt}A_1(\eta_t)\xi = D(\eta_2-\eta_1)\cdot D\xi - |D|(1+e^{-2|D|})^{-1}(\eta_2-\eta_1)(1+e^{-2|D|})G_0\xi,
\end{align*}
we obtain the norm estimate:
\[
\left\|\frac{d}{dt}A_1(\eta_t)\xi\right\|_{s,q} \leq C \Big( \|\partial_{\mathbf{x}'} (\eta_2-\eta_1)\|_{s,q_1} \|\partial_{\mathbf{x}'} \xi\|_{s,q_2} + \|\eta_2-\eta_1\|_{s,q_1} \|\partial_{\mathbf{x}'}^2 \xi\|_{s,q_2} \Big).
\]
\end{proof}

\begin{remark}\label{LemmaA6}
    Existing estimates for \(A_l\) and \(A_L^R\) in \cite{Craig2, Schanz} state that for \(1<q<\infty\) and \(s \in \mathbb{Z}_{\geq 0}\),
    \[
        \|A_l(\eta) \xi\|_{s,q} \leq C\|\eta\|_{C^1}^{l-1} \Big( \|\eta\|_{C^{s+1}} \|\xi\|_{1,q} + \|\eta\|_{C^1} \|\xi\|_{s+1,q} \Big),
    \]
    and
    \[
        \|A_L^R(\eta) \xi\|_{s,q} \leq C\|\eta\|_{C^1}^{L-1} \Big( \|\eta\|_{C^{s+1}} \|\xi\|_{1,q} + \|\eta\|_{C^1} \|\xi\|_{s+1,q} \Big).
    \]
    Theorem \ref{ThmA1'} improves these bounds by establishing sharper norm estimates, which provide enhanced control over the remainder term \(R_3\) in subsequent error analyses.
\end{remark}

\subsection{Estimate of the remainder operators \(R_1\), \(R_2\), and \(R_3\)}

This subsection establishes bounds for the remainder terms in the Dirichlet-Neumann operator expansion, extending Craig's results. We focus particularly on the Taylor remainder \(R_3\).

\begin{theorem} \label{Thm |D| R3}
    Let \(1 < q < \infty\) satisfy \(\frac{1}{q} = \frac{1}{q_1} + \frac{1}{q_2}\), and let \(s \in \mathbb{N}\) with \(s \geq 0\). Suppose \(\eta\) satisfies
    \[
        \|\eta\|_{C^1} < \frac{1}{4(1+C_0)}, \quad \|\eta\|_{C^{s+1}} < \infty, \quad \text{and} \quad \|\eta\|_{s,q_1} < \infty.
    \]
    Then the Taylor remainder \(R_3(\eta)\) from the expansion of the Dirichlet-Neumann operator \(G(\eta)\) satisfies
    \[
        \big\||D|^{-1} R_3(\eta) \xi\big\|_{s,q} \leq C \|\eta\|_{\max\{s,1\},q_1} \|\eta\|_{C^1} \|\eta\|_{C^{s+1}} \|\xi\|_{s+1,q_2},
    \]
    where \(C > 0\) is a constant independent of \(\eta\) and \(\xi\).
\end{theorem}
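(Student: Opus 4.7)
The plan is to prove the bound by decomposing $R_3(\eta)$ into a finite sum of terms that are each at least cubic in $\eta$, and then estimating every piece via the refined $A_l$- and $B_l$-estimates in Theorems \ref{ThmA1}, \ref{ThmA2}, \ref{ThmA1'}, \ref{ThmA2'} and Lemma \ref{LemmaA7}. First I would use the exact identity $(1-B(\eta))G(\eta)=G_0+A(\eta)$ and iterate it three times to obtain
\[ G(\eta) = \sum_{m=0}^{2} B(\eta)^{m}\bigl(G_0+A(\eta)\bigr) + B(\eta)^{3}G(\eta), \]
then split $A=A_1+A_2+A_3^R$ and $B=B_1+B_2+B_3^R$ and read off the known expressions $G_1(\eta)=A_1+B_1G_0$ and $G_2(\eta)=A_2+B_1A_1+B_2G_0+B_1^2G_0$ by collecting the terms of order one and two in $\eta$. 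Everything that remains belongs to $R_3(\eta)$ and decomposes into six families: $A_3^R$, $B_3^RG_0$, the cubic-and-higher cross terms of $BA-B_1A_1$, the cubic-and-higher cross terms of $B^2G_0-B_1^2G_0$, the piece $B^2A$, and the tail $B^3G(\eta)$. Each summand contains at least three factors of $\eta$ once one counts $B_l$ and $A_l$ as carrying $l$ factors.

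The second step is the termwise estimation. Theorem \ref{ThmA1} applied with $L=3$ to $A_3^R$ directly delivers the factors $\|\eta\|_{C^1}\|\eta\|_{\max\{s,1\},q_1}\|\eta\|_{C^{s+1}}$ once $\|\partial_{{\bf x'}}\xi\|_{s,q_2}$ and $\|\xi\|_{1,q_2}$ are absorbed into $\|\xi\|_{s+1,q_2}$. For $B_3^RG_0$ I would invoke Theorem \ref{ThmA2} with $L=3$ and input $G_0\xi$, using the Mikhlin-type bound $\|G_0\xi\|_{s,q_2}\le C\|\xi\|_{s+1,q_2}$ from Remark \ref{LemmaA1} to reduce to the target form. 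For the mixed terms $B_iA_j$ with $i+j\ge 3$ I would write $|D|^{-1}B_i(A_j\xi)$ and pair Theorem \ref{ThmA2} with Theorem \ref{ThmA1'}, distributing the Hölder exponents by a subdivision $\tfrac{1}{q_2}=\tfrac{1}{\tilde q_1}+\tfrac{1}{\tilde q_2}$ so that the $\eta$-norms emerging from $A_j$ are controlled by $\|\eta\|_{C^{s+1}}$ and $\|\eta\|_{\max\{s,1\},q_1}$; the $B_iB_jG_0$ terms are treated analogously with Theorem \ref{ThmA2'} in place of Theorem \ref{ThmA1'}. For the tail pieces $B^2A$ and $B^3G(\eta)$ I would peel off one outer factor of $B$, apply Theorem \ref{ThmA2} to absorb the $|D|^{-1}$, then use Lemma \ref{LemmaA7} on the remaining $B$-power and close with Theorem \ref{ThmA1'}, or the representation $G(\eta)=(1-B)^{-1}(G_0+A)$, for the innermost factor. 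Summing the bounds over $l,L\ge 3$ converges geometrically by the factor $2^l\|\eta\|_{C^1}^{l-2}$ present in each estimate together with the smallness hypothesis $\|\eta\|_{C^1}<\tfrac{1}{4(1+C_0)}$, while the monotone inequalities $\|\eta\|_{C^1}\le\|\eta\|_{C^{s+1}}$ and $\|\eta\|_{s,q_1}\le\|\eta\|_{\max\{s,1\},q_1}$ let me collapse any surplus powers of $\|\eta\|_{C^1}$ into the prescribed trilinear factor.

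The main obstacle I anticipate is the bookkeeping in the cross terms: each summand must produce exactly one factor in $\|\eta\|_{\max\{s,1\},q_1}$, one in $\|\eta\|_{C^1}$, and one in $\|\eta\|_{C^{s+1}}$, with the $\xi$-side collapsing to $\|\xi\|_{s+1,q_2}$. The constraint $\tfrac{1}{q}=\tfrac{1}{q_1}+\tfrac{1}{q_2}$, together with the further subdivision needed when composing $B_i$ with $A_j$, forces a careful choice of intermediate Hölder exponents, and one must verify that after these subdivisions the $\eta$-norms do not strengthen beyond $\|\eta\|_{C^{s+1}}$ or $\|\eta\|_{\max\{s,1\},q_1}$, and that the $\xi$-norms do not exceed $\|\xi\|_{s+1,q_2}$. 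Matching this trilinear homogeneity term by term, rather than any new analytic ingredient, is where the real work lies.
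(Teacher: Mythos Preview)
Your decomposition of $R_3$ is equivalent to the paper's (the paper groups some pieces through an explicit $(1-B)^{-1}$ rather than your tail term $B^3G(\eta)$, but the content is the same), and your treatment of the stand-alone pieces $A_3^R$ and $B_3^RG_0$ via Theorems \ref{ThmA1} and \ref{ThmA2} is exactly right. The gap is in your handling of the composed terms such as $|D|^{-1}B_i(A_j\xi)$ and $|D|^{-1}B_i(B_jG_0\xi)$. You propose to apply Theorem \ref{ThmA2} to the outer $|D|^{-1}B_i$ (producing one factor $\|\eta\|_{s,q_1}$) and then Theorem \ref{ThmA1'} or \ref{ThmA2'} to the inner $A_j$ or $B_j$ via a further H\"older split $\tfrac{1}{q_2}=\tfrac{1}{\tilde q_1}+\tfrac{1}{\tilde q_2}$. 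But those refined theorems necessarily produce a \emph{second} Sobolev factor $\|\eta\|_{s+1,\tilde q_1}$ (or $\|\partial_{{\bf x}'}\eta\|_{s+1,\tilde q_1}$), and there is no hypothesis on $\eta$ in any $L^{\tilde q_1}$ space other than $L^{q_1}$; nor can an $L^{\tilde q_1}$ norm on $\mathbb R^2$ be controlled by $\|\eta\|_{C^{s+1}}$. So the trilinear budget $\|\eta\|_{\max\{s,1\},q_1}\|\eta\|_{C^1}\|\eta\|_{C^{s+1}}$ cannot be met this way.

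The fix, and what the paper actually does, is to reserve the refined split estimates of Theorems \ref{ThmA1} and \ref{ThmA2} for the \emph{outermost} factor carrying the $|D|^{-1}$, and to estimate every \emph{inner} factor with the original Craig--Schanz--Sulem bounds recorded in Remarks \ref{LemmaA5} and \ref{LemmaA6} (and their iterate, Lemma \ref{LemmaA7}). Those unrefined bounds act on a single exponent, $\|A_l(\eta)\zeta\|_{s,q_2}\le C\|\eta\|_{C^1}^{l-1}(\|\eta\|_{C^{s+1}}\|\zeta\|_{1,q_2}+\|\eta\|_{C^1}\|\zeta\|_{s+1,q_2})$ and similarly for $B_l$, so no further H\"older subdivision is needed and the $\eta$-contributions from the inner factors land in $\|\eta\|_{C^1}$ and $\|\eta\|_{C^{s+1}}$ as required. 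Concretely: for $|D|^{-1}B_1B_2^RG_0\xi$ one uses Theorem \ref{ThmA2} on $|D|^{-1}B_1$ (yielding $\|\eta\|_{s,q_1}\|B_2^RG_0\xi\|_{q_2}$) and Remark \ref{LemmaA5} on $B_2^RG_0\xi$; for $(1-B)^{-1}A_3^R$ one writes $|D|^{-1}A_3^R+|D|^{-1}B\sum_{m\ge 0}B^mA_3^R$, applies Theorems \ref{ThmA1} and \ref{ThmA2} to the two outer pieces, and closes the inner sum with Lemma \ref{LemmaA7} together with Remark \ref{LemmaA6}. Once you make this switch, the rest of your outline (geometric convergence under $\|\eta\|_{C^1}<\tfrac{1}{4(1+C_0)}$, absorption of $\|\partial_{{\bf x}'}\xi\|_{s,q_2}$ and $\|\xi\|_{1,q_2}$ into $\|\xi\|_{s+1,q_2}$) goes through.
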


\begin{proof}
    We recall that 
    \begin{equation*}
        \begin{split}
            R_3 &= \big(B_3^R + B_1 (B_2 + B_3^R) + B_2 (B_1 + B_2 + B_3^R) + B_3^R (B_1 + B_2 + B_3^R)\big) G_0 \\
            &\quad + \big(B_2 + B_3^R\big) A_1 + (1 - B)^{-1} \big(A_3^R + B A_2 + B^2 A_1 + B^3 G_0\big) \\
            &= \big(B_3^R + B_1 B_2^R + B_2^R B_1^R\big) G_0 + B_2^R A_1 \\
            &\quad + (1 - B)^{-1} \big(A_3^R + B (A_2 + B A_1 + B^2 G_0)\big).
        \end{split}
    \end{equation*}
    By Remark \ref{LemmaA5} and Theorem \ref{ThmA2}, we have 
    \begin{align*}
        \||D|^{-1} B_3^{R}(\eta)(G_0 \xi) \|_{s,q} 
        &\le C \|\eta\|_{s,q_1} \|\eta\|_{C^1} \big( \|\eta\|_{C^1} \| \partial_{\mathbf{x}'} \xi \|_{s,q_2} + \|\eta\|_{C^{s+1}} \|\partial_{\mathbf{x}'} \xi\|_{q_2} \big) \\
        &\le C \|\eta\|_{s,q_1} \|\eta\|_{C^1} \|\eta\|_{C^{s+1}} \| \partial_{\mathbf{x}'} \xi \|_{s,q_2},
    \end{align*}
    \begin{equation*}
        \||D|^{-1} B_1(\eta) B_2^R(\eta) G_0 \xi \|_{s,q} \le C \|\eta\|_{s,q_1} \|\eta\|_{C^1}^2 \|\partial_{\mathbf{x}'} \xi\|_{q_2},
    \end{equation*}
    and 
    \begin{align*}
        \||D|^{-1} B_2^R(\eta) B_1^R(\eta) G_0 \xi \|_{s,q} 
        &\le C \|\eta\|_{s,q_1} \big( \|\eta\|_{C^1} \| B_1^R(\eta) G_0 \xi \|_{s,q_2} + \|\eta\|_{C^{s+1}} \| B_1^R(\eta) G_0 \xi \|_{q_2} \big) \\
        &\le C \|\eta\|_{s,q_1} \|\eta\|_{C^1} \|\eta\|_{C^{s+1}} \| \partial_{\mathbf{x}'} \xi \|_{s,q_2}.
    \end{align*}
    Using Remark \ref{LemmaA6} and Theorem \ref{ThmA2}, we obtain 
    \begin{align*}
        \||D|^{-1} B_2^R(\eta) A_1(\eta) \xi \|_{s,q}
        &\le C \|\eta\|_{s,q_1} \big( \|\eta\|_{C^1} \| A_1(\eta) \xi \|_{s,q_2} + \|\eta\|_{C^{s+1}} \| A_1(\eta) \xi \|_{q_2} \big) \\
        &\le C \|\eta\|_{s,q_1} \|\eta\|_{C^1} \big( \|\eta\|_{C^{s+1}} \|\xi\|_{1,q_2} + \|\eta\|_{C^1} \|\xi\|_{s+1,q_2} \big).
    \end{align*}
    From Lemma \ref{LemmaA7}, the Neumann series 
    \begin{equation*}
        |D|^{-1} (1 - B)^{-1} A_3^R = |D|^{-1} A_3^R + |D|^{-1} B \sum_{m=0}^{\infty} B^m A_3^R 
    \end{equation*}
    converges for $\|\eta\|_{C^1} < 1/(1 + C_0)$, since the geometric series $\sum_{m=1}^{\infty} (1 + C_0)^{m-1} \|\eta\|_{C^1}^{m-1}$ and $\sum_{m=1}^{\infty} \|\eta\|_{C^1}^m$ converge in this region. By Theorems \ref{ThmA1} and \ref{ThmA2}, we get
    \begin{align*}
        &\||D|^{-1} (1 - B(\eta))^{-1} A_3^R(\eta) \xi \|_{s,q} \\
        &\le \||D|^{-1} A_3^R(\eta) \xi \|_{s,q} + \Big\| |D|^{-1} B \sum_{m=0}^{\infty} B^m A_3^R(\eta) \xi \Big\|_{s,q} \\
        &\le C \|\eta\|_{C^1} \|\eta\|_{\max\{s,1\},q_1} \big( \|\eta\|_{C^1} \|\partial_{\mathbf{x}'} \xi\|_{s,q_2} + \|\eta\|_{C^{s+1}} \|\xi\|_{1,q_2} \big) \\
        &\quad + C \|\eta\|_{s,q_1} \Big( \Big\| \sum_{m=0}^{\infty} B^m A_3^R(\eta) \xi \Big\|_{q_2} + \|\eta\|_{C^1} \Big\| \sum_{m=0}^{\infty} B^m A_3^R(\eta) \xi \Big\|_{s,q_2} \\
        &\qquad + \|\eta\|_{C^{s+1}} \Big\| \sum_{m=0}^{\infty} B^m A_3^R(\eta) \xi \Big\|_{q_2} \Big).
    \end{align*}
    Lemma \ref{LemmaA7} and Remark \ref{LemmaA6} yield
    \begin{align*}
        \Big\| \sum_{m=0}^{\infty} B^m A_3^R(\eta) \xi \Big\|_{s,q_2} 
        &\le \| A_3^R(\eta) \xi \|_{s,q_2} + \sum_{m=1}^{\infty} \| B^m A_3^R(\eta) \xi \|_{s,q_2} \\
        &\le \| A_3^R(\eta) \xi \|_{s,q_2} + \sum_{m=1}^{\infty} C (1 + C_0)^{m-1} \|\eta\|_{C^1}^{m-1} \\
        &\quad \times \big( \|\eta\|_{C^{s+1}} \| A_3^R(\eta) \xi \|_{q_2} + \|\eta\|_{C^1} \| A_3^R(\eta) \xi \|_{s,q_2} \big) \\
        &\le C \|\eta\|_{C^1}^2 \big( \|\eta\|_{C^{s+1}} \|\xi\|_{1, q_2} + \|\eta\|_{C^1} \|\xi\|_{s+1, q_2} \big).
    \end{align*}
    Therefore,
    \begin{align*}
        &\||D|^{-1} (1 - B(\eta))^{-1} A_3^R(\eta) \xi \|_{s,q} \\
        &\le C \|\eta\|_{C^1} \|\eta\|_{\max\{s,1\},q_1} \big( \|\eta\|_{C^1} \|\partial_{\mathbf{x}'} \xi\|_{s,q_2} + \|\eta\|_{C^{s+1}} \|\xi\|_{1,q_2} \big) \\
        &\quad + C \|\eta\|_{s,q_1} \|\eta\|_{C^1}^3 \big( \|\xi\|_{1, q_2} + \|\eta\|_{C^{s+1}} \|\xi\|_{1, q_2} + \|\eta\|_{C^1} \|\xi\|_{s+1, q_2} \big).
    \end{align*}
    Similarly, Lemma \ref{LemmaA7} implies the Neumann series
    \begin{align*}
        &|D|^{-1} (1 - B)^{-1} B (A_2 + B A_1 + B^2 G_0) \\
        &= |D|^{-1} B (A_2 + B A_1 + B^2 G_0) + |D|^{-1} B \sum_{m=1}^{\infty} B^m (A_2 + B A_1 + B^2 G_0)
    \end{align*}
    converges for $\|\eta\|_{C^1} < 1/(1 + C_0)$. Theorems \ref{ThmA1} and \ref{ThmA2} give
    \begin{align*}
        &\||D|^{-1} (1 - B)^{-1} B (A_2 + B A_1 + B^2 G_0) \xi \|_{s,q} \\
        &\le \||D|^{-1} B (A_2 + B A_1 + B^2 G_0) \xi \|_{s,q} + \Big\| |D|^{-1} B \sum_{m=1}^{\infty} B^m (A_2 + B A_1 + B^2 G_0) \xi \Big\|_{s,q} \\
        &\le C \|\eta\|_{s,q_1} \Big( \| (A_2 + B A_1 + B^2 G_0) \xi \|_{q_2} + \Big\| \sum_{m=1}^{\infty} B^m (A_2 + B A_1 + B^2 G_0) \xi \Big\|_{q_2} \Big) \\
        &\quad + C \|\eta\|_{s,q_1} \|\eta\|_{C^1} \Big( \| (A_2 + B A_1 + B^2 G_0) \xi \|_{s,q_2} + \Big\| \sum_{m=1}^{\infty} B^m (A_2 + B A_1 + B^2 G_0) \xi \Big\|_{s,q_2} \Big) \\
        &\quad + C \|\eta\|_{s,q_1} \|\eta\|_{C^{s+1}} \Big( \| (A_2 + B A_1 + B^2 G_0) \xi \|_{q_2} + \Big\| \sum_{m=1}^{\infty} B^m (A_2 + B A_1 + B^2 G_0) \xi \Big\|_{q_2} \Big).
    \end{align*}
    Remark \ref{LemmaA6} and Lemma \ref{LemmaA7} imply
    \begin{align*}
        \| (A_2 + B A_1 + B^2 G_0) \xi \|_{s,q_2} 
        &\le C \|\eta\|_{C^1} \big( \|\eta\|_{C^{s+1}} \|\xi\|_{1,q_2} + \|\eta\|_{C^1} \|\xi\|_{s+1,q_2} \big),
    \end{align*}
    and
    \begin{align*}
        & \Big\| \sum_{m=1}^{\infty} B^m (A_2 + B A_1 + B^2 G_0) \xi \Big\|_{s,q_2} \\
        &\le C \sum_{m=1}^{\infty} (1 + C_0)^{m-1} \|\eta\|_{C^1}^{m-1} \\
        &\quad \times \big( \|\eta\|_{C^{s+1}} \| (A_2 + B A_1 + B^2 G_0) \xi \|_{q_2} + \|\eta\|_{C^1} \| (A_2 + B A_1 + B^2 G_0) \xi \|_{s,q_2} \big) \\
        &\le C \|\eta\|_{C^1}^2 \big( \|\eta\|_{C^{s+1}} \|\xi\|_{1,q_2} + \|\eta\|_{C^1} \|\xi\|_{s+1,q_2} \big).
    \end{align*}
    Hence,
    \begin{align*}
        &\||D|^{-1} (1 - B)^{-1} B (A_2 + B A_1 + B^2 G_0) \xi \|_{s,q} \\
        &\le C \|\eta\|_{s,q_1} \|\eta\|_{C^1}^2 \big( \|\xi\|_{1,q_2} + \|\eta\|_{C^{s+1}} \|\xi\|_{1,q_2} + \|\eta\|_{C^1} \|\xi\|_{s+1,q_2} \big).
    \end{align*}
    Combining all estimates, we obtain
    \begin{align*}
        \||D|^{-1} R_3(\eta) \xi \|_{s, q} 
        &\leq C \|\eta\|_{s,q_1} \|\eta\|_{C^1} \|\eta\|_{C^{s+1}} \| \partial_{\mathbf{x}'} \xi \|_{s,q_2} \\
        &\quad + C \|\eta\|_{s,q_1} \|\eta\|_{C^1} \big( \|\eta\|_{C^{s+1}} \|\xi\|_{1,q_2} + \|\eta\|_{C^1} \|\xi\|_{s+1,q_2} \big) \\
        &\quad + C \|\eta\|_{C^1} \|\eta\|_{\max\{s,1\},q_1} \big( \|\eta\|_{C^1} \|\partial_{\mathbf{x}'} \xi\|_{s,q_2} + \|\eta\|_{C^{s+1}} \|\xi\|_{1,q_2} \big) \\
        &\quad + C \|\eta\|_{s,q_1} \|\eta\|_{C^1}^3 \big( \|\xi\|_{1, q_2} + \|\eta\|_{C^{s+1}} \|\xi\|_{1, q_2} + \|\eta\|_{C^1} \|\xi\|_{s+1, q_2} \big) \\
        &\quad + C \|\eta\|_{s,q_1} \|\eta\|_{C^1}^2 \big( \|\xi\|_{1,q_2} + \|\eta\|_{C^{s+1}} \|\xi\|_{1,q_2} + \|\eta\|_{C^1} \|\xi\|_{s+1,q_2} \big) \\
        &\le C \|\eta\|_{\max\{s,1\},q_1} \|\eta\|_{C^1} \|\eta\|_{C^{s+1}} \|\xi\|_{s+1,q_2}.
    \end{align*}
    This completes the proof.
\end{proof}

Using estimates similar to those in the previous theorem, we obtain the following uniform continuity result for the remainder term $|D|^{-1} R_3$.

\begin{corollary} \label{cor D R3}
    Let \(1 < q < \infty\) with \(\frac{1}{q} = \frac{1}{q_1} + \frac{1}{q_2}\), and let \(s \in \mathbb{N}\), \(s \geq 0\). Suppose \(\eta_1, \eta_2\) satisfy 
    \[
        \|\eta_i\|_{C^1} < \frac{1}{4(1+C_0)}, \quad \|\eta_i\|_{C^{s+1}} < \infty, \quad \text{and} \quad \|\eta_i\|_{s,q_1} < \infty
    \]
    for \(i = 1,2\). Then for the Taylor remainder \(R_3(\eta)\) in the expansion of the Dirichlet-Neumann operator \(G(\eta)\), we have
    \[
        \left\| \frac{d}{dt} |D|^{-1} R_3(\eta_t) \xi \right\|_{s, q} 
        \leq C \| \eta_2 - \eta_1 \|_{\max\{s,1\},q_1} \| \eta_t \|_{C^1} \| \eta_t \|_{C^{s+1}} \| \xi \|_{s+1,q_2}
    \]
    where \(\eta_t = (1-t)\eta_1 + t\eta_2\) and \(t \in [0,1]\), with \(C > 0\) independent of \(\eta_1, \eta_2, \xi\), and \(t\).
\end{corollary}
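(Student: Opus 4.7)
The plan is to mirror the argument of Theorem \ref{Thm |D| R3} term-by-term, differentiating the decomposition
\[
R_3 = (B_3^R + B_1 B_2^R + B_2^R B_1^R)G_0 + B_2^R A_1 + (1-B)^{-1}\bigl(A_3^R + B(A_2 + BA_1 + B^2 G_0)\bigr)
\]
with respect to $t$, and substituting the uniform-continuity analogues of the operator estimates. Since $\eta_t$ is an affine segment joining $\eta_1$ and $\eta_2$, the hypothesis $\|\eta_i\|_{C^1} < \tfrac{1}{4(1+C_0)}$ yields $\|\eta_t\|_{C^1} < \tfrac{1}{4(1+C_0)}$ and $\|\eta_t\|_{C^{s+1}} \leq \max_i \|\eta_i\|_{C^{s+1}}$ for every $t \in [0,1]$, so all series used in Theorem \ref{Thm |D| R3} remain absolutely convergent along the path.

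For the polynomial pieces in $B_1, B_2^R, B_3^R, A_1$, I would apply the Leibniz rule and estimate each resulting factor by Theorems \ref{ThmA1}, \ref{ThmA2}, \ref{ThmA1'}, \ref{ThmA2'} together with Corollaries \ref{cor3.1}, \ref{Cor DA}, \ref{cor A}. In every product exactly one factor carries a derivative in $t$, which produces the single power $\|\eta_2-\eta_1\|_{\max\{s,1\},q_1}$ (or $\|\eta_2-\eta_1\|_{s+1,q_1}$ absorbed into the same norm via embedding), while the remaining factors are bounded in terms of $\|\eta_t\|_{C^1}$ and $\|\eta_t\|_{C^{s+1}}$ exactly as in the proof of Theorem \ref{Thm |D| R3}. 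The structural identity
\[
\|\text{derivative of a product}\|_{s,q} \;\leq\; \sum_{\text{factors}} \|(\text{derivative of one factor})\| \cdot \prod_{\text{other factors}} \|(\text{factor})\|
\]
makes the bookkeeping transparent: each summand reproduces the bound of Theorem \ref{Thm |D| R3} after replacing one $\|\eta\|$ by $\|\eta_2-\eta_1\|$.

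The main obstacle is the Neumann series $(1-B(\eta_t))^{-1}$. Here I would use the resolvent identity
\[
\frac{d}{dt}(1-B(\eta_t))^{-1} = (1-B(\eta_t))^{-1}\, \frac{dB(\eta_t)}{dt}\, (1-B(\eta_t))^{-1},
\]
so that for any operator $T(\eta_t) \in \{A_3^R(\eta_t),\; B(\eta_t)(A_2+BA_1+B^2G_0)(\eta_t)\}$,
\[
\frac{d}{dt}\bigl[(1-B)^{-1}T\xi\bigr] = (1-B)^{-1}\tfrac{dB}{dt}(1-B)^{-1}T\xi + (1-B)^{-1}\tfrac{dT}{dt}\xi.
\]
For the first summand I would control $(1-B)^{-1}T\xi$ in $W^{s,q_2}$ exactly as in the last two displays of the proof of Theorem \ref{Thm |D| R3}, then apply Corollary \ref{cor3.1} to $\tfrac{dB}{dt}$, and finally close with Lemma \ref{LemmaA7} on the outer $(1-B)^{-1}$. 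For the second summand, $\tfrac{dT}{dt}$ is expanded via the product rule and each term bounded by Corollaries \ref{cor3.1}, \ref{Cor DA}, \ref{cor A} in combination with the original norms from Theorems \ref{ThmA1}, \ref{ThmA2}. Both convergent geometric sums come from $\|\eta_t\|_{C^1}(1+C_0) < \tfrac14$.

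Summing all contributions and absorbing lower-order factors using $\|\eta_t\|_{C^1} \leq \|\eta_t\|_{C^{s+1}}$ and $\|\xi\|_{1,q_2} \leq \|\xi\|_{s+1,q_2}$ produces the single composite bound
\[
\left\| \frac{d}{dt} |D|^{-1} R_3(\eta_t) \xi \right\|_{s,q} \leq C\,\|\eta_2-\eta_1\|_{\max\{s,1\},q_1}\, \|\eta_t\|_{C^1}\, \|\eta_t\|_{C^{s+1}}\, \|\xi\|_{s+1,q_2},
\]
with $C$ independent of $t$, $\eta_1$, $\eta_2$, $\xi$. The only genuinely new analytic input beyond the proof of Theorem \ref{Thm |D| R3} is the resolvent identity; all remaining estimates are mechanical transcriptions once that step is in place.
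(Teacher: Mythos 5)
Your proposal follows exactly the route the paper takes: the paper's own proof of this corollary is a one-line citation of Corollaries \ref{cor3.1}, \ref{Cor DA} and \ref{cor A} applied ``through the same analytical framework'' as Theorem \ref{Thm |D| R3}, and your term-by-term differentiation of the explicit decomposition of $R_3$ --- with the resolvent identity supplying $\tfrac{d}{dt}(1-B(\eta_t))^{-1}$, the one genuinely new step the paper leaves implicit --- is precisely that framework spelled out. One small slip worth fixing: the parenthetical claim that $\|\eta_2-\eta_1\|_{s+1,q_1}$ can be ``absorbed into'' $\|\eta_2-\eta_1\|_{\max\{s,1\},q_1}$ by embedding goes the wrong way (the $s{+}1$ norm is the stronger one), so in the terms where the $t$-derivative falls on an inner factor of a composition you should instead estimate that factor in H\"older norms of $\eta_2-\eta_1$ (the $d/dt$ analogues of the bounds in Remark \ref{LemmaA5} and Remark \ref{LemmaA6}), exactly as the inner factors are handled in the proof of Theorem \ref{Thm |D| R3}; this is the same bookkeeping the paper itself glosses over.
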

\begin{proof}
Using the explicit form of the $R_3$ operator, this result is established by applying 
Corollaries \ref{cor3.1}, \ref{Cor DA}, and \ref{cor A} through the same analytical 
framework developed in the preceding theorem's proof.
\end{proof}

We need to make a slight modification to the norm of $ R_3 $ to align with our nonlinear term estimates in Section \ref{sec5}.

\begin{theorem}\label{Thm R_3}
    Let \(1 < q < \infty\) with \(\frac{1}{q} = \frac{1}{q_1} + \frac{1}{q_2}\), and let \(s \in \mathbb{N}\), \(s \geq 0\). Assume \(\eta\) satisfies 
    \[
        \|\eta\|_{C^1} < \frac{1}{4(1+C_0)}, \quad \|\eta\|_{C^{s+1}} < \infty, \quad \text{and} \quad \|\eta\|_{s,q_1} < \infty.
    \]
    Then the remainder term \(R_3(\eta)\) satisfies the estimate:
    \[
        \|R_3(\eta)\xi\|_{s,q} \leq C \|\eta\|_{C^{s+1}}^2 \left( \|\partial_{\mathbf{x}'} \eta\|_{s+1,q_1} \|\xi\|_{s+1,q_2} + \|\eta\|_{s+1,q_1} \|\partial_{\mathbf{x}'} \xi\|_{s+1,q_2} \right)
    \]
    where \(C > 0\) is independent of \(\eta\) and \(\xi\).
\end{theorem}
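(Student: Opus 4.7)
The plan is to mirror the proof of Theorem \ref{Thm |D| R3} but invoke the \emph{sharper} norm estimates of Theorem \ref{ThmA2'} (for $B_l$, $B_L^R$) and Theorem \ref{ThmA1'} (for $A_l$, $A_L^R$) in place of their $|D|^{-1}$-counterparts. Starting from the same algebraic decomposition
\[
R_3 = \bigl(B_3^R + B_1 B_2^R + B_2^R B_1^R\bigr) G_0 + B_2^R A_1 + (1-B)^{-1}\bigl(A_3^R + B(A_2 + B A_1 + B^2 G_0)\bigr),
\]
I will bound each summand in the $\|\cdot\|_{s,q}$-norm, using the fact that $G_0 = |D|\tanh(|D|)$ trades a Sobolev derivative for $\partial_{\mathbf{x}'}$ applied to the argument (so $\|G_0 \xi\|_{s+1,q_2} \lesssim \|\partial_{\mathbf{x}'}\xi\|_{s+1,q_2}$ after absorbing the bounded multiplier $\tanh(|D|)$).

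The first block is controlled termwise: Theorem \ref{ThmA2'} gives, for example,
\[
\|B_3^R(\eta) G_0 \xi\|_{s,q} \leq C \|\eta\|_{C^1}\|\eta\|_{C^{s+1}}\bigl(\|\partial_{\mathbf{x}'}\eta\|_{s,q_1}\|G_0\xi\|_{s,q_2} + \|\eta\|_{s,q_1}\|\partial_{\mathbf{x}'} G_0\xi\|_{s,q_2}\bigr),
\]
and the composed terms $B_1 B_2^R G_0$, $B_2^R B_1^R G_0$, $B_2^R A_1$ are treated by first pushing the inner operator's output into $\|\cdot\|_{s+1,q_2}$ (where Theorems \ref{ThmA2'} and \ref{ThmA1'} supply clean bounds with an extra factor of $\|\eta\|_{C^1}$ or $\|\eta\|_{C^{s+1}}$), then applying the outer $B$-estimate. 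Each such composition produces exactly the required combination $\|\partial_{\mathbf{x}'}\eta\|_{s+1,q_1}\|\xi\|_{s+1,q_2} + \|\eta\|_{s+1,q_1}\|\partial_{\mathbf{x}'}\xi\|_{s+1,q_2}$, modulated by two factors of smallness from $\|\eta\|_{C^{s+1}}$ or $\|\eta\|_{C^1}$.

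For the Neumann-series term $(1-B)^{-1}(\cdots)$, I expand as $\sum_{m\ge 0} B^m(\cdots)$ and use Lemma \ref{LemmaA7} to control each power of $B$ by $(1+C_0)^{m-1}\|\eta\|_{C^1}^{m-1}$ times the $\|\cdot\|_{s,q}$-norm of the argument, so the series converges geometrically under the hypothesis $\|\eta\|_{C^1} < \tfrac{1}{4(1+C_0)}$. The inner quantity $A_3^R + B(A_2 + BA_1 + B^2 G_0)$ is first estimated by Theorems \ref{ThmA1'} and \ref{ThmA2'}, yielding a bound of the form $C\|\eta\|_{C^{s+1}}^2\bigl(\|\partial_{\mathbf{x}'}\eta\|_{s+1,q_1}\|\xi\|_{s+1,q_2} + \|\eta\|_{s+1,q_1}\|\partial_{\mathbf{x}'}\xi\|_{s+1,q_2}\bigr)$, whose structure is preserved under further multiplication by $B^m$.

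The main obstacle will be bookkeeping: ensuring that every composition ultimately delivers \emph{exactly two} factors of the $C^{s+1}$-smallness and the symmetric product-of-norms structure $\|\partial_{\mathbf{x}'}\eta\|_{s+1,q_1}\|\xi\|_{s+1,q_2} + \|\eta\|_{s+1,q_1}\|\partial_{\mathbf{x}'}\xi\|_{s+1,q_2}$, rather than a weaker hybrid. The delicate point is the term $B^2 G_0 \xi$ inside the Neumann block, where one needs to notice that Theorem \ref{ThmA2'} applied twice already produces the required bilinear form in $\eta$ and $\xi$, so that the leftover operator $G_0$ is absorbed by passing from $\|\cdot\|_{s+1,q_2}$ to $\|\partial_{\mathbf{x}'}\xi\|_{s+1,q_2}$; once this is confirmed, summing the geometric series and collecting the dominant powers of $\|\eta\|_{C^{s+1}}$ yields the claimed estimate.
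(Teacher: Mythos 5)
Your proposal is correct and follows essentially the same route as the paper: the same decomposition of $R_3$, termwise bounds via Theorems \ref{ThmA2'} and \ref{ThmA1'} (with Remark \ref{LemmaA5} for the outer compositions), absorption of $G_0$ into $\partial_{\mathbf{x}'}\xi$ via the bounded multiplier $\tanh(|D|)$, and a geometrically convergent Neumann series controlled by Lemma \ref{LemmaA7}. The bookkeeping concern you flag resolves exactly as you anticipate, with mixed factors $\|\eta\|_{C^1}\|\eta\|_{C^{s+1}}$ absorbed into $\|\eta\|_{C^{s+1}}^2$.
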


\begin{proof}
    Recall the expression for \( R_3 \):
    \[
    R_3 = \big(B_3^R + B_1 B_2^R + B_2^R B_1^R\big) G_0 + B_2^R A_1 + (1 - B)^{-1} \big(A_3^R + B (A_2 + B A_1 + B^2 G_0)\big).
    \]
    Applying Theorem \ref{ThmA2'}, we estimate the first term:
    \begin{align*}
        \|B_3^{R}(\eta)(G_0\xi)\|_{s,q}
        &\le C\|\eta\|_{C^1}\|\eta\|_{C^{s+1}} \left(\|\partial_{\bf x'}\eta\|_{s,q_1}\|G_0\xi\|_{s,q_2} + \|\eta\|_{s,q_1}\|\partial_{\bf x'} G_0\xi\|_{s,q_2}\right) \\
        &\le C\|\eta\|_{C^1}\|\eta\|_{C^{s+1}} \left(\|\partial_{\bf x'}\eta\|_{s,q_1}\|\partial_{\bf x'} \xi\|_{s,q_2} + \|\eta\|_{s,q_1}\|\partial_{\bf x'}^2\xi\|_{s,q_2}\right).
    \end{align*}
    For the second term, combining Remark \ref{LemmaA5} and Theorem \ref{ThmA2'} yields
    \begin{align*}
        \|B_1(\eta)B_2^R(\eta)G_0\xi\|_{s,q}
        &\le C\|\eta\|_{C^{s+1}}\|B_2^R(\eta)G_0\xi\|_{q} + \|\eta\|_{C^1}\|B_2^R(\eta)G_0\xi\|_{s,q} \\
        &\le C\|\eta\|_{C^1}\|\eta\|_{C^{s+1}} \big(\|\partial_{\bf x'}\eta\|_{q_1}\|G_0\xi\|_{q_2} + \|\eta\|_{q_1}\|\partial_{\bf x'} G_0\xi\|_{q_2}\big) \\
        &\quad + C\|\eta\|_{C^1}\|\eta\|_{C^{s+1}} \big(\|\partial_{\bf x'}\eta\|_{s,q_1}\|G_0\xi\|_{s,q_2} + \|\eta\|_{s,q_1}\|\partial_{\bf x'} G_0\xi\|_{s,q_2}\big) \\
        &\le C\|\eta\|_{C^1}\|\eta\|_{C^{s+1}} \big(\|\partial_{\bf x'}\eta\|_{s,q_1}\|\partial_{\bf x'}\xi\|_{s,q_2} + \|\eta\|_{s,q_1}\|\partial_{\bf x'}^2\xi\|_{s,q_2}\big).
    \end{align*}
    For the third term, since \(B_1^R = B_2 + B_2^R\), Remark \ref{LemmaA5} gives
    \begin{align*}
        \|B_2^R B_1^R(G_0\xi)\|_{s,q}
        & \le  C \|\eta\|_{C^1} \left(\|\eta\|_{C^{s+1}}\|B_1^R(G_0\xi)\|_{q} + \|\eta\|_{C^1}\|B_1^R(G_0\xi)\|_{s,q}\right).
    \end{align*}
    Furthermore, Theorem \ref{ThmA2'} implies
    \begin{align*}
        \|B_1^R(G_0\xi)\|_{s,q}
        & \le  \|B_2(G_0\xi)\|_{s,q} + \|B_2^R(G_0\xi)\|_{s,q} \\
        & \le  C\|\eta\|_{C^{s+1}} \left( \|\partial_{\bf x'}\eta\|_{s,q_1}\|\partial_{\bf x'}\xi\|_{s,q_2} + \|\eta\|_{s,q_1}\|\partial_{\bf x'}^2\xi\|_{s,q_2} \right).
    \end{align*}
    Combining these yields
    \begin{align*}
        \|B_2^R B_1^R(G_0\xi)\|_{s,q}
        &\le C \|\eta\|_{C^1}^2\|\eta\|_{C^{s+1}} \left( \|\partial_{\bf x'}\eta\|_{s,q_1}\|\partial_{\bf x'}\xi\|_{s,q_2} + \|\eta\|_{s,q_1}\|\partial_{\bf x'}^2\xi\|_{s,q_2} \right).
    \end{align*}
    Next, using Remark \ref{LemmaA5} and Theorem \ref{ThmA1'}, we estimate \(B_2^R A_1\):
    \begin{align*}
        \|B_2^R(\eta)A_1(\eta)\xi\|_{s,q}
        &\le C\|\eta\|_{C^1}\|\eta\|_{C^{s+1}}\|A_1(\eta)\xi\|_{s,q} \\
        &\le C\|\eta\|_{C^1}\|\eta\|_{C^{s+1}} \big(\|\partial_{\bf x'}\eta\|_{s,q_1}\|\partial_{\bf x'}\xi\|_{s,q_2} + \|\eta\|_{s,q_1}\|\partial_{\bf x'}^2\xi\|_{s,q_2}\big).
    \end{align*}
    We now turn to the terms involving \((1 - B)^{-1}\). From the estimate on \(B^m(\eta)\) in Lemma \ref{LemmaA7}, the Neumann series
    \[
    (1 - B)^{-1} A_3^R = A_3^R + \sum_{m=1}^{\infty} B^m A_3^R
    \]
    converges absolutely for \(\|\eta\|_{C^1} < 1 / (1 + C_0)\), as the series \(\sum_{m=1}^{\infty} (1 + C_0)^{m-1} \|\eta\|_{C^1}^{m-1}\) and \(\sum_{m=1}^{\infty} \|\eta\|_{C^1}^m\) converge within this radius. Applying Lemma \ref{LemmaA7} and Theorem \ref{ThmA1'} gives
    \begin{align*}
        &\|(1 - B(\eta))^{-1} A_{3}^R(\eta)\xi\|_{s,q}\\
        &\le  \|A_{3}^R(\eta)\xi\|_{s,q} + \left\|\sum_{m=1}^{\infty} B^m A_3^R(\eta)\xi\right\|_{s,q} \\
        & \le  \|A_{3}^R(\eta)\xi\|_{s,q} + \sum_{m=1}^{\infty} (1 + C_0)^{m-1}\|\eta\|_{C^1}^{m-1} \left( \|\eta\|_{C^{s+1}} \|A_3^R(\eta)\xi\|_{q} +  \|\eta\|_{C^1} \|A_3^R(\eta)\xi\|_{s,q}  \right) \\
        &\le  C\|A_{3}^R(\eta)\xi\|_{s,q} + C\|\eta\|_{C^{s+1}} \|A_3^R(\eta)\xi\|_{q} \\
        &\le C\|\eta\|_{C^{s+1}}^2 \left(\|\partial_{\bf x'}\eta\|_{s+1,q_1}\|\xi\|_{s+1,q_2} + \|\eta\|_{s+1,q_1}\|\partial_{\bf x'}\xi\|_{s+1,q_2}\right).
    \end{align*}
    Similarly, for the remaining part of the inverse term, Lemma \ref{LemmaA7} and Theorem \ref{ThmA1'} imply
    \begin{align*}
        &\|(1 - B)^{-1} B(A_2 + B A_1 + B^2 G_0)\xi\|_{s,q} \\
        &\le  C\|B(A_2 + B A_1 + B^2 G_0)\xi\|_{s,q} + C\|\eta\|_{C^{s+1}} \|B(A_2 + B A_1 + B^2 G_0)\xi\|_{q} \\
        &\le  C\|\eta\|_{C^1} \left( \|A_2\xi\|_{s,q} + \|B A_1\xi\|_{s,q} + \|B^2 G_0\xi\|_{s,q} \right) \\
        &\quad + C\|\eta\|_{C^{s+1}} \left( \|A_2\xi\|_{q} + \|B A_1\xi\|_{q} + \|B^2 G_0\xi\|_{q} \right),
    \end{align*}
    where the individual estimates are
    \begin{align*}
        \|A_2\xi\|_{s,q} &\le C\|\eta\|_{C^{s+1}} \left( \|\partial_{\bf x'}\eta\|_{s+1,q_1}\|\xi\|_{s+1,q_2} + \|\eta\|_{s+1,q_1}\|\partial_{\bf x'}\xi\|_{s+1,q_2} \right), \\
        \|B A_1\xi\|_{s,q} &\le  C \left( \|\eta\|_{C^{s+1}} \|A_1\xi\|_{q} + \|\eta\|_{C^1} \|A_1\xi\|_{s,q} \right) \\
        &\le  C\|\eta\|_{C^{s+1}} \left( \|\partial_{\bf x'}\eta\|_{s, q_1}\|\partial_{\bf x'}\xi\|_{s, q_2} + \|\eta\|_{s, q_1}\|\partial_{\bf x'}^2\xi\|_{s, q_2} \right), \\
        \intertext{and, using Lemma \ref{LemmaA7} and Theorem \ref{ThmA2'},}
        \|B^2 G_0\xi\|_{s,q} &\le  C \|\eta\|_{C^{s+1}} \|B G_0\xi\|_{q} + C \|\eta\|_{C^{1}} \|B G_0\xi\|_{s,q} \\
        &\le  C \|\eta\|_{C^{s+1}} \left( \|\partial_{\bf x'}\eta\|_{q_1}\|\partial_{\bf x'}\xi\|_{q_2} + \|\eta\|_{q_1}\|\partial_{\bf x'}^2\xi\|_{q_2} \right) \\
        &\quad + C \|\eta\|_{C^{1}} (1 + \|\eta\|_{C^{s+1}}) \left( \|\partial_{\bf x'}\eta\|_{s, q_1}\|\partial_{\bf x'}\xi\|_{s, q_2} + \|\eta\|_{s, q_1}\|\partial_{\bf x'}^2\xi\|_{s, q_2} \right) \\
        &\le  C \|\eta\|_{C^{s+1}} \left( \|\partial_{\bf x'}\eta\|_{s, q_1}\|\partial_{\bf x'}\xi\|_{s, q_2} + \|\eta\|_{s, q_1}\|\partial_{\bf x'}^2\xi\|_{s, q_2} \right).
    \end{align*}
    Combining these estimates leads to
    \begin{align*}
        &\|(1 - B)^{-1} B(A_2 + B A_1 + B^2 G_0)\xi\|_{s,q} \\
        &\le  C \|\eta\|_{C^1} \|\eta\|_{C^{s+1}} \left( \|\partial_{\bf x'}\eta\|_{s+1,q_1}\|\xi\|_{s+1,q_2} + \|\eta\|_{s+1,q_1}\|\partial_{\bf x'}\xi\|_{s+1,q_2} \right).
    \end{align*}
    Summing up all the estimates for the constituent parts of \(R_3\), we conclude that
    \begin{align*}
        \|R_3(\eta)\xi\|_{s,q} \le  C \|\eta\|_{C^{s+1}}^2 \left( \|\partial_{\bf x'}\eta\|_{s+1,q_1}\|\xi\|_{s+1,q_2} + \|\eta\|_{s+1,q_1}\|\partial_{\bf x'}\xi\|_{s+1,q_2} \right),
    \end{align*}
    completing the proof.
\end{proof}

Regarding the equi-continuity property of $R_3$, we establish the following result.

\begin{corollary}\label{CorR3}
    Let \(1 < q < +\infty\) with \(\frac{1}{q} = \frac{1}{q_1} + \frac{1}{q_2}\), and let \(s \in \mathbb{N}\), \(s \geq 0\). 
    Suppose \(\eta_1, \eta_2\) satisfy 
    \[
        \|\eta_i\|_{C^{1}} < \frac{1}{4(1+C_0)}, \quad \|\eta_i\|_{C^{s+1}} < +\infty, \quad \|\eta_i\|_{s,q_1} < +\infty, \quad i=1,2.
    \]
    Then for \(\eta_t = \eta_1 + t(\eta_2 - \eta_1)\) with \(t \in [0,1]\), the following estimate holds:
    \begin{align*}
        \left\|\frac{d}{dt}R_3(\eta_t)\xi\right\|_{s,q} 
        &\le C \|\eta_t\|_{C^{s+1}}^2 \|\partial_{\mathbf{x}'}(\eta_2 - \eta_1)\|_{s+1,q_1} \|\xi\|_{s+1,q_2} \\
        &\quad + C \|\eta_t\|_{C^{s+1}}^2 \|\eta_2 - \eta_1\|_{s+1,q_1} \|\partial_{\mathbf{x}'}\xi\|_{s+1,q_2},
    \end{align*}
    where the constant \(C > 0\) depends only on \(q, s\).
\end{corollary}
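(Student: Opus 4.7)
The plan is to mimic the scheme already employed in the proof of Corollary \ref{cor D R3}, but working in the $\|\cdot\|_{s,q}$ topology rather than the $\||D|^{-1}\cdot\|_{s,q}$ topology, so that the building-block estimates are drawn from Theorem \ref{ThmA2'}, Theorem \ref{ThmA1'}, Lemma \ref{LemmaA7}, and Remark \ref{LemmaA5} on one side, and from Corollary \ref{cor3.1}, Corollary \ref{Cor DA}, and Corollary \ref{cor A} on the differentiated side. Concretely, I start from the explicit decomposition already used in the proof of Theorem \ref{Thm R_3},
\[
R_3(\eta_t)=\bigl(B_3^R+B_1B_2^R+B_2^R B_1^R\bigr)G_0+B_2^R A_1+(1-B)^{-1}\bigl(A_3^R+B(A_2+BA_1+B^2G_0)\bigr),
\]
evaluated at $\eta=\eta_t$, and apply $\tfrac{d}{dt}$ term by term via the product rule, using in particular the identity
\[
\tfrac{d}{dt}(1-B(\eta_t))^{-1}=(1-B(\eta_t))^{-1}\bigl(\tfrac{d}{dt}B(\eta_t)\bigr)(1-B(\eta_t))^{-1}.
\]

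Next, each resulting summand contains exactly one differentiated factor (of the form $\tfrac{d}{dt}B_l$, $\tfrac{d}{dt}B_L^R$, $\tfrac{d}{dt}A_l$, or $\tfrac{d}{dt}A_L^R$) multiplied by undifferentiated operators. For the differentiated factor I quote the corresponding equi-continuity corollary, which produces a factor $\|\eta_2-\eta_1\|_{s+1,q_1}$ or $\|\partial_{\mathbf{x}'}(\eta_2-\eta_1)\|_{s+1,q_1}$ accompanied by the appropriate $\|\eta_t\|_{C^1}$ and $\|\eta_t\|_{C^{s+1}}$ factors. For the undifferentiated factors I reuse the estimates already assembled inside the proof of Theorem \ref{Thm R_3}: the composed operators such as $B_1B_2^RG_0$, $B_2^R B_1^R G_0$, $B_2^R A_1$, $BA_2$, $B^2A_1$, $B^3G_0$ were all shown there to map $\xi$ into a norm controlled by $\|\eta_t\|_{C^{s+1}}\bigl(\|\partial_{\mathbf{x}'}\eta_t\|_{s+1,q_1}\|\xi\|_{s+1,q_2}+\|\eta_t\|_{s+1,q_1}\|\partial_{\mathbf{x}'}\xi\|_{s+1,q_2}\bigr)$, and a single application of Remark \ref{LemmaA5} or Lemma \ref{LemmaA7} bounds any leftover $(1-B)^{-1}$ factor. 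Collecting the resulting bounds and absorbing all numerical constants yields the target estimate.

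The main technical point, as in Corollary \ref{cor D R3}, will be the double Neumann series generated by differentiating $(1-B(\eta_t))^{-1}$: after expansion one obtains
\[
(1-B)^{-1}\bigl(\tfrac{d}{dt}B\bigr)(1-B)^{-1}\bigl(A_3^R+B(A_2+BA_1+B^2G_0)\bigr)
=\sum_{j,k\ge 0}B^{j}\bigl(\tfrac{d}{dt}B\bigr)B^{k}\bigl(A_3^R+B(A_2+BA_1+B^2G_0)\bigr),
\]
and one must check that Lemma \ref{LemmaA7} together with Corollary \ref{cor3.1} keeps both geometric series summable under the standing smallness hypothesis $\|\eta_i\|_{C^1}<\frac{1}{4(1+C_0)}$. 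This works exactly as before: each application of $B$ contributes a factor $(1+C_0)\|\eta_t\|_{C^1}<1/4$, so the double sum converges and produces only an overall multiplicative constant depending on $s$ and $q$.

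The chief bookkeeping obstacle will be ensuring that, across all summands, the $\|\eta_t\|_{C^{s+1}}$ powers consolidate to at most the quadratic power stated in the corollary (the same phenomenon as in Theorem \ref{Thm R_3}, where the worst contribution comes from the $A_3^R$-term via Theorem \ref{ThmA1'}). Once that is verified, the remaining summation over all product-rule terms is routine, and the proof is complete.
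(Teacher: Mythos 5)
Your proposal is correct and follows essentially the same route as the paper: the paper's own proof is a one-line appeal to the explicit decomposition of $R_3$, the equi-continuity corollaries for the $A$- and $B$-operators, and the framework of Theorem \ref{Thm R_3}, which is precisely what you carry out (including the differentiation of the Neumann series $(1-B(\eta_t))^{-1}$ and the verification that the smallness hypothesis keeps the resulting double series summable). The only cosmetic remark is that for the un-weighted norm one also needs a $\tfrac{d}{dt}$-analogue of Theorem \ref{ThmA2'} for the $B_l$ factors, which the paper does not state explicitly either but which follows by the identical differentiation argument.
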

\begin{proof}
    Combining Corollaries \ref{cor3.1}, \ref{Cor DA}, and \ref{cor A} with the explicit expression for the $R_3$ operator, 
we complete the proof through direct application of the methodology established above.
\end{proof}

Given the norm estimate for \(R_3\) and the relation \(R_2 = G_2 + R_3\), we derive the following boundedness result for \(|D|^{-1}R_2\).

\begin{theorem}\label{ThmA3}
    Let \(1 < q < +\infty\) with \(\frac{1}{q} = \frac{1}{q_1} + \frac{1}{q_2}\), and let \(s \in \mathbb{N}\), \(s \geq 0\). 
    Assume \(\eta\) satisfies 
    \[
        \|\eta\|_{C^{1}} < \frac{1}{1 + C_0} \quad \text{and} \quad \|\eta\|_{C^{s+1}} < +\infty.
    \]
    Then the Taylor remainder \(R_2(\eta)\) in the expansion of \(G(\eta)\) admits the estimate:
    \begin{align*}
        \||D|^{-1}R_2(\eta)\xi\|_{s,q} 
        &\le C\|\eta\|_{\max\{s,1\},q_1} \left( \|\eta\|_{C^{1}} \|\partial_{\mathbf{x}'}\xi\|_{s,q_2} + \|\eta\|_{C^{s+1}} \|\xi\|_{s+1,q_2} \right) \\
        &\quad + C\|\eta\|_{s,q_1} \|\eta\|_{L^\infty} \|\partial_{\mathbf{x}'}^2\xi\|_{q_2},
    \end{align*}
    where the constant \(C > 0\) depends only on \(q, s\).
\end{theorem}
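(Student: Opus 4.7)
The strategy is to use the Taylor decomposition $R_2 = G_2 + R_3$ and estimate each piece separately. The $R_3$ contribution is already under control: by Theorem~\ref{Thm |D| R3},
$$
\||D|^{-1}R_3(\eta)\xi\|_{s,q} \leq C \|\eta\|_{\max\{s,1\},q_1}\|\eta\|_{C^1}\|\eta\|_{C^{s+1}}\|\xi\|_{s+1,q_2},
$$
which is absorbed by the $\|\eta\|_{\max\{s,1\},q_1}\|\eta\|_{C^{s+1}}\|\xi\|_{s+1,q_2}$ part of the target bound, since $\|\eta\|_{C^1}<1/(1+C_0)$ contributes only a harmless constant. Hence the substantive work lies in estimating $|D|^{-1}G_2(\eta)\xi$.

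Using the explicit form $G_2(\eta) = -\tfrac12|D|^2\eta^2 G_0 - \tfrac12 G_0\eta^2|D|^2 + G_0\eta G_0\eta G_0$ together with $G_0 = |D|\tanh(|D|)$, one rewrites
$$
|D|^{-1}G_2(\eta)\xi = -\tfrac12|D|\bigl(\eta^2 G_0\xi\bigr) - \tfrac12\tanh(|D|)\bigl(\eta^2|D|^2\xi\bigr) + \tanh(|D|)\bigl(\eta\,G_0(\eta\,G_0\xi)\bigr).
$$
By Remark~\ref{LemmaA1}, the outer Fourier multipliers $\tanh(|D|)$ and the Riesz transform $\partial_{\mathbf{x}'}|D|^{-1}$ are Mikhlin operators bounded on $W^{s,q}$, so the problem reduces to three pointwise-product estimates in $W^{s,q}$. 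Invoking the mixed-norm Leibniz rule $\|fg\|_{s,q}\lesssim\|f\|_{L^\infty}\|g\|_{s,q}+\|f\|_{s,q_1}\|g\|_{L^\infty}$ with $\tfrac1q=\tfrac1{q_1}+\tfrac1{q_2}$, the middle product gives
$$
\|\eta^2|D|^2\xi\|_{s,q}\lesssim \|\eta\|_{L^\infty}\|\eta\|_{s,q_1}\|\partial_{\mathbf{x}'}^2\xi\|_{q_2}+\|\eta\|_{L^\infty}^2\|\partial_{\mathbf{x}'}^2\xi\|_{s,q_2},
$$
the first summand being exactly the last term in the claim and the second absorbed into $\|\eta\|_{C^1}\|\partial_{\mathbf{x}'}\xi\|_{s,q_2}$; for the first product one writes $|D|$ as $\partial_{\mathbf{x}'}$ composed with a Riesz transform, expands $\partial_{\mathbf{x}'}(\eta^2 G_0\xi)=2\eta(\partial_{\mathbf{x}'}\eta)G_0\xi+\eta^2 G_0(\partial_{\mathbf{x}'}\xi)$ and uses the boundedness $G_0\colon W^{k+1,p}\to W^{k,p}$ to get $C\|\eta\|_{\max\{s,1\},q_1}\bigl(\|\eta\|_{C^1}\|\partial_{\mathbf{x}'}\xi\|_{s,q_2}+\|\eta\|_{C^{s+1}}\|\xi\|_{s+1,q_2}\bigr)$; the triply-nested third piece is handled in the same spirit by peeling off the outer $\eta$, applying boundedness of the inner $G_0$ to move from $W^{s+1}$ to $W^{s}$, then peeling off the inner $\eta$ and using $\|G_0\xi\|_{s,q_2}\lesssim\|\xi\|_{s+1,q_2}$, producing contributions of both types already accommodated.

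The main obstacle is the careful accounting in the triple-nested composition $G_0\eta G_0\eta G_0$: since $G_0$ is non-local, the Leibniz rule is not available pointwise, and one must use that each occurrence of $G_0$ shifts Sobolev regularity by one while simultaneously distributing the $s$ outer derivatives between the two factors of $\eta$ so that exactly one copy lands in the mixed Sobolev norm $\|\eta\|_{\max\{s,1\},q_1}$ and the other is controlled by a $C^{s+1}$-type norm (or, when paired with $\|\partial_{\mathbf{x}'}\xi\|_{s,q_2}$, by $\|\eta\|_{C^1}$). The mixed-norm product rule above, iterated twice, is tailored to this bookkeeping; summing the three pieces and combining with the $R_3$-bound yields the claimed estimate.
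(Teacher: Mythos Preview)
Your decomposition $R_2=G_2+R_3$ and the handling of $R_3$ via Theorem~\ref{Thm |D| R3} match the paper. The gap is in the $G_2$ piece. Your claim that the term $\|\eta\|_{L^\infty}^2\|\partial_{\mathbf{x}'}^2\xi\|_{s,q_2}$ arising from the middle piece is ``absorbed into $\|\eta\|_{C^1}\|\partial_{\mathbf{x}'}\xi\|_{s,q_2}$'' is simply false: the former requires $s+2$ derivatives of $\xi$ in $L^{q_2}$, whereas every $\xi$-norm in the stated bound involves at most $s+1$ derivatives (through $\|\partial_{\mathbf{x}'}\xi\|_{s,q_2}$ or $\|\xi\|_{s+1,q_2}$) or at most two (through $\|\partial_{\mathbf{x}'}^2\xi\|_{q_2}$). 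The same derivative loss afflicts your other two pieces: each of $-\tfrac12|D|(\eta^2G_0\xi)$, $-\tfrac12\tanh(|D|)(\eta^2|D|^2\xi)$, $\tanh(|D|)(\eta\,G_0\eta\,G_0\xi)$ is individually second-order on $\xi$, so its $W^{s,q}$-norm genuinely needs $\xi\in W^{s+2,q_2}$ when $\eta$ supplies only $L^\infty$-type control. The point is that in $|D|^{-1}G_2$ these three pieces share a common principal part (at high frequency each has leading contribution $\eta^2|D|^2\xi$, with coefficients $-\tfrac12,-\tfrac12,+1$), and this cancellation is precisely what makes $|D|^{-1}G_2$ only first-order on $\xi$. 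Estimating the pieces separately throws the cancellation away.

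The paper does not use the Fourier-multiplier form of $G_2$ at all. It invokes instead the Craig--Schanz--Sulem representation $G_2=B_2G_0+B_1A_1+A_2+B_1^2G_0$ coming from the implicit identity $(1-B(\eta))G(\eta)=G_0+A(\eta)$, and bounds each summand by Theorems~\ref{ThmA1}, \ref{ThmA2} and~\ref{ThmA1'}. The key structural fact is that $|D|^{-1}B_1$ has the smooth, integrable kernel $(|{\bf x}'-{\bf y}'|^2+4)^{-3/2}$, so Theorem~\ref{ThmA2} yields $\||D|^{-1}B_1(\eta)\zeta\|_{s,q}\le C\|\eta\|_{s,q_1}\|\zeta\|_{q_2}$ with \emph{no} derivatives landing on $\zeta$: all $s$ derivatives are placed on $\eta$. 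Applying this with $\zeta=A_1(\eta)\xi$ and the bound $\|A_1(\eta)\xi\|_{q_2}\lesssim\|\partial_{\mathbf{x}'}\eta\|_{L^\infty}\|\partial_{\mathbf{x}'}\xi\|_{q_2}+\|\eta\|_{L^\infty}\|\partial_{\mathbf{x}'}^2\xi\|_{q_2}$ from Theorem~\ref{ThmA1'} produces exactly the $\|\eta\|_{s,q_1}\|\eta\|_{L^\infty}\|\partial_{\mathbf{x}'}^2\xi\|_{q_2}$ term of the statement, with only two $\xi$-derivatives. The $A_l,B_l$ decomposition thus encodes the top-order cancellation at the level of individual summands, which is what your direct multiplier splitting lacks.
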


\begin{proof}
    Starting from the decomposition 
    \[ 
    R_2(\eta)\xi = G_2(\eta)\xi + R_3(\eta)\xi
    \]
    with 
    \[ 
    G_2(\eta) = B_2 G_0 + B_1 A_1 + A_2 + B_1^2 G_0,
    \]
    we estimate each component separately.
    For $|D|^{-1}B_2G_0$, Theorem \ref{ThmA2} yields
    \begin{align*}
        \||D|^{-1}B_2(\eta)G_0\xi\|_{s,q} 
        &\le C\|\eta\|_{s,q_1} \left( \|\eta\|_{C^{1}} \|G_0\xi\|_{s,q_2} + \|\eta\|_{C^{s+1}} \|G_0\xi\|_{q_2} \right) \\
        &\le C\|\eta\|_{s,q_1} \left( \|\eta\|_{C^{1}} \|\partial_{\mathbf{x}'}\xi\|_{s,q_2} + \|\eta\|_{C^{s+1}} \|\partial_{\mathbf{x}'}\xi\|_{q_2} \right).
    \end{align*}
    For $|D|^{-1}B_1A_1$, first apply Theorem \ref{ThmA1'}:
    \[
        \|A_1(\eta)\xi\|_{q_2} \le C \left( \|\partial_{\mathbf{x}'}\eta\|_{L^\infty} \|\partial_{\mathbf{x}'}\xi\|_{q_2} + \|\eta\|_{L^\infty} \|\partial_{\mathbf{x}'}^2\xi\|_{q_2} \right),
    \]
    which implies
\begin{align*}
     \||D|^{-1}B_1(\eta)A_1(\eta)\xi\|_{s,q} &\le C\|\eta\|_{s,q_1} \|A_1(\eta)\xi\|_{q_2} \\
     &\le C\|\eta\|_{s,q_1} \left( \|\partial_{\mathbf{x}'}\eta\|_{L^\infty} \|\partial_{\mathbf{x}'}\xi\|_{q_2} + \|\eta\|_{L^\infty} \|\partial_{\mathbf{x}'}^2\xi\|_{q_2} \right).
\end{align*}
    For $|D|^{-1}A_2$, Theorem \ref{ThmA1} gives
    \begin{align*}
        \||D|^{-1}A_2(\eta)\xi\|_{s,q} \le C \|\eta\|_{\max\{s,1\},q_1} \left( \|\eta\|_{C^{1}} \|\partial_{\mathbf{x}'}\xi\|_{s,q_2} + \|\eta\|_{C^{s+1}} \|\xi\|_{1,q_2} \right).
    \end{align*}
    For $|D|^{-1}B_1^2G_0$, combine Theorem \ref{ThmA2} and Remark \ref{LemmaA5}:
    \[ 
        \||D|^{-1}B_1^2(\eta)G_0\xi\|_{s,q} \le C\|\eta\|_{s,q_1} \|B_1(\eta)G_0\xi\|_{q_2} \le C\|\eta\|_{s,q_1} \|\eta\|_{C^{1}} \|\partial_{\mathbf{x}'}\xi\|_{q_2}.
    \]
    For the remainder term, Theorem \ref{Thm |D| R3} provides
    \begin{align*}
        \||D|^{-1}R_3(\eta)\xi\|_{s,q} \le C \|\eta\|_{\max\{s,1\},q_1} \|\eta\|_{C^{1}} \|\eta\|_{C^{s+1}} \|\xi\|_{s+1,q_2}.
    \end{align*}
    Combining these estimates, we obtain
    \begin{align*}
        \||D|^{-1}R_2(\eta)\xi\|_{s,q} 
        &\le \sum \||D|^{-1}G_2(\eta)\xi\|_{s,q} + \||D|^{-1}R_3(\eta)\xi\|_{s,q} \\
        &\le C \|\eta\|_{\max\{s,1\},q_1} \left( \|\eta\|_{C^{1}} \|\partial_{\mathbf{x}'}\xi\|_{s,q_2} + \|\eta\|_{C^{s+1}} \|\xi\|_{1,q_2} \right) \\
        &\quad + C\|\eta\|_{s,q_1} \left( \|\partial_{\mathbf{x}'}\eta\|_{L^\infty} \|\partial_{\mathbf{x}'}\xi\|_{q_2} + \|\eta\|_{L^\infty} \|\partial_{\mathbf{x}'}^2\xi\|_{q_2} \right) \\
        &\quad + C \|\eta\|_{\max\{s,1\},q_1} \|\eta\|_{C^{1}} \|\eta\|_{C^{s+1}} \|\xi\|_{s+1,q_2}.
    \end{align*}
    Since $\|\xi\|_{1,q_2} \leq \|\xi\|_{s+1,q_2}$, we can consolidate terms to achieve the final estimate:
    \begin{align*}
        \||D|^{-1}R_2(\eta)\xi\|_{s,q} 
        &\le C\|\eta\|_{\max\{s,1\},q_1} \left( \|\eta\|_{C^{1}} \|\partial_{\mathbf{x}'}\xi\|_{s,q_2} + \|\eta\|_{C^{s+1}} \|\xi\|_{s+1,q_2} \right) \\
        &\quad + C\|\eta\|_{s,q_1} \|\eta\|_{L^\infty} \|\partial_{\mathbf{x}'}^2\xi\|_{q_2},
    \end{align*}
    completing the proof.
\end{proof}

\begin{theorem}\label{Thm R_2}
    Let \(1 < q < \infty\) satisfy \(\frac{1}{q} = \frac{1}{q_1} + \frac{1}{q_2}\), and let \(s \in \mathbb{N}\), \(s \geq 0\). 
    Suppose \(\eta\) satisfies 
    \[
        \|\eta\|_{C^{1}} < \frac{1}{1 + C_0} \quad \text{and} \quad \|\eta\|_{C^{s+1}} < \infty.
    \]
    Then the remainder term \(R_2(\eta)\) admits the estimate:
    \begin{align*}
        \|R_2(\eta)\xi\|_{s,q} 
        \le C(1 + \|\eta\|_{C^{s+1}}) \|\eta\|_{C^{s+1}} \Big( & \|\partial_{\mathbf{x}'}\eta\|_{s+1,q_1} \|\xi\|_{s+1,q_2} \\
          & + \|\eta\|_{s+1,q_1} \|\partial_{\mathbf{x}'}\xi\|_{s+1,q_2} \Big),
    \end{align*}
    where \(C > 0\) is a constant depending on \(q, s\).
\end{theorem}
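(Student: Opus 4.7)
The plan is to use the decomposition
$R_2(\eta)\xi = G_2(\eta)\xi + R_3(\eta)\xi$ with
$G_2(\eta) = B_2 G_0 + B_1 A_1 + A_2 + B_1^2 G_0$,
which was already exploited in the proof of Theorem~\ref{ThmA3}. Rather than working with $|D|^{-1}R_2$, I will estimate each summand directly in $W^{s,q}$ using the sharper $L^p$-type bounds already established in Theorem~\ref{ThmA2'} for the $B_l$ operators, Theorem~\ref{ThmA1'} for the $A_l$ operators, and Theorem~\ref{Thm R_3} for the cubic remainder. The overall structure mirrors the proof of Theorem~\ref{ThmA3}, but since Theorems~\ref{ThmA2'} and \ref{ThmA1'} already produce bounds of the form $\|\partial_{\mathbf{x}'}\eta\|_{s+1,q_1}\|\xi\|_{s+1,q_2}+\|\eta\|_{s+1,q_1}\|\partial_{\mathbf{x}'}\xi\|_{s+1,q_2}$, the final collection of terms will come out in exactly the desired shape.

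The steps are as follows. First, for $B_2 G_0\xi$, apply Theorem~\ref{ThmA2'} with $l=2$ to the function $G_0\xi$; since $G_0 = |D|\tanh(|D|)$ and $\tanh(|D|)$ is a bounded Fourier multiplier on $W^{s,q_2}$ (see Remark~\ref{LemmaA1}), one has $\|G_0\xi\|_{s,q_2}\lesssim \|\partial_{\mathbf{x}'}\xi\|_{s,q_2}\leq \|\xi\|_{s+1,q_2}$ and $\|\partial_{\mathbf{x}'} G_0\xi\|_{s,q_2}\lesssim \|\partial_{\mathbf{x}'}\xi\|_{s+1,q_2}$, which yields a bound compatible with the stated form. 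Second, for $B_1^2 G_0\xi$, iterate Theorem~\ref{ThmA2'} twice: the inner $B_1 G_0\xi$ is bounded as in the first step, and then $B_1$ is applied again, absorbing the factor $\|\eta\|_{C^1}$ under the smallness assumption. Third, for $A_2\xi$, the estimate is a direct invocation of Theorem~\ref{ThmA1'}. Finally, for $R_3\xi$, Theorem~\ref{Thm R_3} directly supplies the required bound with an extra factor $\|\eta\|_{C^{s+1}}^2$, which is accommodated by the $(1+\|\eta\|_{C^{s+1}})\|\eta\|_{C^{s+1}}$ prefactor in the statement.

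The main technical point — and the step requiring the most care — will be the mixed composition $B_1 A_1\xi$, since Theorem~\ref{ThmA2'} for $B_1$ produces a bound involving $\|A_1\xi\|_{s,q_2}$ and $\|\partial_{\mathbf{x}'}A_1\xi\|_{s,q_2}\leq \|A_1\xi\|_{s+1,q_2}$, and the latter must in turn be controlled through Theorem~\ref{ThmA1'} with index $s+1$. The resulting bound has shape
\begin{equation*}
\|B_1 A_1\xi\|_{s,q}\leq C\|\eta\|_{C^{s+1}}\bigl(\|\partial_{\mathbf{x}'}\eta\|_{s+1,q_1}\|\xi\|_{s+1,q_2}+\|\eta\|_{s+1,q_1}\|\partial_{\mathbf{x}'}\xi\|_{s+1,q_2}\bigr),
\end{equation*}
where the prefactor $\|\eta\|_{C^{s+1}}$ arises because both $A_1$ and $B_1$ each contribute a single power of $\eta$ in $C^{s+1}$. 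Combining these four contributions with the $R_3$ bound and using the smallness hypothesis $\|\eta\|_{C^1}<(1+C_0)^{-1}$ to absorb all prefactors of $\|\eta\|_{C^1}$ into $(1+\|\eta\|_{C^{s+1}})\|\eta\|_{C^{s+1}}$ yields the stated inequality. No obstacle of a genuinely new type arises; the proof is essentially bookkeeping built on Theorems~\ref{ThmA2'}, \ref{ThmA1'}, and \ref{Thm R_3}.
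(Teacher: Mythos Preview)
Your overall strategy---decompose $R_2 = G_2 + R_3$ with $G_2 = B_2 G_0 + B_1 A_1 + A_2 + B_1^2 G_0$ and bound each piece via Theorems~\ref{ThmA2'}, \ref{ThmA1'}, and~\ref{Thm R_3}---is exactly the paper's approach, and the treatment of $B_2 G_0\xi$, $A_2\xi$, and $R_3\xi$ matches the paper term for term.

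There is, however, a small exponent-bookkeeping slip in your handling of the composite terms $B_1 A_1\xi$ and $B_1^2 G_0\xi$. You propose to apply Theorem~\ref{ThmA2'} to the \emph{outer} $B_1$, which places $A_1\xi$ (resp.\ $B_1 G_0\xi$) in the $L^{q_2}$ slot; but then Theorem~\ref{ThmA1'} (resp.\ Theorem~\ref{ThmA2'}) cannot be invoked to bound $\|A_1\xi\|_{s+1,q_2}$ directly, since those theorems require their \emph{own} H\"older split $\tfrac{1}{q_2}=\tfrac{1}{q_3}+\tfrac{1}{q_4}$, introducing exponents that do not appear in the final bound you display. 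The paper sidesteps this by using the simpler Craig--Schanz estimate of Remark~\ref{LemmaA5}, namely $\|B_1\zeta\|_{s,q}\le C\|\eta\|_{C^{s+1}}\|\zeta\|_{s,q}$, for the outer $B_1$; this keeps the exponent at $q$ throughout and then a single application of Theorem~\ref{ThmA1'} (resp.\ Theorem~\ref{ThmA2'}) to the inner factor with the original $(q_1,q_2)$ split gives the claimed bound. Equivalently one could use Remark~\ref{LemmaA6} to estimate $\|A_1\xi\|_{s+1,q_2}$ directly. Either fix is routine, so the gap is minor, but as written your chain of citations does not close.
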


\begin{proof}
    Starting from the decomposition 
    \[ 
    R_2(\eta)\xi = G_2(\eta)\xi + R_3(\eta)\xi
    \]
    with 
    \[ 
    G_2(\eta) = B_2 G_0 + B_1 A_1 + A_2 + B_1^2 G_0,
    \]
    we establish estimates for each component.
   
\vspace{0.5em}
\noindent \textbf{Estimate for \(B_2G_0\xi\):} By Theorem \ref{ThmA2'},
    \begin{align*}
        \|B_2(\eta) G_0 \xi\|_{s,q} 
        \le C \|\eta\|_{C^{s+1}} \Big( \|\partial_{\mathbf{x}'}\eta\|_{s,q_1} \|\partial_{\mathbf{x}'}\xi\|_{s,q_2} + \|\eta\|_{s,q_1} \|\partial_{\mathbf{x}'}^2 \xi\|_{s,q_2} \Big).
    \end{align*}

\vspace{0.5em}
\noindent
    \textbf{Estimate for \(B_1A_1\xi\):} Combining Remark \ref{LemmaA5} and Theorem \ref{ThmA1'} yields
    \begin{align*}
        \|B_1(\eta) A_1(\eta) \xi\|_{s,q} 
        &\le C \|\eta\|_{C^{s+1}} \|A_1(\eta) \xi\|_{s,q} \\
        &\le C \|\eta\|_{C^{s+1}} \Big( \|\partial_{\mathbf{x}'}\eta\|_{s,q_1} \|\partial_{\mathbf{x}'}\xi\|_{s,q_2} + \|\eta\|_{s,q_1} \|\partial_{\mathbf{x}'}^2 \xi\|_{s,q_2} \Big).
    \end{align*}

\vspace{0.5em}
\noindent
    \textbf{Estimate for \(A_2\xi\):} Applying Theorem \ref{ThmA1'} gives
    \begin{align*}
        \|A_2(\eta) \xi\|_{s,q} 
        \le C \|\eta\|_{C^{s+1}} \Big( \|\partial_{\mathbf{x}'}\eta\|_{s+1,q_1} \|\xi\|_{s+1,q_2} + \|\eta\|_{s+1,q_1} \|\partial_{\mathbf{x}'}\xi\|_{s+1,q_2} \Big).
    \end{align*}

\vspace{0.5em}
\noindent
    \textbf{Estimate for \(B_1^2G_0\xi\):} Using Remark \ref{LemmaA5} and Theorem \ref{ThmA2'},
    \begin{align*}
        \|B_1^2(\eta) G_0 \xi\|_{s,q} 
        &\le C \|\eta\|_{C^{s+1}} \|B_1(\eta) G_0 \xi\|_{s,q} \\
        &\le C \|\eta\|_{C^{s+1}} \Big( \|\partial_{\mathbf{x}'}\eta\|_{s,q_1} \|\partial_{\mathbf{x}'}\xi\|_{s,q_2} + \|\eta\|_{s,q_1} \|\partial_{\mathbf{x}'}^2 \xi\|_{s,q_2} \Big).
    \end{align*}

\vspace{0.5em}
\noindent
    \textbf{Estimate for \(R_3\xi\):} Theorem \ref{Thm R_3} provides
    \begin{align*}
        \|R_3(\eta)\xi\|_{s,q} 
        \le C \|\eta\|_{C^{s+1}}^2 \Big( \|\partial_{\mathbf{x}'}\eta\|_{s+1,q_1} \|\xi\|_{s+1,q_2} + \|\eta\|_{s+1,q_1} \|\partial_{\mathbf{x}'}\xi\|_{s+1,q_2} \Big).
    \end{align*}
    Summing these estimates, we obtain
    \begin{align*}
        \|R_2(\eta)\xi\|_{s,q} 
        &\le \|G_2(\eta)\xi\|_{s,q} + \|R_3(\eta)\xi\|_{s,q} \\
        &\le C \|\eta\|_{C^{s+1}} \Big( \|\partial_{\mathbf{x}'}\eta\|_{s+1,q_1} \|\xi\|_{s+1,q_2} + \|\eta\|_{s+1,q_1} \|\partial_{\mathbf{x}'}\xi\|_{s+1,q_2} \Big) \\
        &\quad + C \|\eta\|_{C^{s+1}}^2 \Big( \|\partial_{\mathbf{x}'}\eta\|_{s+1,q_1} \|\xi\|_{s+1,q_2} + \|\eta\|_{s+1,q_1} \|\partial_{\mathbf{x}'}\xi\|_{s+1,q_2} \Big) \\
        &\le C (1 + \|\eta\|_{C^{s+1}}) \|\eta\|_{C^{s+1}} \Big( \|\partial_{\mathbf{x}'}\eta\|_{s+1,q_1} \|\xi\|_{s+1,q_2} + \|\eta\|_{s+1,q_1} \|\partial_{\mathbf{x}'}\xi\|_{s+1,q_2} \Big),
    \end{align*}
    which completes the proof.
\end{proof}

\section{The modified linearized KP-I operator}\label{sec3}
This section investigates the mapping properties of the modified linearized KP-I operator \(\mathcal{L}_{\ep}\). Specifically, we analyze solutions to the equation:
\begin{equation}\label{4-1}
	\mathcal{L}_{\ep}\phi = \partial_1 h_1 + \partial_2 h_2,
\end{equation}
where the operator is defined as
\begin{align*}
	\mathcal{L}_{\ep} := \mathcal{L}_1 + \mathcal{L}_2 - \frac{3}{c} \partial_1 (\partial_1 q \partial_1 (\cdot)).
\end{align*} 
The functions \(\phi\), \(h_1\), and \(h_2\) belong to the following function spaces:
\begin{align*}
	\mathcal{H}_1 &:= \left\{ f \in \mathcal{H}_{\mathrm{ox}} : \|f\|_a < \infty \right\}, \quad
	\mathcal{H}_2 := \left\{ f \in \mathcal{H}_{\mathrm{e}} : \|f\|_b < \infty \right\}, \\
	\mathcal{H}_3 &:= \left\{ f \in \mathcal{H}_{\mathrm{ox}} : \|f\|_c < \infty \right\},
\end{align*}
equipped with the norms
\begin{align*}
	\|f\|_a^2 &:= \int_{\mathbb{R}^2} \left( \ep^2 |\nabla_{\ep}^5 f|^2 + |\nabla_{\ep}^4 f|^2 + |\nabla^3_{\ep} f|^2 + |\nabla^2 f|^2 + |\nabla f|^2 \right) dx\, dy, \\
	\|f\|_b &:= \|f\|_{L^2(\mathbb{R}^2)} + \|\partial_1 f\|_{L^2(\mathbb{R}^2)}, \\
	\|f\|_c &:= \|f\|_{L^2(\mathbb{R}^2)} + \|\partial_2 f\|_{L^2(\mathbb{R}^2)},
\end{align*}
where \(\nabla_{\ep} := (\partial_1, \ep \partial_2)\). The symmetry classes are defined by
\begin{align*}
	\mathcal{H}_{\mathrm{ox}} &:= \left\{ f : f(x, y) = -f(-x, y) = f(x, -y) \right\}, \\
	\mathcal{H}_{\mathrm{e}} &:= \left\{ f : f(x, y) = f(-x, y) = f(x, -y) \right\}.
\end{align*}

The first result establishes an a-priori estimate for the linearized operator.

\begin{prop} \label{Prop4-1}
	Let \(\phi \in \mathcal{H}_1\) solve \eqref{4-1} with \(h_1 \in \mathcal{H}_2\) and \(h_2 \in \mathcal{H}_3\). Then there exist constants \(\varepsilon_0 > 0\) and \(C > 0\) such that for all \(\varepsilon \in (0, \varepsilon_0)\),
	\begin{equation}\label{4.2}
		\|\phi\|_a \leq C \left( \|h_1\|_b + \|h_2\|_c \right),
	\end{equation}
	where \(C\) is independent of \(\varepsilon\).
\end{prop}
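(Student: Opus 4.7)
The plan is to prove \eqref{4.2} in three stages: (i) a Fourier-symbol coercivity estimate for the principal part $\mathcal{L}_1 + \mathcal{L}_2$ obtained via the continued-fraction lower bound on $\tanh$; (ii) an energy estimate modulo a compact $L^2$-remainder; and (iii) a compactness-contradiction argument that discards the remainder using the Liu-Wei nondegeneracy of the lump $q_0$ in the symmetry class $\mathcal{H}_{\mathrm{ox}}$.

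For Stage (i), I introduce dual Fourier variables $(k,l)$, set $P := k^2 + \varepsilon^2 l^2$ and $y := \varepsilon\sqrt{P}$, and note that the symbol of $\mathcal{L}_1$ is manifestly nonnegative,
\begin{equation*}
\widehat{\mathcal{L}_1}(k,l) = Ak^4 + k^2 + (1+\varepsilon^2)l^2 + \Big(2A+\tfrac{1}{3}\Big)\varepsilon^2 k^2 l^2 + \sigma(1+\varepsilon^2)\varepsilon^4 l^4,
\end{equation*}
while the symbol of $\mathcal{L}_2$ equals $\varepsilon^{-4}(1+\varepsilon^2)(1+\varepsilon^2\sigma P)\Phi(y)$ with $\Phi(y) := (1+y^2/3)y\tanh y - y^2$. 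Cancellation of the leading quadratic terms forces $\Phi$ to vanish to sixth order at $y=0$, with $\Phi(y) = y^6/45 + O(y^8)$. Inserting the continued-fraction lower bound $\tanh y \geq (10y^3+105y)/(y^4+45y^2+105)$ from \eqref{1.tanh-ine}, I will verify the uniform lower bound $\Phi(y) \geq c_0\, y^6/(1+y^2)^2$ for all $y \geq 0$. This yields coercivity of the form $\widehat{\mathcal{L}_1+\mathcal{L}_2}(k,l) \geq c_1 w_\varepsilon(k,l)$ where $w_\varepsilon$ dominates the weight function associated with $\|\cdot\|_a^2$, uniformly in $\varepsilon$.

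For Stage (ii), I pair the equation $\mathcal{L}_\varepsilon\phi = \partial_1 h_1 + \partial_2 h_2$ with $\phi$, and with appropriate scaled derivatives $\partial^\alpha\phi$ as needed, and apply Plancherel. The RHS is controlled via integration by parts and Cauchy-Schwarz by $\|h_1\|_b\|\phi\|_a + \|h_2\|_c\|\phi\|_a$, with the $\|\phi\|_a$ factor absorbed by Young's inequality against the coercivity of Stage (i). The potential contribution $\frac{3}{c}\partial_1(\partial_1 q\,\partial_1\phi)$ is treated as a localized perturbation using the algebraic decay of $q$ from \eqref{2-10}: outside a large compact set $K_R$ its coefficient is small enough to be absorbed, while inside $K_R$ it is bounded by $\|\partial_1 q\|_{L^\infty}\|\phi\|_{H^1(K_R)}$. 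This produces the intermediate estimate
\begin{equation*}
\|\phi\|_a \leq C\big(\|h_1\|_b + \|h_2\|_c\big) + C\|\phi\|_{L^2(K_R)}.
\end{equation*}

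For Stage (iii), suppose \eqref{4.2} fails; then there exist sequences $\varepsilon_n\downarrow 0$, $\phi_n\in\mathcal{H}_1$ with $\|\phi_n\|_a=1$, and $h_{1,n}\in\mathcal{H}_2$, $h_{2,n}\in\mathcal{H}_3$ with $\|h_{1,n}\|_b + \|h_{2,n}\|_c \to 0$. The intermediate estimate forces $\|\phi_n\|_{L^2(K_R)} \geq c > 0$. By the uniform $\|\phi_n\|_a$-bound and Rellich compactness, I extract $\phi_n \to \phi_\infty$ strongly in $L^2_{\mathrm{loc}}$. Since the symbol of $\mathcal{L}_2$ tends to zero on every fixed compact set in Fourier space (because $\varepsilon^{-4}\Phi(\varepsilon\sqrt{P}) = \varepsilon^2 P^3/45 + O(\varepsilon^4)$ there), while $\mathcal{L}_1|_{\varepsilon=0} = (\sigma-\tfrac{1}{3})\partial_1^4 - \partial_1^2 - \partial_2^2$ and $q_\varepsilon \to q_0$, the limit $\phi_\infty$ satisfies
\begin{equation*}
\big(\sigma - \tfrac{1}{3}\big)\partial_1^4\phi_\infty - \partial_1^2\phi_\infty - \partial_2^2\phi_\infty - 3\partial_1\big(\partial_1 q_0 \cdot \partial_1\phi_\infty\big) = 0,
\end{equation*}
which is the linearization of \eqref{2-11} at $q_0$. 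As $\phi_\infty$ inherits the odd-in-$x$, even-in-$y$ symmetry, and the translation kernel of the KP-I linearization is spanned by $\partial_x q_0$ (even in $x$) and $\partial_y q_0$ (odd in $y$)---neither lying in $\mathcal{H}_{\mathrm{ox}}$---the Liu-Wei nondegeneracy \cite{Liu1} forces $\phi_\infty \equiv 0$, contradicting $\|\phi_n\|_{L^2(K_R)} \geq c > 0$.

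The main obstacle is Stage (i): producing a symbol lower bound that simultaneously controls all the anisotropic weights in $\|\cdot\|_a^2$---in particular the top-order term $\varepsilon^2|\nabla_\varepsilon^5\phi|^2$---requires the full continued-fraction inequality for $\tanh$ rather than a finite Taylor approximation, since the latter loses positivity in the saturated regime $\varepsilon\sqrt{P}\gg 1$ where $\tanh$ approaches $1$. A careful case split between the low-frequency regime $\varepsilon\sqrt{P}\lesssim 1$, where $\Phi(y) \sim y^6$ dominates, and the high-frequency regime $\varepsilon\sqrt{P}\gg 1$, where the growth of $(1+\varepsilon^2 \sigma P)\Phi(y) \sim y^3 \cdot \varepsilon^2 P$ provides the needed coercivity, will be needed to piece together the global lower bound uniformly in $\varepsilon$.
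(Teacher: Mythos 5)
Your route is genuinely different from the paper's. The paper works at \emph{fixed} $\varepsilon$: it introduces the self-adjoint operator $L\phi=A\partial_1^2\phi-\phi-\tfrac{3}{c}\partial_1 q\,\phi-(1+\varepsilon^2)\partial_1^{-2}\partial_2^2\phi$ on the space $\mathcal{E}=\overline{\partial_1 C_0^\infty}$, invokes \cite{Liu1} to obtain its unique negative eigenvalue $\lambda_1$ with even eigenfunction $\phi_0$, sets $\phi_1=\partial_1^{-1}\phi_0$, decomposes $\phi=a\phi_1+\phi_2$ orthogonally with respect to the (indefinite) quadratic form, tests the equation against $a\phi_1$ and $\phi_2$ separately to control the single negative direction and the coercive complement, and only then runs the Fourier multiplier estimate $\widehat{\phi}=\widehat{h}/\mathrm{symbol}$ to upgrade to the full $\|\cdot\|_a$ norm. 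You instead prove coercivity modulo a localized remainder and remove the remainder by a compactness--contradiction argument as $\varepsilon\to 0$, using nondegeneracy only for the limiting KP-I operator. Your scheme avoids tracking the $\varepsilon$-dependence of the spectral data ($\lambda_1,\phi_1,\lambda_2,\lambda_*$) and uses the Morse index nowhere, but it is non-quantitative in $\varepsilon_0$ and requires a delicate limit passage in the nonlocal operator $\mathcal{L}_2$; both proofs share the continued-fraction treatment of $\tanh$ via \eqref{1.tanh-ine} and essentially the same final multiplier step. The symmetry argument in your Stage (iii) ($\partial_x q_0$ even in $x$, $\partial_y q_0$ odd in $y$, neither in $\mathcal{H}_{\mathrm{ox}}$) is correct and is the same use of nondegeneracy the paper makes, just applied in the limit rather than at fixed $\varepsilon$.

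There are, however, concrete repairs needed. First, Stage (i) is false as stated: the symbol of $\mathcal{L}_1+\mathcal{L}_2$ does \emph{not} pointwise dominate the Fourier weight of $\|\cdot\|_a^2$ (take $m_2=0$, $m_1\to\infty$: the symbol grows like $Am_1^4+\varepsilon m_1^5$ while the weight contains $m_1^8+\varepsilon^2 m_1^{10}$). What is true, and what you actually need, is $\mathrm{weight}\leq C\,(\mathrm{symbol})^2$, exploited through the resolvent representation $\widehat{\phi}=\widehat{h}_{\mathrm{tot}}/\mathrm{symbol}$ \emph{after} the energy estimate has bounded $\|\partial_1(\partial_1 q\,\partial_1\phi)\|_{L^2}$; your phrase ``absorbed by Young's inequality against the coercivity'' conflates the quadratic-form estimate (which only controls $\int \mathrm{symbol}\cdot|\widehat{\phi}|^2$) with this multiplier estimate, and the two-step structure must be made explicit. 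Second, the localized remainder produced by the potential term is $\|\partial_1\phi\|_{L^2(K_R)}^2$, not $\|\phi\|_{L^2(K_R)}^2$; you need an Ehrling/interpolation step (with a cutoff) to trade it for $\eta\|\partial_1^2\phi\|_{L^2}^2+C_\eta\|\phi\|_{L^2(K_{R'})}^2$. Bear in mind also that the quadratic form genuinely has a negative direction inside $\mathcal{H}_{\mathrm{ox}}$ (namely $\phi_1$ above), so ``coercive modulo local terms'' is the strongest statement available --- your formulation is consistent with this, but it is the reason the contradiction step cannot be skipped.

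Third, Stage (iii) needs more care than you give it. Passing to the limit in $\langle\mathcal{L}_2\phi_n,\chi\rangle\to 0$ requires a frequency splitting: on $|m|\leq M$ use $\widehat{\mathcal{L}_2}\leq C\varepsilon_n^2(m_1^2+\varepsilon_n^2m_2^2)^3\to 0$, and on $|m|\geq M$ use the uniform bound $\varepsilon_n\|\nabla_{\varepsilon_n}^5\phi_n\|_{L^2}\leq 1$ together with $\widehat{\mathcal{L}_2}\leq C\varepsilon_n(m_1^2+\varepsilon_n^2 m_2^2)^{5/2}$. Moreover, since $\|\cdot\|_a$ contains no $L^2$ norm of $\phi$ itself, the uniform local $L^2$ bound needed for Rellich must be extracted from oddness in $x$ plus $\|\partial_1\phi_n\|_{L^2}\leq 1$. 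Finally, the weak limit $\phi_\infty$ is only known to satisfy $\nabla\phi_\infty,\nabla^2\phi_\infty\in L^2$, and you must verify that this class embeds into the one in which the kernel characterization of \cite{Liu1} is proved (the paper sets this up carefully through the space $\mathcal{E}$ and the antiderivative $\partial_1^{-1}$ of \cite{de Bouard}); without that verification the conclusion $\phi_\infty\equiv 0$ does not yet follow.
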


\begin{proof}
    Our proof follows the strategy in \cite{Liu2}, adapted to our context. We present the key steps for completeness.

    \textbf{Step 1: Functional setting and spectral decomposition.}
    Define the space $\mathcal{E}$ as the closure of $\partial_1(C_0^{\infty}(\mathbb{R}^2))$ under the norm
    \[
        \|\partial_1 f\|_{\mathcal{E}} := \left( \|\nabla f\|_{L^2(\mathbb{R}^2)}^2 + \|\partial_1^2 f\|_{L^2(\mathbb{R}^2)}^2 \right)^{1/2}.
    \]
    The antiderivative $\partial_1^{-1}$ is well-defined on $\mathcal{E}$ (cf. \cite[Remark 1.1]{de Bouard}). Consider the self-adjoint operator in $\mathcal{E}$:
    \[
        L\phi := A\partial_1^2\phi - \phi - \frac{3}{c}(\partial_1 q \phi) - (1+\ep^2)\partial_1^{-2}\partial_2^2\phi.
    \]
    By \cite[Theorem 2]{Liu1}, $L$ has a unique negative eigenvalue $\lambda_1$ with associated even eigenfunction $\phi_0$ satisfying:
    \[
        A\partial_1^2\phi_0 - \phi_0 - \frac{3}{c}(\partial_1 q_{\ep}\phi_0) - (1+\ep^2)\partial_1^{-2}\partial_2^2\phi_0 + \lambda_1\phi_0 = 0.
    \]
    Moreover, $\int_{\mathbb{R}} \phi_0(x,y) dx = 0$ for each $y$, and $\phi_0$ decays at infinity. Define:
    \[
        \phi_1 := \partial_1^{-1}\phi_0 = \int_{-\infty}^{x} \phi_0(z,y)  dz, \quad
        \partial_1^{-2}\partial_2^{2}\phi_0 := \int_{-\infty}^{x} \int_{-\infty}^{z} \partial_2^{2}\phi_0(w,y)  dw  dz.
    \]
    Then $\phi_1 \in \mathcal{H}_1$ and satisfies
    \[
        A\partial_1^4\phi_1 - \partial_1^2\phi_1 - \frac{3}{c}\partial_1(\partial_1 q \partial_1\phi_1) - (1+\ep^2)\partial_2^2\phi_1 = -\lambda_1\partial_1^{2}\phi_1.
    \]
    As $\ep \to 0$, $\lambda_1$ converges to the unique negative eigenvalue of the linearized equation for \eqref{2-11}.

    \textbf{Step 2: Orthogonal decomposition.}
    Decompose $\phi = a \phi_1 + \phi_2$ with 
    \[
        a = \frac{\int_{\mathbb{R}^2} \partial_1\phi  \partial_1\phi_1  dx  dy}{\int_{\mathbb{R}^2} (\partial_1\phi_1)^2  dx  dy},
    \]
    where $\phi_2$ belongs to the orthogonal complement of $\phi_1$ in $\mathcal{H}_1$ under the inner product:
    \begin{align*}
        ( f, g ) = \int_{\mathbb{R}^2} \Big[ A\partial_1^2 f \partial_1^2 g + \partial_1 f \partial_1 g + \frac{3}{c} \partial_1 q \partial_1 f \partial_1 g + (1+\ep^2) \partial_2 f \partial_2 g \Big]  dx  dy.
    \end{align*}
    For any $\psi$ in this complement, we have
    \[
        ( \psi, \psi ) \geq \lambda_2 \|\partial_1 \psi\|_{L^2(\mathbb{R}^2)}^2,
    \]
    where $\lambda_2 > 0$ is the smallest positive eigenvalue of $L$, uniformly bounded for small $\varepsilon$. Additionally,
    \begin{equation}\label{4-3}
        ( \psi, \psi ) \geq \lambda_{*} \left( \|\nabla \psi\|_{L^2(\mathbb{R}^2)}^2 + A \|\partial_1^2 \psi\|_{L^2(\mathbb{R}^2)}^2 \right),
    \end{equation}
    with $\lambda_{*} \in (0,1)$ independent of $\varepsilon$.

    \textbf{Step 3: Equation for $\phi_2$ and key estimates.}
    Equation \eqref{4-1} becomes
    \begin{equation}\label{4-4}
        \begin{split}
            \mathcal{L}_{\ep} \phi_2 
            &= a\lambda_1 \partial_1^2 \phi_1 - a \left[ \left(2A + \tfrac{1}{3}\right) \ep^2 \partial_1^2 \partial_2^2 \phi_1 + \sigma(1+\ep^2)\ep^4 \partial_2^4 \phi_1 + \mathcal{L}_2 \phi_1 \right] \\
            &\quad + \partial_1 h_1 + \partial_2 h_2.
        \end{split}
    \end{equation}
    Multiply by $a \phi_1$ and integrate by parts
    \begin{equation}\label{4-5}
        \begin{split}
            &a \left[ \left(2A + \tfrac{1}{3}\right) \ep^2 \langle \partial_1 \partial_2 \phi_2, \partial_1 \partial_2 \phi_1 \rangle + \sigma(1+\ep^2) \ep^4 \langle \partial_2^2 \phi_2, \partial_2^2 \phi_1 \rangle + \langle \mathcal{L}_2 \phi_2, \phi_1 \rangle \right] \\
            &= -a^2 \left[ \left(2A + \tfrac{1}{3}\right) \ep^2 \|\partial_1 \partial_2 \phi_1\|_{L^2}^2 + \sigma(1+\ep^2) \ep^4 \|\partial_2^2 \phi_1\|_{L^2}^2 + \langle \mathcal{L}_2 \phi_1, \phi_1 \rangle \right] \\
            &\quad - a^2 \lambda_1 \|\partial_1 \phi_1\|_{L^2}^2 - a \left( \langle h_1, \partial_1 \phi_1 \rangle + \langle h_2, \partial_2 \phi_1 \rangle \right).
        \end{split}
    \end{equation}
    For the symbol of $\mathcal{L}_2$, set $\widehat{-\partial_1^2} = m_1^2$, $\widehat{-\partial_2^2} = m_2^2$, and $\mathbf{k} = (k_1, k_2) = (\ep m_1, \ep^2 m_2)$. From \eqref{1.tanh}:
\begin{align*}
\widehat{\mathcal{L}_2}&=\ep^{-4}\Big(1+\ep^2+\sigma|\mathbf{k}|^2\Big)\Big(1+\frac{1}{3}|\mathbf{k}|^2\Big)|\mathbf{k}|\Big(\tanh{|\mathbf{k}|}-\frac{|\mathbf{k}|}{1+\frac{1}{3}|\mathbf{k}|^2}\Big)\\
		&\ge \ep^{-4}\Big(1+\ep^2+\sigma|\mathbf{k}|^2\Big)\Big(1+\frac{1}{3}|\mathbf{k}|^2\Big)|\mathbf{k}|\Big(\frac{1}{\frac{1}{|\mathbf{k}|}+\frac{1}{\frac{3}{|\mathbf{k}|}+\frac{1}{\frac{5}{|\mathbf{k}|}+\frac{|\mathbf{k}|}{7}}}}-\frac{|\mathbf{k}|}{1+\frac{1}{3}|\mathbf{k}|^2}\Big)\\
		&>0,
	\end{align*}
 and there exist $C, c_0 > 0$ such that
	\begin{align*}
\widehat{\mathcal{L}_2}&=\ep^{-4}\Big(1+\ep^2+\sigma|\mathbf{k}|^2\Big)\Big(1+\frac{1}{3}|\mathbf{k}|^2\Big)|\mathbf{k}|\Big(\tanh{|\mathbf{k}|}-\frac{|\mathbf{k}|}{1+\frac{1}{3}|\mathbf{k}|^2}\Big)\\
		&\le \ep^{-4}\Big(1+\ep^2+\sigma|\mathbf{k}|^2\Big)\Big(1+\frac{1}{3}|\mathbf{k}|^2\Big)|\mathbf{k}|\Big(\frac{1}{\frac{1}{|\mathbf{k}|}+\frac{1}{\frac{3}{|\mathbf{k}|}+\frac{|\mathbf{k}|}{5}}}-\frac{|\mathbf{k}|}{1+\frac{1}{3}|\mathbf{k}|^2}\Big)\\
		&\le C\ep^{2}(m_1^2+\ep^2m_2^2)^3,\\
        \widehat{\mathcal{L}_2} &\geq \ep^{-4} c_0 |\mathbf{k}|^5 \quad \text{as} \quad |\mathbf{k}| \to \infty.
	\end{align*}
    This implies
    \[
        |\langle \mathcal{L}_2 \phi_2, \phi_1 \rangle| \leq C \ep^2 \|\partial_1^2 \phi_2\|_{L^2} \|\nabla_{\ep}^4 \phi_1\|_{L^2} + C \ep^4 \|\partial_2^2 \phi_2\|_{L^2} \|\nabla_{\ep}^4 \phi_1\|_{L^2}.
    \]
    Using \eqref{4-5} and Young's inequality yields
    \begin{equation}\label{4-6}
        \begin{split}
            &a^2 (-\lambda_1 - \delta_{\ep}) \|\partial_1 \phi_1\|_{L^2}^2 \\
            &\leq -\frac{\lambda_{*}}{32\lambda_1} \left( \ep^2 \|\partial_1^2 \phi_2\|_{L^2}^2 + \sigma \ep^4 \|\partial_2^2 \phi_2\|_{L^2}^2 + \left(2A + \tfrac{1}{3}\right) \ep^2 \|\partial_1 \partial_2 \phi_2\|_{L^2}^2 \right) \\
            &\quad - \frac{2}{\lambda_1} \|h_1\|_{L^2}^2 - \frac{2}{\lambda_1} \frac{\|\partial_2 \phi_1\|_{L^2}^2}{\|\partial_1 \phi_1\|_{L^2}^2} \|h_2\|_{L^2}^2,
        \end{split}
    \end{equation}
   where  $\delta_{\varepsilon}$ depends on $\varepsilon$ and can be arbitrarily small as $\varepsilon \rightarrow 0$. In \eqref{4-6} we have also used $\delta(\varepsilon)\left\| \partial_1\phi_1\right\|_{L^2\left(\mathbb{R}^2\right)}^2$ to control the terms $\ep^2\|\partial_1\partial_2\phi_1\|_{L^2(\mathbb{R}^2)}^2$, $\ep^2\|\nabla^4\phi_1\|^2_{L^2\left(\mathbb{R}^2\right)}$,  and $\ep^4\|\partial_2^2 \phi_1\|_{L^2\left(\mathbb{R}^2\right)}^2$, due to the fact that $\phi_1$ is a concrete function. For the same reason, we see that the coefficients before $\|h_2\|_{L^2(\mathbb{R}^2)}$ is specified constant.

    \textbf{Step 4: Energy estimate for $\phi_2$.}
    Multiply \eqref{4-4} by $\phi_2$ and use \eqref{4-3}:
    \begin{equation}\label{4-7}
        \begin{split}
            &\lambda_{*} \|\nabla \phi_2\|_{L^2}^2 + \lambda_{*} A \|\partial_1^2 \phi_2\|_{L^2}^2 + \left(2A + \tfrac{1}{3}\right) \ep^2 \|\partial_1 \partial_2 \phi_2\|_{L^2}^2 + \sigma(1+\ep^2) \ep^4 \|\partial_2^2 \phi_2\|_{L^2}^2 \\
            &\leq \frac{2}{\lambda_{*}} \|h_1\|_{L^2}^2 + \frac{2}{\lambda_{*}} \|h_2\|_{L^2}^2 + 2a^2 \left( \frac{\lambda_1^2}{\lambda_{*}} + \tilde{\delta}_{\ep} \right) \|\partial_1 \phi_1\|_{L^2}^2.
        \end{split}
    \end{equation}
    Combining \eqref{4-6} and \eqref{4-7} gives
    \begin{align*}
        \frac{a^2 \lambda_1^2}{\lambda_{*}} \|\partial_1 \phi_1\|_{L^2}^2 + \lambda_{*} \|\nabla \phi_2\|_{L^2}^2 
        \leq \frac{10}{\lambda_{*}} \|h_1\|_{L^2}^2 + \left( \frac{8}{\lambda_{*}} \frac{\|\partial_2 \phi_1\|_{L^2}^2}{\|\partial_1 \phi_1\|_{L^2}^2} + \frac{2}{\lambda_{*}} \right) \|h_2\|_{L^2}^2,
    \end{align*}
    implying:
    \begin{equation*}
        a^2 \|\nabla \phi_1\|_{L^2}^2 + \|\nabla \phi_2\|_{L^2}^2 \leq C \left( \|h_1\|_{L^2}^2 + \|h_2\|_{L^2}^2 \right).
    \end{equation*}
    For the specific function $\phi_1$, we have
    \begin{equation*}
        \|\partial_1^2 \phi_1\|_{L^2}^2 \leq C \|\partial_1 \phi_1\|_{L^2}^2.
    \end{equation*}
    Thus from \eqref{4-7}:
    \begin{equation*}
        \|\partial_1^2 \phi_2\|_{L^2}^2 \leq C \left( \|h_1\|_{L^2}^2 + \|h_2\|_{L^2}^2 \right).
    \end{equation*}
    Consequently
    \begin{equation}\label{4-10}
        \|\partial_1 (\partial_1 q \partial_1 \phi)\|_{L^2}^2 \leq C \left( \|h_1\|_{L^2}^2 + \|h_2\|_{L^2}^2 \right).
    \end{equation}

    \textbf{Step 5: Higher-order estimates via Fourier analysis.}
    Rewrite \eqref{4-1} as:
    \begin{equation}\label{4-11}
        \mathcal{L}_1 \phi + \mathcal{L}_2 \phi = \partial_1 h_1 + \partial_2 h_2 + 3 \partial_1 (\partial_1 q \partial_1 \phi) =: h.
    \end{equation}
    By \eqref{4-10} and the definitions of $\|\cdot\|_b$, $\|\cdot\|_c$:
    \begin{equation}\label{4-12}
        \|h\|_{L^2}^2 \leq C \left( \|h_1\|_b^2 + \|h_2\|_c^2 \right).
    \end{equation}
    The Fourier transform of \eqref{4-11} is
    \begin{equation}\label{4-13}
        \widehat{\phi}(m_1, m_2) = \frac{\widehat{h}(m_1, m_2)}{A m_1^4 + m_1^2 + (1+\ep^2) m_2^2 + \left(2A + \tfrac{1}{3}\right) \ep^2 m_1^2 m_2^2 + \sigma (1+\ep^2) \ep^4 m_2^4 + \widehat{\mathcal{L}_2}}.
    \end{equation}
    To bound $\|\phi\|_a$, we show each term is controlled by $\|h\|_{L^2}$. For $\ep \|\nabla_{\ep}^5 \phi\|_{L^2}$:
    \begin{align*}
        \ep \|\nabla_{\ep}^5 \phi\|_{L^2}
        &\leq C \left\| \frac{\ep (m_1^2 + \ep^2 m_2^2)^{5/2} \widehat{h}}{A m_1^4 + m_1^2 + (1+\ep^2) m_2^2 + \left(2A + \tfrac{1}{3}\right) \ep^2 m_1^2 m_2^2 + \sigma (1+\ep^2) \ep^4 m_2^4 + \widehat{\mathcal{L}_2}} \right\|_{L^2} \\
        &\leq C \left\| \frac{\ep (m_1^2 + \ep^2 m_2^2)^{5/2}}{m_1^2 + \ep^2 m_2^2 + \ep (m_1^2 + \ep^2 m_2^2)^{5/2}} \widehat{h} \right\|_{L^2} \\
        &\leq C \|h\|_{L^2}.
    \end{align*}
    Other terms in $\|\cdot\|_a$ follow similarly. This completes the proof.
\end{proof}

\begin{remark}
	We would like to emphasize that the use of $tanh$ in operator $\mathcal{L}_2$ employs the method of continued fractions, which is different from previous linearization estimates. Additionally, $\sigma > \frac{1}{3}$ can be observed from the expansion of tanh, and $\sigma > \frac{1}{3}$ enables operator $\mathcal{L}_2 $ to possess properties similar to those of elliptic operators. This is also the reason why we use $ \mathcal{Q} $ in the derivation of equations in Section \ref{sec2}.
\end{remark}

After establishing the $L^2$ theory for equation \eqref{4-1}, we extend the analysis to $L^p$ spaces for $p \in [4, \infty)$. This extension is crucial for studying the nonlinear problem \eqref{2-8}. We define the following Sobolev spaces:
\begin{align*}
    \mathcal{F}_1 &:= \left\{ f \in \mathcal{H}_{\mathrm{ox}} : \|f\|_* < \infty \right\},  \quad
    \mathcal{F}_2 := \left\{ f \in \mathcal{H}_{\mathrm{e}} : \|f\|_{**} < \infty \right\},  \\
    \mathcal{F}_3 &:= \left\{ f \in \mathcal{H}_{\mathrm{ox}} : \|f\|_{***} < \infty \right\}, 
\end{align*}
equipped with the norms
\begin{align*}
    \|f\|_* &:= \|f\|_a + \|f\|_{L^4(\mathbb{R}^2)} + \|f\|_{L^\infty(\mathbb{R}^2)} + \|\nabla f\|_{L^p(\mathbb{R}^2)} \\
    &\quad + \ep^{1/2} \|\nabla_{\ep} f\|_{L^\infty(\mathbb{R}^2)} + \ep^{1/2} \|\nabla_{\ep}^2 f\|_{L^\infty(\mathbb{R}^2)} + \ep^{3/2} \|\nabla_{\ep}^3 f\|_{L^\infty(\mathbb{R}^2)}, \quad p > 2, \\
    \|f\|_{**} &:= \|f\|_b + \|f\|_{L^{4/3}(\mathbb{R}^2)}, \\
    \|f\|_{***} &:= \|f\|_c + \|f\|_{L^{4/3}(\mathbb{R}^2)}.
\end{align*}

The second main result is the following existence and estimate:

\begin{prop}\label{prop4-2}
    Let $h_1 \in \mathcal{F}_2$ and $h_2 \in \mathcal{F}_3$. There exist constants $\varepsilon_0 > 0$ and $C > 0$ such that for all $\varepsilon \in (0, \varepsilon_0)$, the linear equation \eqref{4-1} admits a solution $\phi \in \mathcal{F}_1$ satisfying
    \begin{equation}\label{4-28}
        \|\phi\|_{*} \leq C \left( \|h_1\|_{**} + \|h_2\|_{***} \right).
    \end{equation}
    Moreover, $\phi(x) \to 0$ as $|x| \to \infty$.
\end{prop}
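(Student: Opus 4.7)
The plan is to bootstrap from the $L^2$-based estimate of Proposition~\ref{Prop4-1} to the stronger norm $\|\cdot\|_*$ via direct Fourier-multiplier analysis. The embeddings $\mathcal{F}_2\hookrightarrow\mathcal{H}_2$ and $\mathcal{F}_3\hookrightarrow\mathcal{H}_3$ are continuous (indeed $\|h_i\|_b\leq\|h_i\|_{**}$ and $\|h_i\|_c\leq\|h_i\|_{***}$ by definition), so Proposition~\ref{Prop4-1} immediately produces a solution $\phi\in\mathcal{H}_1$ with
\[
\|\phi\|_a \;\leq\; C\bigl(\|h_1\|_{**}+\|h_2\|_{***}\bigr),
\]
controlling the $\|\phi\|_a$ piece of the target norm. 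To handle the remaining $L^4$, $L^\infty$, $L^p$ and $\ep$-weighted $L^\infty$ pieces I would recast \eqref{4-1} as $(\mathcal{L}_1+\mathcal{L}_2)\phi=H_\ep$ with
\[
H_\ep \;:=\; \partial_1 h_1+\partial_2 h_2+\tfrac{3}{c}\partial_1\!\bigl(\partial_1 q\,\partial_1\phi\bigr),
\]
and take Fourier transforms to obtain $\widehat\phi(m)=\widehat{H_\ep}(m)/\Sigma_\ep(m)$, where
\[
\Sigma_\ep(m)=A m_1^4+m_1^2+(1+\ep^2)m_2^2+\bigl(2A+\tfrac{1}{3}\bigr)\ep^2 m_1^2 m_2^2+\sigma(1+\ep^2)\ep^4 m_2^4+\widehat{\mathcal{L}_2}(m).
\]
The perturbation $\partial_1(\partial_1 q\,\partial_1\phi)$ is not a source of circularity: since $\partial_1 q$ is Schwartz, H\"older's inequality gives $\partial_1 q\,\partial_1\phi\in L^2\cap L^{4/3}$ with norm dominated by $\|\partial_1\phi\|_{L^2}\leq\|\phi\|_a$, so the right-hand side $H_\ep$ is controlled in the $\|\cdot\|_{**}+\|\cdot\|_{***}$ sense by the data alone.

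The core of the proof lies in the symbol analysis of $m^\alpha/\Sigma_\ep$. On the low-frequency region $\{|m|\leq 1\}$ one has $\Sigma_\ep(m)\gtrsim m_1^2+m_2^2$, so the multipliers $m_j/\Sigma_\ep$ behave like $|m|^{-1}$ and lie in $L^{4/3}_{\mathrm{loc}}(\mathbb{R}^2)$. Combined with the Hausdorff--Young bound $\widehat{h_i}\in L^4$ coming from $h_i\in L^{4/3}$, H\"older in the Fourier variable yields $\widehat\phi\in L^1$ on this region, whence $\|\phi\|_{L^\infty}\leq\|\widehat\phi\|_{L^1}\leq C(\|h_1\|_{**}+\|h_2\|_{***})$; the conjugate Hausdorff--Young estimate $\|\phi\|_{L^4}\leq\|\widehat\phi\|_{L^{4/3}}$ then delivers the $L^4$ bound by the same scheme. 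On the high-frequency region $\{|m|\geq 1\}$, either the polynomial part of $\Sigma_\ep$ or the lower bound $\widehat{\mathcal{L}_2}(m)\gtrsim\ep^{-4}|\mathbf{k}|^5$ coming from the continued-fraction inequality \eqref{1.tanh-ine} dominates, so $m^\alpha/\Sigma_\ep\in L^2$ at infinity, and Cauchy--Schwarz against $\widehat{H_\ep}\in L^2$ closes the bound. For $\|\nabla\phi\|_{L^p}$ with $p>2$ I would invoke the Mikhlin / Hardy--Littlewood--Paley multiplier theorem recalled in the appendix: the symbols $m_j m_\ell/\Sigma_\ep$ satisfy Mikhlin's derivative conditions uniformly in $\ep$.

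The $\ep$-weighted estimates $\ep^{1/2}\|\nabla_\ep^j\phi\|_{L^\infty}$ for $j=1,2$ and $\ep^{3/2}\|\nabla_\ep^3\phi\|_{L^\infty}$ are the most delicate step, and this is precisely the reason the non-local term $\mathcal{L}_2$ must be kept in play. For frequencies where $\widehat{\mathcal{L}_2}$ dominates, the fifth-power growth ensures that the weighted multipliers $\ep^{j-1/2}(\ep m)^\alpha/\Sigma_\ep$ remain in $L^1(\mathbb{R}^2)$ uniformly in $\ep$, while in the intermediate regime $|\mathbf{k}|\sim 1$ the precise powers $\ep^{1/2}$ and $\ep^{3/2}$ are dictated by the scaling $\mathbf{k}=(\ep m_1,\ep^2 m_2)$ together with the two-sided bound on $\Sigma_\ep$ obtainable from \eqref{1.tanh-ine} in that range. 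The decay $\phi(x)\to 0$ at infinity then follows from the Riemann--Lebesgue lemma once $\widehat\phi\in L^1$ is verified. I expect the main obstacle to be precisely this multi-regime symbol analysis: the continued-fraction bounds in \eqref{1.tanh-ine} are nearly tight, the weighted $L^\infty$ estimates leave essentially no slack, and careful book-keeping is needed at the transition $|m|\sim 1$ and along the anisotropic directions $|m_1|\ll|m_2|$ or $|m_2|\ll|m_1|$ to ensure that all implicit constants remain independent of $\ep$.
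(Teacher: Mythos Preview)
Your overall scheme---bootstrapping from $\|\phi\|_a$ via direct Fourier multiplier bounds---is plausible, but the paper takes a shorter route that sidesteps precisely the multi-regime symbol analysis you identify as the obstacle. For $\|\phi\|_{L^4}$ the paper uses Hausdorff--Young followed by the Hardy--Littlewood--Paley weighted inequality $\||m|^{-1}\widehat g\|_{L^{4/3}}\leq C\|g\|_{L^{4/3}}$ (stated in the appendix), which absorbs the $|m|^{-1}$ singularity of $m_j/\Sigma_\ep$ in one stroke without any frequency splitting; your ``same scheme'' for $L^4$ is underspecified, since H\"older $L^{4/3}\cdot L^4\to L^1$ does not land in $L^{4/3}$, and you would in fact need HLP here anyway. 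The bounds on $\|\phi\|_{L^\infty}$ and $\|\nabla\phi\|_{L^p}$ then follow by Gagliardo--Nirenberg interpolation between $\|\nabla^2\phi\|_{L^2}\leq\|\phi\|_a$ and $\|\phi\|_{L^4}$, together with $W^{1,p}(\mathbb{R}^2)\hookrightarrow C_0(\mathbb{R}^2)$ (which also delivers the decay at infinity), rather than via Mikhlin. The decisive simplification is for the $\ep$-weighted $L^\infty$ terms: instead of analysing the multipliers $\ep^{j-1/2}(\ep m)^\alpha/\Sigma_\ep$, the paper rescales $\widetilde\phi(x,y):=\phi(x,\ep y)$, turning $\nabla_\ep$ into the ordinary gradient and giving $\|\nabla^m\widetilde\phi\|_{L^2}=\ep^{-1/2}\|\nabla_\ep^m\phi\|_{L^2}$; the Sobolev embedding $H^2(\mathbb{R}^2)\hookrightarrow C_0(\mathbb{R}^2)$ applied to $\nabla^m\widetilde\phi$ then yields $\ep^{1/2}\|\nabla_\ep^m\phi\|_{L^\infty}$ for $m=1,2$ and $\ep^{3/2}\|\nabla_\ep^3\phi\|_{L^\infty}$ directly from the $L^2$ information already contained in $\|\phi\|_a$, with no further symbol analysis whatsoever. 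One additional gap in your outline: Proposition~\ref{Prop4-1} is only an a-priori estimate and does not by itself ``produce'' a solution; existence requires a separate Riesz/Lax--Milgram argument in a tailored Hilbert space $\mathcal{E}_1$, which the paper supplies by adapting the construction in~\cite{Liu2}.
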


\begin{proof}
    The existence proof follows arguments similar to Proposition 4.2 in \cite{Liu2}. We thus obtain the estimate
    \[
        \|\phi\|_{\mathcal{E}_1} \leq C\big( \|h_1\|_b + \|h_2\|_c \big),
    \]
    where $\mathcal{E}_1$ is the Hilbert space defined by
    \[
        \mathcal{E}_1 := \big\{ f \in \mathcal{H}_{\mathrm{ox}} \mid \|f\|_{\mathcal{E}_1} < +\infty \big\},
    \]
    with norm $\|f\|_{\mathcal{E}_1}^2 = \langle f, f \rangle_{\mathcal{E}_1}$ and inner product
    \[
        \langle f, g \rangle_{\mathcal{E}_1} = \int_{\mathbb{R}^2} \Big( \partial_1^2 f \, \partial_1^2 g + \nabla f \cdot \nabla g + \ep^2 \partial_1 \partial_2 f \, \partial_1 \partial_2 g + \ep^4 \partial_2^2 f \, \partial_2^2 g + (\mathcal{L}_{2} f) g \Big)  dx  dy.
    \]
    Clearly, $\mathcal{E}_1$ is a Hilbert space. Moreover, $\|f\|_{\mathcal{E}_1} = 0$ implies $f = 0$: if $\|f\|_{\mathcal{E}_1} = 0$, then $f$ must be constant, and since $f \in \mathcal{H}_{\mathrm{ox}}$, we conclude $f = 0$.

    The key difference in our approach lies in the use of the $L^p$-norm, which requires modified norm estimates. We now derive the a-priori estimate \eqref{4-28} by analyzing $\|\phi\|_{*}$ term by term. 

    First, we estimate $\|\phi\|_{L^4(\mathbb{R}^2)}$. Using the Green's function representation, we have
    \[
        \widehat{\phi}(m_1, m_2) = \frac{3i m_1 \widehat{\partial_1 q \, \partial_1 \phi} + i m_1 \widehat{h_1} + i m_2 \widehat{h_2}}{A m_1^4 + m_1^2 + (1 + \ep^2) m_2^2 + (2A + \frac{1}{3}) \ep^2 m_1^2 m_2^2 + \sigma(1 + \ep^2) \ep^4 m_2^4 + \widehat{\mathcal{L}_2}}.
    \]
    Applying the Hardy-Littlewood-Paley theorem on $\mathbb{R}^2$ and the Hausdorff-Young inequality, we obtain
    \begin{align*}
        \|\phi\|_{L^4(\mathbb{R}^2)} 
        &= \|\widecheck{\widehat{\phi}}\|_{L^4(\mathbb{R}^2)} 
        = \|\widehat{\widehat{\phi}}(-\xi)\|_{L^4(\mathbb{R}^2)} \\
        &\leq C \|\widehat{\phi}\|_{L^{4/3}(\mathbb{R}^2)} \\
        &\leq C \Big( \big\||m|^{-1} \widehat{\partial_1 q \, \partial_1 \phi}\big\|_{L^{4/3}(\mathbb{R}^2)} 
        + \big\||m|^{-1} \widehat{h}_1 \big\|_{L^{4/3}(\mathbb{R}^2)} 
        + \big\||m|^{-1} \widehat{h_2} \big\|_{L^{4/3}(\mathbb{R}^2)} \Big) \\
        &\leq C \Big( \big\| \partial_1 q \, \partial_1 \phi \big\|_{L^{4/3}(\mathbb{R}^2)} 
        + \big\| h_1 \big\|_{L^{4/3}(\mathbb{R}^2)} 
        + \big\| h_2 \big\|_{L^{4/3}(\mathbb{R}^2)} \Big) \\
        &\leq C \Big( \| h_1 \|_{**} + \| h_2 \|_{***} \Big).
    \end{align*}

    Next, applying the Gagliardo-Nirenberg interpolation inequality
    \[
        \|D^j u\|_{L^{p_1}(\mathbb{R}^n)} \leq C \|D^l u\|_{L^r(\mathbb{R}^n)}^\alpha \|u\|_{L^{p_2}(\mathbb{R}^n)}^{1-\alpha}, \quad \frac{1}{p_1} = \frac{j}{n} + \left( \frac{1}{r} - \frac{l}{n} \right) \alpha + \frac{1 - \alpha}{p_2},
    \]
    we deduce
    \begin{align*}
        \|\phi\|_{L^{p_1}(\mathbb{R}^2)} + \|\nabla \phi\|_{L^{p_2}(\mathbb{R}^2)} 
        &\leq C \Big( \|\nabla^2 \phi\|^{\alpha_1}_{L^2(\mathbb{R}^2)} \|\phi\|^{1-\alpha_1}_{L^4(\mathbb{R}^2)} 
        + \|\nabla^2 \phi\|^{\alpha_2}_{L^2(\mathbb{R}^2)} \|\phi\|^{1-\alpha_2}_{L^4(\mathbb{R}^2)} \Big) \\
        &\leq C \Big( \| h_1 \|_{**} + \| h_2 \|_{***} \Big),
    \end{align*}
    where $\alpha_1 = \frac{1}{3}(1 - \frac{4}{p_1})$, $\alpha_2 = 1 - \frac{4}{3p_2}$, for all $p_1 \in [4, +\infty)$ and $p_2 \in [\frac{8}{3}, +\infty)$. 

    By the Sobolev embedding $W^{1,p}(\mathbb{R}^2) \hookrightarrow C_{0}(\mathbb{R}^2)$ for $p > 2$, it follows that 
    \[
        \|\phi\|_{L^{\infty}(\mathbb{R}^2)} \leq C \big( \| h_1 \|_{**} + \| h_2 \|_{***} \big)
    \]
    and $\phi(x) \to 0$ as $|x| \to \infty$.

    Now define $\widetilde{\phi}(x, y) = \phi(x, \ep y)$. Then
    \begin{align*}
        \|\widetilde{\phi}\|_{L^{\infty}(\mathbb{R}^2)} 
        &= \|\phi\|_{L^{\infty}(\mathbb{R}^2)} 
        \leq C \big( \| h_1 \|_{**} + \| h_2 \|_{***} \big), \\
        \|\nabla^m \widetilde{\phi}\|_{L^2(\mathbb{R}^2)} 
        &= \ep^{-1/2} \|\nabla_{\ep}^m \phi\|_{L^2(\mathbb{R}^2)} 
        \leq C \ep^{-1/2} \big( \| h_1 \|_{**} + \| h_2 \|_{***} \big), \quad m = 1, 2, 3, 4, \\
        \|\nabla^5 \widetilde{\phi}\|_{L^2(\mathbb{R}^2)} 
        &= \ep^{-1/2} \|\nabla_{\ep}^5 \phi\|_{L^2(\mathbb{R}^2)} 
        \leq C \ep^{-3/2} \big( \| h_1 \|_{**} + \| h_2 \|_{***} \big).
    \end{align*}
    Using the Sobolev embedding $H^2(\mathbb{R}^2) \hookrightarrow C_0(\mathbb{R}^2)$, we bound the scaled derivatives:
    \begin{align*}
        \|\nabla_{\ep}^m \phi\|_{C_0(\mathbb{R}^2)} 
        &= \|\nabla^m \widetilde{\phi}\|_{C_0(\mathbb{R}^2)} \\
        &\leq C \Big( \|\nabla^m \widetilde{\phi}\|_{L^2(\mathbb{R}^2)} + \|\nabla^{m+2} \widetilde{\phi}\|_{L^2(\mathbb{R}^2)} \Big) \\
        &\leq \begin{cases} 
            C \ep^{-1/2} \big( \| h_1 \|_{**} + \| h_2 \|_{***} \big), & m = 1, 2, \\
            C \ep^{-3/2} \big( \| h_1 \|_{**} + \| h_2 \|_{***} \big), & m = 3.
        \end{cases}
    \end{align*}
    This completes the proof.
\end{proof}

\section{Solving the nonlinear system}\label{sec5}
This section addresses the solution of the nonlinear systems \eqref{2-5} and \eqref{2-6}. We seek solutions to \eqref{2-8} of the form $f = q + \phi$, where $q = q_{\ep}$ is defined in \eqref{2-10}. Substituting this ansatz yields
\begin{equation}
    \label{5-1}
    \mathcal{L}_{\ep} \phi = \sum_{i=1}^4 P_{i} + \frac{3}{2c} \partial_1 \big( (\partial_1 \phi)^2 \big) - \mathcal{L}_2 q - \left(2A + \tfrac{1}{3}\right) \ep^2 \partial_1^2 \partial_2^2 q - \sigma (1 + \ep^2) \ep^4 \partial_2^4 q.
\end{equation}
From \eqref{2-6}, we express $h$ as $h = c (\partial_1 q + \psi)$, where $\psi$ satisfies 
\begin{equation}
    \label{5-2}
    \psi - \sigma \ep^2 (\partial_1^2 + \ep^2 \partial_2^2) \psi = \partial_1 \phi + \sigma \ep^2 (\partial_1^2 + \ep^2 \partial_2^2) (\partial_1 q) - \frac{\ep^2}{2c} (\partial_1 q + \partial_1 \phi)^2 + \frac{\ep^4}{c} \Pi.
\end{equation}

We solve \eqref{5-1} in the function space
\begin{equation}\label{5-3}
    \mathcal{F}_\phi = \left\{ \phi \in \mathcal{F}_1 \mid \|\phi\|_* \leq C \varepsilon \right\},
\end{equation}
with $C$ sufficiently large. For $\phi \in \mathcal{F}_\phi$, the following estimates hold:
\begin{align}
    \|\xi\|_{L^{\infty}(\mathbb{R}^2; \mathbf{x'})} 
    &\leq C \ep + \ep \|\phi\|_{L^{\infty}(\mathbb{R}^2)} \leq C \ep, \label{5-4} \\
    \|\xi\|_{L^p(\mathbb{R}^2; \mathbf{x'})} 
    &\leq \ep^{1 - \frac{3}{p}} \left( \|q\|_{L^p(\mathbb{R}^2)} + \|\phi\|_{L^p(\mathbb{R}^2)} \right) \leq C \ep^{1 - \frac{3}{p}}, \quad p \in [4, +\infty]. \label{5-5}
\end{align}
Furthermore, for $m = 1, 2, \dots, 5$,
\begin{align}
    \|\partial_{\mathbf{x'}}^m \xi\|_{L^2(\mathbb{R}^2; \mathbf{x'})} 
    &= \ep^{m+1 - \frac{3}{2}} \|\nabla_{\ep}^m f\|_{L^2(\mathbb{R}^2)} \nonumber \\
    &\leq \ep^{m - \frac{1}{2}} \left( \|\nabla_{\ep}^m q\|_{L^2(\mathbb{R}^2)} + \|\nabla_{\ep}^m \phi\|_{L^2(\mathbb{R}^2)} \right) \leq C \ep^{m - \frac{1}{2}}. \label{5-6}
\end{align}
For $m = 1, 2, 3$,
\begin{equation}
    \|\partial_{\mathbf{x'}}^m \xi\|_{L^{\infty}(\mathbb{R}^2; \mathbf{x'})} 
    \leq \ep^{m+1} \left( \|\nabla_{\ep}^m q\|_{L^{\infty}(\mathbb{R}^2)} + \|\nabla_{\ep}^m \phi\|_{L^{\infty}(\mathbb{R}^2)} \right) 
    \leq C \ep^{m+1}. \label{5-7}
\end{equation}
By the Gagliardo-Nirenberg interpolation inequality, for $m = 1, 2, 3, 4$ and $p \in [4, +\infty)$,
\begin{equation}
    \|\partial_{\mathbf{x'}}^m \xi\|_{L^{p}(\mathbb{R}^2; \mathbf{x'})} 
    \leq C \|\partial_{\mathbf{x'}}^{m+1} \xi\|_{L^{2}(\mathbb{R}^2; \mathbf{x'})}^{\theta_1} \|\partial_{\mathbf{x'}}^{m-1} \xi\|_{L^{\infty}(\mathbb{R}^2; \mathbf{x'})}^{1 - \theta_1} 
    \leq C \ep^{m + \frac{1}{2} - \frac{1}{p}}, \label{5-8}
\end{equation}
where $\theta_1 = 1 - \frac{2}{p} \geq \frac{1}{2}$.

Next, we consider the linear equation 
\begin{equation}\label{5.3}
    \psi - \sigma \ep^2 (\partial_1^2 + \ep^2 \partial_2^2) \psi = \mathfrak{h}.
\end{equation}
Define the Hilbert space $H := \left\{ f \in \mathcal{H}_{e} \mid \|f\|_{h} < +\infty \right\}$ with norm
    \begin{align*}
        \|\psi\|_{h} 
        &:= \|\psi\|_{L^2(\mathbb{R}^2)} + \|\nabla_{\ep} \psi\|_{L^2(\mathbb{R}^2)} + \|\nabla_{\ep}^2 \psi\|_{L^2(\mathbb{R}^2)} + \|\nabla_{\ep}^3 \psi\|_{L^2(\mathbb{R}^2)} + \ep \|\nabla_{\ep}^4 \psi\|_{L^2(\mathbb{R}^2)} \\
        &\quad + \ep^2 \|\nabla_{\ep}^5 \psi\|_{L^2(\mathbb{R}^2)} + \ep^3 \|\nabla_{\ep}^6 \psi\|_{L^2(\mathbb{R}^2)} + \|\psi\|_{L^{\infty}(\mathbb{R}^2)}+ \ep^{1/2} \|\nabla_{\ep} \psi\|_{L^{\infty}(\mathbb{R}^2)} \\
        &\quad  + \ep^{3/2} \|\nabla_{\ep}^2 \psi\|_{L^{\infty}(\mathbb{R}^2)} + \ep^{5/2} \|\nabla_{\ep}^3 \psi\|_{L^{\infty}(\mathbb{R}^2)} + \ep^{7/2} \|\nabla_{\ep}^4 \psi\|_{L^{\infty}(\mathbb{R}^2)}.
    \end{align*}
We seek solutions in the space 
\begin{equation}\label{5.10}
    \mathcal{F}_{\psi} := \left\{ \psi \in H \mid \|\psi\|_{h} \leq C \ep \right\}.
\end{equation}

For $\psi \in \mathcal{F}_{\psi}$, we have the estimates:
\begin{align}
    \|\partial_{\mathbf{x'}}^m \eta\|_{L^2(\mathbb{R}^2; \mathbf{x'})} 
    &= \ep^{m+2 - \frac{3}{2}} \|\nabla_{\ep}^{m} h\|_{L^2(\mathbb{R}^2)} \nonumber \\
    &\leq C \ep^{m + \frac{1}{2}} \left( \|\nabla_{\ep}^{m} \partial_1 q\|_{L^2(\mathbb{R}^2)} + \|\nabla_{\ep}^{m} \psi\|_{L^2(\mathbb{R}^2)} \right) \nonumber \\
    &\leq \begin{cases} 
        C \ep^{m + \frac{1}{2}}, & m = 0, 1, 2, \\
        C \ep^{7/2}, & m = 3, 4, 5, 6.
    \end{cases} \label{512}
\end{align}
Similarly, 
\begin{align}
    \|\partial_{\mathbf{x'}}^m \eta\|_{L^{\infty}(\mathbb{R}^2; \mathbf{x'})} 
    &\leq C \ep^{m+2} \left( \|\nabla_{\ep}^m (\partial_1 q)\|_{L^{\infty}(\mathbb{R}^2)} + \|\nabla_{\ep}^m \psi\|_{L^{\infty}(\mathbb{R}^2)} \right) \nonumber \\
    &\leq \begin{cases} 
        C \ep^{2}, & m = 0, \\
        C \ep^{3}, & m = 1, 2, 3, 4.
    \end{cases} \label{5-14}
\end{align}
Moreover, 
\begin{align}
    \|\partial_{\mathbf{x'}}^m \eta\|_{L^p(\mathbb{R}^2; \mathbf{x'})} 
    &\leq C \ep^{2 + m - \frac{3}{p}} \left( \|\nabla_{\ep}^m \partial_1 q\|_{L^p(\mathbb{R}^2)} + \|\nabla_{\ep}^m \psi\|_{L^p(\mathbb{R}^2)} \right) \nonumber\\
    &\leq \begin{cases} 
        C \ep^{2 - \frac{3}{p}}, & m = 0, \\
        C \ep^{3 - \frac{3}{p}}, & m = 1, 2, 3, 4.
    \end{cases} \label{5-15}
\end{align}

We now analyze \eqref{5.3}. For given $\mathfrak{h} \in \mathcal{F}_5$, we solve for $\psi \in \mathcal{F}_{\psi}$, where 
\begin{align*}
     \|\mathfrak{h}\|_{\mathcal{F}_5} &:= \|\mathfrak{h}\|_{L^{\infty}(\mathbb{R}^2)} + \|\mathfrak{h}\|_{L^2(\mathbb{R}^2)} + \|\nabla_{\ep} \mathfrak{h}\|_{L^2(\mathbb{R}^2)} + \|\nabla_{\ep}^2 \mathfrak{h}\|_{L^2(\mathbb{R}^2)} \\
     &\ \quad+ \|\nabla_{\ep}^3 \mathfrak{h}\|_{L^2(\mathbb{R}^2)} + \ep \|\nabla_{\ep}^4 \mathfrak{h}\|_{L^2(\mathbb{R}^2)}.
\end{align*}

\begin{lemma}\label{Lemma5-1}
    Let $\psi \in H$ be a solution of \eqref{5.3} with $\mathfrak{h} \in \mathcal{F}_5$. Then there exist positive constants $\varepsilon_0$ and $C$, independent of $\ep$, such that for all $\ep \in (0, \varepsilon_0)$,
    \begin{align}\label{5-4}
        \|\psi\|_h \leq C \|\mathfrak{h}\|_{\mathcal{F}_5}. 
    \end{align}
\end{lemma}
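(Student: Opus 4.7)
The plan is to pass to Fourier variables, where the equation \eqref{5.3} becomes purely multiplicative, and then read off the various weighted $L^2$ and $L^\infty$ estimates entering $\|\psi\|_h$. Writing $k = (\varepsilon m_1, \varepsilon^2 m_2)$ and applying the Fourier transform to \eqref{5.3} gives
\begin{equation*}
    \widehat{\psi}(m_1,m_2) = \frac{\widehat{\mathfrak{h}}(m_1,m_2)}{1 + \sigma(m_1^2 + \varepsilon^2 m_2^2)\varepsilon^2} = \frac{\widehat{\mathfrak{h}}}{1 + \sigma|k|^2}.
\end{equation*}
Since the symbol $(1 + \sigma|k|^2)^{-1}$ is uniformly bounded by $1$ and, for any polynomial weight $|k|^{2j}$, the quantity $|k|^{2j}/(1+\sigma|k|^2)$ is controlled by $|k|^{2j}$ (or by $|k|^{2(j-1)}$ after absorbing one $|k|^2$ into the denominator), Plancherel directly yields
\begin{equation*}
    \|\nabla_\varepsilon^m \psi\|_{L^2(\mathbb{R}^2)} \leq \|\nabla_\varepsilon^m \mathfrak{h}\|_{L^2(\mathbb{R}^2)}, \qquad 0 \leq m \leq 4,
\end{equation*}
and, absorbing $\varepsilon^2|k|^2$ factors into the symbol,
\begin{equation*}
    \varepsilon^2 \|\nabla_\varepsilon^5 \psi\|_{L^2(\mathbb{R}^2)} \leq C\|\nabla_\varepsilon^3 \mathfrak{h}\|_{L^2(\mathbb{R}^2)}, \qquad \varepsilon^3 \|\nabla_\varepsilon^6 \psi\|_{L^2(\mathbb{R}^2)} \leq C\varepsilon\|\nabla_\varepsilon^4 \mathfrak{h}\|_{L^2(\mathbb{R}^2)}.
\end{equation*}
This takes care of every $L^2$-type term in $\|\psi\|_h$ by quantities already present in $\|\mathfrak{h}\|_{\mathcal{F}_5}$.

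For the $L^\infty$ bound on $\psi$ itself, Fourier multipliers are too weak, so I would instead appeal to the maximum principle for the uniformly elliptic operator $I - \sigma\varepsilon^2(\partial_1^2 + \varepsilon^2\partial_2^2)$: since $\psi$ decays at infinity (it lies in $H$), the comparison principle applied to $\psi \mp \|\mathfrak{h}\|_{L^\infty}$ gives
\begin{equation*}
    \|\psi\|_{L^\infty(\mathbb{R}^2)} \leq \|\mathfrak{h}\|_{L^\infty(\mathbb{R}^2)}.
\end{equation*}
The derivative $L^\infty$-bounds in $\|\psi\|_h$ are then obtained by interpolation in the rescaled variables $\widetilde\psi(x,y) = \psi(x,\varepsilon y)$, for which $\nabla\widetilde\psi$ corresponds to $\nabla_\varepsilon\psi$ and $\|\nabla^m\widetilde\psi\|_{L^2} = \varepsilon^{-1/2}\|\nabla_\varepsilon^m\psi\|_{L^2}$. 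The standard Gagliardo--Nirenberg inequalities on $\mathbb{R}^2$ of the form
\begin{equation*}
    \|\nabla^j \widetilde\psi\|_{L^\infty(\mathbb{R}^2)} \leq C\|\nabla^{j+2}\widetilde\psi\|_{L^2(\mathbb{R}^2)}^{1/2}\|\widetilde\psi\|_{L^\infty(\mathbb{R}^2)}^{1/2}, \qquad 1 \leq j \leq 4,
\end{equation*}
together with the $L^2$-estimates above and $\|\widetilde\psi\|_{L^\infty} = \|\psi\|_{L^\infty} \leq \|\mathfrak{h}\|_{L^\infty}$, produce each weighted norm $\varepsilon^{(2j-1)/2}\|\nabla_\varepsilon^j\psi\|_{L^\infty}$ with a bound of the form $C\varepsilon^{\alpha_j}\|\mathfrak{h}\|_{\mathcal{F}_5}$ for some $\alpha_j \geq 0$; at worst one gains extra powers of $\varepsilon$ which are harmless for small $\varepsilon$.

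The main technical point to check carefully is the bookkeeping of $\varepsilon$-powers in the $L^\infty$ estimates, because the rescaling from $(x,y)$ to $(x,\varepsilon y)$ introduces a factor $\varepsilon^{-1/2}$ per $L^2$-Sobolev ingredient, and one needs to verify that the $\varepsilon^{(2j-1)/2}$ weights prefixing $\|\nabla_\varepsilon^j\psi\|_{L^\infty}$ in the norm $\|\psi\|_h$ are strong enough to absorb these factors. Once this is done for $j=1,2,3,4$ using the appropriate interpolation exponents (taking $\nabla_\varepsilon^{j+2}\psi$ in $L^2$ via the $L^2$-estimates from the first paragraph), all the constituent pieces of $\|\psi\|_h$ are controlled by $\|\mathfrak{h}\|_{\mathcal{F}_5}$ with constants independent of $\varepsilon \in (0,\varepsilon_0)$, yielding \eqref{5-4}.
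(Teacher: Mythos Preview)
Your proposal is correct and follows essentially the same route as the paper: Fourier multipliers for all the $L^2$-based pieces of $\|\psi\|_h$, a maximum-principle argument for $\|\psi\|_{L^\infty}$ (the paper does this via the explicit positive Bessel Green's function for $-\Delta+1$ after rescaling, which is the same mechanism), and then the rescaling $\widetilde\psi(x,y)=\psi(x,\varepsilon y)$ combined with Sobolev-type interpolation for the weighted derivative $L^\infty$ bounds. One technical slip: your Gagliardo--Nirenberg exponents are only right for $j=1$; scaling on $\mathbb{R}^2$ forces
\[
\|\nabla^j\widetilde\psi\|_{L^\infty}\le C\,\|\nabla^{j+2}\widetilde\psi\|_{L^2}^{\,j/(j+1)}\|\widetilde\psi\|_{L^\infty}^{\,1/(j+1)},
\]
not the $\tfrac12$--$\tfrac12$ split, but with these corrected exponents the $\varepsilon$-bookkeeping still closes (in fact with powers to spare). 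The paper sidesteps this by using the cruder embedding $H^2(\mathbb{R}^2)\hookrightarrow C_0(\mathbb{R}^2)$ applied to $\nabla^m\widetilde\psi$, which gives exactly the required $\varepsilon$-weights without interpolation.
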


\begin{proof}
    Applying the Fourier transform to \eqref{5.3} gives
    \[
        \widehat{\psi}(m_1, m_2) = \frac{\widehat{\mathfrak{h}}(m_1, m_2)}{1 + \sigma \ep^2 (m_1^2 + \ep^2 m_2^2)}.
    \]
    From this representation, we derive the high-order derivative estimate:
    \[
        \ep^3 \|\nabla_{\ep}^6 \psi\|_{L^2(\mathbb{R}^2)} \leq C \ep \|\nabla_{\ep}^4 \mathfrak{h}\|_{L^2(\mathbb{R}^2)} \leq C \|\mathfrak{h}\|_{\mathcal{F}_5}.
    \]
    Similar arguments bound the $L^2$-norms of lower-order derivatives of $\psi$ by $C \|\mathfrak{h}\|_{\mathcal{F}_5}$.

    To establish the $L^\infty$-bound, define $\overline{\psi}(x_1, x_2) = \psi(\ep x_1, \ep^2 x_2)$. This satisfies
    \[
        \overline{\psi} - \sigma \Delta_{\mathbf{x}'} \overline{\psi} = \overline{\mathfrak{h}},
    \]
    where $\overline{\mathfrak{h}}(x_1, x_2) = \mathfrak{h}(\ep x_1, \ep^2 x_2)$. The Green's function for $-\Delta_{\mathbf{x}'} + 1$ on $\mathbb{R}^n$ is
    \[
        G(|\mathbf{x}' - \mathbf{y}'|) = |\mathbf{x}' - \mathbf{y}'|^{-(n-2)/2} K_{(n-2)/2}(|\mathbf{x}' - \mathbf{y}'|) > 0,
    \]
    with $K_v(z)$ denoting the modified Bessel function of order $v$. Consequently, 
    \[
        \|\psi\|_{L^{\infty}(\mathbb{R}^2)} = \|\overline{\psi}\|_{L^{\infty}(\mathbb{R}^2)} \leq C \|\overline{\mathfrak{h}}\|_{L^{\infty}(\mathbb{R}^2)} = C \|\mathfrak{h}\|_{L^{\infty}(\mathbb{R}^2)} \leq C \|\mathfrak{h}\|_{\mathcal{F}_5}.
    \]

    For the weighted Hölder norms, define $\widetilde{\psi}(x, y) = \psi(x, \ep y)$. Then
    \[
        \|\nabla^m \widetilde{\psi}\|_{L^{\infty}(\mathbb{R}^2)} = \|\nabla_{\ep}^m \psi\|_{L^{\infty}(\mathbb{R}^2)}, \quad m \geq 0.
    \]
    The $L^2$-norms satisfy
    \[
        \|\nabla^m \widetilde{\psi}\|_{L^2(\mathbb{R}^2)} = \ep^{-1/2} \|\nabla_{\ep}^m \psi\|_{L^2(\mathbb{R}^2)} \leq 
        \begin{cases} 
            C \ep^{-1/2} \|\mathfrak{h}\|_{\mathcal{F}_5}, & m = 1, 2, 3, \\
            C \ep^{3 - m - 1/2} \|\mathfrak{h}\|_{\mathcal{F}_5}, & m = 4, 5, 6.
        \end{cases}
    \]
    By the Sobolev embedding $H^2(\mathbb{R}^2) \hookrightarrow C_0(\mathbb{R}^2)$, we obtain
    \[
        \|\nabla^m \widetilde{\psi}\|_{C_0(\mathbb{R}^2)} \leq C \left( \|\nabla^m \widetilde{\psi}\|_{L^2(\mathbb{R}^2)} + \|\nabla^{m+2} \widetilde{\psi}\|_{L^2(\mathbb{R}^2)} \right).
    \]
    Combining these estimates yields
    \[
        \|\nabla^m \widetilde{\psi}\|_{C_0(\mathbb{R}^2)} \leq 
        \begin{cases} 
            C \ep^{-1/2} \|\mathfrak{h}\|_{\mathcal{F}_5}, & m = 1, \\
            C \ep^{1/2 - m} \|\mathfrak{h}\|_{\mathcal{F}_5}, & m = 2, 3, 4.
        \end{cases}
    \]
    This completes the proof, as all components of $\|\psi\|_h$ are bounded by $C \|\mathfrak{h}\|_{\mathcal{F}_5}$.
\end{proof}

\begin{lemma}\label{Lemma5-2}
    Let $\mathfrak{h} \in \mathcal{F}_5$. Then there exists a solution $\psi \in H$ to \eqref{5.3}. 
    Moreover, for any $\mathfrak{h}_1, \mathfrak{h}_2 \in \mathcal{F}_5$, the corresponding solutions $\psi_{\mathfrak{h}_1}$ and $\psi_{\mathfrak{h}_2}$ satisfy
    \begin{equation}\label{5-5}
        \left\|\psi_{\mathfrak{h}_1} - \psi_{\mathfrak{h}_2}\right\|_{h} \leq C \left\|\mathfrak{h}_1 - \mathfrak{h}_2\right\|_{\mathcal{F}_5}.
    \end{equation}
\end{lemma}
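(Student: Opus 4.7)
The plan is to construct $\psi$ directly via Fourier inversion and then derive the Lipschitz continuity from linearity together with the a priori estimate of Lemma \ref{Lemma5-1}. Since the symbol of the operator on the left-hand side of \eqref{5.3} is
\[
\mathcal{S}(m_1,m_2)=1+\sigma\ep^2(m_1^2+\ep^2 m_2^2)\geq 1,
\]
it never vanishes, so we set
\[
\widehat{\psi}(m_1,m_2):=\frac{\widehat{\mathfrak{h}}(m_1,m_2)}{1+\sigma\ep^2(m_1^2+\ep^2 m_2^2)}
\]
and define $\psi$ as its inverse Fourier transform. The resulting $\psi$ is automatically a tempered-distribution solution of \eqref{5.3}; the point is to verify that $\psi\in H$ and to confirm that it lies in the even-symmetry class $\mathcal{H}_{\mathrm e}$.

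First I would record the symmetry. Because $\mathfrak{h}\in\mathcal{F}_5$ and (as seen from its origin in \eqref{5-2}) is even in both $x$ and $y$, while the multiplier $\mathcal{S}(m_1,m_2)^{-1}$ is also even in each variable, the function $\psi$ belongs to $\mathcal{H}_{\mathrm e}$. Next I would establish that $\psi\in H$ by a direct bound in Fourier variables: for each term in the norm $\|\cdot\|_h$, the corresponding multiplier in $\widehat{\psi}$ is controlled either by the identity or by one of the multipliers treated in the proof of Lemma \ref{Lemma5-1}. For the $L^2$-based seminorms this is immediate from Plancherel and the pointwise inequality $\mathcal{S}^{-1}\leq 1$; for the $L^\infty$-based seminorms I would use the rescaling $\overline\psi(x_1,x_2)=\psi(\ep x_1,\ep^2x_2)$ to convert \eqref{5.3} into the standard Helmholtz-type equation $\overline\psi-\sigma\Delta_{\mathbf{x}'}\overline\psi=\overline{\mathfrak h}$, whose Bessel-kernel representation, together with the Sobolev embedding $H^2(\mathbb R^2)\hookrightarrow C_0(\mathbb R^2)$ already used in Lemma \ref{Lemma5-1}, yields the required scaled $L^\infty$ bounds on $\nabla_\ep^m\psi$. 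This shows $\psi\in H$, and simultaneously $\|\psi\|_h\leq C\|\mathfrak{h}\|_{\mathcal{F}_5}$.

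For the Lipschitz statement \eqref{5-5}, I would exploit linearity: given $\mathfrak{h}_1,\mathfrak{h}_2\in\mathcal{F}_5$ with associated solutions $\psi_{\mathfrak{h}_1},\psi_{\mathfrak{h}_2}\in H$ produced by the construction above, the difference $\Psi:=\psi_{\mathfrak{h}_1}-\psi_{\mathfrak{h}_2}$ satisfies
\[
\Psi-\sigma\ep^2(\partial_1^2+\ep^2\partial_2^2)\Psi=\mathfrak{h}_1-\mathfrak{h}_2,
\]
with $\mathfrak{h}_1-\mathfrak{h}_2\in\mathcal{F}_5$ and $\Psi\in H\cap\mathcal{H}_{\mathrm e}$. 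Applying Lemma \ref{Lemma5-1} to this equation gives
\[
\|\psi_{\mathfrak{h}_1}-\psi_{\mathfrak{h}_2}\|_h=\|\Psi\|_h\leq C\|\mathfrak{h}_1-\mathfrak{h}_2\|_{\mathcal{F}_5},
\]
which is \eqref{5-5}.

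There is no substantive obstacle: the operator is a positive, elliptic Fourier multiplier uniformly in $\ep$, so existence is just inverse Fourier transform, and the Lipschitz estimate is a free consequence of linearity plus the a priori bound of Lemma \ref{Lemma5-1}. The only item requiring a little care is bookkeeping the symmetry class of $\psi$ and confirming that each of the many weighted seminorms that define $\|\cdot\|_h$ is indeed handled by the Fourier/rescaling argument of Lemma \ref{Lemma5-1}; I would therefore simply reference the estimates derived there rather than redo them.
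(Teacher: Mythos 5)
Your proof is correct, but the existence argument takes a different route from the paper's. The paper sets up a Hilbert space $\mathcal{E}_2$ with inner product $\langle f,g\rangle_{\mathcal{E}_2}=\int_{\mathbb{R}^2}\bigl(fg+\sigma\ep^2(\partial_1f\,\partial_1g+\ep^2\partial_2f\,\partial_2g)\bigr)$, represents the bilinear form of $L_{\ep,h}$ via the Riesz representation theorem as an operator $A_h$, and proves $A_h$ is bijective (injectivity and surjectivity both fall out of the coercivity supplied by Lemma \ref{Lemma5-1}); existence is then the statement $R(A_h)=\mathcal{E}_2$. You instead invert the constant-coefficient symbol $1+\sigma\ep^2(m_1^2+\ep^2m_2^2)\geq 1$ directly on the Fourier side. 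Your route is more elementary and arguably more natural here, since the proof of Lemma \ref{Lemma5-1} already works from exactly that Fourier representation, so the membership $\psi\in H$ and the bound $\|\psi\|_h\leq C\|\mathfrak{h}\|_{\mathcal{F}_5}$ come out of computations the paper has already performed; it also gives uniqueness in $L^2$ for free. The paper's variational argument is more abstract but is the one that would survive if the operator had variable coefficients or no explicit symbol, and it matches the functional-analytic framework ($\mathcal{E}_1$, $\mathcal{E}_2$) used elsewhere in the paper. The Lipschitz estimate \eqref{5-5} is obtained identically in both proofs: linearity of \eqref{5.3} plus the a priori bound of Lemma \ref{Lemma5-1}. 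You are also right to flag the evenness bookkeeping (the multiplier is even in each frequency variable and $\mathfrak{h}$, coming from \eqref{5-2}, is even in $x$ and $y$, so $\psi\in\mathcal{H}_{\mathrm e}$); the paper's proof does not spell this out, so your version is, if anything, slightly more complete on that point.
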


\begin{proof}
    Define the operator $L_{\varepsilon,h}$ by
    \begin{equation*}
        L_{\varepsilon,h}\psi := \psi - \sigma\varepsilon^2(\partial_1^2 + \varepsilon^2\partial_2^2)\psi.
    \end{equation*}
    Consider the space
    \begin{equation*}
        \mathcal{E}_2 := \left\{f \mid \|f\|_{\mathcal{E}_2} < +\infty\right\},
    \end{equation*}
    equipped with the inner product
    \begin{equation*}
        \langle f, g \rangle_{\mathcal{E}_2} = \int_{\mathbb{R}^2} \left( fg + \sigma \varepsilon^2 \left( \partial_1f \, \partial_1g + \varepsilon^2 \partial_2f \, \partial_2g \right) \right) dx\,dy
    \end{equation*}
    and norm $\|f\|_{\mathcal{E}_2}^2 = \langle f, f \rangle_{\mathcal{E}_2}$. Then $\mathcal{E}_2$ is a Hilbert space, and $\|f\|_{\mathcal{E}_2} = 0$ implies $f = 0$. It is straightforward to verify that $L_{\varepsilon,h}$ is self-adjoint on $\mathcal{E}_2$.

    Define the bilinear form
    \begin{equation*}
        \mathcal{L}_{\varepsilon,h}(u, v) := \int_{\mathbb{R}^2} v \, L_{\varepsilon,h}u  dx\,dy.
    \end{equation*}
    This form satisfies
    \begin{equation*}
        \left| \mathcal{L}_{\varepsilon,h}(u, v) \right| \leq C \|u\|_{\mathcal{E}_2} \|v\|_{\mathcal{E}_2},
    \end{equation*}
    so for fixed $u \in \mathcal{E}_2$, the map $v \mapsto \mathcal{L}_{\varepsilon,h}(u, v)$ is a bounded linear functional on $\mathcal{E}_2$. By the Riesz Representation Theorem, there exists a unique $w \in \mathcal{E}_2$ such that
    \begin{equation*}
        \mathcal{L}_{\varepsilon,h}(u, v) = \langle w, v \rangle_{\mathcal{E}_2}, \quad \text{for all } v \in \mathcal{E}_2.
    \end{equation*}
    Define the operator $A_h u = w$, so that
    \begin{equation*}
        \mathcal{L}_{\varepsilon,h}(u, v) = \langle A_h u, v \rangle_{\mathcal{E}_2}, \quad \text{for all } v \in \mathcal{E}_2.
    \end{equation*}
    Then $A_h: \mathcal{E}_2 \to \mathcal{E}_2$ is a bounded linear operator.

    We show $A_h$ is injective. Suppose $A_h u = 0$. Then $\langle A_h u, v \rangle_{\mathcal{E}_2} = \mathcal{L}_{\varepsilon,h}(u, v) = 0$ for all $v \in \mathcal{E}_2$, implying $L_{\varepsilon,h} u = 0$. Using the estimate \eqref{5-4}, this yields $\|u\|_{\mathcal{E}_2} = 0$, hence $u = 0$.

    Next, we prove $A_h$ is surjective. Suppose $w \in R(A_h)^\perp \cap \mathcal{E}_2$. Then $\langle A_h u, w \rangle_{\mathcal{E}_2} = 0$ for all $u \in \mathcal{E}_2$, which implies $\mathcal{L}_{\varepsilon,h}(u, w) = 0$ for all $u \in \mathcal{E}_2$. Taking $u = w$ gives $\mathcal{L}_{\varepsilon,h}(w, w) = 0$. By the coercivity result in \eqref{5-4}, this implies $\|w\|_{\mathcal{E}_2} = 0$, so $w = 0$. Thus $R(A_h) = \mathcal{E}_2$.

    Since $A_h$ is bijective, for any $\mathfrak{h} \in \mathcal{F}_5$, there exists a unique $\psi \in \mathcal{E}_2$ such that
    \begin{equation*}
        \langle A_h \psi, v \rangle_{\mathcal{E}_2} = \int_{\mathbb{R}^2} v \mathfrak{h}  dx\,dy, \quad \text{for all } v \in \mathcal{E}_2.
    \end{equation*}
    By the definition of $A_h$ and $\mathcal{L}_{\varepsilon,h}$, this is equivalent to $\psi$ satisfying \eqref{5.3}. Furthermore, the boundedness of $A_h^{-1}$ implies the estimate
    \begin{equation*}
        \|\psi\|_{\mathcal{E}_2} \leq C \|\mathfrak{h}\|_{\mathcal{F}_5}.
    \end{equation*}
    Finally, the linearity of \eqref{5.3} and \eqref{5-4} directly yield the Lipschitz estimate \eqref{5-5}.
\end{proof}

By \eqref{2-5}, we define 
\begin{equation}
    \begin{split}
        \mathfrak{h}_{\varepsilon,\phi} &:=\partial_1\phi + \sigma\varepsilon^2(\partial_1^2 + \varepsilon^2\partial_2^2)(\partial_1q) - \frac{\varepsilon^2}{2c}(\partial_1q + \partial_1\phi)^2 + \frac{\varepsilon^4}{c}\Pi_{\phi} \\
        &\ = \partial_1\phi + \sigma\varepsilon^2(\partial_1^2 + \varepsilon^2\partial_2^2)(\partial_1q) - \frac{\varepsilon^2}{2c}|\nabla_{\varepsilon} f|^2 \\
        &\quad + \frac{\varepsilon^4}{2c(1+\varepsilon^6|\nabla_{\varepsilon} h|^2)}\left(-c\partial_{1}h + \varepsilon^2\nabla_{\varepsilon} h \cdot \nabla_{\varepsilon} f\right)^2 \\
        &\quad + \frac{\sigma \varepsilon^2}{c}\left(\nabla_{\varepsilon}\cdot \left[\frac{\nabla_{\varepsilon} h}{\sqrt{1+\varepsilon^6|\nabla_{\varepsilon} h|^2}} - \nabla_{\varepsilon} h\right]\right).
    \end{split}
\end{equation}

\begin{lemma}\label{Lemma 5.3}
    Let $\phi_1, \phi_2 \in \mathcal{F}_{\phi}$ and $\psi_1, \psi_2 \in \mathcal{F}_{\psi}$. Then 
    \begin{align*}
        \|\mathfrak{h}_{\varepsilon,\phi_1} - \mathfrak{h}_{\varepsilon,\phi_2}\|_{\mathcal{F}_{5}} \leq C \varepsilon^{-\frac{1}{2}} \|\phi_1 - \phi_2\|_{*} + O(\varepsilon) \|\psi_1 - \psi_2\|_{h}.
    \end{align*}
\end{lemma}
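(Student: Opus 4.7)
The plan is to decompose the difference $\mathfrak{h}_{\varepsilon,\phi_1} - \mathfrak{h}_{\varepsilon,\phi_2}$ according to the structure of its definition and to estimate each piece separately in the $\mathcal{F}_5$-norm, using the a priori size information $\|\phi_i\|_* \leq C\varepsilon$ and $\|\psi_i\|_h \leq C\varepsilon$ together with the scaling bounds \eqref{5-4}--\eqref{5-8} for $\xi$ and \eqref{512}--\eqref{5-15} for $\eta$. Writing $f_i = q + \phi_i$ and letting $h_i$ denote the corresponding free surface built from $\psi_i$, the difference splits as
\begin{align*}
\mathfrak{h}_{\varepsilon,\phi_1} - \mathfrak{h}_{\varepsilon,\phi_2}
&= \partial_1(\phi_1 - \phi_2)
 - \frac{\varepsilon^2}{2c}\bigl(|\nabla_\varepsilon f_1|^2 - |\nabla_\varepsilon f_2|^2\bigr) \\
&\quad + \frac{\varepsilon^4}{2c}\,\mathrm{I}(\phi,\psi)
 + \frac{\sigma\varepsilon^2}{c}\,\mathrm{II}(\psi),
\end{align*}
where $\mathrm{I}$ collects the quadratic term $(-c\partial_1 h + \varepsilon^2 \nabla_\varepsilon h \cdot \nabla_\varepsilon f)^2/(1+\varepsilon^6|\nabla_\varepsilon h|^2)$ and $\mathrm{II}$ collects the nonlocal surface-tension correction $\nabla_\varepsilon\cdot[\nabla_\varepsilon h/\sqrt{1+\varepsilon^6|\nabla_\varepsilon h|^2} - \nabla_\varepsilon h]$.

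The first term drives the $\varepsilon^{-1/2}$ prefactor: its $L^{\infty}$-norm obeys $\|\partial_1(\phi_1-\phi_2)\|_{L^\infty} \leq \varepsilon^{-1/2}\|\phi_1-\phi_2\|_*$ thanks to the $\varepsilon^{1/2}\|\nabla_\varepsilon(\cdot)\|_{L^\infty}$ piece of the $\|\cdot\|_*$-norm, while the $L^2$ and $\nabla_\varepsilon^k$-$L^2$ pieces are directly controlled by $\|\phi_1 - \phi_2\|_*$. For the quadratic term I would factor
\begin{equation*}
|\nabla_\varepsilon f_1|^2 - |\nabla_\varepsilon f_2|^2
= \nabla_\varepsilon(\phi_1 - \phi_2) \cdot \nabla_\varepsilon(2q + \phi_1 + \phi_2),
\end{equation*}
apply Leibniz on each derivative entering the $\mathcal{F}_5$-norm, and let at most one derivative fall on the difference factor while the remaining derivatives act on $q,\phi_1,\phi_2$ whose higher $\nabla_\varepsilon$-norms are a priori bounded. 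Interpolating via Gagliardo--Nirenberg between the $L^2$- and $L^\infty$-pieces of $\|\cdot\|_*$ gives a total of $O(\varepsilon^{3/2})\|\phi_1-\phi_2\|_*$, comfortably dominated by the linear contribution.

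The $\mathrm{I}$ and $\mathrm{II}$ pieces depend on both $\phi$ (through $f$) and $\psi$ (through $h$). For $\mathrm{I}$ I write it as a smooth rational function of $\varepsilon^6|\nabla_\varepsilon h|^2$ times a polynomial in $(\partial_1 h,\nabla_\varepsilon h,\nabla_\varepsilon f)$ and expand it by the mean value theorem in the variables $(\phi,\psi)$: the dominant piece $c^2(\partial_1 h_1)^2 - c^2(\partial_1 h_2)^2 = c^2\partial_1(h_1-h_2)\,\partial_1(h_1+h_2)$ is of size $\varepsilon^4\cdot\varepsilon \cdot \|\psi_1-\psi_2\|_h \cdot$ (bounded factors), giving an $O(\varepsilon)\|\psi_1-\psi_2\|_h$ contribution to $\|\cdot\|_{\mathcal{F}_5}$; cross terms with $\nabla_\varepsilon f$ yield additional $O(\varepsilon)$ contributions from both arguments. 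For $\mathrm{II}$ I Taylor-expand $(1+t)^{-1/2}$ at $t=0$ with $t = \varepsilon^6|\nabla_\varepsilon h|^2$, so that $\mathrm{II}$ starts as $-\tfrac{1}{2}\varepsilon^6\nabla_\varepsilon\cdot(|\nabla_\varepsilon h|^2\nabla_\varepsilon h)$ plus a higher-order remainder. The Lipschitz dependence in $\psi$ is then handled as for $\mathrm{I}$, producing at worst $O(\varepsilon)\|\psi_1-\psi_2\|_h$.

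The main obstacle is the top-order contributions $\|\nabla_\varepsilon^3(\cdot)\|_{L^2}$ and $\varepsilon\|\nabla_\varepsilon^4(\cdot)\|_{L^2}$ in the $\mathcal{F}_5$-norm, since there the Leibniz distribution forces derivatives of the $\eta$- and $\xi$-factors up to order five, placing us at the boundary of what $\|\psi\|_h$ and $\|\phi\|_*$ control (with the weight $\varepsilon$ or $\varepsilon^2$ on the top derivative). The trick is that the $\varepsilon^2$ or $\varepsilon^4$ prefactors in the quadratic and $\Pi$-pieces combine with these weighted top-order bounds to preserve an overall $O(\varepsilon)$ factor, so nothing worse than $\varepsilon^{-1/2}$ appears. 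Summing all pieces yields the stated estimate, with the $\varepsilon^{-1/2}\|\phi_1-\phi_2\|_*$ term coming solely from the linear $\partial_1(\phi_1-\phi_2)$ piece, and all nonlinear contributions contributing $O(\varepsilon)\|\phi_1-\phi_2\|_* + O(\varepsilon)\|\psi_1-\psi_2\|_h$.
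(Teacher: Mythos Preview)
Your proposal is correct and follows essentially the same strategy as the paper: the same four-term decomposition (linear, quadratic in $\nabla_\varepsilon f$, the $\Pi$-type term $\mathrm{I}$, and the surface-tension correction $\mathrm{II}$), the same identification of $\partial_1(\phi_1-\phi_2)$ as the sole source of the $\varepsilon^{-1/2}$ prefactor, and the same Leibniz--plus--size-bounds treatment of the nonlinear pieces. The paper organizes $\mathrm{I}$ and $\mathrm{II}$ via explicit auxiliary functions $g_1,g_2,g_3$ and splits each difference as $g_1(h_1)(g_2(h_1,f_1)-g_2(h_2,f_2))+(g_1(h_1)-g_1(h_2))g_2(h_2,f_2)$ rather than invoking the mean-value theorem abstractly, but this is the same computation; one minor point is that the paper's careful bookkeeping of the top-order $\varepsilon\|\nabla_\varepsilon^4(\cdot)\|_{L^2}$ contribution in the quadratic term yields $O(\varepsilon)\|\phi_1-\phi_2\|_*$ rather than the $O(\varepsilon^{3/2})$ you state, so you should double-check that step---but either bound is harmless for the conclusion.
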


\begin{proof}
By the definition of $\mathfrak{h}_{\varepsilon,\phi}$, we have
\begin{align*}
    \mathfrak{h}_{\varepsilon,\phi_1} - \mathfrak{h}_{\varepsilon,\phi_2} 
    &= (\partial_1\phi_1 - \partial_1\phi_2) - \frac{\varepsilon^2}{2c} \nabla_{\varepsilon}(2q + \phi_1 + \phi_2) \cdot \nabla_{\varepsilon}(\phi_1 - \phi_2) \\
    &\quad + g_1(h_1)g_2(h_1, f_1) - g_1(h_2)g_2(h_2, f_2) \\
    &\quad + \nabla_{\varepsilon} \cdot \left[ g_3(h_1) \nabla_{\varepsilon} h_1 - g_3(h_2) \nabla_{\varepsilon} h_2 \right],
\end{align*}
where the coefficient functions are defined as:
\begin{align*}
    g_1(h) &= \frac{\varepsilon^4}{2c(1 + \varepsilon^6 |\nabla_{\varepsilon} h|^2)}, \\
    g_2(h, f) &= \left( -c\partial_{1}h + \varepsilon^2 \nabla_{\varepsilon} h \cdot \nabla_{\varepsilon} f \right)^2, \\
    g_3(h) &= \frac{-\sigma \varepsilon^8 |\nabla_{\varepsilon} h|^2}{c \sqrt{1 + \varepsilon^6 |\nabla_{\varepsilon} h|^2} \left( 1 + \sqrt{1 + \varepsilon^6 |\nabla_{\varepsilon} h|^2} \right)}.
\end{align*}

To estimate the difference in $\mathcal{F}_5$-norm, we bound each component. First, 
by the definition of $\|\cdot\|_{\mathcal{F}_5}$, 
\begin{align*}
    \|\partial_1\phi_1 - \partial_1\phi_2\|_{\mathcal{F}_5} 
    &\leq \|\nabla_{\varepsilon}(\phi_1 - \phi_2)\|_{L^{\infty}(\mathbb{R}^2)} 
       + \|\nabla_{\varepsilon}(\phi_1 - \phi_2)\|_{L^{2}(\mathbb{R}^2)} \\
    &\quad + \|\nabla_{\varepsilon}^2(\phi_1 - \phi_2)\|_{L^{2}(\mathbb{R}^2)} 
       + \|\nabla_{\varepsilon}^3(\phi_1 - \phi_2)\|_{L^{2}(\mathbb{R}^2)} \\
    &\quad + \|\nabla_{\varepsilon}^4(\phi_1 - \phi_2)\|_{L^{2}(\mathbb{R}^2)} 
       + \varepsilon \|\nabla_{\varepsilon}^5(\phi_1 - \phi_2)\|_{L^{2}(\mathbb{R}^2)}.
\end{align*}
By the definition of $\|\cdot\|_*$, this implies
\begin{equation}\label{eq:partial_phi_bound}
    \|\partial_1\phi_1 - \partial_1\phi_2\|_{\mathcal{F}_5} \leq C\varepsilon^{-\frac{1}{2}} \|\phi_1 - \phi_2\|_{*}.
\end{equation}

Next, consider the second term: 
$\frac{\varepsilon^2}{2c} \nabla_{\varepsilon}(2q + \phi_1 + \phi_2) \cdot \nabla_{\varepsilon}(\phi_1 - \phi_2)$. 
Applying the definition of $\|\cdot\|_{\mathcal{F}_5}$ and Sobolev embedding, we obtain
\begin{equation}\label{eq:second_term_decomp}
    \begin{split}
        &\left\| \frac{\varepsilon^2}{2c} \nabla_{\varepsilon}(2q + \phi_1 + \phi_2) \cdot \nabla_{\varepsilon}(\phi_1 - \phi_2) \right\|_{\mathcal{F}_{5}} \\
        &\leq C_1 \varepsilon^2 \|\nabla_{\varepsilon}(2q + \phi_1 + \phi_2)\|_{L^{\infty}} \|\nabla_{\varepsilon}(\phi_1 - \phi_2)\|_{L^{\infty}} \\
        &\quad + C_2 \varepsilon^2 \left\| \nabla_{\varepsilon}(2q + \phi_1 + \phi_2) \cdot \nabla_{\varepsilon}(\phi_1 - \phi_2) \right\|_{L^2} \\
        &\quad + C_3 \varepsilon^2 \left\| \nabla_{\varepsilon}^4 \left[ \nabla_{\varepsilon}(2q + \phi_1 + \phi_2) \cdot \nabla_{\varepsilon}(\phi_1 - \phi_2) \right] \right\|_{L^2},
    \end{split}
\end{equation}
where the Sobolev embedding theorem provides
\begin{equation}\label{eq:sobolev_embed}
    \|\nabla_{\varepsilon}^2 h\|_{L^2} + \|\nabla_{\varepsilon}^3 h\|_{L^2} \leq C \left( \|h\|_{L^2} + \|\nabla_{\varepsilon}^4 h\|_{L^2} \right).
\end{equation}
The first component in \eqref{eq:second_term_decomp} is bounded by
\begin{equation}\label{eq:inf_bound}
    \varepsilon^2 \|\nabla_{\varepsilon}(2q + \phi_1 + \phi_2)\|_{L^{\infty}} \|\nabla_{\varepsilon}(\phi_1 - \phi_2)\|_{L^{\infty}} \leq C \varepsilon^{\frac{3}{2}} \|\phi_1 - \phi_2\|_{*}.
\end{equation}
For the second component, we have
\begin{equation}\label{eq:L2_bound}
    \varepsilon^2 \left\| \nabla_{\varepsilon}(2q + \phi_1 + \phi_2) \cdot \nabla_{\varepsilon}(\phi_1 - \phi_2) \right\|_{L^2} \leq C \varepsilon^2 \|\phi_1 - \phi_2\|_{*}.
\end{equation}
For the third component, apply the Leibniz rule to the fourth-order gradient:
\begin{equation}\label{eq:leibniz_app}
    \begin{split}
        &\varepsilon^2 \left\| \nabla_{\varepsilon}^4 \left[ \nabla_{\varepsilon}(2q + \phi_1 + \phi_2) \cdot \nabla_{\varepsilon}(\phi_1 - \phi_2) \right] \right\|_{L^2} \\
        &\leq \varepsilon^2 \sum_{k=0}^{4} \binom{4}{k} \left\| \left( \nabla_{\varepsilon}^{k+1} (2q + \phi_1 + \phi_2) \right) \cdot \left( \nabla_{\varepsilon}^{5-k} (\phi_1 - \phi_2) \right) \right\|_{L^2} \\
        &\leq C\varepsilon^2 \sum_{k=0}^{2} \left\| \nabla_{\varepsilon}^{k+1} (2q + \phi_1 + \phi_2) \right\|_{L^\infty} \left\| \nabla_{\varepsilon}^{5-k} (\phi_1 - \phi_2) \right\|_{L^2} \\
        &\quad + C\varepsilon^2 \sum_{k=3}^{4} \left\| \nabla_{\varepsilon}^{k+1} (2q + \phi_1 + \phi_2) \right\|_{L^2} \left\| \nabla_{\varepsilon}^{5-k} (\phi_1 - \phi_2) \right\|_{L^\infty}.
    \end{split}
\end{equation}
The required derivative norms are bounded as follows:
\begin{align}
    \left\| \nabla_{\varepsilon}^{m} (2q + \phi_1 + \phi_2) \right\|_{L^2} &\leq C, \quad m = 4,5, \label{eq:high_deriv_L2} \\
    \left\| \nabla_{\varepsilon}^{m} (2q + \phi_1 + \phi_2) \right\|_{L^\infty} &\leq 
    \begin{cases} 
        C, & m = 1,2, \\
        C \varepsilon^{-\frac{1}{2}}, & m = 3. \label{eq:deriv_inf}
    \end{cases}
\end{align}
Combining \eqref{eq:high_deriv_L2}, \eqref{eq:deriv_inf}, and \eqref{eq:leibniz_app}, the dominant term in \eqref{eq:leibniz_app} is $\mathcal{O}(\varepsilon)$, yielding
\begin{equation}\label{eq:high_deriv_bound}
    \varepsilon^2 \left\| \nabla_{\varepsilon}^4 \left[ \nabla_{\varepsilon}(2q + \phi_1 + \phi_2) \cdot \nabla_{\varepsilon}(\phi_1 - \phi_2) \right] \right\|_{L^2} \leq C \varepsilon \|\phi_1 - \phi_2\|_{*}.
\end{equation}
Synthesizing \eqref{eq:inf_bound}, \eqref{eq:L2_bound}, and \eqref{eq:high_deriv_bound} into \eqref{eq:second_term_decomp}, we conclude
\begin{equation}\label{eq:second_term_final}
    \left\| \frac{\varepsilon^2}{2c} \nabla_{\varepsilon}(2q + \phi_1 + \phi_2) \cdot \nabla_{\varepsilon}(\phi_1 - \phi_2) \right\|_{\mathcal{F}_{5}} \leq C \varepsilon \|\phi_1 - \phi_2\|_{*}.
\end{equation}

For the third term in $\mathfrak{h}_{\varepsilon,\phi_1} - \mathfrak{h}_{\varepsilon,\phi_2}$, we decompose the difference as
\begin{equation}\label{eq:g1g2_decomp}
    \begin{split}
        \| g_1(h_1) g_2(h_1, f_1) - g_1(h_2) g_2(h_2, f_2) \|_{\mathcal{F}_{5}} 
        &\leq \| g_1(h_1) \big( g_2(h_1, f_1) - g_2(h_2, f_2) \big) \|_{\mathcal{F}_{5}} \\
        &\quad + \| \big( g_1(h_1) - g_1(h_2) \big) g_2(h_2, f_2) \|_{\mathcal{F}_{5}}.
    \end{split}
\end{equation}
\noindent\textbf{Estimation of the first term in \eqref{eq:g1g2_decomp}.}
Applying the definition of $\|\cdot\|_{\mathcal{F}_5}$ and Sobolev embedding \eqref{eq:sobolev_embed}, 
\begin{equation}\label{eq:g1g2_diff_F5}
    \begin{split}
        &\| g_1(h_1) \big( g_2(h_1, f_1) - g_2(h_2, f_2) \big) \|_{\mathcal{F}_{5}} \\
        &\leq \| g_1(h_1) \|_{L^{\infty}} \| g_2(h_1, f_1) - g_2(h_2, f_2) \|_{L^{\infty}} \\
        &\quad + C \| g_1(h_1) \big( g_2(h_1, f_1) - g_2(h_2, f_2) \big) \|_{L^2} \\
        &\quad + C \| \nabla_{\varepsilon}^4 \big[ g_1(h_1) \big( g_2(h_1, f_1) - g_2(h_2, f_2) \big) \big] \|_{L^2}.
    \end{split}
\end{equation}
We establish the following bounds:
\begin{align}
    \| g_1(h_1) \|_{L^{\infty}} &\leq C \varepsilon^4, \label{eq:g1_inf_bound} \\
    \| g_2(h_1, f_1) - g_2(h_2, f_2) \|_{L^{\infty}} &\leq C \left( \| \partial_1(\psi_1 - \psi_2) \|_{L^{\infty}} + \varepsilon^2 \| \nabla_{\varepsilon} (\phi_1 - \phi_2) \|_{L^{\infty}} \right), \label{eq:g2_diff_inf} \\
    \| g_1(h_1) \big( g_2(h_1, f_1) - g_2(h_2, f_2) \big) \|_{L^2} &\leq C \varepsilon^4 \left( \| \nabla_{\varepsilon} (\psi_1 - \psi_2) \|_{L^2} + \varepsilon^2 \| \nabla_{\varepsilon} (\phi_1 - \phi_2) \|_{L^2} \right). \label{eq:g1g2_diff_L2}
\end{align}
For the high-order derivative term, the Leibniz rule yields:
\begin{equation}\label{eq:leibniz_g1g2_diff}
    \begin{split}
        &\| \nabla_{\varepsilon}^4 \big[ g_1(h_1) \big( g_2(h_1, f_1) - g_2(h_2, f_2) \big) \big] \|_{L^2} \\
        &\leq C \sum_{m=0}^{3} \| \nabla_{\varepsilon}^m g_1(h_1) \|_{L^{\infty}} \| \nabla_{\varepsilon}^{4-m} \big( g_2(h_1, f_1) - g_2(h_2, f_2) \big) \|_{L^2} \\
        &\quad + C \| \nabla_{\varepsilon}^4 g_1(h_1) \|_{L^{2}} \| g_2(h_1, f_1) - g_2(h_2, f_2) \|_{L^{\infty}}.
    \end{split}
\end{equation}
The derivatives of $g_1$ satisfy:
\begin{align}
    \|  g_1(h_1) \|_{L^{\infty}} & \leq C \varepsilon^4 \quad \| \nabla_{\varepsilon} g_1(h_1) \|_{L^{\infty}} \leq C \varepsilon^8, \quad 
    \| \nabla_{\varepsilon}^2 g_1(h_1) \|_{L^{\infty}} \leq C \varepsilon^8, \label{eq:g1_deriv_inf} \\
    \| \nabla_{\varepsilon}^3 g_1(h_1) \|_{L^{\infty}} &\leq C \varepsilon^6, \quad 
    \| \nabla_{\varepsilon}^4 g_1(h_1) \|_{L^{\infty}} \leq C \varepsilon^8. \label{eq:g1_high_deriv_inf}
\end{align}
The difference $g_2(h_1, f_1) - g_2(h_2, f_2)$ admits the estimates:
\begin{align}
\|  \big( g_2(h_1, f_1) - g_2(h_2, f_2) \big) \|_{L^\infty} &\leq C \left( \varepsilon^{-\frac{1}{2}} \| \psi_1 - \psi_2 \|_{h} + \varepsilon^{\frac{3}{2}} \| \phi_1 - \phi_2 \|_{*} \right), \label{eq:g2_diff_inft} \\
    \| \nabla_{\varepsilon} \big( g_2(h_1, f_1) - g_2(h_2, f_2) \big) \|_{L^2} &\leq C \left( \varepsilon^{-\frac{1}{2}} \| \psi_1 - \psi_2 \|_{h} + \varepsilon^{\frac{3}{2}} \| \phi_1 - \phi_2 \|_{*} \right), \label{eq:g2_diff_grad_L2} \\
    \| \nabla_{\varepsilon}^2 \big( g_2(h_1, f_1) - g_2(h_2, f_2) \big) \|_{L^2} &\leq C \left( \varepsilon^{-\frac{3}{2}} \| \psi_1 - \psi_2 \|_{h} + \varepsilon \| \phi_1 - \phi_2 \|_{*} \right), \label{eq:g2_diff_hess_L2} \\
    \| \nabla_{\varepsilon}^3 \big( g_2(h_1, f_1) - g_2(h_2, f_2) \big) \|_{L^2} &\leq C \left( \varepsilon^{-\frac{3}{2}} \| \psi_1 - \psi_2 \|_{h} + \varepsilon \| \phi_1 - \phi_2 \|_{*} \right), \label{eq:g2_diff_third_L2} \\
    \| \nabla_{\varepsilon}^4 \big( g_2(h_1, f_1) - g_2(h_2, f_2) \big) \|_{L^2} &\leq C \left( \varepsilon^{-\frac{5}{2}} \| \psi_1 - \psi_2 \|_{h} + \varepsilon^{-\frac{1}{2}} \| \phi_1 - \phi_2 \|_{*} \right). \label{eq:g2_diff_fourth_L2}
\end{align}
Combining \eqref{eq:g1_deriv_inf}--\eqref{eq:g1_high_deriv_inf} and \eqref{eq:g2_diff_inft}--\eqref{eq:g2_diff_fourth_L2}, we obtain
\begin{equation}\label{eq:high_deriv_g1g2_diff}
    \| \nabla_{\varepsilon}^4 \big[ g_1(h_1) \big( g_2(h_1, f_1) - g_2(h_2, f_2) \big) \big] \|_{L^2} \leq C \varepsilon^{\frac{3}{2}} \left( \| \psi_1 - \psi_2 \|_{h} + \varepsilon^2 \| \phi_1 - \phi_2 \|_{*} \right).
\end{equation}

\noindent\textbf{Estimation of the second term in \eqref{eq:g1g2_decomp}.}
Similarly, we have
\begin{equation}\label{eq:g1_diff_g2_F5}
    \begin{split}
        &\| \big( g_1(h_1) - g_1(h_2) \big) g_2(h_2, f_2) \|_{\mathcal{F}_{5}} \\
        &\leq \| g_2(h_2, f_2) \|_{L^{\infty}} \| g_1(h_1) - g_1(h_2) \|_{L^{\infty}} \\
        &\quad + C \| \big( g_1(h_1) - g_1(h_2) \big) g_2(h_2, f_2) \|_{L^2} \\
        &\quad + C \| \nabla_{\varepsilon}^4 \big[ \big( g_1(h_1) - g_1(h_2) \big) g_2(h_2, f_2) \big] \|_{L^2}.
    \end{split}
\end{equation}
The uniform bounds yield:
\begin{align}
    \| g_2(h_2, f_2) \|_{L^{\infty}} \| g_1(h_1) - g_1(h_2) \|_{L^{\infty}} &\leq C \varepsilon^9 \| \psi_1 - \psi_2 \|_{h}, \label{eq:g1_diff_inf} \\
    \| \big( g_1(h_1) - g_1(h_2) \big) g_2(h_2, f_2) \|_{L^2} &\leq C \varepsilon^9 \| \psi_1 - \psi_2 \|_{h}. \label{eq:g1_diff_g2_L2}
\end{align}
For the derivative term, Leibniz rule gives
\begin{equation}\label{eq:leibniz_g1_diff_g2}
    \begin{split}
        &\| \nabla_{\varepsilon}^4 \big[ \big( g_1(h_1) - g_1(h_2) \big) g_2(h_2, f_2) \big] \|_{L^2} \\
        &\leq C \sum_{m=0}^{2} \| \nabla_{\varepsilon}^m g_2(h_2, f_2) \|_{L^{\infty}} \| \nabla_{\varepsilon}^{4-m} \big( g_1(h_1) - g_1(h_2) \big) \|_{L^2} \\
        &\quad + C \sum_{m=3}^{4} \| \nabla_{\varepsilon}^m g_2(h_2, f_2) \|_{L^{2}} \| \nabla_{\varepsilon}^{4-m} \big( g_1(h_1) - g_1(h_2) \big) \|_{L^{\infty}} \\
        &\leq C \varepsilon^6 \| \psi_1 - \psi_2 \|_{h}. 
    \end{split}
\end{equation}

\noindent\textbf{Synthesis.}
Combining \eqref{eq:g1g2_diff_F5}--\eqref{eq:g1g2_diff_L2}, \eqref{eq:high_deriv_g1g2_diff}, and \eqref{eq:g1_diff_inf}--\eqref{eq:leibniz_g1_diff_g2} into \eqref{eq:g1g2_decomp}, we obtain the estimate:
\begin{equation}\label{eq:g1g2_final_bound}
    \| g_1(h_1) g_2(h_1, f_1) - g_1(h_2) g_2(h_2, f_2) \|_{\mathcal{F}_{5}} \leq C \varepsilon^{\frac{3}{2}} \left( \| \psi_1 - \psi_2 \|_{h} + \varepsilon^2 \| \phi_1 - \phi_2 \|_{*} \right).
\end{equation}

Finally, we estimate the last term in $\mathfrak{h}_{\varepsilon,\phi_1} - \mathfrak{h}_{\varepsilon,\phi_2}$. We decompose the divergence term as:
\begin{equation}\label{eq:div_decomp}
    \begin{split}
        \| \nabla_{\varepsilon} \cdot \left[ g_3(h_1) \nabla_{\varepsilon} h_1 - g_3(h_2) \nabla_{\varepsilon} h_2 \right] \|_{\mathcal{F}_5} 
        &\leq \| \nabla_{\varepsilon} \cdot \left[ g_3(h_1) \nabla_{\varepsilon} (h_1 - h_2) \right] \|_{\mathcal{F}_5} \\
        &\quad + \| \nabla_{\varepsilon} \cdot \left[ (g_3(h_1) - g_3(h_2)) \nabla_{\varepsilon} h_2 \right] \|_{\mathcal{F}_5}.
    \end{split}
\end{equation}

\noindent\textbf{Estimation of the first term in \eqref{eq:div_decomp}.}
Applying the $\mathcal{F}_5$-norm definition:
\begin{equation}\label{eq:div_term1_F5}
    \begin{split}
        \| \nabla_{\varepsilon} \cdot \left[ g_3(h_1) \nabla_{\varepsilon} (h_1 - h_2) \right] \|_{\mathcal{F}_5} 
        &\leq C \| \nabla_{\varepsilon} \cdot \left[ g_3(h_1) \nabla_{\varepsilon} (h_1 - h_2) \right] \|_{L^{\infty}} \\
        &\quad + C \| \nabla_{\varepsilon} \cdot \left[ g_3(h_1) \nabla_{\varepsilon} (h_1 - h_2) \right] \|_{L^{2}} \\
        &\quad + C \| \nabla_{\varepsilon}^5 \left[ g_3(h_1) \nabla_{\varepsilon} (h_1 - h_2) \right] \|_{L^{2}}.
    \end{split}
\end{equation}
The first two components satisfy:
\begin{align}
    &\| \nabla_{\varepsilon} \cdot \left[ g_3(h_1) \nabla_{\varepsilon} (h_1 - h_2) \right] \|_{L^{\infty}} \nonumber \\
    &\leq C \left( \varepsilon^6 \| \nabla_{\varepsilon} (\psi_1 - \psi_2) \|_{L^{\infty}} + \varepsilon^7 \| \nabla_{\varepsilon}^2 (\psi_1 - \psi_2) \|_{L^{\infty}} \right) \leq C \varepsilon^5 \| \psi_1 - \psi_2 \|_{h}, \label{eq:div_term1_inf} \\
    &\| \nabla_{\varepsilon} \cdot \left[ g_3(h_1) \nabla_{\varepsilon} (h_1 - h_2) \right] \|_{L^{2}} \nonumber \\
    &\leq C \left( \varepsilon^6 \| \nabla_{\varepsilon} (\psi_1 - \psi_2) \|_{L^{2}} + \varepsilon^7 \| \nabla_{\varepsilon}^2 (\psi_1 - \psi_2) \|_{L^{2}} \right) \leq C \varepsilon^5 \| \psi_1 - \psi_2 \|_{h}. \label{eq:div_term1_L2}
\end{align}
For the high-order derivative term, Leibniz rule yields:
\begin{equation}\label{eq:div_term1_high}
    \begin{split}
        &\| \nabla_{\varepsilon}^5 \left[ g_3(h_1) \nabla_{\varepsilon} (h_1 - h_2) \right] \|_{L^{2}} \\
        &\leq \sum_{m=0}^{3} \| \nabla_{\varepsilon}^m g_3(h_1) \|_{L^{\infty}} \| \nabla_{\varepsilon}^{6-m} (h_1 - h_2) \|_{L^{2}} \\
        &\quad + \sum_{m=4}^{5} \| \nabla_{\varepsilon}^m g_3(h_1) \|_{L^{2}} \| \nabla_{\varepsilon}^{6-m} (h_1 - h_2) \|_{L^{\infty}} \\
        &\leq C \varepsilon^5 \| \psi_1 - \psi_2 \|_{h}.
    \end{split}
\end{equation}
Combining \eqref{eq:div_term1_inf}, \eqref{eq:div_term1_L2}, and \eqref{eq:div_term1_high}:
\begin{equation}\label{eq:div_term1_final}
    \| \nabla_{\varepsilon} \cdot \left[ g_3(h_1) \nabla_{\varepsilon} (h_1 - h_2) \right] \|_{\mathcal{F}_5} \leq C \varepsilon^5 \| \psi_1 - \psi_2 \|_{h}.
\end{equation}

\noindent\textbf{Estimation of the second term in \eqref{eq:div_decomp}.}
Similarly, we have
\begin{equation}\label{eq:div_term2_F5}
    \begin{split}
        &\| \nabla_{\varepsilon} \cdot \left[ (g_3(h_1) - g_3(h_2)) \nabla_{\varepsilon} h_2 \right] \|_{\mathcal{F}_5} \\
        &\leq C \| \nabla_{\varepsilon} \cdot \left[ (g_3(h_1) - g_3(h_2)) \nabla_{\varepsilon} h_2 \right] \|_{L^{\infty}} \\
        &\quad + C \| \nabla_{\varepsilon} \cdot \left[ (g_3(h_1) - g_3(h_2)) \nabla_{\varepsilon} h_2 \right] \|_{L^{2}} \\
        &\quad + C \| \nabla_{\varepsilon}^5 \left[ (g_3(h_1) - g_3(h_2)) \nabla_{\varepsilon} h_2 \right] \|_{L^{2}}.
    \end{split}
\end{equation}
The uniform bounds yield:
\begin{align}
    &\| \nabla_{\varepsilon} \cdot \left[ (g_3(h_1) - g_3(h_2)) \nabla_{\varepsilon} h_2 \right] \|_{L^{\infty}} \nonumber \\
    &\leq C \left( \varepsilon^7 \| \nabla_{\varepsilon} (\psi_1 - \psi_2) \|_{L^{\infty}} + \varepsilon^8 \| \nabla_{\varepsilon}^2 (\psi_1 - \psi_2) \|_{L^{\infty}} \right) \leq C \varepsilon^6 \| \psi_1 - \psi_2 \|_{h}, \label{eq:div_term2_inf} \\
    &\| \nabla_{\varepsilon} \cdot \left[ (g_3(h_1) - g_3(h_2)) \nabla_{\varepsilon} h_2 \right] \|_{L^{2}} \nonumber \\
    &\leq C \varepsilon^8 \left( \| \nabla_{\varepsilon} (\psi_1 - \psi_2) \|_{L^{\infty}} + \| \nabla_{\varepsilon}^2 (\psi_1 - \psi_2) \|_{L^{2}} \right) \leq C \varepsilon^6 \| \psi_1 - \psi_2 \|_{h}. \label{eq:div_term2_L2}
\end{align}
For the high-order derivative term, note that $g_3(h)$ depends on $|\nabla_{\varepsilon} h|^2$:
\begin{equation}\label{eq:div_term2_high}
    \begin{split}
        &\| \nabla_{\varepsilon}^5 \left[ (g_3(h_1) - g_3(h_2)) \nabla_{\varepsilon} h_2 \right] \|_{L^{2}} \\
        &\leq C \varepsilon^8 \sum_{m=1}^{3} \| \nabla_{\varepsilon}^m h_2 \|_{L^{\infty}} \| \nabla_{\varepsilon}^{6-m} (|\nabla_{\varepsilon} h_1|^2 - |\nabla_{\varepsilon} h_2|^2) \|_{L^2} \\
        &\quad + C \varepsilon^8 \sum_{m=4}^{6} \| \nabla_{\varepsilon}^m h_2 \|_{L^{2}} \| \nabla_{\varepsilon}^{6-m} (|\nabla_{\varepsilon} h_1|^2 - |\nabla_{\varepsilon} h_2|^2) \|_{L^{\infty}}.
    \end{split}
\end{equation}
The required derivative bounds are:
\begin{align}
    \| \nabla_{\varepsilon} h_2 \|_{L^{\infty}} &\leq C, & 
    \| \nabla_{\varepsilon}^2 h_2 \|_{L^{\infty}} &\leq C \varepsilon^{-\frac{1}{2}}, &
    \| \nabla_{\varepsilon}^3 h_2 \|_{L^{\infty}} &\leq C \varepsilon^{-\frac{3}{2}}, \label{eq:h2_deriv_inf} \\
    \| \nabla_{\varepsilon}^4 h_2 \|_{L^{2}} &\leq C, & 
    \| \nabla_{\varepsilon}^5 h_2 \|_{L^{2}} &\leq C \varepsilon^{-1}, & 
    \| \nabla_{\varepsilon}^6 h_2 \|_{L^{2}} &\leq C \varepsilon^{-2}, \label{eq:h2_high_deriv_L2}
\end{align}
and
\begin{align}
    \| \nabla_{\varepsilon}^m (|\nabla_{\varepsilon} h_1|^2 - |\nabla_{\varepsilon} h_2|^2) \|_{L^{\infty}} 
    &\leq C \varepsilon^{-\frac{2m+1}{2}} \| \psi_1 - \psi_2 \|_{h}, \quad m=0,1,2, \label{eq:gradh_diff_inf} \\
    \| \nabla_{\varepsilon}^3 (|\nabla_{\varepsilon} h_1|^2 - |\nabla_{\varepsilon} h_2|^2) \|_{L^2} 
    &\leq C \varepsilon^{-\frac{3}{2}} \| \psi_1 - \psi_2 \|_{h}, \label{eq:gradh_diff_third_L2} \\
    \| \nabla_{\varepsilon}^5 (|\nabla_{\varepsilon} h_1|^2 - |\nabla_{\varepsilon} h_2|^2) \|_{L^2} 
    &\leq C \varepsilon^{-3} \| \psi_1 - \psi_2 \|_{h}. \label{eq:gradh_diff_fifth_L2}
\end{align}
Combining \eqref{eq:h2_deriv_inf}--\eqref{eq:gradh_diff_fifth_L2} into \eqref{eq:div_term2_high}:
\begin{equation}\label{eq:div_term2_high_final}
    \| \nabla_{\varepsilon}^5 \left[ (g_3(h_1) - g_3(h_2)) \nabla_{\varepsilon} h_2 \right] \|_{L^{2}} \leq C \varepsilon^4 \| \psi_1 - \psi_2 \|_{h}.
\end{equation}
Synthesizing \eqref{eq:div_term2_inf}, \eqref{eq:div_term2_L2}, and \eqref{eq:div_term2_high_final}:
\begin{equation}\label{eq:div_term2_final}
    \| \nabla_{\varepsilon} \cdot \left[ (g_3(h_1) - g_3(h_2)) \nabla_{\varepsilon} h_2 \right] \|_{\mathcal{F}_5} \leq C \varepsilon^4 \| \psi_1 - \psi_2 \|_{h}.
\end{equation}

\noindent\textbf{Conclusion.}
Substituting \eqref{eq:div_term1_final} and \eqref{eq:div_term2_final} into \eqref{eq:div_decomp}, we obtain the final estimate for the divergence term:
\begin{equation}\label{eq:div_final}
    \| \nabla_{\varepsilon} \cdot \left[ g_3(h_1) \nabla_{\varepsilon} h_1 - g_3(h_2) \nabla_{\varepsilon} h_2 \right] \|_{\mathcal{F}_5} \leq C \varepsilon^4 \| \psi_1 - \psi_2 \|_{h}.
\end{equation}
This completes the proof of the required estimates for all components of $\mathfrak{h}_{\varepsilon,\phi_1} - \mathfrak{h}_{\varepsilon,\phi_2}$.
\end{proof}

Now we are able to give the proof of Theorem \ref{Thm1.1}.

\begin{proof}[ Proof of Theorem \ref{Thm1.1}]

With Lemmas \ref{Lemma5-2} and \ref{Lemma 5.3}, we have established that for a given function $\phi$, a solution to \eqref{5-2} exists. Consequently, the original problem is equivalent to finding a solution to \eqref{5-1}. We rewrite \eqref{5-1} as
\begin{equation}
    \label{5-7}
    \begin{split}
        \mathcal{L}_{\ep}\phi &= \hat{P}_1 + \hat{P}_2 + P_3 + \hat{P}_4,
    \end{split}
\end{equation}
where
\begin{align*}
    \hat{P}_1 &:= P_1 + \frac{3}{2c} \partial_1 \bigl( (\partial_1 \phi)^2 \bigr) - \left(2A + \tfrac{1}{3}\right) \ep^2 \partial_1^2 \partial_2^2 q, \\
    \hat{P}_2 &:= P_2 - \sigma (1 + \ep^2) \ep^4 \partial_2^4 q, \\
    \hat{P}_4 &:= P_4 - \mathcal{L}_2 q.
\end{align*}
We shall find a solution to \eqref{5-7} using a fixed-point argument.

For any $\phi \in \mathcal{F}_\phi$, we first establish the estimate
\begin{equation}\label{5-9}
    \big\|\partial_1^{-1}\hat{P}_1\big\|_{**} \leq C \varepsilon.
\end{equation}    
To prove this, we verify that all terms in $\partial_1^{-1}\hat{P}_1$ are bounded by $C \varepsilon$ in the $\|\cdot\|_{**}$ norm. We illustrate the process with $(\partial_1 \phi)^2$ and $\varepsilon^2 \mathcal{Q}\Pi$, as representative cases; the remaining terms can be handled similarly, and we omit the details for brevity. 

For $(\partial_1 \phi)^2$, we compute
\begin{align*}
    \|(\partial_1 \phi)^2\|_{L^2(\mathbb{R}^2)} &\leq \|\partial_1\phi\|_{L^2}\|\partial_1\phi\|_{L^\infty} \leq C\|\phi\|_{*}, \\
    \|\partial_1((\partial_1 \phi)^2)\|_{L^2} &\leq C\|\partial_1^2\phi\|_{L^2}\|\partial_1\phi\|_{L^\infty} \leq C\|\phi\|_{*}, \\
    \|(\partial_1 \phi)^2\|_{L^{4/3}} &= \|\partial_1\phi\|_{L^{8/3}}^2 \leq \|\partial_1\phi\|_{L^2}^{3/2}\|\partial_1\phi\|_{L^\infty}^{1/2} \leq C\|\phi\|_{*}^{3/2}.
\end{align*}
This yields
\[
\|(\partial_1 \phi)^2\|_{**} \leq C \varepsilon.
\]

For $\varepsilon^2 \mathcal{Q}\Pi = \ep^2\Pi + \frac{1}{3}\ep^4\mathcal{P}\Pi$, where 
\begin{align*}
    \Pi &= -\frac{1}{2}(\partial_2 f)^2 + \frac{1}{2(1 + \ep^6|\nabla_{\ep} h|^2)}(-c\partial_1h + \ep^2\nabla_{\ep} h \cdot \nabla_{\ep} f)^2 \\
    &\quad + \sigma \ep^4 \nabla_{\ep} \cdot \left[ \frac{-|\nabla_{\ep}h|^2\nabla_{\ep} h}{\sqrt{1+ \ep^6|\nabla_{\ep} h|^2}(1+\sqrt{1+ \ep^6|\nabla_{\ep} h|^2})} \right],
\end{align*}
we begin with $(\partial_2f)^2$:  
\begin{align*}
    \ep^2\|(\partial_2f)^2\|_{**} &\leq C\ep^2(\|(\partial_2q)^2\|_{**} + \|(\partial_2\phi)^2\|_{**}) \\
    &\leq C\ep^2(1 + \|(\partial_2\phi)^2\|_{**}),
\end{align*}
where 
\begin{align*}
    \ep^2\|(\partial_2\phi)^2\|_{**} &= \ep^2 \left( \|(\partial_2\phi)^2\|_{L^2} + \|\partial_1((\partial_2\phi)^2)\|_{L^2} + \|(\partial_2\phi)^2\|_{L^{4/3}} \right) \\
    &\leq C \|\phi\|_{*} \leq C\ep.
\end{align*}
Similarly,
\begin{align*}
    \ep^4\|\mathcal{P}(\partial_2f)^2\|_{**} &\leq C\ep^4(\|\mathcal{P}(\partial_2q)^2\|_{**} + \|\mathcal{P}(\partial_2\phi)^2\|_{**}) \\
    &\leq C\ep^4(1 + \|\mathcal{P}(\partial_2\phi)^2\|_{**}),
\end{align*}
with
\begin{align*}
    \ep^4\|\mathcal{P}(\partial_2\phi)^2\|_{**} &\leq C\ep^2 \left( \|\phi\|_{*} + \|\nabla_{\ep}^3\phi\|_{L^2}\|\nabla_{\ep}\phi\|_{L^4} + \|\nabla_{\ep}^2\phi\|_{L^2}\|\nabla_{\ep}^2\phi\|_{L^4} \right) \\
    &\leq C\ep^2.
\end{align*}

Next, consider $\ep^2g_4(h)g_2(h,f)$ and $\ep^2\nabla_{\ep}\cdot [g_5(h)\nabla_{\ep}h]$, where 
\begin{align*}
    g_2(h,f) &= (-c\partial_1h + \ep^2\nabla_{\ep}h \cdot \nabla_{\ep}f)^2, \\
    g_4(h) &= \frac{1}{2(1+\ep^6|\nabla_\ep h|^2)}, \\
    g_5(h) &= \frac{\sigma \ep^4 |\nabla_{\ep}h|^2}{\sqrt{1+\ep^6|\nabla_{\ep}h|^2}(1+\sqrt{1+\ep^6|\nabla_{\ep}h|^2})}.
\end{align*}
We obtain
\begin{align*}
    \ep^2\|g_4(h)g_2(h,f)\|_{**} 
        &= \ep^2 \left( \|g_4g_2\|_{L^2} + \|\partial_1(g_4g_2)\|_{L^2} + \|g_4g_2\|_{L^{4/3}} \right) \\
        &\leq C \ep,
\end{align*}
and
\begin{align*}
    \ep^2\|\nabla_{\ep}\cdot[g_5\nabla_{\ep}h]\|_{**} 
        &= \ep^2 \left( \|\nabla_{\varepsilon}\cdot[g_5\nabla_{\varepsilon} h]\|_{L^2} + \|\partial_1\nabla_{\varepsilon}\cdot[g_5\nabla_{\varepsilon} h]\|_{L^2} + \|\nabla_{\varepsilon}\cdot[g_5\nabla_{\varepsilon} h]\|_{L^{4/3}} \right) \\
        &\leq C\ep^2.
\end{align*}
For the higher-order terms:
\begin{align*}
    \ep^4\|\mathcal{P}(g_4g_2)\|_{**} 
        &\leq \ep^4 \left( \|\nabla_{\ep}^2(g_4g_2)\|_{L^2} + \|\nabla_{\ep}^2(g_4g_2)\|_{L^{4/3}} + \|\partial_1\nabla_{\ep}^2(g_4g_2)\|_{L^2} \right) \\
        &\leq C\ep^2, \\
    \ep^4\|\mathcal{P}\nabla_{\ep}\cdot[g_5\nabla_{\ep}h]\|_{**}
        &= \ep^4 \left( \|\mathcal{P}\nabla_{\ep}\cdot[g_5\nabla_{\ep} h]\|_{L^2} + \|\partial_1\mathcal{P}\nabla_{\ep}\cdot[g_5\nabla_{\ep} h]\|_{L^2} + \|\mathcal{P}\nabla_{\ep}\cdot[g_5\nabla_{\ep} h]\|_{L^{4/3}} \right) \\
        &\leq C\ep^2.
\end{align*}
Combining these estimates yields
\[
\|\ep^2\mathcal{Q}\Pi\|_{**} \leq C \varepsilon.
\]

Next, we estimate $\|\partial_2^{-1}\hat{P}_2\|_{***} \leq C\varepsilon$ using representative terms $\varepsilon^4\mathcal{P}(h\partial_2f)$ and $\varepsilon^2h\partial_2f$ from $\partial_2^{-1} \hat{P}_2$. We compute:
\begin{align*}
    \varepsilon^4\|\mathcal{P}(h\partial_2 f)\|_{***} 
        &= \varepsilon^4 \left( \|\mathcal{P}(h\partial_2f)\|_{L^2} + \|\partial_2 \mathcal{P}(h\partial_2f)\|_{L^2} + \|\mathcal{P}(h\partial_2f)\|_{L^{4/3}} \right) \\
        &\leq C\varepsilon, \\
    \varepsilon^2\|h\partial_2 f\|_{***} 
        &= \varepsilon^2 \left( \|h\partial_2f\|_{L^2} + \|\partial_2(h\partial_2f)\|_{L^2} + \|h\partial_2f\|_{L^{4/3}} \right) \\
        &\leq C\varepsilon.
\end{align*}
This establishes the bound $\|\partial_2^{-1}\hat{P}_2\|_{***} \leq C\varepsilon$.

For $P_3$, we recall the operator representation $|D| = -\mathscr{R}_1(D)\partial_{x_1} - \mathscr{R}_2(D)\partial_{x_2}$, where $D = (-i\partial_{x_1}, -i\partial_{x_2})$ and the Riesz transforms $\mathscr{R}_j(D) = i\frac{D_j}{|D|}$ are bounded on $W^{s,q}$ for $1 < q < \infty$ and $s \in \mathbb{Z}_{\geq 0}$. The expression for $P_3$ is:
\begin{align*}
    P_3 &= -\frac{\varepsilon^{-4}}{c^2} G_2(\varepsilon^2 h) f - \frac{\varepsilon^{-4}}{c^2} R_3(\varepsilon^2 h) f - \frac{\varepsilon^{-2}}{c^2} \left( \sigma + \tfrac{1}{3} \right) \mathcal{P} G_2(\varepsilon^2 h) f \\
        &\quad - \frac{\varepsilon^{-2}}{c^2} \left( \sigma + \tfrac{1}{3} \right) \mathcal{P} R_3(\varepsilon^2 h) f - \frac{1}{3c^2} \sigma \mathcal{P}^2 R_1(\varepsilon^2 h) f.
\end{align*}

To establish the estimates $\|\mathscr{R}_1(D)|D|^{-1}P_3\|_{**} \leq C \varepsilon$ and $\|\mathscr{R}_2(D)|D|^{-1}P_3\|_{***} \leq C \varepsilon$, we analyze representative terms: $\varepsilon^{-4}R_3(\varepsilon^2 h)f$, $\varepsilon^{-2}\mathcal{P} R_3(\varepsilon^2 h)f$, and $\mathcal{P}^2 R_1(\varepsilon^2 h)f$.

\noindent\textbf{Term 1:} $\varepsilon^{-4} R_3(\varepsilon^2 h) f = -\varepsilon^{-4} \partial_1 \mathscr{R}_1(D) |D|^{-1} R_3(\eta)\xi - \varepsilon^{-3} \partial_2 \mathscr{R}_2(D) |D|^{-1} R_3(\eta)\xi$\\
Using Theorem \ref{Thm |D| R3} and Remark \ref{LemmaA1}, we estimate:
\begin{align*}
    \varepsilon^{-4} \| \mathscr{R}_1(D) |D|^{-1} R_3(\eta)\xi \|_{**} 
        &= \varepsilon^{-4} \left( \| \cdot \|_{L^2} + \| \partial_1 \cdot \|_{L^2} + \| \cdot \|_{L^{4/3}} \right) \\
    \varepsilon^{-4} \| \mathscr{R}_1(D) |D|^{-1} R_3(\eta)\xi \|_{L^2} 
        &= \varepsilon^{-5/2} \| \cdot \|_{L^2(\mathbb R^2;{\bf x'})} \\
        &\leq C \varepsilon^{-5/2} \| |D|^{-1} R_3(\eta)\xi \|_{L^2(\mathbb R^2;{\bf x'})} \\
        &\leq C \varepsilon^{-5/2} \| \eta \|_{C^1}^2 \| \eta \|_{W^{1,4}(\mathbb R^2;{\bf x'})} \| \xi \|_{W^{1,4}(\mathbb R^2;{\bf x'})} \\
        &\leq C \varepsilon^3, \\
    \varepsilon^{-4} \| \mathscr{R}_1(D) |D|^{-1} R_3(\eta)\xi \|_{L^{4/3}} 
        &= \varepsilon^{-7/4} \| \cdot \|_{L^{4/3}(\mathbb R^2;{\bf x'})} \\
        &\leq C \varepsilon^{-7/4} \| |D|^{-1} R_3(\eta)\xi \|_{L^{4/3}(\mathbb R^2;{\bf x'})} \\
        &\leq C \varepsilon^{-7/4} \| \eta \|_{C^1}^2 \| \eta \|_{H^1(\mathbb R^2;{\bf x'})} \| \xi \|_{W^{1,4}(\mathbb R^2;{\bf x'})} \\
        &\leq C \varepsilon^3.
\end{align*}
By Theorem \ref{Thm R_3}, we have  
\begin{align*}
    \ep^{-4}\|\partial_1 \mathscr{R}_1(D) |D|^{-1}{R}_3(\eta)\xi\|_{L^2(\mathbb R^2)}&= \ep^{-7/2}\|\mathscr{R}_1(D)\partial_{x_1}|D|^{-1}R_3(\eta)\xi\|_{L^2(\mathbb{R}^2; {\bf x'})} \\
            &\le  C\ep^{-7/2}\|R_3(\eta)\xi\|_{L^2(\mathbb{R}^2; {\bf x'})}\\
			&\le C\ep^{-7/2}\|\eta\|_{C^1}^2\|\partial_{{\bf x'}}\eta\|_{W^{1,4}(\mathbb{R}^2; {\bf x'})}\|\xi\|_{W^{1,4}(\mathbb{R}^2; {\bf x'})}\\
			&\quad+C\ep^{-7/2}\|\eta\|_{C^1}^2\|\eta\|_{W^{1,4}(\mathbb{R}^2; {\bf x'})}\| \partial_{{\bf x'}}\xi\|_{W^{1,4}(\mathbb{R}^2; {\bf x'})}\\
			&\le C\ep^3.
\end{align*}
Combining these yields:
\[
\varepsilon^{-4} \| \mathscr{R}_1(D) |D|^{-1} R_3(\eta)\xi \|_{**} \leq C \varepsilon^3.
\]
Similarly, $\varepsilon^{-3} \| \mathscr{R}_2(D) |D|^{-1} R_3(\eta)\xi \|_{***} \leq C \varepsilon^3$.

\noindent\textbf{Term 2:} $\varepsilon^{-2} \mathcal{P} R_3(\varepsilon^2 h) f = -\varepsilon^{-2} \partial_1^2 R_3(\varepsilon^2 h) f - \partial_2^2 R_3(\varepsilon^2 h) f$\\
Theorem \ref{Thm R_3} provides:
\begin{align*}
    \varepsilon^{-2} \| \partial_1 R_3(\varepsilon^2 h) f \|_{**} 
        &= \varepsilon^{-2} \left( \| \partial_1 R_3 \|_{L^2} + \| \partial_1 R_3 \|_{L^{4/3}} + \| \partial_1^2 R_3 \|_{L^2} \right) \\
    \varepsilon^{-2} \| \partial_1 R_3(\varepsilon^2 h) f \|_{L^2} 
        &= \varepsilon^{-5/2} \| \partial_{x_1} R_3(\eta)\xi \|_{L^2(\mathbb R^2;{\bf x'})} \\
        &\leq \varepsilon^{-5/2} \| R_3(\eta)\xi \|_{H^1(\mathbb R^2;{\bf x'})} \\
        &\leq C \varepsilon^{-5/2} \| \eta \|_{C^2}^2 \| \partial_{{\bf x'}} \eta \|_{W^{2,4}(\mathbb R^2;{\bf x'})} \| \xi \|_{W^{2,4}(\mathbb R^2;{\bf x'})} \\
        &\quad + C \varepsilon^{-5/2} \| \eta \|_{C^2}^2 \| \eta \|_{W^{2,4}(\mathbb R^2;{\bf x'})} \| \partial_{{\bf x'}} \xi \|_{W^{2,4}(\mathbb R^2;{\bf x'})} \\
        &\leq C \varepsilon^4, \\
    \varepsilon^{-2} \| \partial_1 R_3(\varepsilon^2 h) f \|_{L^{4/3}} 
        &= \varepsilon^{-7/4} \| \partial_{x_1} R_3(\eta)\xi \|_{L^{4/3}(\mathbb R^2;{\bf x'})} \\
        &\leq \varepsilon^{-7/4} \| R_3(\eta)\xi \|_{W^{1,4/3}(\mathbb R^2;{\bf x'})} \\
        &\leq C \varepsilon^{-7/4} \| \eta \|_{C^2}^2 \| \partial_{{\bf x'}} \eta \|_{H^2(\mathbb R^2;{\bf x'})} \| \xi \|_{W^{2,4}(\mathbb R^2;{\bf x'})} \\
        &\quad + C \varepsilon^{-7/4} \| \eta \|_{C^2}^2 \| \eta \|_{H^2(\mathbb R^2;{\bf x'})} \| \partial_{{\bf x'}} \xi \|_{W^{2,4}(\mathbb R^2;{\bf x'})} \\
        &\leq C \varepsilon^4, \\
    \varepsilon^{-2} \| \partial_1^2 R_3(\varepsilon^2 h) f \|_{L^2} 
        &= \varepsilon^{-7/2} \| \partial_{x_1}^2 R_3(\eta)\xi \|_{L^2(\mathbb R^2;{\bf x'})} \\
        &\leq \varepsilon^{-7/2} \| R_3(\eta)\xi \|_{H^2(\mathbb R^2;{\bf x'})} \\
        &\leq C \varepsilon^{-7/2} \| \eta \|_{C^3}^2 \| \partial_{{\bf x'}} \eta \|_{W^{3,4}(\mathbb R^2;{\bf x'})} \| \xi \|_{W^{3,4}(\mathbb R^2;{\bf x'})} \\
        &\quad + C \varepsilon^{-7/2} \| \eta \|_{C^3}^2 \| \eta \|_{W^{3,4}(\mathbb R^2;{\bf x'})} \| \partial_{{\bf x'}} \xi \|_{W^{3,4}(\mathbb R^2;{\bf x'})} \\
        &\leq C \varepsilon^3.
\end{align*}
Thus, 
\[
\| \varepsilon^{-2} \partial_1 R_3(\varepsilon^2 h) f \|_{**} \leq C \varepsilon^3.
\]
Similarly, $\| \partial_2 R_3(\varepsilon^2 h) f \|_{***} \leq C \varepsilon^3$.

\noindent\textbf{Term 3:} $\mathcal{P}^2 R_1(\varepsilon^2 h) f = -\partial_1^2 \mathcal{P} R_1(\varepsilon^2 h) f - \varepsilon^2 \partial_2^2 \mathcal{P} R_1(\varepsilon^2 h) f$\\
We compute:
\begin{align*}
    \| \partial_1 \mathcal{P} R_1(\varepsilon^2 h) f \|_{**} 
        &= \sum_{m=1}^2 \| \partial_1^m \mathcal{P} R_1 \|_{L^2} + \| \partial_1 \mathcal{P} R_1 \|_{L^{4/3}}.
\end{align*}
From \eqref{2-5}:
\begin{align*}
    \| \partial_1 \mathcal{P} R_1(\varepsilon^2 h) f \|_{L^2} 
        &= \| \partial_1 \mathcal{P} (-c \varepsilon^2 \partial_1 h - G_0 f) \|_{L^2} \\
        &\leq C \varepsilon^2 \| \partial_1^2 \nabla_{\varepsilon}^2 h \|_{L^2} + C \varepsilon \| \partial_1 \nabla_{\varepsilon}^3 f \|_{L^2} \\
        &\leq C \varepsilon, \\
    \| \partial_1^2 \mathcal{P} R_1(\varepsilon^2 h) f \|_{L^2} 
        &= \| \partial_1^2 \mathcal{P} (-c \varepsilon^2 \partial_1 h - G_0 f) \|_{L^2} \\
        &\leq C \varepsilon^2 \| \partial_1 \nabla_{\varepsilon}^4 h \|_{L^2} + C \varepsilon \| \nabla_{\varepsilon}^5 f \|_{L^2} \\
        &\leq C \varepsilon.
\end{align*}
For the $L^{4/3}$ component, using the decomposition $R_1(\eta)\xi = (B_1 G_0 + A_1)\xi + R_2(\eta)\xi$ and Theorems \ref{ThmA2'}, \ref{ThmA1'}, \ref{Thm R_2}:
\begin{align*}
    &\| B_1 G_0 \xi \|_{W^{3,4/3}(\mathbb R^2;{\bf x'})} + \| A_1 \xi \|_{W^{3,4/3}(\mathbb R^2;{\bf x'})} \\
    &\leq C \left( \| \partial_{{\bf x'}} \eta \|_{W^{3,4}(\mathbb R^2;{\bf x'})} \| \partial_{{\bf x'}} \xi \|_{H^3(\mathbb R^2;{\bf x'})} + \| \eta \|_{W^{3,4}(\mathbb R^2;{\bf x'})} \| \partial_{{\bf x'}}^2 \xi \|_{H^3(\mathbb R^2;{\bf x'})} \right) \\
    &\leq C \varepsilon^{11/4}, \\
    \| R_2(\eta)\xi \|_{W^{3,4/3}(\mathbb R^2;{\bf x'})} 
        &\leq C (1 + \| \eta \|_{C^4}) \| \eta \|_{C^4}\\
        &\quad \times\left( \| \partial_{{\bf x'}} \eta \|_{H^4(\mathbb R^2;{\bf x'})} \| \xi \|_{W^{4,4}(\mathbb R^2;{\bf x'})} + \| \eta \|_{H^4(\mathbb R^2;{\bf x'})} \| \partial_{{\bf x'}} \xi \|_{W^{4,4}(\mathbb R^2;{\bf x'})} \right) \\
        &\leq C \varepsilon^{15/4}.
\end{align*}
Thus,
\begin{align*}
    &\| \partial_1 \mathcal{P} R_1(\varepsilon^2 h) f \|_{L^{4/3}} \\
        &= \varepsilon^{-7/4} \| \partial_{x_1} (-\Delta) R_1(\eta)\xi \|_{L^{4/3}(\mathbb R^2;{\bf x'})} \\
        &\leq \varepsilon^{-7/4} \| R_1(\eta)\xi \|_{W^{3,4/3}(\mathbb R^2;{\bf x'})} \\
        &\leq C \varepsilon^{-7/4} \left( \| B_1 G_0 \xi \|_{W^{3,4/3}(\mathbb R^2;{\bf x'})} + \| A_1 \xi \|_{W^{3,4/3}(\mathbb R^2;{\bf x'})} + \| R_2(\eta)\xi \|_{W^{3,4/3}(\mathbb R^2;{\bf x'})} \right) \\
        &\leq C \varepsilon.
\end{align*}
Combining these results gives
\[
\| \partial_1 \mathcal{P} R_1(\varepsilon^2 h) f \|_{**} \leq C \varepsilon.
\]
Applying analogous estimates to the remaining terms yields the desired bounds:
\begin{equation}\label{5-11}
    \| \mathscr{R}_1(D) |D|^{-1} P_3 \|_{**} \leq C \varepsilon, \quad 
    \| \mathscr{R}_2(D) |D|^{-1} P_3 \|_{***} \leq C \varepsilon.
\end{equation}

For $\hat{P}_4$, we analyze the representative term $\mathcal{P}G_0hG_0f$ to establish the estimates
\begin{equation}\label{5-12}
    \|\mathscr{R}_1(D)|D|^{-1}\hat{P}_4\|_{**} \leq C\varepsilon, \quad 
    \|\mathscr{R}_2(D)|D|^{-1}\hat{P}_4\|_{***} \leq C\varepsilon.
\end{equation}
Using the decomposition $\mathcal{P}G_0hG_0f = -\partial_1^2G_0hG_0f - \varepsilon^2\partial_2^2G_0hG_0f$, we estimate $\|\partial_1G_0hG_0f\|_{**}$ and $\varepsilon^2\|\partial_2G_0hG_0f\|_{***}$. For the first component:
\begin{align*}
    \|\partial_1G_0hG_0f\|_{**} 
        &= \|\partial_1G_0hG_0f\|_{L^2} + \|\partial_1^2G_0hG_0f\|_{L^2} + \|\partial_1G_0hG_0f\|_{L^{4/3}}.
\end{align*}
The $L^2$-norm of the second derivative satisfies:
\begin{align*}
    \|\partial_1^2G_0hG_0f\|_{L^2} 
        &\leq C\varepsilon \|\nabla_{\varepsilon}^3(hG_0f)\|_{L^2} \\
        &\leq C\varepsilon \Big( \|\nabla_{\varepsilon}^3 h\|_{L^4} \|G_0f\|_{L^4} + \|\nabla_{\varepsilon}^2h\|_{L^4} \|\nabla_{\varepsilon}G_0f\|_{L^4} \\
        &\quad + \|\nabla_{\varepsilon} h\|_{L^\infty} \|\nabla_{\varepsilon}^2G_0f\|_{L^2} + \|h\|_{L^\infty} \|\nabla_{\varepsilon}^3G_0f\|_{L^2} \Big) \\
        &\leq C\varepsilon^2 \Big( \|\nabla_{\varepsilon}^3 h\|_{L^4} \|\nabla_{\varepsilon}f\|_{L^4} + \|\nabla_{\varepsilon}^2h\|_{L^4} \|\nabla_{\varepsilon}^2f\|_{L^4} \\
        &\quad + \|\nabla_{\varepsilon} h\|_{L^\infty} \|\nabla_{\varepsilon}^3f\|_{L^2} + \|h\|_{L^\infty} \|\nabla_{\varepsilon}^4f\|_{L^2} \Big).
\end{align*}
Applying the Gagliardo-Nirenberg inequality $\|u\|_{L^4} \leq C\|\nabla u\|_{L^2}^{1/2}\|u\|_{L^2}^{1/2}$ yields
\begin{align*}
    \|\nabla_{\varepsilon}^2h\|_{L^4} \leq C\varepsilon^{-1/4}, \quad 
    \|\nabla_{\varepsilon}^3h\|_{L^4} \leq C\varepsilon^{-1/4}, \\
    \|\nabla_{\varepsilon}f\|_{L^4} \leq C\varepsilon^{-1/4}, \quad 
    \|\nabla_{\varepsilon}^2f\|_{L^4} \leq C\varepsilon^{-1/4}.
\end{align*}
Substituting these bounds gives $\|\partial_1^2G_0hG_0f\|_{L^2} \leq C\varepsilon$. Similarly, $\|\partial_1G_0hG_0f\|_{L^2} \leq C\varepsilon$.

For the $L^{4/3}$ component:
\begin{align*}
    \|\partial_1G_0hG_0f\|_{L^{4/3}} 
        &\leq C\varepsilon \|\nabla_{\varepsilon}^2(hG_0f)\|_{L^{4/3}} \\
        &\leq C\varepsilon \Big( \|\nabla_{\varepsilon}^2 h\|_{L^2} \|G_0f\|_{L^4} + \|\nabla_{\varepsilon}h\|_{L^2} \|\nabla_{\varepsilon} G_0f\|_{L^4} \\
        &\quad + \|h\|_{L^{2}} \|\nabla_{\varepsilon}^2G_0f\|_{L^4} \Big) \\
        &\leq C\varepsilon.
\end{align*}
Thus, $\|\partial_1G_0hG_0f\|_{**} \leq C\varepsilon$. Analogously, $\varepsilon^2\|\partial_2G_0hG_0f\|_{***} \leq C\varepsilon$.

Combining estimates \eqref{5-9}, \eqref{5-11}, \eqref{5-12} with Proposition \ref{prop4-2}, we obtain a solution $\mathcal{N}(\phi)$ to \eqref{5-7} satisfying
\begin{align*}
    \|\mathcal{N}(\phi)\|_* 
        &\leq C \Big( \|\partial_1^{-1}\hat{P}_1\|_{**} + \|\partial_2^{-1}\hat{P}_2\|_{***} + \|\mathscr{R}_1(D)|D|^{-1}P_3\|_{**} \\
        &\quad + \|\mathscr{R}_2(D)|D|^{-1}P_3\|_{***} + \|\mathscr{R}_1(D)|D|^{-1}\hat{P}_4\|_{**} + \|\mathscr{R}_2(D)|D|^{-1}\hat{P}_4\|_{***} \Big) \\
        &\leq C \varepsilon.
\end{align*}
Consequently, the map $\phi \mapsto \mathcal{N}(\phi)$ sends $\mathcal{F}_{\phi}$ to itself when $C$ is sufficiently large.

Next, we prove that the map $\phi \mapsto \mathcal{N}(\phi)$ is a contraction. Specifically, for any $\phi_1, \phi_2 \in \mathcal{F}_{\phi}$, 
\begin{equation}\label{5-13}
    \begin{aligned}
        \|\partial_1^{-1}\hat{P}_1(\phi_1) - \partial_1^{-1}\hat{P}_1(\phi_2)\|_{**} 
            &\leq C \varepsilon^{1/2} \|\phi_1 - \phi_2\|_*, \\
        \|\partial_2^{-1}\hat{P}_2(\phi_1) - \partial_2^{-1}\hat{P}_2(\phi_2)\|_{***} 
            &\leq C \varepsilon^{1/2} \|\phi_1 - \phi_2\|_*, \\
        \|\mathscr{R}_1(D)|D|^{-1} P_3(\phi_1) - \mathscr{R}_1(D)|D|^{-1} P_3(\phi_2)\|_{**} 
            &\leq C \varepsilon^{1/2} \|\phi_1 - \phi_2\|_*, \\
        \|\mathscr{R}_2(D)|D|^{-1} P_3(\phi_1) - \mathscr{R}_2(D)|D|^{-1} P_3(\phi_2)\|_{***} 
            &\leq C \varepsilon^{1/2} \|\phi_1 - \phi_2\|_*, \\
        \|\mathscr{R}_1(D)|D|^{-1} \hat{P}_4(\phi_1) - \mathscr{R}_1(D)|D|^{-1} \hat{P}_4(\phi_2)\|_{**} 
            &\leq C \varepsilon^{1/2} \|\phi_1 - \phi_2\|_*, \\
        \|\mathscr{R}_2(D)|D|^{-1} \hat{P}_4(\phi_1) - \mathscr{R}_2(D)|D|^{-1} \hat{P}_4(\phi_2)\|_{***} 
            &\leq C \varepsilon^{1/2} \|\phi_1 - \phi_2\|_*.
    \end{aligned}
\end{equation}

We establish these estimates by analyzing representative terms from $\hat{P}_1$, $\hat{P}_2$, $P_3$, and $\hat{P}_4$. To emphasize the dependence on $\phi$, we denote $h = h_\phi$ and $f = f_\phi$.

\noindent\textbf{Term from $\hat{P}_1$: $\partial_1((\partial_1\phi)^2)$}. 
We analyze the term $\partial_1((\partial_1\phi)^2)$. Direct computation yields the following estimates:
\begin{align*}
    \|(\partial_1\phi_1)^2 - (\partial_1\phi_2)^2\|_{L^2} 
        &\leq \|\partial_1(\phi_1 + \phi_2)\|_{L^\infty} \|\partial_1(\phi_1 - \phi_2)\|_{L^2} \\
        &\leq C \varepsilon^{1/2} \|\phi_1 - \phi_2\|_*, \\
    \|(\partial_1\phi_1)^2 - (\partial_1\phi_2)^2\|_{L^{4/3}} 
        &\leq \|\partial_1(\phi_1 + \phi_2)\|_{L^4} \|\partial_1(\phi_1 - \phi_2)\|_{L^2} \\
        &\leq C \varepsilon \|\phi_1 - \phi_2\|_*, \\
    \|\partial_1((\partial_1\phi_1)^2) - \partial_1((\partial_1\phi_2)^2)\|_{L^2} 
        &\leq \|\partial_1^2(\phi_1 + \phi_2)\|_{L^\infty} \|\partial_1(\phi_1 - \phi_2)\|_{L^2} \\
        &\quad + \|\partial_1(\phi_1 + \phi_2)\|_{L^\infty} \|\partial_1^2(\phi_1 - \phi_2)\|_{L^2} \\
        &\leq C \varepsilon^{1/2} \|\phi_1 - \phi_2\|_*.
\end{align*}
Combining these estimates gives
\[
\|(\partial_1\phi_1)^2 - (\partial_1\phi_2)^2\|_{**} \leq C \varepsilon^{1/2} \|\phi_1 - \phi_2\|_*.
\]

\noindent\textbf{Term from $\hat{P}_2$: $\varepsilon^2 \partial_2(h \partial_2 f)$}.
By Lemmas \ref{Lemma5-2} and \ref{Lemma 5.3}, we have
\begin{align*}
    \varepsilon^2 \|h_{\phi_1} \partial_2 f_{\phi_1} - h_{\phi_2} \partial_2 f_{\phi_2} \|_{L^2} 
        &\leq \varepsilon^2 \|(h_{\phi_1} - h_{\phi_2}) \partial_2 f_{\phi_1} \|_{L^2}  \\
        &\quad + \varepsilon^2 \|h_{\phi_2} \partial_2 (f_{\phi_1} - f_{\phi_2}) \|_{L^2} \\
        &\leq C\varepsilon^2 \|\psi_1 - \psi_2\|_{L^2} \|\partial_2(q + \phi_1)\|_{L^\infty} \\
        &\quad + C\varepsilon^2 \|h_{\phi_2}\|_{L^\infty} \|\partial_2(\phi_1 - \phi_2)\|_{L^2} \\
        &\leq C\varepsilon \|\phi_1 - \phi_2\|_*, \\[5pt]
    \varepsilon^2 \|h_{\phi_1} \partial_2 f_{\phi_1} - h_{\phi_2} \partial_2 f_{\phi_2} \|_{L^{4/3}} 
        &\leq \varepsilon^2 \|(h_{\phi_1} - h_{\phi_2}) \partial_2 f_{\phi_1} \|_{L^{4/3}} \\
        &\quad + \varepsilon^2 \|h_{\phi_2} \partial_2 (f_{\phi_1} - f_{\phi_2}) \|_{L^{4/3}} \\
        &\leq C\varepsilon \|\phi_1 - \phi_2\|_*, \\[5pt]
    \varepsilon^2 \| \partial_2 (h_{\phi_1} \partial_2 f_{\phi_1}) - \partial_2 (h_{\phi_2} \partial_2 f_{\phi_2}) \|_{L^2} 
        &\leq \varepsilon^2 \| \partial_2 [(h_{\phi_1} - h_{\phi_2}) \partial_2 f_{\phi_1}] \|_{L^2} \\
        &\quad + \varepsilon^2 \| \partial_2 [h_{\phi_2} \partial_2 (f_{\phi_1} - f_{\phi_2})] \|_{L^2} \\
        &\leq \varepsilon^2 \| \partial_2 (h_{\phi_1} - h_{\phi_2}) \|_{L^2} \| \partial_2 f_{\phi_1} \|_{L^\infty} \\
        &\quad + \varepsilon^2 \| h_{\phi_1} - h_{\phi_2} \|_{L^2} \| \partial_2^2 f_{\phi_1} \|_{L^\infty} \\
        &\quad + \varepsilon^2 \| \partial_2 h_{\phi_2} \|_{L^\infty} \| \partial_2 (f_{\phi_1} - f_{\phi_2}) \|_{L^2} \\
        &\quad + \varepsilon^2 \| h_{\phi_2} \|_{L^\infty} \| \partial_2^2 (f_{\phi_1} - f_{\phi_2}) \|_{L^2} \\
        &\leq C\varepsilon^{1/2} \|\phi_1 - \phi_2\|_*.
\end{align*}
Thus, combining the components of the $\| \cdot \|_{***}$ norm:
\[
\varepsilon^2 \| h_{\phi_1} \partial_2 f_{\phi_1} - h_{\phi_2} \partial_2 f_{\phi_2} \|_{***} \leq C \varepsilon^{1/2} \|\phi_1 - \phi_2\|_*.
\]

\noindent\textbf{Term from $P_3$: $\ep^{-4}R_3(\ep^2h)f$}.
Consider the decomposition:
\[
\ep^{-4}{R}_3(\ep^2h)f = -\ep^{-4}\partial_1 \mathscr{R}_1(D) |D|^{-1}{R}_3(\eta)\xi - \ep^{-3}\partial_2 \mathscr{R}_2(D) |D|^{-1}{R}_3(\eta)\xi.
\]
For the first term $\ep^{-4}\partial_1 \mathscr{R}_1(D) |D|^{-1}{R}_3(\eta)\xi$, Theorem \ref{Thm |D| R3} and Corollary \ref{cor D R3} imply:
\begin{align*}
        &\|\ep^{-4} \mathscr{R}_1(D) |D|^{-1}\big({R}_3(\eta_1)\xi_1-{R}_3(\eta_2)\xi_2\big)\|_{L^2(\mathbb R^2)}\\
            &\le C\ep^{-\frac{5}{2}}\||D|^{-1}\big({R}_3(\eta_1)\xi_1-{R}_3(\eta_2)\xi_2\big)\|_{L^2(\mathbb R^2; \mathbf{x'})}\\
            &\le C\ep^{-\frac{5}{2}} \Big(\| |D|^{-1}\big({R}_3(\eta_1)-{R}_3(\eta_2)\big)\xi_1\|_{L^2(\mathbb R^2; \mathbf{x'})} \\
            &\qquad+ \||D|^{-1}{R}_3(\eta_2)(\xi_1-\xi_2)\|_{L^2(\mathbb R^2; \mathbf{x'})}\Big)\\
            &\le C\ep^{-\frac{5}{2}}\Big(\sup_{t\in[0,1]}\| |D|^{-1}\tfrac{d}{dt}{R}_3(\eta_t)\xi_1\|_{L^2(\mathbb R^2; \mathbf{x'})}
             \\
             &\qquad+ \| |D|^{-1}{R}_3(\eta_2)(\xi_1-\xi_2)\|_{L^2(\mathbb R^2; \mathbf{x'})}\Big)\\
            &\le C \ep^{\frac{7}{4}}\|\eta_2-\eta_1\|_{W^{1,4}(\mathbb R^2; \mathbf{x'})}  + C\ep^{\frac{11}{4}}\|\xi_2-\xi_1\|_{W^{1,4}(\mathbb R^2; \mathbf{x'})}\\
            &\le C\ep^{3}\|\psi_2-\psi_1\|_{L^4(\mathbb{R}^2)} + C\ep^4\|\nabla_{\ep}(\psi_2-\psi_1)\|_{L^{4}(\mathbb{R}^2)}\\
            &\hspace{2em} + C\ep^3\|\phi_2-\phi_1\|_{L^4(\mathbb R^2)} + C\ep^4\|\nabla_{\ep}(\phi_2-\phi_1)\|_{L^4(\mathbb R^2)}\\
            &\le C\ep^3\|\psi_2-\psi_1\|_{h} + C\ep^3\|\phi_2-\phi_1\|_{*} \\
            &\le C\ep^2 \|\phi_2-\phi_1\|_{*},
\end{align*}
where $\eta_t = \eta_1 + t(\eta_2 - \eta_1)$ for $t \in [0, 1]$. Similarly,
\begin{align*}
    &\|\ep^{-4} \mathscr{R}_1(D) |D|^{-1}\big({R}_3(\eta_1)\xi_1-{R}_3(\eta_2)\xi_2\big)\|_{L^{\frac{4}{3}}(\mathbb R^2)}\\
    &\le C\ep^{-\frac{7}{4}}\||D|^{-1}\big({R}_3(\eta_1)\xi_1-{R}_3(\eta_2)\xi_2\big)\|_{L^{\frac{4}{3}}(\mathbb R^2; \mathbf{x'})}\\
    &\le C\ep^{-\frac{7}{4}} \Big( \| |D|^{-1}\big({R}_3(\eta_1)-{R}_3(\eta_2)\big)\xi_1\|_{L^{\frac{4}{3}}(\mathbb R^2; \mathbf{x'})}  \\
    &\qquad+ \||D|^{-1}{R}_3(\eta_2)(\xi_1-\xi_2)\|_{L^{\frac{4}{3}}(\mathbb R^2; \mathbf{x'})} \Big)\\
    &\le C \ep^{-\frac{7}{4}} \Big( \sup_{t\in [0,1]} \| |D|^{-1} \tfrac{d}{dt}{R}_3(\eta_t) \xi_1 \|_{L^{\frac{4}{3}}(\mathbb R^2; \mathbf{x'})}  \\
    &\qquad+ \| |D|^{-1}{R}_3(\eta_2)(\xi_1-\xi_2)\|_{L^{\frac{4}{3}}(\mathbb R^2; \mathbf{x'})} \Big)\\
    &\le C\ep^{\frac{5}{2}} \|\eta_2 - \eta_1\|_{H^1(\mathbb R^2; \mathbf{x'})} + C\ep^{\frac{11}{4}} \|\xi_1 - \xi_2\|_{W^{1,4}(\mathbb R^2; \mathbf{x'})}\\
    &\le C\ep^{3} \|\psi_2 - \psi_1\|_{h} + C\ep^3 \|\phi_1 - \phi_2\|_{*}\\
    &\le C\ep^2 \|\phi_1 - \phi_2\|_{*}.
\end{align*}
By Corollary \ref{CorR3} and Theorem \ref{Thm R_3}:
\begin{align*}
    &\ep^{-4} \| \partial_1 \mathscr{R}_1(D) |D|^{-1} (R_3(\eta_1)\xi_1 - R_3(\eta_2)\xi_2) \|_{L^2(\mathbb R^2)} \\
    &\le C\ep^{-\frac{7}{2}} \| {R}_3(\eta_1)\xi_1 - {R}_3(\eta_2)\xi_2 \|_{L^2(\mathbb R^2; \mathbf{x'})} \\
    &\le C\ep^{-\frac{7}{2}}\Big( \| \big( {R}_3(\eta_1) - {R}_3(\eta_2) \big) \xi_1 \|_{L^2(\mathbb R^2; \mathbf{x'})} + \| {R}_3(\eta_2) (\xi_1 - \xi_2) \|_{L^2(\mathbb R^2; \mathbf{x'})}\Big) \\
    &\le C\ep^{-\frac{7}{2}} \Big(\sup_{t\in[0,1]} \| \tfrac{d}{dt} {R}_3(\eta_t) \xi_1 \|_{L^2(\mathbb R^2; \mathbf{x'})}  +  \| {R}_3(\eta_2) (\xi_1 - \xi_2) \|_{L^2(\mathbb R^2; \mathbf{x'})} \Big)\\
    &\le C \ep^{\frac{3}{4}} \left(\| \eta_2 - \eta_1 \|_{W^{2,4}(\mathbb R^2; \mathbf{x'})} +  \| \xi_1 - \xi_2 \|_{W^{2,4}(\mathbb R^2; \mathbf{x'})}\right) \\
    &\le C\ep^{2} \big( \| \psi_1 - \psi_2 \|_{L^4(\mathbb R^2)} + \ep \| \nabla_{\ep}(\psi_1 - \psi_2) \|_{L^4(\mathbb R^2)} \\
    &\hspace{2em} + \ep^2 \| \nabla_{\ep}^2 (\psi_1 - \psi_2) \|_{L^4(\mathbb R^2)} \big) \\
    &\hspace{1em} + C\ep \big( \| \phi_1 - \phi_2 \|_{L^4(\mathbb R^2)} + \ep \| \nabla_{\ep}(\phi_1 - \phi_2) \|_{L^4(\mathbb R^2)} \\
    &\hspace{2em} + \ep^2 \| \nabla_{\ep}^2 (\phi_1 - \phi_2) \|_{L^4(\mathbb R^2)} \big) \\
    &\le C\ep \| \phi_2 - \phi_1 \|_{*}.
\end{align*}
Therefore,
\[
\ep^{-4} \| \mathscr{R}_1(D) |D|^{-1} \left( {R}_3(\ep^2h_{\phi_1})f_{\phi_1} - {R}_3(\ep^2h_{\phi_2})f_{\phi_2} \right) \|_{**} \le C\ep \| \phi_2 - \phi_1 \|_{*}.
\]
Similarly, 
\[
\ep^{-3} \| \mathscr{R}_2(D) |D|^{-1} \left( {R}_3(\ep^2h_{\phi_1})f_{\phi_1} - {R}_3(\ep^2h_{\phi_2})f_{\phi_2} \right) \|_{**} \le C\ep \| \phi_2 - \phi_1 \|_{*}.
\]

\noindent\textbf{Term from $\hat{P}_4$: $\varepsilon^{-2} (G_0 h G_0 f)$}.
We decompose the expression as follows:
\begin{align*}
    \varepsilon^{-2} (G_0 h G_0 f) 
        &= \ep^{-5} G_0(\eta G_0 \xi) \\
        &= -\ep^{-5} \partial_{x_1} \Bigl[ \mathscr{R}_1(D) \tanh(|D|) (\eta G_0 \xi) \Bigr] - \ep^{-5} \partial_{x_2} \Bigl[ \mathscr{R}_2(D) \tanh(|D|) (\eta G_0 \xi) \Bigr] \\
        &= -\ep^{-4} \partial_1 \Bigl[ \mathscr{R}_1(D) \tanh(|D|) (\eta G_0 \xi) \Bigr] - \ep^{-3} \partial_2 \Bigl[ \mathscr{R}_2(D) \tanh(|D|) (\eta G_0 \xi) \Bigr].
\end{align*}
From Remark \ref{LemmaA1} and Lemmas \ref{Lemma5-2}, \ref{Lemma 5.3}:
\begin{align*}
    & \varepsilon^{-4} \| \mathscr{R}_1(D) \tanh{(|D|)} (\eta_1 G_0 \xi_1 - \eta_2 G_0 \xi_2) \|_{L^2(\mathbb R^2)} \\
    &= \varepsilon^{-\frac{5}{2}} \| \mathscr{R}_1(D) \tanh{(|D|)} (\eta_1 G_0 \xi_1 - \eta_2 G_0 \xi_2) \|_{L^2(\mathbb R^2; \mathbf{x'})} \\
    &\le C \varepsilon^{-\frac{5}{2}} \big( \| (\eta_1 - \eta_2) G_0 \xi_1 \|_{L^2(\mathbb R^2; \mathbf{x'})} \\
    &\hspace{2em} + \| \eta_2 G_0 (\xi_1 - \xi_2) \|_{L^2(\mathbb R^2; \mathbf{x'})} \big) \\
    &\le C\ep^{-\frac{5}{2}} \big( \| \eta_1 - \eta_2 \|_{L^{\infty}(\mathbb R^2; \mathbf{x'})} \| G_0 \xi_1 \|_{L^2(\mathbb R^2; \mathbf{x'})} \\
    &\hspace{2em} + \| \eta_2 \|_{L^{\infty}(\mathbb R^2; \mathbf{x'})} \| G_0 (\xi_1 - \xi_2) \|_{L^2(\mathbb R^2; \mathbf{x'})} \big) \\
    &\le C\ep^{-\frac{5}{2}} \big( \| \eta_1 - \eta_2 \|_{L^{\infty}(\mathbb R^2; \mathbf{x'})} \| \partial_{\mathbf{x}'}^2 \xi_1 \|_{L^2(\mathbb R^2; \mathbf{x'})} \\
    &\hspace{2em} + \| \eta_2 \|_{L^{\infty}(\mathbb R^2; \mathbf{x'})} \| \partial_{\mathbf{x}'}^2 (\xi_1 - \xi_2) \|_{L^2(\mathbb R^2; \mathbf{x'})} \big) \\
    &\le C\ep \| \psi_1 - \psi_2 \|_{L^{\infty}(\mathbb{R}^2)} + C\ep \| \nabla_{\ep}^2 (\phi_1 - \phi_2) \|_{L^2(\mathbb R^2)} \\
    &\le C\ep^{\frac{1}{2}} \| \phi_1 - \phi_2 \|_{*},
\end{align*}
where we used the inequality:
\[
\| G_0 \xi \|_{L^2(\mathbb{R}^2; \mathbf{x'})} = \| |\mathbf{k}| \tanh (|\mathbf{k}|) \widehat{\xi} \|_{L^2(\mathbb{R}^2; \mathbf{x'})} \le C \| |\mathbf{k}|^2 \widehat{\xi} \|_{L^2(\mathbb{R}^2; \mathbf{x'})} \le C \| \partial_{\mathbf{x}'}^2 \xi \|_{L^2(\mathbb{R}^2; \mathbf{x'})}.
\]
For the $L^{4/3}$ norm:
\begin{align*}
    & \varepsilon^{-4} \| \mathscr{R}_1(D) \tanh{(|D|)} (\eta_1 G_0 \xi_1 - \eta_2 G_0 \xi_2) \|_{L^{\frac{4}{3}}(\mathbb R^2)} \\
    &= \varepsilon^{-\frac{7}{4}} \| \mathscr{R}_1(D) \tanh{(|D|)} (\eta_1 G_0 \xi_1 - \eta_2 G_0 \xi_2) \|_{L^{\frac{4}{3}}(\mathbb R^2; \mathbf{x'})} \\
    &\le C \varepsilon^{-\frac{7}{4}} \big( \| (\eta_1 - \eta_2) G_0 \xi_1 \|_{L^{\frac{4}{3}}(\mathbb R^2; \mathbf{x'})}  + \| \eta_2 G_0 (\xi_1 - \xi_2) \|_{L^{\frac{4}{3}}(\mathbb R^2; \mathbf{x'})} \big) \\
    &\le C\ep^{-\frac{7}{4}} \big( \| \eta_1 - \eta_2 \|_{L^{4}(\mathbb R^2; \mathbf{x'})} \| G_0 \xi_1 \|_{L^2(\mathbb R^2; \mathbf{x'})} \\
    &\hspace{2em} + \| \eta_2 \|_{L^{4}(\mathbb R^2; \mathbf{x'})} \| G_0 (\xi_1 - \xi_2) \|_{L^2(\mathbb R^2; \mathbf{x'})} \big) \\
    &\le C\ep^{-\frac{7}{4}} \big( \| \eta_1 - \eta_2 \|_{L^{4}(\mathbb R^2; \mathbf{x'})} \| \partial_{\mathbf{x}'}^2 \xi_1 \|_{L^2(\mathbb R^2; \mathbf{x'})} \\
    &\hspace{2em} + \| \eta_2 \|_{L^{4}(\mathbb R^2; \mathbf{x'})} \| \partial_{\mathbf{x}'}^2 (\xi_1 - \xi_2) \|_{L^2(\mathbb R^2; \mathbf{x'})} \big) \\
    &\le C\ep \| \psi_1 - \psi_2 \|_{L^{4}(\mathbb{R}^2)} + C\ep \| \nabla_{\ep}^2 (\phi_1 - \phi_2) \|_{L^2(\mathbb R^2)} \\
    &\le C\ep^{\frac{1}{2}} \| \phi_1 - \phi_2 \|_{*}.
\end{align*}
Moreover,
\begin{align*}
    & \ep^{-4} \| \partial_1 \mathscr{R}_1(D) \tanh{(|D|)} \left( \eta_1 G_0 \xi_1 - \eta_2 G_0 \xi_2 \right) \|_{L^2(\mathbb R^2)} \\
    &= \ep^{-1} \| \partial_1 \mathscr{R}_1(D) \tanh{(|D|)} \left( h_1 G_0 f_1 - h_2 G_0 f_2 \right) \|_{L^2(\mathbb R^2)} \\
    &\le C\ep^{-1} \| \partial_1  \tanh{(|D|)} \left( h_1 G_0 f_1 - h_2 G_0 f_2 \right) \|_{L^2(\mathbb R^2)} \\
    &\le C\ep^{-1} \| \partial_1 |D| \left( h_1 G_0 f_1 - h_2 G_0 f_2 \right) \|_{L^2(\mathbb R^2)} \\
    &\le C \Big( \| \nabla_{\ep}^2 (\psi_1 - \psi_2) \|_{L^4(\mathbb R^2)} \| G_0 f_1 \|_{L^4(\mathbb R^2)} \\
    &\hspace{1em} + \| \nabla_{\ep} (\psi_1 - \psi_2) \|_{L^4(\mathbb R^2)} \| \nabla_{\ep} G_0 f_1 \|_{L^4(\mathbb R^2)} \\
    &\hspace{1em} + \| \psi_1 - \psi_2 \|_{L^{\infty}(\mathbb R^2)} \| \nabla_{\ep}^2 G_0 f_1 \|_{L^2(\mathbb R^2)} \\
    &\hspace{1em} + \| \nabla_{\ep}^2 h_2 \|_{L^{\infty}(\mathbb R^2)} \| G_0 (\phi_1 - \phi_2) \|_{L^2(\mathbb R^2)} \\
    &\hspace{1em} + \| \nabla_{\ep} h_2 \|_{L^{\infty}(\mathbb R^2)} \| \nabla_{\ep} G_0 (\phi_1 - \phi_2) \|_{L^2(\mathbb R^2)} \\
    &\hspace{1em} + \| h_2 \|_{L^{\infty}(\mathbb R^2)} \| \nabla_{\ep}^2 G_0 (\phi_1 - \phi_2) \|_{L^2(\mathbb R^2)} \Big).
\end{align*}
By the Gagliardo-Nirenberg inequality:
\begin{align*}
    \| \nabla_{\ep}^2 (\psi_1 - \psi_2) \|_{L^4(\mathbb R^2)}
        & \le C\ep^{-\frac{1}{4}} \| \nabla_{\ep}^3 (\psi_1 - \psi_2) \|_{L^2}^{1/2} \| \nabla_{\ep}^2 (\psi_1 - \psi_2) \|_{L^2}^{1/2} \\
        & \le C\ep^{-\frac{1}{4}} \| \psi_1 - \psi_2 \|_{h}, \\
    \| \nabla_{\ep} (\psi_1 - \psi_2) \|_{L^4(\mathbb R^2)}
        & \le C\ep^{-\frac{1}{4}} \| \nabla_{\ep}^2 (\psi_1 - \psi_2) \|_{L^2}^{1/2} \| \nabla_{\ep} (\psi_1 - \psi_2) \|_{L^2}^{1/2} \\
        & \le C\ep^{-\frac{1}{4}} \| \psi_1 - \psi_2 \|_{h},
\end{align*}
and 
\begin{align*}
     \|\nabla_{\ep}G_0f_1\|_{L^4(\mathbb R^2)}&\le C\ep^{-\frac{1}{4}}\|\nabla_{\ep}^2G_0f_1\|_{L^2(\mathbb R^2)}^{1/2}\|\nabla_{\ep}G_0f_1\|_{L^2(\mathbb R^2)}^{1/2}\\
    &\le C\ep^{\frac{7}{4}}\|\nabla_{\ep}^4f_1\|_{L^2(\mathbb R^2)}^{1/2}\|\nabla_{\ep}^3f_1\|_{L^2(\mathbb R^2)}^{1/2}\le C \ep^{\frac{7}{4}},\\
    \|G_0f_1\|_{L^4(\mathbb R^2)}&\le C \ep^{\frac{7}{4}}.
\end{align*}
Thus,
\begin{align*}
    & \ep^{-4} \| \partial_1 \mathscr{R}_1(D) \tanh{(|D|)} \left( \eta_1 G_0 \xi_1 - \eta_2 G_0 \xi_2 \right) \|_{L^2} \\
    & \le C\ep^{\frac{3}{2}} \left( \| \psi_1 - \psi_2 \|_{h} + \| \phi_1 - \phi_2 \|_{*} \right) \\
    & \le C\ep \| \phi_1 - \phi_2 \|_{*}.
\end{align*}
Analogous estimates hold for $\ep^{-3} \partial_2 \mathscr{R}_2(D) \tanh (|D|) (\eta G_0 \xi)$, yielding:
\begin{align*}
    \ep^{-4} \| \mathscr{R}_1(D) \tanh{(|D|)} (\eta_1 G_0 \xi_1 - \eta_2 G_0 \xi_2) \|_{**} & \le C\ep^{\frac{1}{2}} \| \phi_1 - \phi_2 \|_{*}, \\
    \ep^{-3} \| \mathscr{R}_2(D) \tanh{(|D|)} (\eta_1 G_0 \xi_1 - \eta_2 G_0 \xi_2) \|_{**} & \le C\ep^{\frac{1}{2}} \| \phi_1 - \phi_2 \|_{*}.
\end{align*}

\noindent\textbf{Conclusion}
The remaining terms in $\hat{P}_1$, $\hat{P}_2$, $P_3$, and $\hat{P}_4$ admit similar estimates. Applying \eqref{5-13} and Proposition \ref{prop4-2}:
\[
\left\| \mathcal{N}(\phi_1) - \mathcal{N}(\phi_2) \right\|_* \leq C \varepsilon^{\frac{1}{2}} \| \phi_1 - \phi_2 \|_*.
\]
The existence of a solution $\phi$ to \eqref{5-7} follows from the contraction mapping principle, which establishes the main theorem.
\end{proof}

\begin{center}
{\bf Acknowledgments}
\end{center}

The research of C.F. Gui is supported by University of Macau research grants CPG2024-00016-FST, CPG202500032-FST, SRG2023-00011-FST, MYRG-GRG2023-00139-FST-UMDF, UMDF Professorial Fellowship of Mathematics, Macao SAR FDCT0003/2023/RIA1 and Macao SAR FDCT0024/2023/RIB1. The research of Y. Liu is supported by  NSFC No. 12471204. The research of J.C. Wei is supported by National Key R\&D Program of China 2022YFA1005602, and Hong Kong General Research Fund “New frontiers in singular limits of nonlinear partial differential equations”. The research of W. Yang is partially supported by National
Key R\&D Program of China 2022YFA1006800, NSFC No.
12171456 and No. 12271369, FDCT No. 0070/2024/RIA1, UMDF No. TISF/2025/006/FST, No. MYRG-GRG2024-00082-FST, No. MYRG-GRG2025-00051-FST and Startup
Research Grant No. SRG2023-00067-FST.

\medskip

\noindent
{\bf Note added after completion:} After the completion of our work—presented by the fourth author at the conference ``Partial Differential Equations and Fluid Mechanics" held at the University of Bath, September 2–4, 2025—we learned from Professor Mark Groves that he and Wahlen have independently obtained similar results. See \cite{Groves2025-1, Groves2025-2}

\appendix 
\renewcommand{\thetheorem}{A.\arabic{theorem}}
\setcounter{theorem}{0}
\renewcommand{\thecorollary}{A.\arabic{corollary}}
\setcounter{corollary}{0}
\begin{appendices}
	\section*{Appendix}
	
	In this appendix, we present some 
	$W^{k,p}$-boundedness results for operators employed earlier, with proofs detailed in \cite{Grafakos2008, Grafakos2014}.

	\begin{theorem}[Mikhlin Multiplier Theorem]
		Let $m: \mathbb{R}^d \backslash\{0\} \rightarrow \mathbb{C}$ be a measurable function. Assume there exist constants $C_\alpha>$ 0 such that the following conditions holds:
		For all multi-indices $\alpha=\left(\alpha_1, \ldots, \alpha_d\right)$ with $|\alpha|=\alpha_1+\cdots+\alpha_d \leq k$, where $k>\frac{d}{2}$ is an integer,
		$$
		\left|\partial^\alpha m(\xi)\right| \leq C_\alpha|\xi|^{-|\alpha|}, \quad \forall \xi \neq 0
		$$
		Here, $\partial^\alpha=\frac{\partial^{|\alpha|}}{\partial \xi_1^{\alpha_1} \ldots \partial \xi_d^{\alpha_d}}$ denotes the mixed partial derivative. Then $m$ is an $L^p$-Fourier multiplier for all $1<p<\infty$. Specifically, the operator $T_m$ defined by
		$$
		T_m f:=\mathcal{F}^{-1}(m \cdot \widehat{f}), \quad f \in \mathcal{S}\left(\mathbb{R}^d\right)
		$$
		extends to a bounded operator on $L^p\left(\mathbb{R}^d\right)$, satisfying
		$$
		\left\|T_m f\right\|_{L^p\left(\mathbb{R}^d\right)} \leq C_{p, d}\Bigg(\sum_{|\alpha| \leq\left\lfloor\frac{d}{2}\right\rfloor+1} C_\alpha\Bigg)\|f\|_{L^p\left(\mathbb{R}^d\right)}
		$$
		where $C_{p, d}>0$ depends only on $p$ and $d$.
	\end{theorem}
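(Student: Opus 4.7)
The plan is to prove $L^p$ boundedness of $T_m$ for $1 < p < \infty$ through the classical Calderón--Zygmund route. First I would establish $L^2$ boundedness directly: the Mikhlin hypothesis with multi-index $\alpha = 0$ yields $|m(\xi)| \leq C_0$, so Plancherel gives $\|T_m f\|_{L^2} \leq C_0 \|f\|_{L^2}$. The bulk of the work is then to verify that the convolution kernel $K = \mathcal{F}^{-1}(m)$ (interpreted distributionally) satisfies the Hörmander regularity condition
\begin{equation*}
\int_{|x| \geq 2|y|} |K(x - y) - K(x)|  dx \leq C, \qquad y \neq 0.
\end{equation*}

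To verify this, I would introduce a Littlewood--Paley decomposition: pick a smooth radial bump $\psi$ supported in $\{1/2 \leq |\xi| \leq 2\}$ with $\sum_{j \in \mathbb{Z}} \psi(2^{-j}\xi) = 1$ for $\xi \neq 0$, and write $m = \sum_j m_j$ where $m_j(\xi) := m(\xi)\psi(2^{-j}\xi)$. Each $m_j$ is smooth with compact support, so $K_j := \mathcal{F}^{-1}(m_j)$ is Schwartz. Combining the Mikhlin derivative bounds on $m$ with the Leibniz rule yields $|\partial^\alpha m_j(\xi)| \leq C_\alpha 2^{-j|\alpha|} \mathbf{1}_{\mathrm{supp}(m_j)}(\xi)$ for $|\alpha| \leq k$. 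Iterated integration by parts in the Fourier inversion formula then produces the pointwise estimates
\begin{equation*}
|K_j(x)| \leq C 2^{jd}(1 + 2^j|x|)^{-k}, \qquad |\nabla K_j(x)| \leq C 2^{j(d+1)}(1 + 2^j|x|)^{-k},
\end{equation*}
where the hypothesis $k > d/2$ ensures these estimates are integrable. The Hörmander condition follows by splitting the sum over $j$ at the threshold $2^j|y| \sim 1$: on the low-frequency block one invokes the mean value theorem together with the gradient bound, while on the high-frequency block one uses the pointwise size bound on $K_j$ and $K_j(\cdot - y)$ separately.

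With the Hörmander condition and $L^2$-boundedness in hand, the standard Calderón--Zygmund theorem (via the decomposition of $L^1$ functions at height $\lambda$) yields weak-type $(1,1)$ boundedness, and Marcinkiewicz interpolation between $L^1_w$ and $L^2$ delivers strong $L^p$ boundedness for $1 < p \leq 2$; the range $2 \leq p < \infty$ follows by duality, since $T_m^* = T_{\overline{m}}$ and $\overline{m}$ obeys the same derivative estimates. The explicit bound $C_{p,d} \sum_{|\alpha| \leq \lfloor d/2 \rfloor + 1} C_\alpha$ emerges by tracking the constants through the pointwise estimates for $K_j$. The hard part will be the careful derivation of the pointwise kernel estimates via iterated integration by parts, where one must trade Mikhlin's scale-invariant derivative bounds against the Fourier decay; this is precisely the step responsible for the dimensional constraint $k > \lfloor d/2 \rfloor + 1$ and for the final form of the constant.
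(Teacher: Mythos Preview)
The paper does not supply its own proof of the Mikhlin theorem; it merely records the statement in the appendix and refers the reader to Grafakos \cite{Grafakos2008, Grafakos2014}. So there is no in-paper argument to compare against, but your outline contains a genuine gap worth flagging.

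Your pointwise kernel bound $|K_j(x)| \leq C\,2^{jd}(1+2^j|x|)^{-k}$ is correct, but you then claim that ``the hypothesis $k>d/2$ ensures these estimates are integrable.'' This is false: $\int_{\mathbb{R}^d}(1+|u|)^{-k}\,du$ diverges unless $k>d$, and for $d\geq 2$ the Mikhlin hypothesis gives only $k=\lfloor d/2\rfloor+1\leq d$ derivatives. Concretely, in $d=3$ one has $k=2$, and $(1+r)^{-2}$ is not integrable over $\mathbb{R}^3$. The same failure propagates through both blocks of your low/high-frequency splitting of the H\"ormander integral: each requires summing a quantity comparable to $\int_{|u|\gtrsim 1}|u|^{-k}\,du$, which diverges. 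Your pointwise route is the right one for the H\"ormander multiplier theorem with $k>d$, but not for Mikhlin with $k>d/2$.

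The standard fix---and this is what Grafakos and H\"ormander's original argument actually do---is to replace the $L^1$-based pointwise estimate by an $L^2$-based one. Plancherel gives
\[
\int_{\mathbb{R}^d} |K_j(x)|^2\,|x|^{2k}\,dx \;\lesssim\; \sum_{|\alpha|=k}\|\partial^{\alpha} m_j\|_{L^2}^2 \;\lesssim\; 2^{j(d-2k)},
\]
and then Cauchy--Schwarz yields
\[
\int_{|x|\geq R}|K_j(x)|\,dx \;\leq\; \Big(\int |K_j|^2|x|^{2k}\,dx\Big)^{1/2}\Big(\int_{|x|\geq R}|x|^{-2k}\,dx\Big)^{1/2} \;\lesssim\; (2^jR)^{d/2-k},
\]
which is summable over $j$ with $2^jR\gtrsim 1$ precisely when $k>d/2$. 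An analogous $L^2$ bound on $\nabla K_j$ handles the low-frequency block. This Plancherel--Cauchy--Schwarz step is where the threshold $d/2$ genuinely enters, and it is the ingredient your sketch is missing.
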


	By the Mikhlin multiplier theorem, we can readily establish the boundedness of the following operators:

	\begin{corollary}
		The operators $(1 + e^{-2|D|})^{-1}$, $R_i = i \frac{D_i}{|D|}$, and $\tanh(|D|)$ are bounded in $W^{k,p}(\mathbb{R}^d)$, where $D_i=-i\partial_{x_i}$ and $|D|=(-\Delta)^{1/2}$.
	\end{corollary}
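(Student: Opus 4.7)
The plan is to invoke the Mikhlin multiplier theorem stated above for each of the three symbols and then promote $L^p$-boundedness to $W^{k,p}$-boundedness via the commutation of Fourier multipliers with partial derivatives. Concretely, writing the three multipliers as
\[
 m_1(\xi) = (1+e^{-2|\xi|})^{-1}, \qquad m_2^{(i)}(\xi) = i\,\frac{\xi_i}{|\xi|}, \qquad m_3(\xi) = \tanh(|\xi|),
\]
the first step is to verify the Mikhlin pointwise estimate
\[
 |\partial^{\alpha} m(\xi)| \le C_{\alpha}\,|\xi|^{-|\alpha|}\quad\text{for all multi-indices $\alpha$ with $|\alpha|\le \lfloor d/2\rfloor+1$.}
\]
For $m_2^{(i)}$ this is the classical computation for the Riesz transform: $\xi_i/|\xi|$ is positively homogeneous of degree $0$ and smooth away from the origin, so $\partial^{\alpha}m_2^{(i)}$ is homogeneous of degree $-|\alpha|$ and bounded on the unit sphere. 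For $m_3$, one writes $\partial_i m_3(\xi)=\operatorname{sech}^2(|\xi|)\,\xi_i/|\xi|$ and proceeds inductively: each further derivative either hits the rational factor $\xi_i/|\xi|$ (contributing an extra $|\xi|^{-1}$) or hits $\operatorname{sech}^{2k}(|\xi|)$ times a polynomial in $\tanh(|\xi|)$ (contributing a factor bounded on the whole space and decaying super-polynomially as $|\xi|\to\infty$). For $|\xi|$ small the bound is automatic because $|\xi|^{-|\alpha|}$ blows up, while for $|\xi|\ge 1$ the exponential decay of $\operatorname{sech}^2(|\xi|)$ dominates every polynomial factor. The same scheme applies to $m_1(\xi)$: away from $0$ every derivative of $(1+e^{-2|\xi|})^{-1}$ is a rational combination of $e^{-2|\xi|}$ with derivatives of $|\xi|$, and the exponential decay of $e^{-2|\xi|}$ as $|\xi|\to\infty$ gives much stronger estimates than required, while near the origin the boundedness of all derivatives of $(1+e^{-2r})^{-1}$ in $r$, combined again with the $|\xi|^{-|\alpha|}$ slack, yields the claim.

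Once the Mikhlin condition is verified, the theorem delivers $L^{p}(\mathbb R^{d})$-boundedness of the corresponding operators for every $1<p<\infty$. To pass to $W^{k,p}$, one observes that each $T_{m}$ is a Fourier multiplier and therefore commutes with $\partial^{\beta}$ on the Schwartz class: for $f\in \mathcal{S}(\mathbb R^{d})$ and $|\beta|\le k$,
\[
 \partial^{\beta}(T_{m}f) = T_{m}(\partial^{\beta}f),
\]
because $\widehat{\partial^{\beta}f}(\xi)=(i\xi)^{\beta}\widehat f(\xi)$ and $m(\xi)(i\xi)^{\beta}=(i\xi)^{\beta}m(\xi)$. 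Hence
\[
 \|T_{m}f\|_{W^{k,p}} = \Bigl(\sum_{|\beta|\le k}\|\partial^{\beta}(T_{m}f)\|_{L^{p}}^{p}\Bigr)^{1/p} = \Bigl(\sum_{|\beta|\le k}\|T_{m}(\partial^{\beta}f)\|_{L^{p}}^{p}\Bigr)^{1/p} \le C\,\|f\|_{W^{k,p}},
\]
and a standard density argument extends the inequality from $\mathcal{S}(\mathbb R^{d})$ to all of $W^{k,p}(\mathbb R^{d})$.

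The only non-routine point is the verification of the Mikhlin estimate for $m_{1}$ and $m_{3}$, which fails to be smooth at the origin because of the radial factor $|\xi|$. The cleanest way to handle this is to split into the two regimes $|\xi|\le 1$ and $|\xi|\ge 1$: in the inner region one exploits that $|\xi|^{-|\alpha|}\ge 1$ so any $L^{\infty}$ bound on $\partial^{\alpha}m$ suffices, while in the outer region one uses the exponential decay of $\operatorname{sech}^{2}(|\xi|)$ and $e^{-2|\xi|}$ to absorb the inverse powers of $|\xi|$ that arise from differentiating $|\xi|$ repeatedly (each such differentiation produces at worst a factor $|\xi|^{-1}$, which is bounded by $1$ on $\{|\xi|\ge 1\}$). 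Once this pointwise bookkeeping is carried out up to order $\lfloor d/2\rfloor+1$, the Mikhlin theorem and the commutation argument above complete the proof.
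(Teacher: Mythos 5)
Your proposal is correct and follows exactly the route the paper intends: the paper's own justification of this corollary is simply the citation of the Mikhlin multiplier theorem stated immediately before it, and you supply the routine verification (homogeneity for the Riesz symbols, the split into $|\xi|\le 1$ and $|\xi|\ge 1$ with exponential decay of the derivatives of $\tanh$ and $(1+e^{-2r})^{-1}$ for the radial symbols, and commutation of multipliers with $\partial^\beta$ to pass from $L^p$ to $W^{k,p}$). No gaps.
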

	
	To establish $L^p$ norm estimates, we need the following well-known inequalities.
	\begin{theorem}[Hardy-Littlewood-Paley theorem on $\mathbb{R}^2$]
		Assume $p \in[1,2]$. Then
		$$
		\|\widehat f\|_{L^p\left(|x|^{-2(2-p)}\right)} \leqslant C_p\|f\|_{L^p}
		$$
		where  we denote by $L^p(w)$ the $L^p$-space with respect to $w(x) d x$. The norm on $L^p(w)$ is
		$$
		\|f\|_{L^p(w)}=\left(\int_{\mathbb{R}^2}|f(x)|^p w(x) d x\right)^{1 / p}.
		$$
	\end{theorem}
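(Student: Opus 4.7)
My plan is to derive the weighted Hardy-Littlewood-Paley inequality by real-variable interpolation between the $L^2$-Plancherel identity and a weak-$L^1$ endpoint estimate. Observing that
\[
\|\hat f\|_{L^p(|x|^{-2(2-p)})}^p = \int_{\R^2} \bigl(|x|^{-2(2-p)/p}\,|\hat f(x)|\bigr)^p\,dx,
\]
the inequality is equivalent to the $L^p(\R^2) \to L^p(\R^2)$ boundedness of the sublinear operator
\[
T_\theta f(x) := |x|^{-2\theta}\,\hat f(x), \qquad \theta := \tfrac{2-p}{p},
\]
with a constant depending only on $p$. Since the weight $|x|^{-2}$ fails to be locally integrable in $\R^2$, a strong-type conclusion can only hold for $p \in (1,2]$; at $p=1$ one obtains only a weak-type surrogate, which is the natural interpretation of the endpoint.

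I would then view $T_\theta$ as the member $z=\theta$ of the analytic family
\[
T_z f(x) := |x|^{-2z}\,\hat f(x), \qquad 0 \le \Re z \le 1,
\]
and invoke Stein's interpolation theorem (in the form that accommodates a weak-type endpoint). On the line $\Re z = 0$ the multiplier $|x|^{-2z}$ has modulus one, so Plancherel gives
\[
\|T_{iy} f\|_{L^2(\R^2)} = \|\hat f\|_{L^2(\R^2)} = \|f\|_{L^2(\R^2)} \qquad \text{for every } y \in \R.
\]
On the line $\Re z = 1$ the trivial bound $|\hat f(x)| \le \|f\|_{L^1}$ together with the fact that $|x|^{-2}$ lies in weak $L^1(\R^2)$ (its Lebesgue distribution function equals $\pi/\lambda$) yields
\[
\bigl|\{x \in \R^2 : |T_{1+iy} f(x)| > \lambda\}\bigr| \le \bigl|\{x : |x|^{-2} > \lambda/\|f\|_{L^1}\}\bigr| = \pi\,\|f\|_{L^1}/\lambda,
\]
so that $T_{1+iy} : L^1(\R^2) \to L^{1,\infty}(\R^2)$ with operator norm controlled uniformly in $y$.

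Applying Stein interpolation to the analytic family $\{T_z\}$ at $z = \theta = (2-p)/p \in (0,1)$, which satisfies $\frac{1}{p} = \frac{1-\theta}{2} + \theta$, produces the strong-type bound $\|T_\theta f\|_{L^p(\R^2)} \le C_p\,\|f\|_{L^p(\R^2)}$ for $p \in (1,2)$, which is precisely the claim; the endpoint $p=2$ is handled directly by Plancherel. The principal technical point is verifying the admissibility hypotheses of the interpolation: one needs $z \mapsto T_z f$ to extend to a holomorphic operator-valued map in the open strip with at most admissible exponential growth in $|\Im z|$ on the boundary, which is standard for Fourier multipliers of the form $|x|^{-2z}$ viewed as tempered distributions (see \cite{Grafakos2008,Grafakos2014}). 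Once this analytic-family framework is set up, the remainder of the argument is bookkeeping of the exponents; the main subtlety is the asymmetry at the $p=1$ endpoint, where only the weak-type bound is available and genuine strong-type boundedness does not hold.
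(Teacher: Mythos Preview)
Your argument is essentially the same as the paper's: both establish the Plancherel endpoint at $p=2$ and the weak-type endpoint at $p=1$ via the trivial bound $|\hat f|\le \|f\|_{L^1}$ together with $|x|^{-2}\in L^{1,\infty}(\R^2)$, and then interpolate. The only difference is packaging: the paper works with the single operator $Tf(\xi)=|\xi|^2\hat f(\xi)$ viewed as a map $L^p(dx)\to L^p(|\xi|^{-4}d\xi)$ and applies Marcinkiewicz directly, whereas you introduce the analytic family $T_z f=|x|^{-2z}\hat f$ and invoke a weak-endpoint version of Stein interpolation. Since $|T_zf|=|x|^{-2\Re z}|\hat f|$ depends only on $\Re z$, the complex structure is superfluous and your interpolation collapses to Marcinkiewicz anyway; the paper's formulation is therefore the more economical one, but the substance of the two proofs is identical.
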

	\begin{proof}
		We consider the mapping $(T f)(\xi)=|\xi|^n \widehat{f}(\xi)$. By Plancherel theorem, we have
		$$
		\|T f\|_{L_*^2\left(|\xi|^{-2 n}\right)} \leqslant\|T f\|_{L^2\left(|\xi|^{-2 n}\right)}=\|\widehat{f}\|_2=\|f\|_2
		$$
		which implies that $T$ is of weak type $(2,2)$. We now work towards showing that $T$ is of weak type $(1,1)$. Thus, the Marcinkiewicz interpolation theorem implies the theorem.
		Now, consider the set $E_\alpha=\left\{\xi:|\xi|^n|\widehat{f}(\xi)|>\alpha\right\}$. For simplicity, we let $\nu$ denote the measure $|\xi|^{-2 n} d \xi$ and assume that $\|f\|_1=1$. Then, $|\hat{f}(\xi)| \leqslant\left(\frac{|\omega|}{2 \pi}\right)^{n / 2}$. For $\xi \in E_\alpha$, we therefore have $\alpha<\left(\frac{|\omega|}{2 \pi}\right)^{n / 2}|\xi|^n$. Consequently,
		$$
		(T f)_*(\alpha)=\nu\left(E_\alpha\right)=\int_{E_\alpha}|\xi|^{-2 n} d \xi \leqslant \int_{|\xi|^n>\left(\frac{|\omega|}{2 \pi}\right)^{-n / 2}}|\xi|^{-2 n} d \xi \leqslant C \alpha^{-1}
		$$
		Thus, we prove that
		$$
		\alpha \cdot(T f)_*(\alpha) \leqslant C\|f\|_1
		$$
		which implies $T$ is of weak type $(1,1)$. Therefore, we complete the proof.
	\end{proof}

\end{appendices}

\end{document}